\newtheorem{theorem}{Theorem}
\newtheorem{proposition}{Proposition}
\newtheorem{lemma}{Lemma}
\theoremstyle{definition}
\newtheorem{conjecture}{Conjecture}
\theoremstyle{remark}
\newtheorem{remark}{Remark}
\newtheorem{corollary}{Corollary}
\numberwithin{equation}{section}
\newcommand\restr[2]{{% we make the whole thing an ordinary symbol
		\left.\kern-\nulldelimiterspace % automatically resize the bar with \right
		#1 % the function
		\vphantom{\big|} % pretend it's a little taller at normal size
		\right|_{#2} % this is the delimiter
}}
\DeclareRobustCommand{\rchi}{{\mathpalette\irchi\relax}}
\newcommand{\irchi}[2]{\raisebox{\depth}{$#1\chi$}} % inner command, used by \rchi
\numberwithin{equation}{section}
\numberwithin{theorem}{section}
\numberwithin{proposition}{section}
\numberwithin{lemma}{section}
\numberwithin{remark}{section}
\numberwithin{exercise}{section}
\numberwithin{corollary}{section}
\numberwithin{conjecture}{section}
\numberwithin{problem}{section}
\newcounter{desccount}
\begin{document}

%------
% Insert the title of your paper and (if necessary)
% a short title for the running head.
%------
\title{Frequency conditions for the global stability of nonlinear delay equations with several equilibria}
\titlemark{Global stability of nonlinear delay equations with several equilibria}

%------

%%%% Pls fill in all fields for each author
%%%% Label the authors by their position in the authors' list using {}
%%%% Pls look for the MR Author ID in the metadata.xml file in the source bundle
%%%% ORCID is only to be added if provided by the author
%%%% Abbreviate first names for the running head
\emsauthor{1}{
	\givenname{Mikhail}
	\surname{Anikushin}
	%\mrid{1234567}
	\orcid{0000-0002-7781-8551}}{M.M.~Anikushin}
%%%% Repeat the same fields for each numbered author
\emsauthor{2}{
	\givenname{Andrey}
	\surname{Romanov}
	%\mrid{}
	\orcid{0009-0008-4886-2351}}{A.O.~Romanov}
%%%% Repeat the same fields for each numbered author
%\emsauthor{2}{
%	\givenname{Second}
%	\surname{Contributor}
%	\mrid{}
%	\orcid{}}{S.~Contributor}
%\emsauthor*{3}{
%	\givenname{Someone}
%	\surname{Else}
%	\mrid{}
%	\orcid{}}{S.~Else}

%%%% Please provide detalied address info for each author
%%%% Use the same numbering as for \emsauthor above
%%%% Please look up the ROR ID of your institute here: https://ror.org
\Emsaffil{1}{
	\pretext{}
	\department{Department of Applied Cybernetics}
	\organisation{Faculty of Mathematics and Mechanics, St Petersburg University}
	\rorid{023znxa73}
	\address{Universitetskiy prospekt 28}
	\zip{198504}
	\city{Peterhof}
	\country{Russia}
	\posttext{}
	\affemail{demolishka@gmail.com}
}
\Emsaffil{2}{
	\pretext{}
	\department{Department of Applied Cybernetics}
	\organisation{Faculty of Mathematics and Mechanics, St Petersburg University}
	\rorid{023znxa73}
	\address{Universitetskiy prospekt 28}
	\zip{198504}
	\city{Peterhof}
	\country{Russia}
	\posttext{}
	\affemail{romanov.andrey.twai@gmail.com}
}
%%%% Repeat the same fields for each numbered author
%%%% If some author has multiple affiliations, repeat the fields for each affiliation
%%%% Number the affiliations using {}
	
%% If a corresponding author is needed, use \emsauthor* instead of \emsauthor

%%%% Appendix authors are treated analogously with the only difference 
%%%% that they do not have the abbreviated name field
	
%%%% If a regular author is an Appendix author as well, repeat their data 
%%%% using  \Appendixauthor* instead of \Appendixauthor

%\Appendixauthor*{2}{
%	\givenname{Mikhail}
%%	\surname{Anikushin}
%	\mrid{1234567}
%	\orcid{0000-0001-0002-0003}}
%\Appendixaffil{2}{
%	\pretext{}
%	\department{}
%	\organisation{}
%	\rorid{}
%	\address{}
%	\zip{}
%	\city{}
%	\country{}
%	\posttext{}
%	\affemail{}
%	\furtheremail{}}

%------
% Add MSC 2020 codes according to www.ams.org/msc/msc2020.html.
% Secondary codes (in square brackets) are optional.
%------
\classification[34K20, 37L30, 37L45]{37L15}
%------
% Add a list of keywords. Only capitalise those keywords that start with a proper name.
%------
\keywords{delay equations; frequency theorem; compound cocycles; global stability}

%------
% Insert your abstract.
%------
\begin{abstract}
	In our adjacent work, we developed a spectral comparison principle for compound cocycles generated by delay equations. It allows to derive frequency inequalities for the uniform exponential stability of such cocycles by means of their comparison with stationary problems. Such inequalities are hard to verify purely analytically, and in this work we develop approximation schemes to verify some of the arising frequency inequalities. Besides some general theoretical results, in applications we stick to the case of scalar equations. By means of the Suarez--Schopf delayed oscillator and the Mackey--Glass equations, we demonstrate applications of the theory to reveal regions in the space of parameters where the absence of closed invariant contours can be guaranteed. Since the frequency inequalities are robust, so close systems also satisfy them, we expect the method to actually imply the global stability, as in known finite-dimensional results utilizing variants of the closing lemma, which is still awaiting developments in infinite dimensions.
\end{abstract}

\maketitle
\tableofcontents
%------
% INSERT THE BODY OF THE PAPER HERE (except
% acknowledgments, funding info and bibliography)
%------

\section{Introduction}

\subsection{Historical perspective: generalized Bendixson criterion in global stability problems}
In the seminal paper \cite{Smith1986HD}, Smith presented a generalization of the Bendixson criterion for ordinary differential equations (ODEs) in $\mathbb{R}^{n}$. His abstract conditions were concerned with a continuous mapping $F$ of a bounded simply connected domain $\mathcal{D}$ in $\mathbb{R}^{n}$ such that $F(\mathcal{D})$ lies in a compact subset of $\mathcal{D}$. Then he proved that there are no closed invariant contours on which $F$ is bijective\footnote{This (bijectivity) was implicitly used in \cite{Smith1986HD}, and the explicit clarification was given in \cite{LiMuldowney1995LowBounds}.} provided that the Hausdorff dimension of the maximal compact invariant subset $\mathcal{A}$ (attractor) is strictly less than $2$.

In applications to ODEs, $F$ is given by the time-$t$ mapping $\varphi^{t}$ of the semiflow $\varphi$ generated by an ODE (vector field), and $\bar{\mathcal{D}}$ (the closure of $\mathcal{D}$ in $\mathbb{R}^{n}$) is a positively invariant\footnote{In the original work \cite{Smith1986HD}, it is required that $\bar{\mathcal{D}}$ is mapped into $\mathcal{D}$ under the semiflow. However, one can weaken the condition to just the positive invariance, i.e., $\varphi^{t}(\bar{\mathcal{D}}) \subset \bar{\mathcal{D}}$ for all $t \geq 0$, and the existence of an attractor in the interior, if the local closing lemma is used (see \cite{LiMuldowney1996SIAMGlobStab}).} with respect to $\varphi$ closed bounded region such that $\mathcal{A} \coloneq \bigcap_{t \geq 0} \varphi^{t}(\bar{\mathcal{D}})$ is a compact subset of $\mathcal{D}$. Then, using the Liouville trace formula, Smith derived the condition 
\begin{equation}
	\label{EQ: SmithContractionCondition}
	\alpha_{1}(q) + \alpha_{2}(q) < 0 \qquad \text{for any} \ q \in \bar{\mathcal{D}},
\end{equation}
where $\alpha_{1}(q)$ and $\alpha_{2}(q)$ are the first and the second largest eigenvalues of the additively symmetrized Jacobian matrix of the vector field at $q$. This condition guarantees the contraction of areas under the action of the differential $d_{q}\varphi^{t}$ of $\varphi^{t}$ uniformly in $q \in \mathcal{A}$ as $t \to +\infty$. This implies that the Hausdorff dimension of $\mathcal{A}$ is strictly less than $2$ (see \cite{ChepyzhovIlyin2004, KuzReit2020, Temam1997}) and, consequently, the abstract Bendixson criterion can be applied. In \cite{Smith1986HD}, it was also noted that \eqref{EQ: SmithContractionCondition} is a robust condition, i.e., $C^{1}$-close systems also satisfy it. This allowed Smith to utilize Pugh's closing lemma and deduce from \eqref{EQ: SmithContractionCondition} that any nonwandering point in $\bar{\mathcal{D}}$ is an equilibrium, and, moreover, any trajectory in $\bar{\mathcal{D}}$ necessarily converges to an equilibrium\footnote{If the stationary set is finite, then this last conclusion is obvious, since the $\omega$-limit set of any point is connected and consists of nonwandering points.}.

Later, condition \eqref{EQ: SmithContractionCondition} was sharpened by Li and Muldowney \cite{LiMuldowney1996SIAMGlobStab, LiMuldowney1995Conv} with the aid of compound matrices and Lozinskii (logarithmic) norms. They also widely extended geometric ideas of Smith to semiflows in Banach spaces \cite{LiMuldowney1995LowBounds} and, in particular, established a generalized Bendixson criterion for such semiflows.

Moreover, Leonov and Boichenko \cite{LeonovBoi1992} gave another sharper conditions via Lyapunov-like functions. Now this approach is known as the Leonov method \cite{Kuznetsov2016}. In the monograph of Kuznetsov and Reitmann \cite{KuzReit2020}, the method is combined with logarithmic norms.

In fact, all the mentioned results are implicitly concerned with showing that
\begin{equation}
	\label{EQ: LyapExpContrCond}
	\lambda_{1}(\Xi) + \lambda_{2}(\Xi) < 0,
\end{equation}
where $\lambda_{1}(\Xi)$ and $\lambda_{2}(\Xi)$ are the first and the second uniform Lyapunov exponents of the derivative cocycle $\Xi$ of $\varphi$ in $\bar{\mathcal{D}}$, i.e., we have $\Xi^{t}(q,\cdot) \coloneq d_{q}\varphi^{t}$ for $q \in \bar{\mathcal{D}}$ and $t \geq 0$ in terms of Section \ref{SUBSEC: CocyclesAndSemiflowsBrief}. In the terminology of \cite{Anikushin2023LyapExp}, \eqref{EQ: LyapExpContrCond} is obtained by computing the infinitesimal growth exponents for the twofold antisymmetric multiplicative compound $\Xi_{2}$ of $\Xi$ in an adapted metric. Then the so-called maximization procedure (or the averaging procedure in the case of \cite{LiMuldowney1996SIAMGlobStab}) is applied to estimate the quantity $\lambda_{1}(\Xi) + \lambda_{2}(\Xi)$ from above.

Such a sum as $\lambda_{1}(\Xi) + \lambda_{2}(\Xi)$, being the largest uniform Lyapunov exponent of $\Xi_{2}$, is upper semicontinuous with respect to $\Xi$ under natural conditions. This is the robustness that is required for the application of the closing lemma. Moreover, the Lyapunov exponents depend only on the attractor $\mathcal{A}$, and not on the domain $\mathcal{D}$ that encloses it, see \cite{Anikushin2023LyapExp}.

In \cite{Anikushin2023LyapExp}, it is shown (under some natural conditions) that one can always adapt the metric (not necessarily coercive) on the twofold exterior power such that the maximization procedure will produce quantities arbitrarily close to $\lambda_{1}(\Xi) + \lambda_{2}(\Xi)$. Thus, on the geometric level, there are no ``autonomous convergence theorems'' (in plural, as it is used in \cite{LiMuldowney1996SIAMGlobStab}), but rather only one abstract statement concerned with \eqref{EQ: LyapExpContrCond}. Diversity arises in applications due to the use of particular metrics for specific problems in order to verify \eqref{EQ: LyapExpContrCond}. Of course, for efficient applications, this approach demands constructing adapted metrics.

For invertible finite-dimensional systems, there is a more delicate result related to the existence of adapted metrics due to Kawan, Matveev, and Pogromsky \cite{KawanPogromsky2021}. Motivated by such existence theorems, recent advances concern the numerical search for adapted metrics using subgradient optimization (see Kawan, Hafstein, and Giesl \cite{KawanHafsteinGiesl2021}) or nonlinear constrained optimization (see our work \cite{AnikushinRomanov2025RobustEstimates}). We refer to \cite{Anikushin2023LyapExp,AnikushinRomanov2025RobustEstimates} for more discussions.

From the perspective of the analysis of systems depending on parameters, it is convenient to call a semiflow \textit{globally stable} if any of its trajectories tends to the stationary set \cite{Kuzetal2020Lorenz}. This term covers multistable systems (which are more common) and emphasizes the global character of the problem. In the space of parameters, the boundary of global stability distinguishes the regions with simple and complex behavior.

From \eqref{EQ: LyapExpContrCond} we immediately see limitations of the method. Namely, the first $\lambda_{1}$ and the second $\lambda_{2}$ (as the real part decreases) eigenvalues at any equilibrium from $\mathcal{D}$ must satisfy $\operatorname{Re}\lambda_{1} + \operatorname{Re}\lambda_{2} < 0$. There are systems where the boundary of global stability is determined by local bifurcations of equilibria (usually, the Andronov--Hopf bifurcation). In this case the boundary is called \textit{trivial} in the terminology of \cite{Kuzetal2020Lorenz}. For such systems, the criterion based on \eqref{EQ: LyapExpContrCond} has a prospect (with the use of adapted metrics) to reveal the entire region of global stability, provided that there are no saddles with $\operatorname{Re}\lambda_{1} + \operatorname{Re}\lambda_{2} \geq 0$ until the bifurcation occurs. However, there are systems where the boundary of global stability is determined by nonlocal bifurcations (such as the Lorenz system), and in this case the boundary is called \textit{hidden}. For such systems, applications of analytical methods may be complicated. Looking ahead, we note that the Suarez--Schopf model (see Section \ref{SEC: SuarezSchopfCompoundStab}) has a hidden boundary of global stability, while the Mackey--Glass model (see Section \ref{SEC: MackeyGlassCompoundStab}) is conjectured to have a trivial boundary.

The above considerations can be illustrated by means of the Lorenz system, for which the conditions given by Smith \cite{Smith1986HD} were improved in \cite{LeonovBoi1992}. Moreover, by developing the Leonov method, Leonov et al. \cite{LeoKuzKorKusakin2016} and Kuznetsov et al. \cite{Kuzetal2020Lorenz} derived an exact analytical formula for the Lyapunov dimension of the global attractor in the Lorenz system for all standard parameters\footnote{More precisely, \cite{LeoKuzKorKusakin2016} establishes an exact formula for some parameters, and \cite{Kuzetal2020Lorenz} proves that for other parameters the system is globally stable (in this case the Lyapunov dimension is also given by the local Lyapunov dimension at the zero equilibrium).}. As a consequence, there is an analytical description of the region where \eqref{EQ: LyapExpContrCond} is satisfied, and this is the maximum that can be achieved via the generalized Bendixson criterion.

We refer to the survey of Zelik \cite{ZelikAttractors2022} and our paper \cite{Anikushin2023LyapExp} for more discussions on dimension estimates.

\subsection{Contribution of the present work}
This paper is concerned with applications of the generalized Bendixson criterion developed by Li and Muldowney \cite{LiMuldowney1995LowBounds} to delay equations in $\mathbb{R}^{n}$ by verifying \eqref{EQ: LyapExpContrCond} for the corresponding derivative cocycles. This is related to the problem of obtaining effective dimension estimates for such equations that is rarely addressed in the known literature (see \cite{Anikushin2023LyapExp, Anikushin2022Semigroups, AnikushinRomanov2024EffEst} for discussions). To the best of our knowledge, the first satisfactory results in this direction were obtained in \cite{Anikushin2023LyapExp}. In particular, dimension estimates for the global attractor in the Mackey--Glass equations, which seem to be asymptotically sharp (i.e., up to a constant) as the delay value tends to infinity, are obtained therein. Although such estimates provide nontrivial regions where \eqref{EQ: LyapExpContrCond} holds, numerical analysis indicates much larger regions of global stability.

Here we follow the approach developed in the adjacent paper \cite{Anikushin2023Comp}, where a spectral comparison principle for compound cocycles in Hilbert spaces generated by delay equations is established. This principle treats the compound cocycle as a nonautonomous perturbation of a $C_{0}$-semigroup and provides frequency conditions (inequalities) to guarantee that certain properties concerned with spectral dichotomies of the semigroup will be preserved under such a perturbation. Here the perturbation is described through the so-called quadratic constraints, and the perturbation problem is posed in the context of an appropriate infinite-horizon quadratic regulator problem, which, in its turn, is resolved via the frequency theorem developed by the first author in \cite{Anikushin2020FreqDelay} (see also \cite{Anikushin2020FreqParab}). In particular, the principle provides frequency conditions for the uniform exponential stability of compound cocycles. This is clearly related to the initial problem, since, in terms of such cocycles, \eqref {EQ: LyapExpContrCond} means that the twofold compound cocycle is uniformly exponentially stable. On the geometric level, the frequency conditions guarantee the existence of an adapted metric given by a positive-definite quadratic functional on the exterior power, see Theorem \ref{TH: QuadraticFunctionalDelayCompoundTheorem}. Although it is not necessarily coercive, its relation with the dynamics allows us to obtain the required bound, see Corollary \ref{COR: DelayCompoundUniformExponentialStability}. We give a brief exposition of this theory in Section \ref{SEC: StabilityOfDelayCompounds}.

However, for the verification of arising frequency inequalities, we need to compute resolvents of additive compound operators. In the case of delay equations, this reduces to solving a first-order PDE with boundary conditions involving both partial derivatives and delays. This prevents dealing with the problem in a purely analytical way, see Section \ref{SUBSEC: ExampleResolventEquations} for the simplest example of such equations.

In this paper, we aim to develop approximation schemes to verify frequency inequalities and consider implementations of such schemes for conducting reliable numerical experiments, see Section \ref{SEC: ComputationOfFreqIneqDelayComp}. Besides some abstract results, we mainly stick to the case of scalar equations\footnote{See Remark \ref{REM: FrequencyCompoundGeneralCase} concerned with developing analogs of the approximation scheme for systems of equations and the end of Section \ref{SUBSEC: ExampleResolventEquations}.}. We give applications to the Suarez--Schopf delayed oscillator (see Section \ref{SEC: SuarezSchopfCompoundStab}), which is a system with a hidden boundary of global stability (see \cite{AnikushinRom2023SS}), and the Mackey--Glass equations (see Section \ref{SEC: MackeyGlassCompoundStab}), which is conjectured to be a system with a trivial boundary of global stability. For these models, the developed machinery indicates sharper regions of global stability than the purely analytical results from \cite{Anikushin2022Semigroups, Anikushin2023LyapExp}. Moreover, for Mackey--Glass equations with classical parameters, it even improves the delicate result of Liz, Tkachenko, and Trofimchuk \cite{Lizetal2003} relying on some specificity of scalar equations. In \cite{AnikushinRomanov2024EffEst}, we also gave applications of the frequency criterion to the Nicholson blowflies model and compare it with several existing stability results.

Note also that the frequency-domain approach to the uniform exponential stability of compound cocycles is potentially applicable to a range of problems, which include systems enjoying a kind of asymptotic compactness, such as parabolic, hyperbolic, or neutral delay equations. However, besides this paper and the adjacent paper \cite{Anikushin2023Comp}, we do not know such applications even in the case of ODEs. As to delay equations, here the general approach presented in \cite{Anikushin2020FreqDelay} reveals some specificity of such equations and leads to the discovery of their important functional-analytical properties, which we call structural Cauchy formulas. Such properties are related to the well-posedness of the infinite-horizon quadratic regulator problem.

Although the analytical side of our approach constituted by \cite{Anikushin2023Comp} and \cite{Anikushin2020FreqDelay} may seem complicated (especially for experimentalists), the approximation scheme \nameref{DESC: AS1DelayCompound}--\nameref{DESC: AS4DelayCompound} stated in Section \ref{SEC: ApproximationFreqIneqDelayCompound}, as well as the explicit analysis presented in Section \ref{SUBSEC: ExampleResolventEquations}, shall be accessible to a wide audience.

To the best of our knowledge, there is still no variant of the closing lemma that is appropriate for infinite-dimensional problems and delay equations in particular. Because of this, we are unable to prove that our conditions generally imply the global stability, but we believe in this because of their robustness. However, in the case of the Suarez--Schopf delayed oscillator, this problem can be avoided since the system belongs to the class of monotone feedback systems that satisfy the Poincar\'{e}--Bendixson trichotomy \cite{MalletParetSell1996}. Moreover, for some delay equations one may construct finite-dimensional inertial manifolds (see \cite{Anikushin2020FreqDelay, Anikushin2020Geom, AnikushinAADyn2021}) and apply the usual closing lemma. We hope that our research will stimulate developments of the closing lemma in infinite dimensions.

This paper is organized as follows. In Section \ref{SEC: Preliminaries} we introduce basic definitions. Namely, in Section \ref{SUBSEC: MultiplicativeAddCompoundsBrief} we briefly discuss tensor products of Hilbert spaces and compound operators on $m$-fold tensor products. In Section \ref{SUBSEC: CocyclesAndSemiflowsBrief} we give definitions of semiflows and cocycles. In Section \ref{SEC: StabilityOfDelayCompounds} we expound a part of the theory developed in \cite{Anikushin2023Comp}, which is necessary to introduce frequency conditions for the uniform exponential stability of compound cocycles generated by delay equations, see the final Theorem \ref{TH: QuadraticFunctionalDelayCompoundTheorem}. In Section \ref{SEC: ComputationOfFreqIneqDelayComp} we develop approximation schemes to verify frequency inequalities (see Section \ref{SEC: ApproxSchemeStatement} for the statement and Section \ref{SEC: ApproxSchemeConverg} for a discussion) and compare them with a direct approach to examine the resolvent equations for twofold additive compound operators (see Section \ref{SUBSEC: ExampleResolventEquations}). Then we give applications to the Suarez--Schopf delayed oscillator (see Section \ref{SEC: SuarezSchopfCompoundStab}) and the Mackey--Glass equations (see Section \ref{SEC: MackeyGlassCompoundStab}).
\section{Preliminaries}
\label{SEC: Preliminaries}
\subsection{Multiplicative and additive compound operators on tensor products of Hilbert spaces}
\label{SUBSEC: MultiplicativeAddCompoundsBrief}

Throughout the paper, $\mathcal{L}(\mathbb{E};\mathbb{F})$ denotes the space of all bounded linear operators between Banach spaces $\mathbb{E}$ and $\mathbb{F}$, and $\|\cdot\|_{\mathcal{L}(\mathbb{E};\mathbb{F})}$ denotes the operator norm. If $\mathbb{E}=\mathbb{F}$, we usually write just $\mathcal{L}(\mathbb{E})$. If  $\mathbb{E} = \mathbb{H}$ is a Hilbert space, we denote the inner product and the induced norm in $\mathbb{H}$ by $\langle \cdot, \cdot \rangle_{\mathbb{H}}$ and $|\cdot|_{\mathbb{H}}$, respectively. 

Let us briefly discuss basic concepts concerned with tensor products of Hilbert spaces, see, for example, \cite{Temam1997}. Let $\mathbb{H}_{1}$ and $\mathbb{H}_{2}$ be real or complex Hilbert spaces. By $\mathbb{H}_{1} \odot \mathbb{H}_{2}$ we denote the algebraic tensor product of $\mathbb{H}_{1}$ and $\mathbb{H}_{2}$. For $v_{1} \in \mathbb{H}_{1}$ and $v_{2} \in \mathbb{H}_{2}$, there is an element $v_{1} \otimes v_{2}$ of $\mathbb{H}_{1} \odot \mathbb{H}_{2}$ called the tensor product of $v_{1}$ and $v_{2}$. Recall that $v_{1} \otimes v_{2}$ is linear in both arguments, and such elements, which are called decomposable tensors, span $\mathbb{H}_{1} \odot \mathbb{H}_{2}$. Let $\langle \cdot, \cdot \rangle_{\mathbb{H}_{1}}$ and $\langle \cdot, \cdot \rangle_{\mathbb{H}_{2}}$ be the inner products in $\mathbb{H}_{1}$ and $\mathbb{H}_{2}$. We endow $\mathbb{H}_{1} \odot \mathbb{H}_{2}$ with the inner product defined by 
\begin{equation}
	\label{EQ: InnerProductTensorHilbertSpace}
	\langle v_{1} \otimes v_{2}, w_{1} \otimes w_{2} \rangle_{\mathbb{H}_{1} \otimes \mathbb{H}_{2}} \coloneq \langle v_{1},w_{1} \rangle_{\mathbb{H}_{1}} \langle v_{2}, w_{2} \rangle_{\mathbb{H}_{2}}
\end{equation}
for all $v_{1},w_{1} \in \mathbb{H}_{1}$ and $v_{2},w_{2} \in \mathbb{H}_{2}$. This formula indeed correctly defines an inner product in $\mathbb{H}_{1} \odot \mathbb{H}_{2}$ due to the universal property of algebraic tensor products. Now the \textit{tensor product} $\mathbb{H}_{1} \otimes \mathbb{H}_{2}$ of $\mathbb{H}_{1}$ and $\mathbb{H}_{2}$ is defined as the completion of $\mathbb{H}_{1} \odot \mathbb{H}_{2}$ by the norm induced by \eqref{EQ: InnerProductTensorHilbertSpace}. 

Given Hilbert spaces $\mathbb{H}_{1},\mathbb{H}_{2}, \mathbb{W}_{1}$, and $\mathbb{W}_{2}$ and bounded linear operators $L_{1} \in \mathcal{L}(\mathbb{H}_{1};\mathbb{W}_{1})$ and $L_{2} \in \mathcal{L}(\mathbb{H}_{2}; \mathbb{W}_{2})$, there is a unique operator $L_{1} \otimes L_{2} \in \mathcal{L}(\mathbb{H}_{1} \otimes \mathbb{H}_{2}; \mathbb{W}_{1} \otimes \mathbb{W}_{2})$ called the \textit{tensor product} of $L_{1}$ and $L_{2}$ such that
\begin{equation}
	(L_{1} \otimes L_{2})(v_{1} \otimes v_{2}) = L_{1} v_{1} \otimes L_{2} v_{2} \qquad \text{for all} \quad v_{1} \in \mathbb{H}_{1}, v_{2} \in \mathbb{H}_{2}.
\end{equation}
It can be shown that $\| L_{1} \otimes L_{2} \| = \| L_{1} \| \cdot \| L_{2}\|$, where $\|\cdot\|$ denotes appropriate operator norms associated with the above inner products. Moreover, by definition, the tensor product of operators behaves well with respect to compositions of operators in the sense that $(BA) \otimes (DC) = (B \otimes D)(A \otimes C)$ holds for any bounded linear operators $A,B,C$, and $D$ defined on appropriate spaces.

For any triple $\mathbb{H}_{1}, \mathbb{H}_{2}$, and $\mathbb{H}_{3}$ of Hilbert spaces, we have that the tensor products $(\mathbb{H}_{1} \otimes \mathbb{H}_{2}) \otimes \mathbb{H}_{3}$ and $\mathbb{H}_{1} \otimes (\mathbb{H}_{2} \otimes \mathbb{H}_{3})$ are naturally isometrically isomorphic and therefore denoted just by $\mathbb{H}_{1} \otimes \mathbb{H}_{2} \otimes \mathbb{H}_{3}$. This allows to carry the above constructions to any finite product $\mathbb{H}_{1} \otimes \cdots \otimes \mathbb{H}_{m}$ of Hilbert spaces.

Given a Hilbert space $\mathbb{H}$ and a positive integer $m$, we denote the $m$-fold tensor product $\mathbb{H}^{\otimes m}$ of $\mathbb{H}$ with itself by $\mathbb{H}^{\otimes m} \coloneq \mathbb{H} \otimes \cdots \otimes \mathbb{H}$ ($m$ times). Then for any operator $L \in \mathcal{L}(\mathbb{H})$ we denote its $m$-fold product $L \otimes \cdots \otimes L \in \mathcal{L}(\mathbb{H}^{\otimes m})$ by $L^{\otimes m}$ and call it the \textit{$m$-fold multiplicative compound} of $L$.

Let $\mathbb{S}_{m}$ be the symmetric group on $\{1,\ldots,m\}$. For each $\sigma \in \mathbb{S}_{m}$, let $S_{\sigma} \in \mathcal{L}(\mathbb{H}^{\otimes m})$ be the transposition operator with respect to $\sigma$, i.e., for all $v_{1},\ldots,v_{m} \in \mathbb{H}$, we have
\begin{equation}
	\label{EQ: HilbertTensorProdOperatorSsigma}
	S_{\sigma}(v_{1} \otimes \cdots \otimes v_{m}) \coloneq v_{\sigma(1)} \otimes \cdots \otimes v_{\sigma(m)}. 
\end{equation}
It is not hard to show that $S_{\sigma_{1} \sigma_{2}} = S_{\sigma_{2}} S_{\sigma_{1}}$ for all $\sigma_{1},\sigma_{2} \in \mathbb{S}_{m}$, see Remark \ref{REM: AntihomomorphismOfInducedByPermutations}. In particular, $S^{-1}_{\sigma} = S_{\sigma^{-1}}$. Moreover, $S^{*}_{\sigma} = S^{-1}_{\sigma}=S_{\sigma^{-1}}$, i.e., $S_{\sigma}$ is a unitary operator.

Let $\Pi^{\wedge}_{m} \in \mathcal{L}(\mathbb{H}^{\otimes m})$ be given by
\begin{equation}
	\label{EQ: WedgeProjectorHilbertSpace}
	\Pi^{\wedge}_{m} \coloneq \frac{1}{m!} \sum_{\sigma \in \mathbb{S}_{m}} (-1)^{\sigma}S_{\sigma}.
\end{equation}
From the above properties of $S_{\sigma}$, it can be shown that $\Pi^{\wedge}_{m}$ is an orthogonal projector in $\mathbb{H}^{\otimes m}$. Let $\mathbb{H}^{\wedge m}$ be its image, which is called the \textit{$m$-fold exterior product} of $\mathbb{H}$. For all $v_{1},\ldots,v_{m} \in \mathbb{H}$, we set $v_{1} \wedge \cdots \wedge v_{m} \coloneq \Pi^{\wedge}_{m}(v_{1} \otimes \cdots \otimes v_{m})$. 

It is clear that for any $L \in \mathcal{L}(\mathbb{H})$, the operator $L^{\otimes m}$ commutes with any $S_{\sigma}$ from \eqref{EQ: HilbertTensorProdOperatorSsigma} and, as a consequence, commutes with $\Pi^{\wedge}_{m}$ from \eqref{EQ: WedgeProjectorHilbertSpace}. Thus, $\mathbb{H}^{\wedge m}$ is invariant with respect to $L^{\otimes m}$. Let $L^{\wedge m}$ be the restriction of $L^{\otimes m}$ to $\mathbb{H}^{\wedge m}$ called the \textit{$m$-fold antisymmetric multiplicative compound} of $L$. It is sometimes convenient to say that $L^{\wedge m}$ is the \textit{$m$-fold multiplicative compound of $L$} in $\mathbb{H}^{\wedge m}$. It is not hard to see that
\begin{equation}
	L^{\wedge m} (v_{1} \wedge \cdots \wedge v_{m}) = Lv_{1} \wedge \cdots \wedge Lv_{m}
\end{equation}
holds for all $v_{1},\ldots,v_{m} \in \mathbb{H}$.

Now suppose that $G$ is a $C_{0}$-semigroup in $\mathbb{H}$, see \cite{EngelNagel2000}, and let $G(t) \in \mathcal{L}(\mathbb{H})$ denote its time-$t$ mapping for $t \geq 0$. By $G^{\otimes m}$ (resp. $G^{\wedge m}$) we denote the semigroup called the \textit{$m$-fold multiplicative compound} of $G$ in $\mathbb{H}^{\otimes m}$ (resp. $\mathbb{H}^{\wedge m}$) such that its time-$t$ mappings are given by $G^{\otimes m}(t) \coloneq (G(t))^{\otimes m}$ (resp. $G^{\wedge m}(t) \coloneq (G(t))^{\wedge m}$) for $t \geq 0$. It can be shown, see \cite[Section~2]{Anikushin2023Comp}, that $G^{\otimes m}$ (resp. $G^{\wedge m}$) is a $C_{0}$-semigroup in $\mathbb{H}^{\otimes m}$ (resp. $\mathbb{H}^{\wedge m}$).

Suppose that $A$ is the generator of a $C_{0}$-semigroup $G$ in $\mathbb{H}$. Let $A^{[\otimes m]}$ (resp. $A^{[\wedge m]}$) be the generator of $G^{\otimes m}$ (resp. $G^{\wedge m}$). Then $A^{[\otimes m]}$ (resp. $A^{[\wedge m]}$) is called the \textit{$m$-fold additive compound} (resp. the \textit{$m$-fold antisymmetric additive compound}) of $A$. We also say that $A^{[\wedge m]}$ is the \textit{$m$-fold additive compound of $A$} in $\mathbb{H}^{\wedge m}$.

If $G$ is eventually norm continuous (resp. eventually compact), the semigroups $G^{\otimes m}$ and $G^{\wedge m}$ are also eventually norm continuous (resp. eventually compact) by \cite[Propositions 2.2 and 2.3]{Anikushin2023Comp}. In the case of eventually compact semigroups, which arise in the study of delay equations, we can relate eigenvalues and the corresponding spectral subspaces of $A$ with those of $A^{[\otimes m]}$ or $A^{[\wedge m]}$, see \cite[Theorem 3.2]{Anikushin2023Comp}. In this paper, the following property concerned with the spectral bound of $A^{[\wedge m]}$ is important.
\begin{proposition}
	\label{PROP: SpectralBoundAwedgeViaA}
	Suppose that $G$ is eventually compact, and let $\lambda_{1}(A), \lambda_{2}(A), \ldots$ be the eigenvalues of $A$ arranged by nonincreasing their real parts and according to their multiplicities. Then the spectral bound $s(A^{[\wedge m]})$ of $A^{[\wedge m]}$ is given by
	\begin{equation}
		s(A^{[\wedge m]}) = \sum_{j=1}^{m}\operatorname{Re}\lambda_{j}(A),
	\end{equation}
    provided that $A$ has at least $m$ eigenvalues, and $s(A^{[\wedge m]}) = -\infty$ otherwise.
\end{proposition}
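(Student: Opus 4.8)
The plan is to exploit the Spectral Mapping Theorem for eventually compact semigroups, which converts the problem of locating $s(A^{[\wedge m]})$ into a problem about the point spectrum of $G^{\wedge m}(t)$, and then to compute that point spectrum in terms of the eigenvalues of $G(t)$. First I would recall that since $G$ is eventually compact, so is $G^{\wedge m}$ (as noted in the excerpt), and hence both semigroups satisfy the Spectral Mapping Theorem: for $t>0$ one has $\sigma(G^{\wedge m}(t)) \setminus \{0\} = e^{t\sigma(A^{[\wedge m]})}$, and moreover the nonzero spectrum consists entirely of eigenvalues of finite multiplicity, with the spectrum of the generator consisting only of eigenvalues. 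In particular $s(A^{[\wedge m]}) = \sup\{\operatorname{Re}\mu : \mu \in \sigma(A^{[\wedge m]})\}$ with the convention that the supremum of the empty set is $-\infty$.

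Next I would identify the eigenvalues of $A^{[\wedge m]}$. Fix $t>0$ large enough that $G(t)$ is compact; then $G(t)$ has a (possibly finite) sequence of nonzero eigenvalues $\mu_j = e^{t\lambda_j(A)}$ repeated according to algebraic multiplicity, and one knows that the generalized eigenvectors of $G(t)$ corresponding to $\mu_j$ coincide with those of $A$ for $\lambda_j(A)$ (this is the standard correspondence for eventually compact semigroups, and it is exactly the kind of statement recorded in \cite{Anikushin2023Comp}). The key algebraic fact is then purely linear: if $T$ is a bounded operator on $\mathbb{H}$ whose nonzero spectrum is a discrete set of eigenvalues $\mu_1,\mu_2,\ldots$ (with multiplicity), then the nonzero eigenvalues of $T^{\wedge m}$ on $\mathbb{H}^{\wedge m}$ are precisely the products $\mu_{j_1}\cdots\mu_{j_m}$ over strictly increasing multi-indices $j_1<\cdots<j_m$, again with the appropriate multiplicities; this follows by passing to a Jordan-type basis adapted to the spectral subspaces of $T$ and computing the action of $T^{\wedge m}$ on the induced wedge basis. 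Applying this with $T = G(t)$ gives that the nonzero eigenvalues of $G^{\wedge m}(t)$ are $\exp\bigl(t\sum_{k=1}^m \lambda_{j_k}(A)\bigr)$ over increasing multi-indices, and hence, via the Spectral Mapping Theorem run backwards, the eigenvalues of $A^{[\wedge m]}$ are exactly the sums $\sum_{k=1}^m \lambda_{j_k}(A)$ with $j_1<\cdots<j_m$ (this last identification should be phrased carefully since $e^{t\cdot}$ is not injective, but the full spectrum of $A^{[\wedge m]}$ lies in a left half-plane and is discrete, so one recovers it unambiguously — this is precisely what is already used to relate $\sigma(A)$ and $\sigma(A^{[\wedge m]})$ in \cite{Anikushin2023Comp}).

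Finally I would take the supremum of real parts. Among all strictly increasing multi-indices $j_1<\cdots<j_m$, the quantity $\operatorname{Re}\sum_{k=1}^m \lambda_{j_k}(A) = \sum_{k=1}^m \operatorname{Re}\lambda_{j_k}(A)$ is maximized by choosing the $m$ eigenvalues of largest real part, i.e. $j_k = k$, because the $\operatorname{Re}\lambda_j(A)$ are arranged in nonincreasing order; this yields $s(A^{[\wedge m]}) = \sum_{j=1}^m \operatorname{Re}\lambda_j(A)$. If $A$ has fewer than $m$ eigenvalues there are no admissible multi-indices, so $\sigma(A^{[\wedge m]}) = \varnothing$ and $s(A^{[\wedge m]}) = -\infty$ by the convention, covering the remaining case.

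I expect the main obstacle to be the careful justification of the eigenvalue correspondence in the third sentence of the second paragraph — namely that the discrete (nonzero) spectrum of $G^{\wedge m}(t)$ is genuinely exhausted by the products of eigenvalues of $G(t)$, with no extra spectrum appearing and with multiplicities matching — together with the clean translation back to $\sigma(A^{[\wedge m]})$ despite the noninjectivity of the exponential. Both points are essentially the content of the spectral results for compound semigroups established in \cite{Anikushin2023Comp}, so in the write-up I would invoke those results rather than reprove them, and the remaining work is the elementary optimization over multi-indices.
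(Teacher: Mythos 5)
Your argument is correct and follows essentially the same route the paper intends: the paper does not prove this proposition in-house but derives it exactly as you do, from eventual compactness of $G^{\wedge m}$, the Spectral Mapping Theorem for eventually norm-continuous semigroups, and the identification of the nonzero eigenvalues of the compound with products of eigenvalues of $G(t)$ (equivalently, sums $\sum_{k}\lambda_{j_k}(A)$), which is taken from \cite{Anikushin2023Comp}. The technical point you flag — that no extra nonzero spectrum appears and multiplicities match — is precisely the content of the cited results, so invoking them rather than reproving them is consistent with the paper's own treatment.
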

\begin{proof}
	Since $G$ is eventually compact, $G^{\wedge m}$ is also eventually compact due to \cite[Proposition 2.2]{Anikushin2023Comp}. In virtue of \cite[Theorem 3.1, Chapter V]{EngelNagel2000}, the spectrum of $A^{[\wedge m]}$ consists of eigenvalues having finite algebraic multiplicities, and, thanks to \cite[Corollary 2.11, Chapter IV]{EngelNagel2000}, for any $\nu \in \mathbb{R}$ the number of eigenvalues in the half-plane $\operatorname{Re}\lambda > \nu$ is finite. Consequently, the spectral bound of $A^{[\wedge m]}$ is given by the largest real part of its eigenvalues (or by $-\infty$ if the spectrum is empty).
	
	In view of \cite[Theorem 3.2]{Anikushin2023Comp}, the spectrum of $A^{[\otimes m]}$ consists of the sums 
	\begin{equation}
		\label{EQ: SpectralBoundCompoundProofDecompos}
		\lambda_{0} \coloneq \sum_{j=1}^{m}\lambda_{j},
	\end{equation}
	where $\lambda_{j}$ is an eigenvalue of $A$ for any $j \in \{1,\ldots,m\}$, and for the spectral subspace $\mathbb{L}_{A^{[\otimes m]}}(\lambda_{0})$ of $A^{[\otimes m]}$ corresponding to $\lambda_{0}$ we have
	\begin{equation}
		\label{EQ: LeadingSpectralSubspace}
		\mathbb{L}_{A^{[\otimes m]}}(\lambda_{0}) = \bigoplus_{k=1}^{N} \bigotimes_{j=1}^{m}\mathbb{L}_{A}(\lambda^{k}_{j}),
	\end{equation}
	where $N$ is the number of distinct $m$-tuples $(\lambda^{k}_{1},\ldots,\lambda^{k}_{m})$ enumerated by $k \in \{1,\ldots,N\}$ such that \eqref{EQ: SpectralBoundCompoundProofDecompos} holds with $\lambda_{j} = \lambda^{k}_{j}$ for any $j \in \{1,\ldots,m\}$, and $\mathbb{L}_{A}(\lambda^{k}_{j})$ is the spectral subspace of $A$ corresponding to $\lambda^{k}_{j}$.
	
	Moreover, the spectrum of $A^{[\wedge m]}$ exactly consists of such $\lambda_{0}$ for which the projector $\Pi^{\wedge}_{m}$ defined in \eqref{EQ: WedgeProjectorHilbertSpace} does not vanish on $\mathbb{L}_{A^{[\otimes m]}}(\lambda_{0})$. In this case, $\Pi^{\wedge}_{m}\mathbb{L}_{A^{[\otimes m]}}(\lambda_{0})$ is the spectral subspace of $A^{[\wedge m]}$ corresponding to $\lambda_{0}$.
	
	By \eqref{EQ: LeadingSpectralSubspace}, $\Pi^{\wedge}_{m} \mathbb{L}_{A^{[\otimes m]}}(\lambda_{0}) \not= 0$ if and only if $\Pi^{\wedge}_{m}\bigotimes_{j=1}^{m}\mathbb{L}_{A}(\lambda^{k}_{j}) \not=0$ for some $k \in \{1,\ldots,N\}$.  Moreover, $\Pi^{\wedge}_{m}\bigotimes_{j=1}^{m}\mathbb{L}_{A}(\lambda^{k}_{j}) \not=0$ if and only if each $\lambda^{k}_{j}$ does not occur in the $m$-tuple $(\lambda^{k}_{1},\ldots,\lambda^{k}_{m})$ more often than its algebraic multiplicity. Clearly, $\lambda_{0} = \sum_{j=1}^{m}\lambda_{j}(A)$ satisfies this condition and has the largest real part (or the spectrum is empty if there are less than $m$ eigenvalues of $A$).
\end{proof}

\subsection{Semiflows and cocycles}
\label{SUBSEC: CocyclesAndSemiflowsBrief}

Consider a \textit{time space} $\mathbb{T} \in \{ \mathbb{R}_{+}, \mathbb{R} \}$, where $\mathbb{R}_{+} = [0,+\infty)$. A family of mappings $\vartheta^{t} \colon \mathcal{Q} \to \mathcal{Q}$, where $t \in \mathbb{T}$ and $\mathcal{Q}$ is a complete metric space, is called a \textit{dynamical system} on $\mathcal{Q}$ if
\begin{description}[before=\let\makelabel\descriptionlabel]
	\item[(DS1)\refstepcounter{desccount}\label{DESC: DS1}] for any $q \in \mathcal{Q}$ and $t,s \in \mathbb{T}$, we have $\vartheta^{t+s}(q) = \vartheta^{t}( \vartheta^{s}(q))$ and $\vartheta^{0}(q) = q$;
	\item[(DS2)\refstepcounter{desccount}\label{DESC: DS2}] the mapping $\mathbb{T} \times \mathcal{Q} \ni (t,q) \mapsto \vartheta^{t}(q)$ is continuous.
\end{description}
For brevity, we often use the notation $(\mathcal{Q},\vartheta)$ or simply $\vartheta$ to denote the dynamical system. In the case $\mathbb{T} = \mathbb{R}_{+}$ (resp. $\mathbb{T} = \mathbb{R}$), we call $\vartheta$ a \textit{semiflow} (resp. a \textit{flow}) on $\mathcal{Q}$.

Let a dynamical system $(\mathcal{Q},\vartheta)$ be fixed. Given a Banach space $\mathbb{E}$, a family of mappings $\psi^{t}(q,\cdot) \colon \mathbb{E} \to \mathbb{E}$, where $t \in \mathbb{R}_{+}$ and $q \in \mathcal{Q}$, is called a \textit{cocycle} in $\mathbb{E}$ over $(\mathcal{Q},\vartheta)$ if 
\begin{description}[before=\let\makelabel\descriptionlabel]
	\item[\textbf{(CO1)}\refstepcounter{desccount}\label{DESC: CO1}] for any $v \in \mathbb{E}$, $q \in \mathcal{Q}$, and $t,s \in \mathbb{R}_{+}$, we have $\psi^{t+s}(q,v) = \psi^{t}(\vartheta^{s}(q),\psi^{s}(q,v))$ and $\psi^{0}(q,v) = v$;
	\item[\textbf{(CO2)}\refstepcounter{desccount}\label{DESC: CO2}] the mapping $\mathbb{R}_{+} \times \mathcal{Q} \times \mathbb{E} \ni (t,q,v) \mapsto \psi^{t}(q,v)$ is continuous.
\end{description}
For brevity, we often denote such a cocycle by $\psi$. In the context of cocycles, $(\mathcal{Q},\vartheta)$ is often called the \textit{base system} or the \textit{driving system}.

For each cocycle $\psi$ in $\mathbb{E}$ over $(\mathcal{Q},\vartheta)$, there is the associated semiflow $\pi$ on $\mathcal{Q} \times \mathbb{E}$, called a \textit{skew-product semiflow}, given by
\begin{equation}
	\pi^{t}(q,v) \coloneq ( \vartheta^{t}(q), \psi^{t}(q,v) ) \qquad \text{for all} \quad t \geq 0, q \in \mathcal{Q}, \ \text{and} \ v \in \mathbb{E}.
\end{equation}

Suppose that each cocycle mapping $\psi^{t}(q,\cdot)$ belongs to the space $\mathcal{L}(\mathbb{E})$ of bounded linear operators in $\mathbb{E}$. Then we say that $\psi$ is a \textit{linear cocycle} and denote it by $\Xi$. Moreover, if $\Xi$ additionally satisfies the following properties:
\begin{description}[before=\let\makelabel\descriptionlabel]
	\item[\textbf{(UC1)}\refstepcounter{desccount}\label{DESC: UC1}] for any $t \in \mathbb{R}_{+}$, the mapping $\mathcal{Q} \ni q \mapsto \Xi^{t}(q,\cdot) \in \mathcal{L}(\mathbb{E})$ is continuous in the operator norm; 
	\item[\textbf{(UC2)}\refstepcounter{desccount}\label{DESC: UC2}] the cocycle mappings are bounded uniformly in finite times,
	\begin{equation}
		\sup_{t \in [0,1]} \sup_{q \in \mathcal{Q}}\| \Xi^{t}(q,\cdot) \|_{\mathcal{L}(\mathbb{E})} < +\infty,
	\end{equation}
\end{description}
then $\Xi$ is called a \textit{uniformly continuous linear cocycle}. Note that for such cocycles, the condition \nameref{DESC: CO2} is equivalent to that the operator $\Xi^{t}(q,\cdot)$ depends continuously on $(t,q)$ in the strong operator topology.

In this paper, we deal with uniformly continuous linear cocycles in a Hilbert space $\mathbb{H}$, see Remark \ref{REM: CocyclesArisingFromDDEs}. Let $\Xi$ be such a cocycle. For each integer $m > 0$, we associate with $\Xi$ a cocycle $\Xi_{m}$ acting on the $m$-fold tensor product $\mathbb{H}^{\otimes m}$ of $\mathbb{H}$. For $\Xi_{m}$, each cocycle mapping $\Xi^{t}_{m}(q,\cdot) \in \mathcal{L}(\mathbb{H}^{\otimes m})$, where $t \geq 0$ and $q \in \mathcal{Q}$, is given by the $m$-fold multiplicative compound of $\Xi^{t}(q,\cdot)$ in $\mathbb{H}^{\otimes m}$. It can be shown that $\Xi_{m}$ is indeed a uniformly continuous cocycle, and we call it the \textit{$m$-fold multiplicative compound} of $\Xi$ in $\mathbb{H}^{\otimes m}$. Moreover, we use the same notation to denote the restriction of $\Xi_{m}$ to the $m$-fold exterior power $\mathbb{H}^{\wedge m}$ and call it the \textit{$m$-fold multiplicative compound} of $\Xi$ in $\mathbb{H}^{\wedge m}$ or the \textit{$m$-fold antisymmetric multiplicative compound} of $\Xi$. It should be clear from the context in which spaces the cocycle acts.
\section{Exponential stability of compound cocycles generated by delay equations}
\label{SEC: StabilityOfDelayCompounds}

\subsection{Cocycles generated by nonautonomous delay equations}
We are going to describe the class of delay equations to which our theory applies. For this, let $\pi$ be a semiflow on a complete metric space $\mathcal{P}$. For some positive integers $r_{1}$ and $r_{2}$, we set $\mathbb{U} \coloneq \mathbb{R}^{r_{1}}$ and $\mathbb{M} \coloneq \mathbb{R}^{r_{2}}$, where the spaces are endowed with some (not necessarily standard) inner products. Consider the class of nonautonomous delay equations in $\mathbb{R}^{n}$ over $(\mathcal{P},\pi)$ that are described over each $\wp \in \mathcal{P}$ as follows:
\begin{equation}
	\label{EQ: DelayRnLinearized}
	\dot{x}(t) = \widetilde{A}x_{t} + \widetilde{B}F'(\pi^{t}(\wp))Cx_{t}.
\end{equation}
Here $\tau > 0$ is a fixed real number (delay); $x(\cdot) \colon [-\tau,T] \to \mathbb{R}^{n}$ for some $T>0$, and $x_{t}(\theta) = x(t+\theta)$ for $\theta \in [-\tau,0]$ denotes the $\tau$-history segment of $x(\cdot)$ at $t \in [0,T]$. Moreover, $\widetilde{A} \colon C([-\tau,0];\mathbb{R}^{n}) \to \mathbb{R}^{n}$ and $C \colon C([-\tau,0];\mathbb{R}^{n}) \to \mathbb{M}$ are bounded linear operators; $\widetilde{B} \colon \mathbb{U} \to \mathbb{R}^{n}$ is a linear operator, and $F' \colon \mathcal{P} \to \mathcal{L}(\mathbb{M};\mathbb{U})$ is a continuous mapping such that for some $\Lambda>0$ we have
\begin{equation}
	\label{EQ: LipschitzFprimeDelay}
	\|F'(\wp)\|_{\mathcal{L}(\mathbb{M};\mathbb{U})} \leq \Lambda \qquad \text{for all} \quad \wp \in \mathcal{P}.
\end{equation}
\begin{remark}
	\label{REM: CocyclesArisingFromDDEs}
	Equations as \eqref{EQ: DelayRnLinearized} arise after linearization of nonautonomous (over a dynamical system $(\mathcal{Q},\vartheta)$) nonlinear delay equations, which generate a nonlinear cocycle $\psi$ over $(\mathcal{Q},\vartheta)$. In this case, the derivative cocycle $\Xi$ generated by the linearized equations is a uniformly continuous linear cocycle over the skew-product semiflow $\pi$ associated with $\psi$. 
\end{remark}

To discuss the well-posedness of \eqref{EQ: DelayRnLinearized}, let us write it as an evolutionary equation in a proper Hilbert space. For this, consider the Hilbert space 
\begin{equation}
	\label{EQ: DelayEqsHilbertSpace}
	\mathbb{H} = L_{2}([-\tau,0];\mu;\mathbb{R}^{n}),
\end{equation}
where the measure $\mu$ is given by the sum of the Lebesgue measure on $[-\tau,0]$ and the $\delta$-measure concentrated at $0$. For $\phi \in \mathbb{H}$, we define $R^{(1)}_{0}\phi = \phi(0) \in \mathbb{R}$ and $R^{(1)}_{1}\phi = \restr{\phi}{(-\tau,0)} \in L_{2}(-\tau,0;\mathbb{R}^{n})$. Here the upper index in the notation will be explained below.

We embed the space $\mathbb{E} = C([-\tau,0];\mathbb{R}^{n})$ into $\mathbb{H}$ by sending each $\psi \in \mathbb{E}$ into $\phi \in \mathbb{H}$ such that $R^{(1)}_{0}\phi = \psi(0)$ and $R^{(1)}_{1}\phi = \psi$. It will be convenient to identify the elements of $\mathbb{E}$ and their images in $\mathbb{H}$ under the embedding. In particular, we use the same notation for the operator $C$ from \eqref{EQ: DelayRnLinearized} and its composition with the embedding. Namely, we set $C\phi \coloneq CR^{(1)}_{1}\phi = C\psi$ for $\phi \in \mathbb{H}$ and $\psi \in \mathbb{E}$ related by the just introduced embedding.

With $\widetilde{A}$ from \eqref{EQ: DelayRnLinearized}, we associate the operator $A$ in $\mathbb{H}$ defined for $\phi \in \mathcal{D}(A)$ by
\begin{equation}
	\label{EQ: OperatorAScalarDelayEquations}
	R^{(1)}_{0}(A\phi) = \widetilde{A} R^{(1)}_{1}\phi \quad \text{and} \quad R^{(1)}_{1}(A\phi) = \frac{d}{d\theta} R^{(1)}_{1}\phi,
\end{equation}
where the domain $\mathcal{D}(A)$ of $A$ is given by the embedding of $W^{1,2}(-\tau,0;\mathbb{R}^{n})$ into $\mathbb{H}$ similarly to the above. Since $W^{1,2}(-\tau,0;\mathbb{R}^{n})$ can be naturally continuously embedded into $C([-\tau,0];\mathbb{R}^{n})$, the definition is correct. Clearly, $A$ is a closed operator.

Now define a bounded linear operator $B \colon \mathbb{U} \to \mathbb{H}$ by $R^{(1)}_{0}B\eta = \widetilde{B} \eta$ and $R^{(1)}_{1}B\eta = 0$ for $\eta \in \mathbb{U}$. Then \eqref{EQ: DelayRnLinearized} can be treated as an abstract evolution equation in $\mathbb{H}$ given by
\begin{equation}
	\label{EQ: DelayLinearCocAbsract}
	\dot{\xi}(t) = A\xi(t) + BF'(\pi^{t}(\wp))C\xi(t).
\end{equation}

By an adaptation of \cite[Theorem 1]{Anikushin2022Semigroups} and the variation of constants formula derived therein, one can show that \eqref{EQ: DelayLinearCocAbsract} generates a uniformly continuous linear cocycle $\Xi$ in $\mathbb{H}$ over $(\mathcal{P},\pi)$ given by $\Xi^{t}(\wp,\xi_{0}) \coloneq \xi(t;\xi_{0})$, where $\xi(t;\xi_{0})$ for $t \geq 0$ is a solution (in a generalized sense) of \eqref{EQ: DelayLinearCocAbsract} with $\xi(0;\xi_{0}) = \xi_{0}$. We refer to \cite{Anikushin2023Comp} for precise formulations in which sense the solutions may be understood.

It can be shown that the operator $A$, as in \eqref{EQ: OperatorAScalarDelayEquations}, generates an eventually compact $C_{0}$-semigroup $G$ in $\mathbb{H}$, see \cite{Anikushin2022Semigroups}. For any integer $m \geq 1$, according to Subsection \ref{SUBSEC: MultiplicativeAddCompoundsBrief}, let $G^{\wedge m}$ be the $m$-fold multiplicative compound of $G$ in $\mathbb{H}^{\wedge m}$, and let $A^{[\wedge m]}$ be the $m$-fold antisymmetric additive compound of $A$, i.e., the generator of $G^{\wedge m}$.

Below, we aim to study the $m$-fold antisymmetric multiplicative compound $\Xi_{m}$ of $\Xi$ defined in Section \ref{SUBSEC: CocyclesAndSemiflowsBrief}. Namely, we will state conditions for its uniform exponential stability by considering $\Xi_{m}$ as a perturbation of $G^{\wedge m}$ (see Theorem \ref{TH: QuadraticFunctionalDelayCompoundTheorem}), expounding the theory from our adjacent work \cite{Anikushin2023Comp}. On this way, our basic aim is given by \eqref{EQ: InfinitesimaldDescriptionXim}, which gives a description of $\Xi_{m}$ on the infinitesimal level analogously to \eqref{EQ: DelayLinearCocAbsract}. This requires a description of the abstract spaces and operators along with the study of their intrinsic properties. Although in the subsequent applications we treat only the case of $n=1$ and $m=2$, we find it useful (to provide better understanding) to expound the theory in the general case.

In the forthcoming subsections, we present a compact exposition of some results from \cite{Anikushin2023Comp} that are necessary for applications. We therefore refer the interested reader to \cite{Anikushin2023Comp} for a more systematic treatment and detailed proofs of these results.

\subsection{Description of the abstract $m$-fold tensor and exterior products}
First, let us consider the abstract $m$-fold tensor product $\mathbb{H}^{\otimes m}$ of $\mathbb{H}$ from \eqref{EQ: DelayEqsHilbertSpace}. It is well known that $\mathbb{H}^{\otimes m}$ is naturally isometrically isomorphic to the space 
\begin{equation}
	\mathcal{L}^{\otimes}_{m} \coloneq L_{2}([-\tau,0]^{m};\mu^{\otimes m};(\mathbb{R}^{n})^{\otimes m}),
\end{equation}
where $\mu^{\otimes m}$ is the $m$-fold product of $\mu$. Recall that the isomorphism is defined on decomposable tensors $\phi_{1} \otimes \cdots \otimes \phi_{m}$, where $\phi_{1},\ldots,\phi_{m} \in \mathbb{H}$, by
\begin{equation}
	\label{EQ: IsomorphismHWedgeLm}
	\begin{split}
		\phi_{1} \otimes \cdots \otimes \phi_{m} \mapsto (\phi_{1} \otimes \cdots \otimes \phi_{m})(\cdot_{1},\ldots,\cdot_{m}) \in \mathcal{L}^{\otimes}_{m},\\
		(\phi_{1} \otimes \cdots \otimes \phi_{m})(\theta_{1},\ldots,\theta_{m}) \coloneq \phi_{1}(\theta_{1}) \otimes \cdots \otimes \phi_{m}(\theta_{m})
	\end{split}
\end{equation}
for $\mu^{\otimes m}$-almost all $(\theta_{1},\ldots,\theta_{m}) \in [-\tau,0]^{m}$.

In particular, the restriction of the above isomorphism to $\mathbb{H}^{\wedge m}$ provides an isometric isomorphism with the subspace $\mathcal{L}^{\wedge}_{m}$ of $\mu^{\otimes m}$-antisymmetric functions in $\mathcal{L}^{\otimes}_{m}$. Recall that $\mathcal{L}^{\wedge}_{m}$ consists of $\Phi \in \mathcal{L}^{\otimes}_{m}$ satisfying
\begin{equation}
	\label{EQ: AntisymmetricFunctionDefinition}
	\Theta_{\sigma}\Phi = (-1)^{\sigma} T_{\sigma} \Phi
\end{equation}
for any $\sigma \in \mathbb{S}_{m}$. Here $\Theta_{\sigma}$ permutes the arguments of $\Phi$ according to $\sigma$, i.e.,
\begin{equation}
	\label{EQ: ThetaSigmaDefinition}
	(\Theta_{\sigma}\Phi)(\theta_{1}, \ldots, \theta_{m}) \coloneq \Phi(\theta_{\sigma(1)},\ldots, \theta_{\sigma(m)} )
\end{equation}
for $\mu^{\otimes m}$-almost all $(\theta_{1}, \ldots, \theta_{m}) \in [-\tau,0]^{m}$, and $T_{\sigma}$ is the transposition operator (with respect to $\sigma$) in $(\mathbb{R}^{n})^{\otimes m}$ given by
\begin{equation}
	\label{EQ: TranspositionOperatorOnValues}
	T_{\sigma}(x_{1} \otimes \cdots \otimes x_{m}) \coloneq x_{\sigma(1)} \otimes \cdots \otimes x_{\sigma(m)}
\end{equation}
for all $x_{1},\ldots,x_{m} \in \mathbb{R}^{n}$. 
\begin{remark}
	For $n=1$, we have $(\mathbb{R}^{n})^{\otimes m} = \mathbb{R}$, and, consequently, $T_{\sigma}$ is just the identity operator. In this case, \eqref{EQ: AntisymmetricFunctionDefinition} coincides with the usual definition of an antisymmetric function, which changes its sign according to the permutation of arguments.
\end{remark}
\begin{remark}
	\label{REM: AntihomomorphismOfInducedByPermutations}
	For $\sigma_{1},\sigma_{2} \in \mathbb{S}_{m}$, let us emphasize that $T_{\sigma_{2}} T_{\sigma_{1}} = T_{\sigma_{1} \sigma_{2}}$, i.e., the correspondence $\sigma \to T_{\sigma}$ is an antihomomorphism\footnote{In a similar context, \cite{MalletParretNussbaum2013} asserts that $\sigma \mapsto S_{\sigma}$ is homomorphic, which is false.}. On the other hand, we have $\Theta_{\sigma_{2}} \Theta_{\sigma_{1}} = \Theta_{\sigma_{2} \sigma_{1}}$.
	
	To get a conceptual explanation for this, one should consider $x_{j}$ as a function of $j$. Then $T_{\sigma}$ is related\footnote{The space of functions $x(j) = x_{j}$ with domain $\{1,\ldots,m\}$ and values in $\mathbb{R}^{n}$ can be naturally identified with $(\mathbb{R}^{n})^{m}$. Then $\sigma^{*}$ is a multilinear mapping, and $T_{\sigma}$ can be considered as an extension of $\sigma^{*}$ to $(\mathbb{R}^{n})^{\otimes m}$.} to the action (a change of variables) $\sigma^{*}$ on such functions induced by the action of $\sigma$ on the space $\{1,\ldots,m\}$ of arguments. As is always the case for induced actions on functions, the inducing is contravariant, i.e., $(\sigma_{1}\sigma_{2})^{*} = \sigma^{*}_{2} \circ \sigma^{*}_{1}$.
	
	As to $\Theta_{\sigma}$, its action on functions $\Phi$ is induced by the permutation $h_{\sigma}$ of arguments $(\theta_{1},\ldots,\theta_{m})$, i.e., $\Phi_{\sigma} = h^{*}_{\sigma}$ in similar terms. By considering $\theta_{j}$ as a function of $j$ and the induced action $\sigma^{*}$ on such functions, we may write $h_{\sigma} = \sigma^{*}$. So, there are two contravariant operations resulting in $\Theta_{\sigma}$. \qed
\end{remark}

Up to the isomorphism \eqref{EQ: IsomorphismHWedgeLm}, one may express the operator $S_{\sigma}$ given by \eqref{EQ: HilbertTensorProdOperatorSsigma} as $S_{\sigma} = T_{\sigma}\Theta_{\sigma^{-1}}$. In particular, the projector $\Pi^{\wedge}_{m}$ defined in \eqref{EQ: WedgeProjectorHilbertSpace} is expressed by
\begin{equation}
	\label{EQ: AbstractPiWedgeToConcrete}
	\Pi^{\wedge}_{m} = \frac{1}{m!}\sum_{\sigma \in \mathbb{S}_{m}} (-1)^{\sigma} T_{\sigma^{-1}} \Theta_{\sigma}. 
\end{equation} 

From \eqref{EQ: AbstractPiWedgeToConcrete} it is clear that the restriction of the isomorphism \eqref{EQ: IsomorphismHWedgeLm} to the space $\mathbb{H}^{\wedge m}$ sends each $\phi_{1} \wedge \cdots \wedge \phi_{m}$, where $\phi_{1},\ldots,\phi_{m} \in \mathbb{H}$, into the function
\begin{equation}
	\label{EQ: DelayCompoundIsomorphismAntisymmetricDecomp}
	(\phi_{1} \wedge \cdots \wedge \phi_{m})(\theta_{1},\ldots,\theta_{m}) = \frac{1}{m!}\sum_{\sigma \in \mathbb{S}_{m}}(-1)^{\sigma}T_{\sigma^{-1}} \phi_{1}(\theta_{\sigma(1)}) \otimes \cdots \otimes \phi_{m}(\theta_{\sigma(m)})
\end{equation}
defined for $\mu^{\otimes m}$-almost all $(\theta_{1},\ldots,\theta_{m}) \in [-\tau,0]^{m}$.

It will be convenient to work in the spaces $\mathcal{L}^{\otimes}_{m}$ and $\mathcal{L}^{\wedge}_{m}$. For this, we need to introduce some related notations.

For all integers $k \in \{1,\ldots,m\}$ and $1 \leq j_{1} < \cdots < j_{k} \leq m$, we form a multi-index $j_{1}\ldots j_{k}$ and define the set $\mathcal{B}^{(m)}_{j_{1}\ldots j_{k}}$, which is called a $k$-\textit{face} of $[-\tau,0]^{m}$ with respect to $\mu^{\otimes m}$, by
\begin{equation}
	\label{EQ: DefinitionOfBoundaryFace}
	\mathcal{B}^{(m)}_{j_{1}\ldots j_{k}} \coloneq \left\{ (\theta_{1},\ldots,\theta_{m}) \in [-\tau,0]^{m} \ | \ \theta_{j} = 0 \ \text{for any} \ j \notin \{ j_{1},\ldots,j_{k}  \} \right\}.
\end{equation}
We also set $\mathcal{B}^{(m)}_{0} \coloneq \{0\}^{m}$, denoting the set corresponding to the unique $0$-face with respect to $\mu^{\otimes m}$, and consider it as $\mathcal{B}^{(m)}_{j_{1}\ldots j_{k}}$ with $k=0$. Then we define the \textit{restriction operator $R^{(m)}_{j_{1}\ldots j_{k}}$} (including $R^{(m)}_{0}$) by
\begin{equation}
	\label{EQ: RestrictionOperatorDelayTensor}
	\mathcal{L}^{\otimes}_{m} \ni \Phi \mapsto R^{(m)}_{j_{1}\ldots j_{k}}\Phi \coloneq \restr{\Phi}{\mathcal{B}^{(m)}_{j_{1}\ldots j_{k}}} \in L_{2}((-\tau,0)^{k};(\mathbb{R}^{n})^{\otimes m}),
\end{equation}
where in the last inclusion we naturally identified $\mathcal{B}^{(m)}_{j_{1}\ldots,j_{k}}$ with $[-\tau,0]^{k}$ by omitting the zeroed arguments. In other words, $R^{(m)}_{j_{1}\ldots j_{k}}$ takes a function of $m$ arguments $\theta_{1}, \ldots, \theta_{m}$ to the function of $k$ arguments $\theta_{j_{1}}, \ldots, \theta_{j_{k}}$, setting $\theta_{j}=0$ for $j \notin \{j_{1},\ldots,j_{k}\}$, considered as a function in the usual $L_{2}$-space over the $k$-cube $(-\tau,0)^{k}$.

Similarly to the operators $R^{(1)}_{1}$ and $R^{(1)}_{0}$ used in \eqref{EQ: OperatorAScalarDelayEquations}, any element $\Phi$ of $\mathcal{L}^{\otimes}_{m}$ is uniquely determined by its restrictions $R^{(m)}_{j_{1}\ldots j_{k}}\Phi$ taken over all multi-indices $j_{1}\ldots j_{k}$ as above. From this, we define $\partial_{j_{1}\ldots j_{k}}\mathcal{L}^{\otimes}_{m}$ as the subspace of $\mathcal{L}^{\otimes}_{m}$ where all the restriction operators, except possibly $R^{(m)}_{j_{1}\ldots j_{k}}$, vanish. We call $\partial_{j_{1}\ldots j_{k}}\mathcal{L}^{\otimes}_{m}$ the \textit{boundary subspace over the $k$-face $\mathcal{B}^{(m)}_{j_{1}\ldots j_{k}}$}. Note that $R^{(m)}_{j_{1}\ldots j_{k}}$ provides a natural isometric isomorphism between $\partial_{j_{1}\ldots j_{k}}\mathcal{L}^{\otimes}_{m}$ and $L_{2}((-\tau,0)^{k};(\mathbb{R}^{n})^{\otimes m})$. Clearly, the space $\mathcal{L}^{\otimes}_{m}$ decomposes into the inner orthogonal sum of boundary subspaces,
\begin{equation}
	\label{EQ: TensorSpaceDelayCompoundDecompositionBoundarySubspaces}
	\mathcal{L}^{\otimes}_{m} = \bigoplus_{k=0}^{m}\bigoplus_{j_{1}\ldots j_{k}} \partial_{j_{1}\ldots j_{k}}\mathcal{L}^{\otimes}_{m}.
\end{equation}
\begin{remark}
	\label{REM: SumMultiIndexNotation}
	In \eqref{EQ: TensorSpaceDelayCompoundDecompositionBoundarySubspaces} and for what follows, the sums over multi-indices $j_{1}\ldots j_{k}$ (with $k$ fixed) are always taken over all $1 \leq j_{1} < \cdots < j_{k} \leq m$. For $k=0$, by definition, we assume that there is a unique multi-index $j_{1}\ldots j_{k} = 0$, and we also set $\{j_{1},\ldots,j_{k}\} \coloneq \{0\}$. We always emphasize the limits of $k$ (if it is supposed to vary), which may be different. 
\end{remark}

If it is clear from the context, we often omit the upper index in $R^{(m)}_{j_{1}\ldots j_{k}}$ or $\mathcal{B}^{(m)}_{j_{1}\ldots j_{k}}$ and write simply $R_{j_{1}\ldots j_{k}}$ or $\mathcal{B}_{j_{1}\ldots j_{k}}$. Moreover, it will be convenient to use the notation $R_{j_{1}\ldots j_{k}}$ for a not necessarily increasing sequence $j_{1},\ldots,j_{k}$ to denote the same operator as for the properly rearranged sequence. Sometimes we will use the excluded index notation to denote restriction operators and $k$-faces. For example, for $j \in \{1, \ldots, m\}$ we will often use $R_{\hat{j}} \coloneq R_{1\ldots \hat{j} \ldots m}$ and $\mathcal{B}_{\hat{j}}\coloneq \mathcal{B}_{1 \ldots \hat{j} \ldots m}$, where the hat on the right means that the element is excluded from the considered set constituting the multi-index.
\begin{remark}
	\begin{figure}
		\centering
		\includegraphics[width=0.5\linewidth]{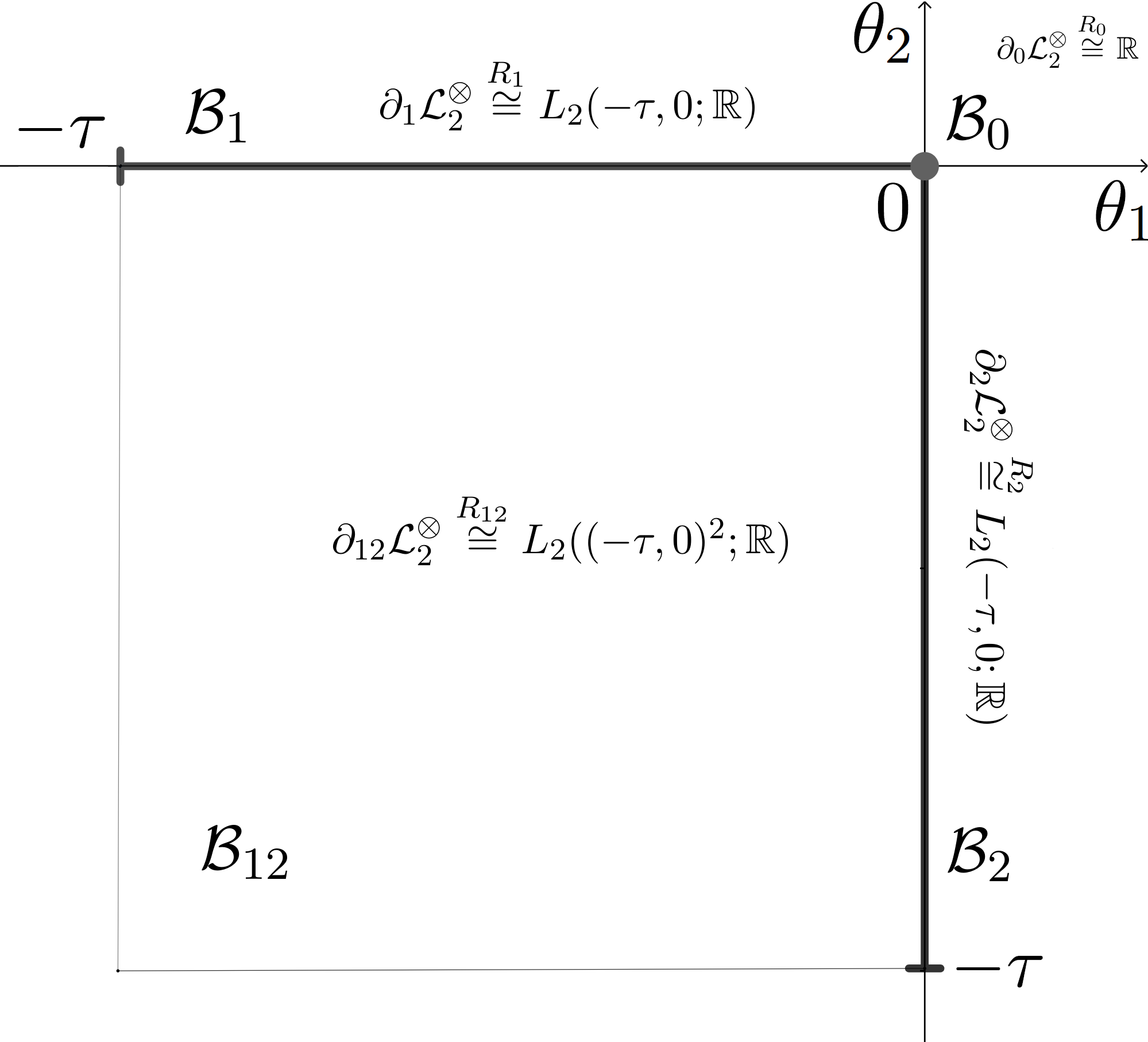}	
		\caption{The decomposition \eqref{EQ: TensorSpaceDelayCompoundDecompositionBoundarySubspaces} with $m=2$ and $n=1$. Here $\mathcal{L}^{\otimes}_{2} = L_{2}([-\tau,0]^{2};\mu^{\otimes 2};\mathbb{R})$ is decomposed into the sum of the boundary subspaces $\partial_{0}\mathcal{L}^{\otimes}_{2}$, $\partial_{1}\mathcal{L}^{\otimes}_{2}$, $\partial_{2}\mathcal{L}^{\otimes}_{2}$, and $\partial_{12}\mathcal{L}^{\otimes}_{2}$ over the faces $\mathcal{B}_{0}$, $\mathcal{B}_{1}$, $\mathcal{B}_{2}$, and $\mathcal{B}_{12}$, respectively. These subspaces are naturally isomorphic to appropriate $L_{2}$-spaces via the restriction operators $R_{0}$, $R_{1}$, $R_{2}$, and $R_{12}$, respectively.}
		\label{FIG: ExampleFunction}
	\end{figure}
	For $m=2$ and $n=1$, any $\Phi \in \mathcal{L}^{\otimes}_{m}$ is determined by its four restrictions: one value $R_{0}\Phi \in \mathbb{R}$, two functions $R_{1}\Phi$ and $R_{2}\Phi$ from $L_{2}(-\tau,0;\mathbb{R})$, and one function $R_{12} \Phi$ from $L_{2}((-\tau,0)^{2};\mathbb{R})$, see Fig. \ref{FIG: ExampleFunction}. It should be noted that even if $R_{12}\Phi$, $R_{1}\Phi$, or $R_{2}\Phi$ admit continuous representations, it is not true in general that they need to be related on intersections of faces. In particular, the values $(R_{12}\Phi)(0,0)$, $(R_{1}\Phi)(0)$, $(R_{2}\Phi)(0)$, and $R_{0}\Phi$ need not be related.
\end{remark}

Analogously to \eqref{EQ: ThetaSigmaDefinition}, it is convenient to introduce operators $\Theta^{(k)}_{\bar{\sigma}}$ permuting (with respect to $\bar{\sigma} \in \mathbb{S}_{k}$) arguments of functions of $k$ variables, whose space should be understood from the context. In the case of $\Phi \in \mathcal{L}^{\wedge}_{m}$, one can describe the relations from \eqref{EQ: AntisymmetricFunctionDefinition} in terms of the restrictions $R_{j_{1}\ldots j_{k}}\Phi$ as follows. 
\begin{proposition}\cite[Proposition 4.1]{Anikushin2023Comp}
	\label{PROP: AntisymmetricRelationsGeneral}
	An element $\Phi \in \mathcal{L}^{\otimes}_{m}$ belongs to $\mathcal{L}^{\wedge}_{m}$ if and only if for all $k \in \{0,\ldots,m\}$, $1 \leq j_{1} < \cdots < j_{k} \leq m$, and $\sigma \in \mathbb{S}_{m}$, we have
	\begin{equation}
		\label{EQ: AntisymmetricRelationsGeneral}
		\begin{split}
			R_{j_{1}\ldots j_{k}}\Phi = (-1)^{\sigma}T_{\sigma} \Theta^{(k)}_{\bar{\sigma}} R_{\sigma(j_{1}) \ldots \sigma(j_{k})}\Phi,
		\end{split}
	\end{equation}
	where $\bar{\sigma} \in \mathbb{S}_{k}$ is such that $\sigma( j_{\bar{\sigma}(1)} ) < \cdots < \sigma( j_{\bar{\sigma}(k)})$.
	
	In particular, we have that\footnote{Here in \eqref{EQ: AntisymmetricRestrictionRelationOuterPermutation} the tail of $\sigma$, i.e., $\sigma(l)$ for $l \geq k+1$, is arbitrary.}
	\begin{equation}
		\label{EQ: AntisymmetricRestrictionRelationOuterPermutation}
		 R_{1\ldots k}\Phi = (-1)^{\sigma}T_{\sigma} R_{j_{1}\ldots j_{k}}\Phi \qquad \text{for any} \quad \sigma = \begin{pmatrix}
			1 & \ldots & k & \ldots \\
			j_{1} & \ldots & j_{k} & \ldots
		\end{pmatrix} \in \mathbb{S}_{m},
	\end{equation}
	and, as a consequence, for almost all $(\theta_{1},\ldots,\theta_{k}) \in (-\tau,0)^{k}$, we have
	\begin{equation}
		\label{EQ: AntisymmetricRestrictionRelationValues}
		(R_{1\ldots k}\Phi)(\theta_{1},\ldots,\theta_{k}) \in (\mathbb{R}^{n})^{\otimes k} \otimes (\mathbb{R}^{n})^{\wedge (m-k)}.
	\end{equation}
\end{proposition}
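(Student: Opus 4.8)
The plan is to realize the defining antisymmetry \eqref{EQ: AntisymmetricFunctionDefinition} as an operator identity on $\mathcal{L}^{\otimes}_{m}$ and then read it off along the orthogonal decomposition \eqref{EQ: TensorSpaceDelayCompoundDecompositionBoundarySubspaces}. First I would record that, under the isomorphism \eqref{EQ: IsomorphismHWedgeLm}, the transposition operator $S_{\sigma}$ from \eqref{EQ: HilbertTensorProdOperatorSsigma} becomes the operator $\Sigma_{\sigma}$ given by $(\Sigma_{\sigma}\Phi)(\theta_{1},\ldots,\theta_{m}) = T_{\sigma}\Phi(\theta_{\sigma^{-1}(1)},\ldots,\theta_{\sigma^{-1}(m)})$; this is checked on decomposable tensors $\phi_{1}\otimes\ldots\otimes\phi_{m}$ (which span a dense subspace) and extended by the isometry property, using that the product measure $\mu^{\otimes m}$ is invariant under permutations of coordinates, so $\Sigma_{\sigma}$ is a well-defined isometry. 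Since $\Pi^{\wedge}_{m}$ is the orthogonal projector onto $\mathcal{L}^{\wedge}_{m}$, the space $\mathcal{L}^{\wedge}_{m}$ equals the common eigenspace $\{\Phi \ | \ S_{\sigma}\Phi = (-1)^{\sigma}\Phi \text{ for all } \sigma\in\mathbb{S}_{m}\}$; translating the eigenrelation $\Sigma_{\sigma}\Phi = (-1)^{\sigma}\Phi$ by the substitution $\theta_{i}\mapsto\theta_{\sigma(i)}$ gives exactly \eqref{EQ: AntisymmetricFunctionDefinition}. Hence $\Phi\in\mathcal{L}^{\wedge}_{m}$ iff $\Sigma_{\sigma}\Phi = (-1)^{\sigma}\Phi$ for every $\sigma$.

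The central step is to express these operator identities face by face. A coordinate permutation carries the $k$-face $\mathcal{B}^{(m)}_{l_{1}\ldots l_{k}}$ onto another $k$-face, namely $\mathcal{B}^{(m)}_{\sigma^{-1}(l_{1})\ldots\sigma^{-1}(l_{k})}$ (indices re-sorted into increasing order), so $\Sigma_{\sigma}$ maps the boundary subspace $\partial_{l_{1}\ldots l_{k}}\mathcal{L}^{\otimes}_{m}$ isometrically onto $\partial_{\sigma^{-1}(l_{1})\ldots\sigma^{-1}(l_{k})}\mathcal{L}^{\otimes}_{m}$. Writing out $R^{(m)}_{j_{1}\ldots j_{k}}(\Sigma_{\sigma}\Phi)$ in terms of the relevant restriction of $\Phi$ is the place where the surviving free arguments must be relabeled into increasing order, which introduces the auxiliary permutation $\overline{\sigma}\in\mathbb{S}_{k}$, while the $m-k$ pinned zero coordinates contribute to the sign so that the total sign of the face identity remains $(-1)^{\sigma}$ rather than $(-1)^{\overline{\sigma}}$. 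Applying each $R^{(m)}_{j_{1}\ldots j_{k}}$ to $\Sigma_{\sigma}\Phi = (-1)^{\sigma}\Phi$ (and, to bring it to the stated form, replacing $\sigma$ by $\sigma^{-1}$ and renaming dummy variables) then yields precisely \eqref{EQ: AntisymmetricRelationsGeneral}; conversely, since any $\Phi$ is determined by its restrictions and \eqref{EQ: TensorSpaceDelayCompoundDecompositionBoundarySubspaces} is $\Sigma_{\sigma}$-equivariant, validity of \eqref{EQ: AntisymmetricRelationsGeneral} for all faces and all $\sigma$ recovers $\Sigma_{\sigma}\Phi = (-1)^{\sigma}\Phi$, hence $\Phi\in\mathcal{L}^{\wedge}_{m}$. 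All identities here are understood $\mu^{\otimes m}$-almost everywhere: each $k$-face has positive $\mu^{\otimes m}$-measure $\tau^{k}$ and the faces exhaust $[-\tau,0]^{m}$ up to a $\mu^{\otimes m}$-null set, which is also what underlies \eqref{EQ: TensorSpaceDelayCompoundDecompositionBoundarySubspaces}.

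For the two ``in particular'' claims I would specialize \eqref{EQ: AntisymmetricRelationsGeneral}. For \eqref{EQ: AntisymmetricRestrictionRelationOuterPermutation}, pick $\sigma$ with $\sigma(a) = j_{a}$ for $a = 1,\ldots,k$; then $\sigma^{-1}(j_{a}) = a$, the index set $\sigma^{-1}(j_{1})\ldots\sigma^{-1}(j_{k})$ equals $1\ldots k$ and is already sorted, $\overline{\sigma} = \mathrm{id}$, and \eqref{EQ: AntisymmetricRelationsGeneral} collapses to $R_{j_{1}\ldots j_{k}}\Phi = (-1)^{\sigma}T_{\sigma}R_{1\ldots k}\Phi$ for this choice of the tail of $\sigma$. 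For \eqref{EQ: AntisymmetricRestrictionRelationValues}, apply \eqref{EQ: AntisymmetricRelationsGeneral} with $(j_{1},\ldots,j_{k}) = (1,\ldots,k)$ and $\sigma$ fixing $1,\ldots,k$ and permuting $k+1,\ldots,m$ arbitrarily; then both sides feature $R_{1\ldots k}\Phi$ with $\overline{\sigma} = \mathrm{id}$ and the same arguments, so for almost all $(\theta_{1},\ldots,\theta_{k})$ the value $(R_{1\ldots k}\Phi)(\theta_{1},\ldots,\theta_{k})$ is fixed by $(-1)^{\sigma}T_{\sigma}$; since these $T_{\sigma}$ act only on the last $m-k$ tensor slots and $\mathbb{S}_{m-k}$ is finite, running over all such $\sigma$ shows, with $(\mathbb{R}^{n})^{\wedge(m-k)}$ being the range of the antisymmetrizing projector on $(\mathbb{R}^{n})^{\otimes(m-k)}$, that the value lies in $(\mathbb{R}^{n})^{\otimes k}\otimes(\mathbb{R}^{n})^{\wedge(m-k)}$, which is \eqref{EQ: AntisymmetricRestrictionRelationValues}. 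Finally, \eqref{EQ: AntisymmetricRestrictionRelationValues} makes \eqref{EQ: AntisymmetricRestrictionRelationOuterPermutation} independent of the tail of $\sigma$: changing the tail by $\rho\in\mathbb{S}_{m-k}$ multiplies $(-1)^{\sigma}T_{\sigma}$ by the two mutually cancelling factors $(-1)^{\rho}$ and $T_{\rho}$, the latter acting on $(\mathbb{R}^{n})^{\otimes k}\otimes(\mathbb{R}^{n})^{\wedge(m-k)}$ as multiplication by $(-1)^{\rho}$.

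\textbf{Main obstacle.} The essential difficulty is purely the combinatorial bookkeeping in the second step: correctly identifying the image face of $\mathcal{B}^{(m)}_{l_{1}\ldots l_{k}}$ under a coordinate permutation, introducing the re-sorting permutation $\overline{\sigma}$ of the surviving arguments, and verifying that after this re-sorting the sign of the face identity is still $(-1)^{\sigma}$ (the pinned zero coordinates must be accounted for when computing it). Once the action of $\Sigma_{\sigma}$ on the decomposition \eqref{EQ: TensorSpaceDelayCompoundDecompositionBoundarySubspaces} has been pinned down, the ``iff'', the two specializations, and the tail-independence are all immediate.
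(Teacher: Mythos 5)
The paper itself contains no proof of this proposition (Section \ref{SEC: StabilityOfDelayCompounds} explicitly defers all proofs to the adjacent work \cite{Anikushin2023Comp}), so there is nothing in-text to compare against; judged on its own, your strategy is the right and essentially the only natural one: the membership condition \eqref{EQ: AntisymmetricFunctionDefinition} is a $\mu^{\otimes m}$-a.e.\ identity, each relatively open $k$-face carries positive $\mu^{\otimes m}$-measure $\tau^{k}$ and these faces exhaust $[-\tau,0]^{m}$ up to a null set, so restricting the identity face by face gives \eqref{EQ: AntisymmetricRelationsGeneral} and patching the faces back together gives the converse; your two specializations are also correct, and in particular the derivation of \eqref{EQ: AntisymmetricRestrictionRelationValues} by letting $\sigma$ run over the copy of $\mathbb{S}_{m-k}$ fixing $1,\ldots,k$ and averaging is exactly right.

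Three bookkeeping points in your write-up deserve care, though none is a fatal gap. First, the detour through the eigenrelation $S_{\sigma}\Phi=(-1)^{\sigma}\Phi$ is unnecessary and, with the paper's literal convention $T_{\sigma}(x_{1}\otimes\ldots\otimes x_{m})=x_{\sigma(1)}\otimes\ldots\otimes x_{\sigma(m)}$, your claim that translating it by the substitution gives ``exactly'' \eqref{EQ: AntisymmetricFunctionDefinition} is off by a $\sigma\leftrightarrow\sigma^{-1}$ swap in the $T$-factor (one lands on $\Phi(\theta_{\sigma(1)},\ldots,\theta_{\sigma(m)})=(-1)^{\sigma}T_{\sigma}\Phi(\theta)$, not $T_{\sigma^{-1}}$); since $\mathcal{L}^{\wedge}_{m}$ is \emph{defined} by \eqref{EQ: AntisymmetricFunctionDefinition}, the clean proof restricts \eqref{EQ: AntisymmetricFunctionDefinition} itself to the face $\mathcal{B}_{\sigma(j_{1})\ldots\sigma(j_{k})}$ and then replaces $\sigma$ by $\sigma^{-1}$, which produces \eqref{EQ: AntisymmetricRelationsGeneral} verbatim; for $n=1$ (the only case used later) all $T_{\sigma}=\operatorname{Id}$ and the distinction evaporates. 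Second, your remark that the $m-k$ pinned zero coordinates ``contribute to the sign'' is misleading: no sign arises from the pinned coordinates at all — the factor $(-1)^{\sigma}$ is inherited unchanged from the global relation, and the sorting permutation $\overline{\sigma}$ only reorders the surviving arguments and never enters the sign. Third, the tail-independence of \eqref{EQ: AntisymmetricRestrictionRelationOuterPermutation} needs no separate cancellation argument, because \eqref{EQ: AntisymmetricRelationsGeneral} yields \eqref{EQ: AntisymmetricRestrictionRelationOuterPermutation} for \emph{each} admissible $\sigma$ individually; your cancellation argument as stated is actually delicate for $n\geq 2$, since writing the tail change as $\sigma'=\sigma\circ\widehat{\rho}$ gives $T_{\sigma'}=T_{\widehat{\rho}}T_{\sigma}$ (the map $\sigma\mapsto T_{\sigma}$ is an anti-homomorphism), so the extra $T$-factor does not act directly on the antisymmetric last $m-k$ slots of $R_{1\ldots k}\Phi$ and the claimed pointwise cancellation is not immediate in that order. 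With these corrections the sketch compiles into a complete proof.
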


For $n=1$, the operator $T_{\sigma}$ becomes identical, since $(\mathbb{R}^{n})^{\otimes m} = \mathbb{R}$ for any $m$. Moreover, $(\mathbb{R}^{n})^{\wedge (m-k)} = 0$ for $m - k >1$, and the antisymmetric relations can be simplified as follows.
\begin{corollary}
	\label{COR: AntisymmetricRelationsScalar}
	Suppose $n=1$. Then the relations from \eqref{EQ: AntisymmetricRelationsGeneral} are equivalent to the relations
	\begin{alignat}{2}
		\label{EQ: AntisymmRelationsScalar}
		&R_{j_{1}\ldots j_{k}}\Phi = 0 \qquad &&\text{for any} \quad k \in \{0,\ldots, m-2\},\notag\\
		&R_{\hat{j}}\Phi \qquad &&\text{is antisymmetric for any} \quad j \in \{1, \ldots, m\},\\
		&R_{\hat{i}}\Phi = (-1)^{j-i} R_{\hat{j}}\Phi \qquad &&\text{for all} \quad i,j \in \{1, \ldots, m\},\notag\\
		&R_{1\ldots m}\Phi \qquad &&\text{is antisymmetric}.\notag
	\end{alignat}
\end{corollary}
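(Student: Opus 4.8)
The statement is an equivalence, and the sole effect of the hypothesis $n=1$ is that $(\mathbb{R}^{n})^{\otimes m}=\mathbb{R}$ and every operator $T_{\sigma}$ is the identity, so that the relations \eqref{EQ: AntisymmetricRelationsGeneral} collapse to the scalar identities $(R_{j_{1}\ldots j_{k}}\Phi)(\theta_{j_{1}},\ldots,\theta_{j_{k}}) = (-1)^{\sigma}(R_{\sigma^{-1}(j_{1})\ldots\sigma^{-1}(j_{k})}\Phi)(\theta_{j_{\overline{\sigma}(1)}},\ldots,\theta_{j_{\overline{\sigma}(k)}})$, required for all $k\in\{0,\ldots,m\}$, all $1\le j_{1}<\ldots<j_{k}\le m$ and all $\sigma\in\mathbb{S}_{m}$. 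The plan is to obtain ``$\Rightarrow$'' by feeding three families of carefully chosen permutations into this identity, and ``$\Leftarrow$'' by showing that the relations \eqref{EQ: AntisymmRelationsScalar} regenerate the whole family, the only genuine book-keeping being the case $k=m-1$.

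For ``$\Rightarrow$'': if $k\le m-2$ then $m-k\ge 2$, so one can pick distinct $p,q\notin\{j_{1},\ldots,j_{k}\}$; with $\sigma=(p\ q)$ every $j_{\ell}$ is fixed, whence $\overline{\sigma}=\mathrm{id}$ and the identity becomes $R_{j_{1}\ldots j_{k}}\Phi=-R_{j_{1}\ldots j_{k}}\Phi$, i.e. $R_{j_{1}\ldots j_{k}}\Phi=0$; alternatively this follows at once from \eqref{EQ: AntisymmetricRestrictionRelationOuterPermutation}--\eqref{EQ: AntisymmetricRestrictionRelationValues} because $(\mathbb{R}^{n})^{\wedge(m-k)}=\{0\}$ for $m-k\ge 2$ when $n=1$. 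Next, for $k\in\{m-1,m\}$, choosing $\sigma$ to be a transposition of two indices present in $\{j_{1},\ldots,j_{k}\}$ leaves the restricted index set unchanged, makes $\overline{\sigma}$ the matching transposition of arguments, and turns the identity into the statement that $R_{\widehat{j}}\Phi$ (resp. $R_{1\ldots m}\Phi$) changes sign under a transposition of two of its variables; since transpositions generate $\mathbb{S}_{m-1}$ (resp. $\mathbb{S}_{m}$), full antisymmetry follows. Finally, for $i<j$ take $\sigma$ to be the $(j-i+1)$-cycle with $\sigma(i)=i+1,\ldots,\sigma(j-1)=j$, $\sigma(j)=i$, so that $(-1)^{\sigma}=(-1)^{j-i}$ and $\sigma^{-1}(i)=j$; one checks that $\ell\mapsto\sigma^{-1}(j_{\ell})$ is already increasing along the slots of $R_{\widehat{i}}\Phi$, so $\overline{\sigma}=\mathrm{id}$ and the identity reads $R_{\widehat{i}}\Phi=(-1)^{j-i}R_{\widehat{j}}\Phi$.

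For ``$\Leftarrow$'': if $k\le m-2$, both sides of \eqref{EQ: AntisymmetricRelationsGeneral} vanish by the first line of \eqref{EQ: AntisymmRelationsScalar}, since $\{\sigma^{-1}(j_{1}),\ldots,\sigma^{-1}(j_{k})\}$ is again a $k$-face with $k\le m-2$; if $k=m$, one has $\overline{\sigma}=\sigma$ and the identity is exactly the antisymmetry of $R_{1\ldots m}\Phi$. The case $k=m-1$ is the crux: writing $\{j_{1},\ldots,j_{m-1}\}=\{1,\ldots,m\}\setminus\{p\}$ and $q:=\sigma^{-1}(p)$, the restricted index set is $\{1,\ldots,m\}\setminus\{q\}$, and I would use the sign relations of \eqref{EQ: AntisymmRelationsScalar} to replace each $R_{\widehat{l}}\Phi$ by $(-1)^{m-l}h$ with $h:=R_{1\ldots m-1}\Phi=R_{\widehat{m}}\Phi$ antisymmetric; reordering the arguments of $h$ by antisymmetry then reduces \eqref{EQ: AntisymmetricRelationsGeneral} to the numerical identity $(-1)^{m-p}=(-1)^{\sigma}(-1)^{m-q}(-1)^{\overline{\sigma}}$, i.e. to $(-1)^{\sigma}(-1)^{\overline{\sigma}}=(-1)^{q-p}$. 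This sign identity is the main obstacle, and I expect to settle it by an inversion count: $(-1)^{\overline{\sigma}}$ is the parity of the inversions of $\sigma^{-1}$ among the positions different from $p$, whereas the inversions of $\sigma^{-1}$ involving the position $p$ (which carries the value $q$) number $2x+q-p$, where $x$ counts the positions $<p$ whose value exceeds $q$; hence $(-1)^{\sigma}$ and $(-1)^{\overline{\sigma}}$ differ exactly by $(-1)^{q-p}$, which closes the proof.
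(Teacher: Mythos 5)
Your proof is correct and follows the route the paper intends: the corollary is left there as an exercise with a pointer to the proof of Proposition \ref{PROP: ControlSpaceCompoundDelayDescriptionScalarCase}, where exactly your permutation choices (transpositions fixing or permuting the retained indices for the vanishing and antisymmetry statements, and the $(j-i+1)$-cycle with $\sigma(j)=i$ for the sign relation between $R_{\widehat{i}}\Phi$ and $R_{\widehat{j}}\Phi$) are used for the necessity direction. Your inversion count establishing $(-1)^{\sigma}(-1)^{\overline{\sigma}}=(-1)^{q-p}$ for $q=\sigma^{-1}(p)$ is also correct (the inversions of $\sigma^{-1}$ involving position $p$ indeed number $2x+q-p$), and it supplies in full the sufficiency step that the paper's analogous argument dismisses as routine.
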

\noindent We leave the proof as an exercise to the reader and refer to the proof of Proposition \ref{PROP: ControlSpaceCompoundDelayDescriptionScalarCase} below, where necessary arguments are applied in a similar context.

Note that the antisymmetric relations \eqref{EQ: AntisymmetricRelationsGeneral} link each $\partial_{j_{1} \ldots j_{k}} \mathcal{L}^{\otimes}_{m}$ with other boundary subspaces over $k$-faces (i.e., with the same $k$). Thus, for a given $k \in \{0,\ldots, m\}$, it is convenient to introduce the subspace (recall $\Pi^{\wedge}_{m}$ given by \eqref{EQ: WedgeProjectorHilbertSpace})
\begin{equation}
	\label{EQ: AntisymmetricSubspaceOverKfaces}
	\partial_{k}\mathcal{L}^{\wedge}_{m} \coloneq \left\{ \Phi \in \bigoplus\limits_{j_{1}\ldots j_{k}}\partial_{j_{1} \ldots j_{k}}\mathcal{L}^{\otimes}_{m} \ | \ \Phi \text{ satisfies } \eqref{EQ: AntisymmetricRelationsGeneral} \right\} = \Pi^{\wedge}_{m} \bigoplus\limits_{j_{1}\ldots j_{k}}\partial_{j_{1} \ldots j_{k}}\mathcal{L}^{\otimes}_{m}.
\end{equation}
Clearly, $\mathcal{L}^{\wedge}_{m}$ decomposes into the inner orthogonal sum of these subspaces,
\begin{equation}
	\label{EQ: DecompositionAntisymmetricSubspaceDelayCompound}
	\mathcal{L}^{\wedge}_{m} = \bigoplus_{k=0}^{m} \partial_{k}\mathcal{L}^{\wedge}_{m}.
\end{equation}

We say that $k$ is \textit{improper} if $\partial_{k}\mathcal{L}^{\wedge}_{m}$ is the zero subspace. Otherwise, we say that $k$ is \textit{proper}. For example, when $n=1$, Corollary \ref{COR: AntisymmetricRelationsScalar} gives that any $k \leq m-2$ is improper, and only $k=m-1$ and $k=m$ are proper. More generally, it can be shown that any $k \geq m-n$ is proper, and $k < m-n$ is improper, where the latter immediately follows from \eqref{EQ: AntisymmetricRestrictionRelationValues}.

Below we will identify $\mathbb{H}^{\otimes m}$ and $\mathbb{H}^{\wedge m}$ with $\mathcal{L}^{\otimes m}$ and $\mathcal{L}^{\wedge m}$ according to the isomorphism \eqref{EQ: IsomorphismHWedgeLm} and its restriction \eqref{EQ: DelayCompoundIsomorphismAntisymmetricDecomp}, respectively. Moreover, we will use the same notations for the corresponding additive and multiplicative compound operators, semigroups, and cocycles induced from the abstract spaces via the isomorphisms.

To describe the infinitesimal generator of the $m$-fold compound cocycle $\Xi_{m}$ in a form similar to \eqref{EQ: DelayLinearCocAbsract}, we need to introduce the corresponding control and measurement operators. It may be convenient for the reader to have in mind the final result \eqref{EQ: InfinitesimaldDescriptionXim}.

\subsection{Induced control operators on tensor and exterior products}
\label{SUBSEC: InducedControlOperators}
For any $j \in \{1,\ldots,m\}$, we set $\mathbb{R}_{1,j} \coloneq (\mathbb{R}^{n})^{\otimes(j-1)}$, $\mathbb{R}_{2,j} \coloneq (\mathbb{R}^{n})^{\otimes (m-j)}$, and $\mathbb{U}_{j} \coloneq \mathbb{R}_{1,j} \otimes \mathbb{U} \otimes \mathbb{R}_{2,j}$. Here $\mathbb{U} = \mathbb{R}^{r_{1}}$ is endowed with some inner product as it was defined above \eqref{EQ: DelayRnLinearized}.

With $\widetilde{B}$ from \eqref{EQ: DelayRnLinearized}, for all $k \in \{ 0, \ldots, m-1 \}$ and $1 \leq j_{1} < \cdots < j_{k} \leq m$, we associate a linear bounded operator $B^{j_{1} \ldots j_{k}}_{j}$, which takes an element $\Phi_{\mathbb{U}}$ from $L_{2}((-\tau,0)^{k}; \mathbb{U}_{j})$ to the element from the boundary subspace $\partial_{j_{1} \ldots j_{k}} \mathcal{L}^{\otimes}_{m}$ (see \eqref{EQ: TensorSpaceDelayCompoundDecompositionBoundarySubspaces}) defined by
\begin{equation}
	\label{EQ: BoundaryOpeartorCompoundDelay}
	\left(B^{j_{1}\ldots j_{k}}_{j}\Phi_{\mathbb{U}}\right)(\theta_{1},\ldots,\theta_{m}) \coloneq (\operatorname{Id}_{\mathbb{R}_{1,j}} \otimes \widetilde{B} \otimes \operatorname{Id}_{ \mathbb{R}_{2,j} } )\Phi_{\mathbb{U}}(\theta_{j_{1}},\ldots,\theta_{j_{k}})
\end{equation}
for almost all $(\theta_{1},\ldots,\theta_{m}) \in \mathcal{B}^{(m)}_{j_{1}\ldots j_{k}}$ in the sense of the $k$-dimensional Lebesgue measure on $\mathcal{B}^{(m)}_{j_{1}\ldots j_{k}}$, see \eqref{EQ: DefinitionOfBoundaryFace}.

We define the \textit{control space} $\mathbb{U}^{\otimes}_{m}$ via the outer orthogonal sum as follows:
\begin{equation}
	\label{EQ: ControlSpaceDelayCompoundDefinition}
	\mathbb{U}^{\otimes}_{m} \coloneq \bigoplus_{k=0}^{m-1}\bigoplus_{j_{1} \ldots j_{k}} \bigoplus_{j \notin \{ j_{1},\ldots,j_{k} \}} L_{2}((-\tau,0)^{k};\mathbb{U}_{j}).
\end{equation}
For convenience, we write $\eta = (\eta^{j}_{j_{1}\ldots j_{k}})$ for an element of $\mathbb{U}^{\otimes}_{m}$, where each $\eta^{j}_{j_{1}\ldots j_{k}}$ belongs to the corresponding summand from \eqref{EQ: ControlSpaceDelayCompoundDefinition}. 

Now the \textit{control operator} $B^{\otimes}_{m} \in \mathcal{L}(\mathbb{U}^{\otimes}_{m};\mathcal{L}^{\otimes}_{m})$ associated with $\widetilde{B}$ is defined by
\begin{equation}
	\label{EQ: OperatorBDelayCompoundDefinition}
	B^{\otimes}_{m}\eta \coloneq  \sum_{k=0}^{m-1}\sum_{j_{1} \ldots j_{k}}\sum_{j \notin \{j_{1},\ldots,j_{k}\}} B^{j_{1}\ldots j_{k}}_{j}\eta^{j}_{j_{1}\ldots j_{k}} \quad \text{for} \quad \eta = (\eta^{j}_{j_{1}\ldots j_{k}}) \in \mathbb{U}^{\otimes}_{m},
\end{equation}
where the sum is just a sum in $\mathcal{L}^{\otimes}_{m}$. In more detail, the inner sum is the sum in $\partial_{j_{1} \ldots j_{k}} \mathcal{L}^{\otimes}_{m}$, and the other sums can be understood according to \eqref{EQ: TensorSpaceDelayCompoundDecompositionBoundarySubspaces}.

Now we are going to define an analog of $B^{\otimes}_{m}$ for the antisymmetric subspace $\mathcal{L}^{\wedge}_{m}$. First, consider $\eta=(\eta^{j}_{j_{1}\ldots j_{k}}) \in \mathbb{U}^{\otimes}_{m}$ satisfying analogous to \eqref{EQ: AntisymmetricRelationsGeneral} antisymmetric relations. Namely, for all $k \in \{0, \ldots, m-1\}$, $1 \leq j_{1} < \cdots < j_{k} \leq m$, $j \notin \{ j_{1},\ldots, j_{k} \}$, and $\sigma \in \mathbb{S}_{m}$, we require $\eta$ to satisfy
\begin{equation}
	\label{EQ: ControlSpaceRelationsAntisymmetricCompoundDelay}
	\eta^{j}_{j_{1}\ldots j_{k}} = (-1)^{\sigma}T_{\sigma} \Theta^{(k)}_{\bar{\sigma}} \eta^{\sigma(j)}_{\sigma(j_{\bar{\sigma}(1)})\ldots \sigma(j_{\bar{\sigma}(k)})},
\end{equation}
where $\bar{\sigma} \in \mathbb{S}_{k}$ is such that $\sigma(j_{\bar{\sigma}(1)}) < \cdots < \sigma(j_{ \bar{\sigma}(k)})$. Here $\Theta^{(k)}_{\bar{\sigma}}$ is analogous to the one used in \eqref{EQ: AntisymmetricRelationsGeneral}, and $T_{\sigma} \colon \mathbb{U}_{j} \to \mathbb{U}_{\sigma^{-1}(j)}$ is defined similarly to \eqref{EQ: TranspositionOperatorOnValues} for any $j \in \{1,\ldots,m\}$.

\begin{remark}
	Relations from \eqref{EQ: ControlSpaceRelationsAntisymmetricCompoundDelay} may be simplified in the case $n=1$, see Proposition \ref{PROP: ControlSpaceCompoundDelayDescriptionScalarCase} below.
\end{remark}

Recall that $k \in \{ 0, \ldots, m\}$ is called improper if $\partial_{k}\mathcal{L}^{\wedge}_{m}$ from \eqref{EQ: AntisymmetricSubspaceOverKfaces} is zero. Now we define $\mathbb{U}^{\wedge}_{m}$ as follows:
\begin{equation}
	\label{EQ: AntisymmetricControlSpaceDefinition}
	\begin{split}
		\mathbb{U}^{\wedge}_{m}\coloneq\{ \eta=(\eta^{j}_{j_{1}\ldots j_{k}}) \in \mathbb{U}^{\otimes}_{m} \ | \ &\eta \ \text{satisfies} \ \eqref{EQ: ControlSpaceRelationsAntisymmetricCompoundDelay} \ \text{and } \ \\&\eta^{j}_{j_{1}\ldots j_{k}}=0 \ \text{for improper} \ k \}.
	\end{split}
\end{equation}

Let $B^{\wedge}_{m}$ denote the restriction to $\mathbb{U}^{\wedge}_{m}$ of the operator $B^{\otimes}_{m}$ from \eqref{EQ: OperatorBDelayCompoundDefinition}. By \cite[Proposition 6.1]{Anikushin2023Comp}, we have that $B^{\otimes}_{m} \eta \in \mathcal{L}^{\wedge}_{m}$ provided that $\eta \in \mathbb{U}^{\otimes}_{m}$ satisfies \eqref{EQ: ControlSpaceRelationsAntisymmetricCompoundDelay}, and, as a consequence, $B^{\wedge}_{m}$ belongs to $\mathcal{L}(\mathbb{U}^{\wedge}_{m};\mathcal{L}^{\wedge}_{m})$. For a general $\eta$ satisfying \eqref{EQ: ControlSpaceRelationsAntisymmetricCompoundDelay}, it is not necessary for $\eta^{j}_{j_{1} \ldots j_{k}}$ to be zero for improper $k$. However, the cumulative impact via \eqref{EQ: OperatorBDelayCompoundDefinition} of such components is an element of $\partial_{k}\mathcal{L}^{\wedge}_{m}$, and it vanishes for improper $k$ since $B^{\otimes}_{m} \eta \in \mathcal{L}^{\wedge}_{m}$. This is why we force these components to be zero in the definition \eqref{EQ: AntisymmetricControlSpaceDefinition}. See Remark \ref{REM: ZeroingControlComponents} for a more specific example.

\subsection{Induced measurement operators on tensor and exterior products}
\label{SUBSEC: InducedMeasurementOperators}
Applying the Riesz representation theorem to the operator $C$ from \eqref{EQ: DelayRnLinearized}, we get an $(r_{2} \times n)$-matrix-valued function $\gamma(\cdot)$ of bounded variation on $ [-\tau,0]$ such that
\begin{equation}
	\label{EQ: MeasurementOperatorCviaGamma}
	C\phi = \int_{-\tau}^{0}d\gamma(\theta)\phi(\theta) \qquad \text{for any} \quad \phi\in C([-\tau,0];\mathbb{R}^{n}).
\end{equation}

Recall that $\mathbb{R}_{1,j} = (\mathbb{R}^{n})^{\otimes (j-1)}$ and $\mathbb{R}_{2,j} = (\mathbb{R}^{n})^{\otimes (m-j)}$ and set $\mathbb{M}_{j} \coloneq \mathbb{R}_{1,j} \otimes \mathbb{M} \otimes \mathbb{R}_{2,j}$, where $\mathbb{M} = \mathbb{R}^{r_{2}}$ as in \eqref{EQ: DelayRnLinearized}. Then for all $k \in \{ 0, \ldots, m-1\}$, $J \in \{1,\ldots,k+1\}$, and $j \in \{ 1,\ldots,m \}$, we define an operator $C^{(k)}_{j,J}$ taking a function $\Phi \in C([-\tau,0]^{k+1};(\mathbb{R}^{n})^{\otimes m})$ to an element $C^{(k)}_{j,J}\Phi$ of $C([-\tau,0]^{k};\mathbb{M}_{j} )$ as follows:
\begin{equation}
	\label{EQ: OperatorCkjJDefinition}
	(C^{(k)}_{j,J}\Phi)(\theta_{1},\ldots,\hat{\theta}_{J},\ldots,\theta_{k+1}) \coloneq \int_{-\tau}^{0}d\gamma_{j}(\theta_{J})\Phi(\theta_{1},\ldots,\theta_{k+1}),
\end{equation}
where $\gamma_{j}(\theta) = \operatorname{Id}_{\mathbb{R}_{1,j}} \otimes \gamma(\theta) \otimes \operatorname{Id}_{\mathbb{R}_{2,j}}$ for $\theta \in [-\tau,0]$. In other words, the operator $C^{(k)}_{j,J}$ integrates over the $J$th argument with respect to the operator-valued measure $d\gamma_{j}$, which acts by $\gamma$ on the $j$th component of the tensor product in the space of values.

We need to consider $C^{(k)}_{j,J}$ in a wider context. For this, we define the space $\mathbb{E}_{k+1}((\mathbb{R}^{n})^{\otimes m})$ of all functions $\Phi \in L_{2}((-\tau,0)^{k+1}; (\mathbb{R}^{n})^{\otimes m})$ such that for any $j \in \{1, \ldots, k+1\}$ there exists $\Phi^{b}_{j}(\cdot) \in C([-\tau,0];L_{2}((-\tau,0)^{k};(\mathbb{R}^{n})^{\otimes m}))$ satisfying the identity
\begin{equation}
	\label{EQ: SpaceEmDelayPhiBDefinition}
	\restr{\Phi}{\mathcal{B}^{(k+1)}_{\hat{j}}+\theta e_{j}} = \Phi^{b}_{j}(\theta) \qquad \text{for almost all} \quad \theta \in [-\tau,0],
\end{equation}
where $e_{j}$ is the $j$th vector in the standard basis of $\mathbb{R}^{k+1}$, and we naturally identify $\mathcal{B}^{(k+1)}_{\hat{j}}+\theta e_{j}$ with $[-\tau,0]^{k}$ by omitting the $j$th coordinate.

In the above notations, we endow the space $\mathbb{E}_{k+1}( (\mathbb{R}^{n})^{\otimes m})$ with the norm
\begin{equation}
	\label{EQ: NormInSpaceEmDelay}
	\| \Phi \|_{\mathbb{E}_{k+1}( (\mathbb{R}^{n})^{\otimes m})} \coloneq \sup_{1 \leq j \leq k+1} \sup_{\theta \in [-\tau,0]}\| \Phi^{b}_{j}(\theta) \|_{L_{2}( (-\tau,0)^{k}; (\mathbb{R}^{n})^{\otimes m} )}
\end{equation}
that clearly makes it a Banach space. We also set $\mathbb{E}_{0}((\mathbb{R}^{n})^{\otimes m}) \coloneq (\mathbb{R}^{n})^{\otimes m}$.

Since $\Phi^{b}_{j}(\theta)$ continuously depends on $\theta \in [-\tau,0]$, it is not hard to show that the space $C([-\tau,0]^{k+1}; (\mathbb{R}^{n})^{\otimes m})$ is dense in $\mathbb{E}_{k+1}( (\mathbb{R}^{n})^{\otimes m})$. We have the following proposition.
\begin{proposition}\cite[Theorem A.3]{Anikushin2023Comp}
	\label{TH: OperatorCExntesionOntoEk}
	The operator $C^{(k)}_{j,J}$ defined by \eqref{EQ: OperatorCkjJDefinition} can be extended to a bounded operator from $\mathbb{E}_{k+1}((\mathbb{R}^{n})^{\otimes m})$ to $L_{2}((-\tau,0)^{k};\mathbb{M}_{j})$. Moreover, its norm does not exceed the total variation $\operatorname{Var}_{[-\tau,0]}(\gamma)$ of $\gamma$ on $[-\tau,0]$, where $\gamma$ is given by \eqref{EQ: MeasurementOperatorCviaGamma}.
\end{proposition}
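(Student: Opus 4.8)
The plan is to prove the bound on the dense subspace $C([-\tau,0]^{k+1};(\mathbb{R}^{n})^{\otimes m})$ of $\mathbb{E}_{k+1}((\mathbb{R}^{n})^{\otimes m})$ and then extend by continuity. So first I would fix $\Phi \in C([-\tau,0]^{k+1};(\mathbb{R}^{n})^{\otimes m})$ and rewrite $C^{(k)}_{j,J}\Phi$ through the slice map. For such $\Phi$ the function $\Phi^{b}_{J}(\cdot)$ from \eqref{EQ: SpaceEmDelayPhiBDefinition} is literally the restriction of $\Phi$ to the hyperplane $\{\theta_{J}=\theta\}$, viewed as an element of $C([-\tau,0]^{k};(\mathbb{R}^{n})^{\otimes m}) \subset L_{2}((-\tau,0)^{k};(\mathbb{R}^{n})^{\otimes m})$ depending continuously on $\theta$. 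With $\gamma_{j}(\theta) = \operatorname{Id}_{\mathbb{R}_{1,j}}\otimes\gamma(\theta)\otimes\operatorname{Id}_{\mathbb{R}_{2,j}}$, definition \eqref{EQ: OperatorCkjJDefinition} then reads $C^{(k)}_{j,J}\Phi = \int_{-\tau}^{0}d\gamma_{j}(\theta)\Phi^{b}_{J}(\theta)$, which (for continuous $\Phi$, where everything in sight is continuous) is the norm limit in $L_{2}((-\tau,0)^{k};\mathbb{M}_{j})$ of the Riemann--Stieltjes sums $\sum_{i}(\gamma_{j}(s_{i})-\gamma_{j}(s_{i-1}))\Phi^{b}_{J}(\xi_{i})$ taken over partitions $-\tau=s_{0}<\ldots<s_{N}=0$ with tags $\xi_{i}\in[s_{i-1},s_{i}]$; here $\gamma_{j}(s_{i})-\gamma_{j}(s_{i-1})$ acts pointwise on $L_{2}$-functions of the remaining $k$ variables. (The $L_{2}$-convergence is immediate since the sums converge uniformly on the compact cube $[-\tau,0]^{k}$ to the continuous function $C^{(k)}_{j,J}\Phi$.)

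Next I would estimate a single Riemann--Stieltjes sum. The pointwise-acting operator $g \mapsto (\gamma_{j}(s_{i})-\gamma_{j}(s_{i-1}))g$ from $L_{2}((-\tau,0)^{k};(\mathbb{R}^{n})^{\otimes m})$ into $L_{2}((-\tau,0)^{k};\mathbb{M}_{j})$ has operator norm equal to $\|\gamma_{j}(s_{i})-\gamma_{j}(s_{i-1})\|_{\mathcal{L}((\mathbb{R}^{n})^{\otimes m};\mathbb{M}_{j})}$, which in turn equals $\|\gamma(s_{i})-\gamma(s_{i-1})\|$ because $\|\operatorname{Id}\otimes G\otimes\operatorname{Id}\|=\|G\|$ for the tensor (Hilbert) norms, as recalled in Subsection \ref{SUBSEC: MultiplicativeAddCompoundsBrief}. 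Hence
\begin{equation*}
\left\| \sum_{i=1}^{N}(\gamma_{j}(s_{i})-\gamma_{j}(s_{i-1}))\Phi^{b}_{J}(\xi_{i}) \right\|_{L_{2}((-\tau,0)^{k};\mathbb{M}_{j})} \leq \left( \sum_{i=1}^{N}\|\gamma(s_{i})-\gamma(s_{i-1})\| \right) \sup_{\theta\in[-\tau,0]}\|\Phi^{b}_{J}(\theta)\|_{L_{2}} \leq (\operatorname{Var}_{[-\tau,0]}\gamma)\,\|\Phi\|_{\mathbb{E}_{k+1}((\mathbb{R}^{n})^{\otimes m})},
\end{equation*}
the last inequality being the definition \eqref{EQ: NormInSpaceEmDelay} of the norm specialized to the index $J$. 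Passing to the limit over refining partitions gives $\|C^{(k)}_{j,J}\Phi\|_{L_{2}((-\tau,0)^{k};\mathbb{M}_{j})} \leq (\operatorname{Var}_{[-\tau,0]}\gamma)\,\|\Phi\|_{\mathbb{E}_{k+1}((\mathbb{R}^{n})^{\otimes m})}$ for every continuous $\Phi$.

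Finally, since $C([-\tau,0]^{k+1};(\mathbb{R}^{n})^{\otimes m})$ is dense in $\mathbb{E}_{k+1}((\mathbb{R}^{n})^{\otimes m})$ and $C^{(k)}_{j,J}$ is linear and, by the previous step, bounded by $\operatorname{Var}_{[-\tau,0]}\gamma$ on this dense subspace as a map into the Banach space $L_{2}((-\tau,0)^{k};\mathbb{M}_{j})$, the bounded extension theorem produces a unique continuous linear extension to all of $\mathbb{E}_{k+1}((\mathbb{R}^{n})^{\otimes m})$ with norm $\leq \operatorname{Var}_{[-\tau,0]}\gamma$; the extension is unambiguous precisely because the bound sends Cauchy sequences of continuous functions to Cauchy sequences in $L_{2}((-\tau,0)^{k};\mathbb{M}_{j})$. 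This establishes the proposition.

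The point where genuine care is required is the middle estimate: it is an instance of Minkowski's integral inequality for vector-valued Stieltjes integrals, and one must keep the $L_{2}((-\tau,0)^{k};\cdot)$-norm \emph{outside} the $\theta_{J}$-integration. A naive pointwise bound $\bigl|\int d\gamma_{j}(\theta_{J})\Phi(\cdot,\theta_{J},\cdot)\bigr| \leq (\operatorname{Var}_{[-\tau,0]}\gamma)\sup_{\theta_{J}}|\Phi(\cdot,\theta_{J},\cdot)|$ followed by integration in the remaining $k$ variables would yield $\int_{(-\tau,0)^{k}}\sup_{\theta_{J}}|\Phi|^{2}$ rather than $\sup_{\theta_{J}}\int_{(-\tau,0)^{k}}|\Phi|^{2}$, and since $\int\sup\geq\sup\int$ this does not close — the Riemann--Stieltjes-sum route above is exactly what circumvents this. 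A secondary, routine point is the identity $\operatorname{Var}_{[-\tau,0]}\gamma_{j}=\operatorname{Var}_{[-\tau,0]}\gamma$ together with the existence and $L_{2}$-valued interpretation of the Stieltjes integral, both of which follow from multiplicativity of the tensor norm and the continuity of $\Phi^{b}_{J}$.
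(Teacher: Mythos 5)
Your proof is correct, and it is essentially the intended argument: the paper only cites \cite{Anikushin2023Comp} for the proof, and the natural proof there is exactly your route — represent $C^{(k)}_{j,J}\Phi$ on the dense subspace $C([-\tau,0]^{k+1};(\mathbb{R}^{n})^{\otimes m})$ as an $L_{2}$-valued Riemann--Stieltjes integral of the slice map $\theta \mapsto \Phi^{b}_{J}(\theta)$, bound each sum by $\sum_{i}\|\gamma(s_{i})-\gamma(s_{i-1})\|\,\sup_{\theta}\|\Phi^{b}_{J}(\theta)\|_{L_{2}} \leq \operatorname{Var}_{[-\tau,0]}\gamma \cdot \|\Phi\|_{\mathbb{E}_{k+1}}$ using $\|\operatorname{Id}\otimes G\otimes \operatorname{Id}\|=\|G\|$, and conclude by the bounded extension theorem. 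Your remark on keeping the $L_{2}$-norm outside the $\theta_{J}$-integration (the Minkowski-type step, rather than a pointwise sup bound) correctly identifies the only place where care is needed.
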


Let us define a Banach space $\mathbb{E}^{\otimes}_{m}$ via the outer direct sum as follows:
\begin{equation}
	\label{EQ: SpaceEmDefinitionDelayCompoundGeneral}
	\mathbb{E}^{\otimes}_{m} \coloneq \bigoplus_{k=0}^{m} \bigoplus_{j_{1}\ldots j_{k}} \mathbb{E}_{k}((\mathbb{R}^{n})^{\otimes m}).
\end{equation}
We endow $\mathbb{E}^{\otimes}_{m}$ with any of the standard norms and embed it into $\mathcal{L}^{\otimes}_{m}$ by naturally sending each element from the $j_{1}\ldots j_{k}$th summand in \eqref{EQ: SpaceEmDefinitionDelayCompoundGeneral} to the boundary subspace $\partial_{j_{1}\ldots j_{k}}\mathcal{L}^{\otimes}_{m}$ from \eqref{EQ: TensorSpaceDelayCompoundDecompositionBoundarySubspaces}. Moreover, let $\mathbb{E}^{\wedge}_{m}$ be the subspace of $\mathbb{E}^{\otimes}_{m}$ that is mapped into $\mathcal{L}^{\wedge}_{m}$ under the embedding. We will identify the spaces and their images under the embedding. Then we just have $\mathbb{E}^{\wedge}_{m} = \mathbb{E}^{\otimes}_{m} \cap \mathcal{L}^{\wedge}_{m}$.

Analogously to the control space $\mathbb{U}^{\otimes}_{m}$, we introduce the \textit{measurement space} $\mathbb{M}^{\otimes}_{m}$ via the outer orthogonal sum
\begin{equation}
	\label{EQ: MeasurementSpaceCompoundTensorDefinition}
	\mathbb{M}^{\otimes}_{m}\coloneq\bigoplus_{k=0}^{m-1}\bigoplus_{j_{1}\ldots j_{k}} \bigoplus_{j \notin \{j_{1},\ldots,j_{k}\}}L_{2}((-\tau,0)^{k};\mathbb{M}_{j}).
\end{equation}

Given $k \in \{0,\ldots,m-1\}$, $1 \leq j_{1} < \cdots < j_{k} \leq m$, and $j \notin \{ j_{1},\ldots,j_{k} \}$, we define $J(j) = J(j;j_{1},\ldots,j_{k})$ as the integer $J$ such that $j$ is the $J$th element in the set $\{ j,j_{1},\ldots, j_{k} \}$ arranged by increasing. Now define the \textit{measurement operator} $C^{\otimes}_{m} \in \mathcal{L}(\mathbb{E}^{\otimes}_{m};\mathbb{M}^{\otimes}_{m})$ by
\begin{equation}
	\label{EQ: DelayMeasumerentAntisymmetricSumSpace}
	C^{\otimes}_{m}\Phi \coloneq \sum_{k=0}^{m-1}\sum_{j_{1}\ldots j_{k}}\sum_{j \notin \{j_{1},\ldots,j_{k}\}} C^{(k)}_{j,J(j)}R_{jj_{1}\ldots j_{k}}\Phi,
\end{equation}
where the sum is taken in $\mathbb{M}^{\otimes}_{m}$ according to \eqref{EQ: MeasurementSpaceCompoundTensorDefinition}, and the action of $C^{(k)}_{j,J(j)}$ is understood in the sense of Proposition \ref{TH: OperatorCExntesionOntoEk}.

Let us define an analog of the above constructions for the antisymmetric case. For this, we consider such elements $M=(M^{j}_{j_{1}\ldots j_{k}}) \in \mathbb{M}^{\otimes}_{m}$ that satisfy for all $k \in \{0, \ldots, m-1\}$, $1 \leq j_{1} < \cdots < j_{k} \leq m$, $j \notin \{ j_{1},\ldots, j_{k} \}$, and $\sigma \in \mathbb{S}_{m}$ the relations
\begin{equation}
	\label{EQ: MeasurementSpaceRelationsAntisymmetricCompoundDelay}
		M^{j}_{j_{1}\ldots j_{k}} = (-1)^{\sigma}T_{\sigma} \Theta^{(k)}_{\bar{\sigma}} M^{\sigma(j)}_{\sigma(j_{\bar{\sigma}(1)})\ldots \sigma(j_{\bar{\sigma}(k)})},
\end{equation}
where $\bar{\sigma} \in \mathbb{S}_{k}$ is such that $\sigma(j_{\bar{\sigma}(1)}) < \cdots < \sigma(j_{ \bar{\sigma}(k)})$. Here $\Theta^{(k)}_{\bar{\sigma}}$ is analogous to the one used in \eqref{EQ: AntisymmetricRelationsGeneral}, and $T_{\sigma} \colon \mathbb{M}_{j} \to \mathbb{M}_{\sigma^{-1}(j)}$ is defined similarly to \eqref{EQ: TranspositionOperatorOnValues} for any $j \in \{1,\ldots,m\}$.

Now we define $\mathbb{M}^{\wedge}_{m}$ as follows:
\begin{equation}
	\label{EQ: AntisymmetricMeasurementSpaceDefinition}
	\begin{split}
		\mathbb{M}^{\wedge}_{m}\coloneq\{ M=(M^{j}_{j_{1}\ldots j_{k}}) \in \mathbb{M}^{\otimes}_{m} \ | \ &M \ \text{satisfies} \ \eqref{EQ: MeasurementSpaceRelationsAntisymmetricCompoundDelay} \ \text{and} \ \\&M^{j}_{j_{1}\ldots j_{k}}=0 \ \text{for improper} \ k \}.
	\end{split}
\end{equation}

Let $C^{\wedge}_{m}$ be given by restricting $C^{\otimes}_{m}$ to $\mathbb{E}^{\wedge}_{m}$ and zeroing the components for improper $k$, i.e.,
\begin{equation}
	(C^{\wedge}_{m}\Phi)^{j}_{j_{1}\ldots j_{k}} \coloneq \begin{cases}
		(C^{\otimes}_{m}\Phi)^{j}_{j_{1}\ldots j_{k}} &\text{ if $k$ is proper},\\
		0 &\text{ otherwise}.
	\end{cases}
\end{equation}
Similarly to the operator $B^{\wedge}_{m}$, it can be shown that $C^{\wedge}_{m}$ belongs to $\mathcal{L}(\mathbb{E}^{\wedge}_{m};\mathbb{M}^{\wedge}_{m})$, see \cite[Proposition 6.2]{Anikushin2023Comp}.

\subsection{Infinitesimal description of compound cocycles}

For any $\wp \in \mathcal{P}$, $k \in \{0,\ldots, m-1 \}$, and $j \in \{1, \ldots, m\}$, we consider an operator $F'_{j}(\wp)$ induced by $F'(\wp)$ from \eqref{EQ: DelayRnLinearized}, which takes $\Phi_{\mathbb{M}} \in L_{2}((-\tau,0)^{k}; \mathbb{M}_{j} )$ to an element $F'_{j}(\wp) \Phi_{\mathbb{M}}$ from $L_{2}((-\tau,0)^{k};\mathbb{U}_{j})$ given by
\begin{equation}
	\label{EQ: DelayCompoundOperatorFJdefinition}
	(F'_{j}(\wp) \Phi_{\mathbb{M}})(\theta_{1},\ldots,\theta_{k}) \coloneq ( \operatorname{Id}_{\mathbb{R}_{1,j}} \otimes F'(\wp) \otimes \operatorname{Id}_{ \mathbb{R}_{2,j} } )\Phi_{\mathbb{M}}(\theta_{1},\ldots,\theta_{k}),
\end{equation}
where, as usual, $\mathbb{R}_{1,j}=(\mathbb{R}^{n})^{\otimes (j-1)}$, $\mathbb{R}_{2,j} = (\mathbb{R}^{n})^{\otimes (m-j)}$, and $\mathbb{U}_{j} = \mathbb{R}_{1,j} \otimes \mathbb{U} \otimes \mathbb{R}_{2,j}$ with $\mathbb{U}$ defined above \eqref{EQ: DelayRnLinearized}. Note that we omit the dependence of $F'_{j}(\wp)$ on $k$ for convenience.

These operators induce an operator $F^{\otimes}_{m}(\wp)$ from $\mathbb{M}^{\otimes}_{m}$ to $\mathbb{U}^{\otimes}_{m}$ as follows:
\begin{equation}
	\label{EQ: FPrimeTensorOperatorDef}
	F^{\otimes}_{m} M = \sum_{k=0}^{m-1}\sum_{j_{1}\ldots j_{k}} \sum_{ j \notin \{j_{1}\dots j_{k}\} } F'_{j}(\wp) M^{j}_{j_{1}\ldots j_{k}}
\end{equation}
for all $M = (M^{j}_{j_{1}\ldots j_{k}}) \in \mathbb{M}^{\otimes}_{m}$. Note that the overall sum is taken in $\mathbb{U}^{\otimes}_{m}$ according to \eqref{EQ: ControlSpaceDelayCompoundDefinition}. Moreover, restricting $F^{\otimes}_{m}(\wp)$ to $\mathbb{M}^{\wedge}_{m}$ gives a well-defined mapping $F^{\wedge}_{m}(\wp)$ into $\mathbb{U}^{\wedge}_{m}$. 

Recall that $A$ given by \eqref{EQ: OperatorAScalarDelayEquations} is the generator of a $C_{0}$-semigroup in $\mathbb{H}$. Let $A^{[\otimes m]}$ be the $m$-fold additive compound of $A$. Since we identified $\mathbb{H}^{\otimes m}$ with $\mathcal{L}^{\otimes}_{m}$ via the isomorphism from \eqref{EQ: IsomorphismHWedgeLm}, it is reasonable to give the description of $A^{[\otimes m]}$ (in particular, $A^{[\wedge m]}$) in terms of the space $\mathcal{L}^{\otimes}_{m}$.

For this, for any $k \in \{1,\ldots,m\}$, we consider the space $\mathcal{W}^{2}_{D}( (-\tau,0)^{k}; (\mathbb{R}^{n})^{\otimes m} )$ consisting of all $\Phi \in L_{2}((-\tau,0)^{k}; (\mathbb{R}^{n})^{\otimes m})$ having a square-summable diagonal derivative (in the generalized sense), i.e.,
\begin{equation}
	\sum_{j=1}^{k} \frac{\partial}{\partial\theta_{j}}\Phi \in L_{2}( (-\tau,0)^{k};(\mathbb{R}^{n})^{\otimes m}),
\end{equation}
and such that the trace (restriction) of $\Phi$ to each $\mathcal{B}^{(k)}_{\hat{j}}$, where $j \in \{1,\ldots,m\}$, belongs to $L_{2}((-\tau,0)^{k-1};(\mathbb{R}^{n})^{\otimes m})$ after $\mathcal{B}^{(k)}_{\hat{j}}$ is naturally identified with $(-\tau,0)^{k-1}$. We call $\mathcal{W}^{2}_{D}( (-\tau,0)^{k}; (\mathbb{R}^{n})^{\otimes m} )$ a \textit{diagonal Sobolev space}. It can be shown that, when being endowed with a natural norm\footnote{Its square is given by the sum of squares of $L_{2}$-norms of diagonal derivatives and $L_{2}$-norms of traces on the $(k-1)$-faces $\mathcal{B}^{(k)}_{\hat{j}}$, see \cite[Proposition A.1]{Anikushin2023Comp}.}, it becomes a Hilbert space, which naturally continuously embeds into the space $\mathbb{E}_{k}((\mathbb{R}^{n})^{\otimes m})$ from \eqref{EQ: NormInSpaceEmDelay}, see \cite[Proposition A.2]{Anikushin2023Comp}.

Analogously to the operators $C^{(k)}_{j,J}$ from \eqref{EQ: OperatorCkjJDefinition}, one can define operators $\widetilde{A}^{(k)}_{j,J}$ associated with $\widetilde{A}$ from \eqref{EQ: DelayRnLinearized}. Then we have the following theorem.
\begin{theorem}\cite[Theorems 4.2 and 4.3]{Anikushin2023Comp}
	\label{TH: AdditiveCompoundDelayDescription}
	Let $A^{[\otimes m]}$ be the $m$-fold additive compound of $A$ given by \eqref{EQ: OperatorAScalarDelayEquations}. Then for each $\Phi \in \mathcal{D}(A^{[\otimes m]})$, any restriction $R_{j_{1}\ldots j_{k}}\Phi$ with $k \in \{1,\ldots,m\}$ belongs to $\mathcal{W}^{2}_{D}((-\tau,0)^{k};(\mathbb{R}^{n})^{\otimes m})$. Moreover, for such $\Phi$ and all $k \in \{0, \ldots, m\}$ and $1 \leq j_{1} < \cdots < j_{k} \leq m$, we have\footnote{Here $R_{j_{1}\ldots j_{k}}\Phi$ is considered as a function of $\theta_{1},\ldots,\theta_{k}$. Recall also that $J(j) = J(j;j_{1},\ldots,j_{k})$ is the integer $J$ such that $j$ is the $J$th element in the set $\{ j,j_{1},\ldots, j_{k} \}$ arranged by increasing.}
	\begin{equation}
		\label{EQ: OperatorAmFormula}
		R_{j_{1}\ldots j_{k}}\left( A^{[\otimes m]} \Phi  \right) = 
		\sum_{l=1}^{k}\frac{\partial}{\partial \theta_{l}}R_{j_{1}\ldots j_{k}}\Phi + \sum_{j \notin \{ j_{1},\ldots,j_{k} \}} \widetilde{A}^{(k)}_{j,J(j)}R_{jj_{1}\ldots j_{k}}\Phi.
	\end{equation}
	Moreover, the graph norm on $\mathcal{D}(A^{[\otimes m]})$ is equivalent to the norm $\| \cdot \|_{\mathcal{W}^{2}_{D}}$, where
	\begin{equation}
		\| \Phi \|^{2}_{\mathcal{W}^{2}_{D}} \coloneq \sum_{k=1}^{m}\sum_{j_{1} \ldots j_{k}} \| R_{j_{1}\ldots j_{k}} \Phi \|^{2}_{ \mathcal{W}^{2}_{D}((-\tau,0)^{k}; (\mathbb{R}^{n})^{\otimes m} ) }.
	\end{equation}
\end{theorem}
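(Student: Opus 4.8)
\emph{Sketch of proof.} The plan is to compute the action of $A^{[\otimes m]}$ on decomposable tensors, rewrite it through the boundary decomposition \eqref{EQ: TensorSpaceDelayCompoundDecompositionBoundarySubspaces}, and then extend it to the whole domain by means of a norm equivalence. Since $A$ generates the $C_{0}$-semigroup $G$ in $\mathbb{H}$, the algebraic tensor product $\mathcal{D}(A)\odot\ldots\odot\mathcal{D}(A)$ is dense in $\mathbb{H}^{\otimes m}$, is invariant under $G^{\otimes m}(t)=G(t)^{\otimes m}$ (because $G(t)\mathcal{D}(A)\subseteq\mathcal{D}(A)$), and lies in $\mathcal{D}(A^{[\otimes m]})$ with $A^{[\otimes m]}(\phi_{1}\otimes\ldots\otimes\phi_{m})=\sum_{i=1}^{m}\phi_{1}\otimes\ldots\otimes A\phi_{i}\otimes\ldots\otimes\phi_{m}$ for $\phi_{i}\in\mathcal{D}(A)$ --- the Leibniz rule for tensor products of $C_{0}$-semigroups, obtained by a telescoping argument and the continuity of the map $(\psi_{1},\ldots,\psi_{m})\mapsto\psi_{1}\otimes\ldots\otimes\psi_{m}$. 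Therefore this algebraic tensor product is a core for $A^{[\otimes m]}$, so it suffices to establish \eqref{EQ: OperatorAmFormula} and the norm equivalence on it.

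For the core computation, write each $\phi_{i}\in\mathcal{D}(A)$ as $\phi_{i}=\phi_{i}^{(0)}+\phi_{i}^{(1)}$, where $\phi_{i}^{(0)}$ is supported on the atom of $\mu$ at $0$ with $R^{(1)}_{0}\phi_{i}^{(0)}=\phi_{i}(0)$ and $\phi_{i}^{(1)}=R^{(1)}_{1}\phi_{i}$ is the $L_{2}$-part. Expanding $\phi_{1}\otimes\ldots\otimes\phi_{m}=\sum_{S\subseteq\{1,\ldots,m\}}\bigotimes_{i}\psi_{i}^{S}$, where $\psi_{i}^{S}=\phi_{i}^{(1)}$ for $i\in S$ and $\psi_{i}^{S}=\phi_{i}^{(0)}$ for $i\notin S$, identifies the $S$-term with the component of $\phi_{1}\otimes\ldots\otimes\phi_{m}$ in the boundary subspace $\partial_{j_{1}\ldots j_{k}}\mathcal{L}^{\otimes}_{m}$ for $S=\{j_{1}<\ldots<j_{k}\}$, whose restriction $R_{j_{1}\ldots j_{k}}(\phi_{1}\otimes\ldots\otimes\phi_{m})$ is the function whose value at $(\theta_{j})_{j\in S}$ is the tensor obtained by evaluating the $i$-th factor at $\theta_{i}$ for $i\in S$ and at $0$ for $i\notin S$. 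Applying the Leibniz formula and splitting, in its $i$-th summand, the $i$-th slot of $A\phi_{i}$ via $R^{(1)}_{1}(A\phi_{i})=\tfrac{d}{d\theta}\phi_{i}$ and $R^{(1)}_{0}(A\phi_{i})=\widetilde{A}\phi_{i}=\int_{-\tau}^{0}d\alpha(\theta)\phi_{i}(\theta)$ (the Riesz representation of $\widetilde{A}$, in the role $\gamma$ plays for $C$), one sees that after restricting to a $k$-face $\mathcal{B}^{(m)}_{j_{1}\ldots j_{k}}$ the $i$-th summand with $i\in S$ yields $\tfrac{\partial}{\partial\theta_{i}}R_{j_{1}\ldots j_{k}}\Phi$ and the one with $i\notin S$ yields $\widetilde{A}^{(k)}_{j,J(j)}R_{jj_{1}\ldots j_{k}}\Phi$, where $\widetilde{A}^{(k)}_{j,J}$ is built from $\alpha$ exactly as $C^{(k)}_{j,J}$ in \eqref{EQ: OperatorCkjJDefinition} is built from $\gamma$ and $J(j)$ is the position of $j$ in the ordered set $\{j,j_{1},\ldots,j_{k}\}$. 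Summing over $i$ and extending by linearity gives \eqref{EQ: OperatorAmFormula}; bookkeeping with the transposition operators $T_{\sigma}$ on the values in $(\mathbb{R}^{n})^{\otimes m}$ is purely formal.

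It remains to pass to all of $\mathcal{D}(A^{[\otimes m]})$ while proving the norm equivalence, which I would do at once by showing that on the core the graph norm and $\|\cdot\|_{\mathcal{W}^{2}_{D}}$ are comparable. The bound $\|A^{[\otimes m]}\Phi\|\lesssim\|\Phi\|_{\mathcal{W}^{2}_{D}}$ is read off from \eqref{EQ: OperatorAmFormula}, the continuous embedding $\mathcal{W}^{2}_{D}((-\tau,0)^{k+1};(\mathbb{R}^{n})^{\otimes m})\hookrightarrow\mathbb{E}_{k+1}((\mathbb{R}^{n})^{\otimes m})$, and the boundedness of $\widetilde{A}^{(k)}_{j,J}$ from $\mathbb{E}_{k+1}$ to $L_{2}((-\tau,0)^{k};(\mathbb{R}^{n})^{\otimes m})$ --- the analogue of Proposition \ref{TH: OperatorCExntesionOntoEk}; the remaining contribution $|R_{0}\Phi|$ equals, on the core, $\lim_{\theta\to 0^{-}}(R_{1}\Phi)(\theta)$ and is thus dominated by the $\mathbb{E}_{1}$-norm, hence by $\|\Phi\|_{\mathcal{W}^{2}_{D}}$. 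For the reverse inequality I would use downward induction on the codimension $m-k$ of the face: on the top face $k=m$ the relation \eqref{EQ: OperatorAmFormula} has no boundary term, so $\sum_{l=1}^{m}\tfrac{\partial}{\partial\theta_{l}}R_{1\ldots m}\Phi=R_{1\ldots m}(A^{[\otimes m]}\Phi)$ bounds $\|R_{1\ldots m}\Phi\|_{\mathcal{W}^{2}_{D}}$ by the graph norm; on a face of codimension $m-k$, \eqref{EQ: OperatorAmFormula} expresses $\sum_{l}\tfrac{\partial}{\partial\theta_{l}}R_{j_{1}\ldots j_{k}}\Phi$ through $R_{j_{1}\ldots j_{k}}(A^{[\otimes m]}\Phi)$ and terms $\widetilde{A}^{(k)}_{j,J}R_{jj_{1}\ldots j_{k}}\Phi$ over faces of strictly smaller codimension, already controlled by the inductive hypothesis and the embedding $\mathcal{W}^{2}_{D}\hookrightarrow\mathbb{E}_{k+1}$. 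Once the equivalence holds on the core, any graph-Cauchy sequence in the core is $\mathcal{W}^{2}_{D}$-Cauchy, so its limit $\Phi\in\mathcal{D}(A^{[\otimes m]})$ has every $R_{j_{1}\ldots j_{k}}\Phi$ in $\mathcal{W}^{2}_{D}$; the right-hand side of \eqref{EQ: OperatorAmFormula} is continuous in the $\mathcal{W}^{2}_{D}$-topology, so the formula passes to the limit, and the norm equivalence extends by continuity.

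The main obstacle is this last step: closing the downward induction for $\|\Phi\|_{\mathcal{W}^{2}_{D}}\lesssim\|\Phi\|_{\mathrm{graph}}$ needs a uniform treatment of the trace theory on the spaces $\mathbb{E}_{k}$ and of the norm bounds for the induced operators $\widetilde{A}^{(k)}_{j,J}\colon\mathbb{E}_{k+1}\to L_{2}$ --- essentially the analogue of Proposition \ref{TH: OperatorCExntesionOntoEk} --- together with keeping the permutation and tensor-slot conventions consistent so that the resulting identity matches \eqref{EQ: OperatorAmFormula} verbatim. By contrast, the computation on decomposable tensors is elementary once the atom/$L_{2}$ splitting of $\mathbb{H}$ is in place.
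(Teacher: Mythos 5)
The paper itself contains no proof of Theorem \ref{TH: AdditiveCompoundDelayDescription}: as stated at the start of Section \ref{SEC: StabilityOfDelayCompounds}, this result is quoted from the adjacent work \cite{Anikushin2023Comp}, so a line-by-line comparison with "the paper's proof" is not possible here. Judged on its own, your sketch is sound and follows what is essentially the natural route. The three pillars — (i) $\mathcal{D}(A)\odot\ldots\odot\mathcal{D}(A)$ is dense, semigroup-invariant (since $G(t)\mathcal{D}(A)\subset\mathcal{D}(A)$) and hence a core on which the Leibniz rule holds; (ii) the atom/Lebesgue splitting $\phi_i=\phi_i^{(0)}+\phi_i^{(1)}$ of $\mu$, which matches the expansion over subsets $S$ exactly with the face decomposition \eqref{EQ: TensorSpaceDelayCompoundDecompositionBoundarySubspaces} and turns the $i$-th Leibniz summand into $\partial_{\theta_i}R_{j_1\ldots j_k}\Phi$ (if $i\in S$) or $\widetilde{A}^{(k)}_{j,J(j)}R_{jj_1\ldots j_k}\Phi$ (if $i\notin S$), giving \eqref{EQ: OperatorAmFormula} on the core; and (iii) the two-sided norm estimate on the core (downward induction on the codimension of the face for the hard direction) followed by closure of the graph-Cauchy sequence in the $\mathcal{W}^{2}_{D}$-topology — fit together without circularity, since the formula is proved directly on the core before being used in the induction. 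Your treatment of the $0$-face term via the trace $R_0\Phi=\lim_{\theta\to0^-}(R_1\Phi)(\theta)$ on the core is also the right fix for the fact that the sum norm omits $|R_0\Phi|$. The ingredients you flag as the remaining load-bearing steps — the continuous embedding $\mathcal{W}^{2}_{D}\hookrightarrow\mathbb{E}_{k+1}$ and the boundedness of the induced operators $\widetilde{A}^{(k)}_{j,J}$ on $\mathbb{E}_{k+1}$ (the analogue of Proposition \ref{TH: OperatorCExntesionOntoEk}, with the Riesz measure of $\widetilde{A}$ in place of $\gamma$) — are precisely the facts the paper asserts or provides, so identifying them as the crux is accurate rather than a gap; completing them would require the trace analysis carried out in \cite{Anikushin2023Comp}, not a new idea.
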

\begin{remark}
	\label{REM: DescriptionOfDomainAdditiveCompound}
	Theorem \ref{TH: AdditiveCompoundDelayDescription} does not fully describe the domain $\mathcal{D}(A^{[\otimes m]})$. In fact, functions from the diagonal Sobolev space $\mathcal{W}^{2}_{D}( (-\tau,0)^{k}; (\mathbb{R}^{n})^{\otimes m} )$ have well-defined $L_{2}$-traces on sections of $[-\tau,0]^{k}$ by hyperplanes transversal to the diagonal line in $\mathbb{R}^{k}$. Then for each $\Phi \in \mathcal{D}(A^{[\otimes m]})$, the trace of $R_{j_{1}\ldots j_{k}}\Phi$ on $\mathcal{B}^{(k)}_{\hat{l}}$ agrees with the restriction $R_{j_{1}\ldots \hat{j_{l}} \ldots j_{k}}\Phi$ for any $l \in \{1,\ldots,k\}$. This was proven in \cite[Theorems 4.2 and 4.3]{Anikushin2023Comp}. We do not get into details of this fact in the present work although we will recall it in the proof of Proposition \ref{PROP: IntegralHeadComputation}. Moreover, this agreement of traces and restrictions completely characterizes the domain $\mathcal{D}(A^{[\otimes m]})$, see \cite[Remark 4.2]{Anikushin2023Comp}.
\end{remark}

In particular, we have the following continuous embeddings
\begin{equation}
	\mathcal{D}(A^{[\otimes m]}) \subset \mathbb{E}^{\otimes}_{m} \subset \mathcal{L}^{\otimes}_{m} \qquad \text{and} \qquad \mathcal{D}(A^{[\wedge m]}) \subset \mathbb{E}^{\wedge}_{m} \subset \mathcal{L}^{\wedge}_{m},
\end{equation}
where the intermediate (or auxiliary) Banach spaces $\mathbb{E}^{\otimes}_{m}$ and $\mathbb{E}^{\wedge}_{m}$ are important for our study, see Theorems \ref{TH: QuadraticFunctionalDelayCompoundTheorem} and \ref{TH: ResolventDelayCompoundBound}.

Using the above introduced notations, we may give the infinitesimal description of the $m$-fold multiplicative compound $\Xi_{m}$ of $\Xi$ in the space $\mathcal{L}^{\wedge}_{m}$ as follows:
\begin{equation}
	\label{EQ: InfinitesimaldDescriptionXim}
	\dot{\Phi}(t) = A^{[\wedge m]}\Phi(t) + B^{\wedge}_{m} F^{\wedge}_{m}(\pi^{t}(\wp))C^{\wedge}_{m}\Phi(t).
\end{equation}

By \cite[Theorem 6.1]{Anikushin2023Comp}, we have that for any $\Phi_{0} = \xi_{1} \wedge \cdots \wedge \xi_{m}$ with $\xi_{1},\ldots,\xi_{m} \in \mathcal{D}(A)$, there exists a classical solution, i.e., $\Phi(\cdot) \in C^{1}([0,\infty);\mathcal{L}^{\wedge}_{m}) \cap C([0,\infty);\mathcal{D}(A^{[\wedge m]}))$ such that $\Phi(0) = \Phi_{0}$ and $\Phi(t)$ satisfies \eqref{EQ: InfinitesimaldDescriptionXim} for all $t \geq 0$. It is given by the trajectory of $\Phi_{0}$ over $\wp$, i.e., $\Xi^{t}_{m}(\wp,\Phi_{0}) = \Phi(t)$ for $t \geq 0$. Consequently, on a dense subset of $\mathcal{L}^{\wedge}_{m}$, the trajectories of $\Xi_{m}$ are classical solutions to \eqref{EQ: InfinitesimaldDescriptionXim}. 

\subsection{Frequency conditions for the uniform exponential stability of compound cocycles}
To describe conditions for the uniform exponential stability of $\Xi_{m}$, we consider it as a nonautonomous perturbation of the $C_{0}$-semigroup $G^{\wedge m}$ generated by $A^{[\wedge m]}$. From this view, \eqref{EQ: InfinitesimaldDescriptionXim} describes the generator of $\Xi_{m}$ as a nonautonomous boundary perturbation of $A^{[\wedge m]}$. 

We are going to consider quadratic constraints associated with such perturbations. For this, let $\mathcal{G}(M,\eta)$ be a bounded quadratic form of $M \in \mathbb{M}^{\wedge}_{m}$ and $\eta \in \mathbb{U}^{\wedge}_{m}$. Then we consider a quadratic form $\mathcal{F}$ on $\mathbb{E}^{\wedge}_{m} \times \mathbb{U}^{\wedge}_{m}$ defined as follows:
\begin{equation}
	\label{EQ: QuadraticFormRealCompoundDelayGeneral}
	\mathcal{F}(\Phi,\eta) \coloneq \mathcal{G}(C^{\wedge}_{m}\Phi,\eta) \qquad \text{for} \quad \Phi \in \mathbb{E}^{\wedge}_{m} \quad \text{and} \quad \eta \in \mathbb{U}^{\wedge}_{m}.
\end{equation}
We say that $\mathcal{F}$ defines a \textit{quadratic constraint} with respect to \eqref{EQ: InfinitesimaldDescriptionXim} if $\mathcal{F}(\Phi,\eta) \geq 0$ for any $\eta = F^{\wedge}_{m}(\wp)C^{\wedge}_{m}\Phi$ with arbitrary $\Phi \in \mathbb{E}^{\wedge}_{m}$ and $\wp \in \mathcal{P}$ and, in addition, $\mathcal{F}(\Phi,0) \geq 0$.

Let us describe the Hermitian extension $\mathcal{F}^{\mathbb{C}}$ of $\mathcal{F}$ defined on the complexifications $(\mathbb{E}^{\wedge}_{m})^{\mathbb{C}}$ and $(\mathbb{U}^{\wedge}_{m})^{\mathbb{C}}$ of $\mathbb{E}^{\wedge}_{m}$ and $\mathbb{U}^{\wedge}_{m}$, respectively, as $\mathcal{F}^{\mathbb{C}}(\Phi_{1} + i\Phi_{2}, \eta_{1}+i\eta_{2}) \coloneq \mathcal{F}(\Phi_{1},\eta_{1}) + \mathcal{F}(\Phi_{2},\eta_{2})$ for all $\Phi_{1},\Phi_{2}\in \mathbb{E}^{\wedge}_{m}$ and $\eta_{1},\eta_{2} \in \mathbb{U}^{\wedge}_{m}$. Any $\mathcal{G}$ as above can always be represented as follows:
\begin{equation}
	\mathcal{G}(M,\eta) = \langle M, \mathcal{G}_{1} M \rangle_{\mathbb{M}^{\wedge}_{m}} + \langle \eta,\mathcal{G}_{2}M \rangle_{\mathbb{U}^{\wedge}_{m}} + \langle \eta, \mathcal{G}_{3}\eta \rangle_{ \mathbb{U}^{\wedge}_{m}},
\end{equation}
where $\mathcal{G}_{1} \in \mathcal{L}(\mathbb{M}^{\wedge}_{m})$ and $\mathcal{G}_{3} \in \mathcal{L}( \mathbb{U}^{\wedge}_{m})$ are self-adjoint and $\mathcal{G}_{2} \in \mathcal{L}(\mathbb{M}^{\wedge}_{m}; \mathbb{U}^{\wedge}_{m})$. Then for all $\Phi \in (\mathbb{E}^{\wedge}_{m})^{\mathbb{C}}$ and $\eta \in (\mathbb{U}^{\wedge}_{m})^{\mathbb{C}}$, the value $\mathcal{F}^{\mathbb{C}}(\Phi,\eta)$ is given by
\begin{equation}
	\label{EQ: QuadraticFormComplexificationCompoundDelayGeneral}
	\begin{split}
		\mathcal{F}^{\mathbb{C}}(\Phi,\eta) = \mathcal{G}^{\mathbb{C}}(C^{\wedge}_{m}\Phi,\eta) = \langle C^{\wedge}_{m}\Phi, \mathcal{G}_{1} C^{\wedge}_{m}\Phi \rangle_{(\mathbb{M}^{\wedge}_{m})^{\mathbb{C}}} + \operatorname{Re}\langle \eta, \mathcal{G}_{2} C^{\wedge}_{m}\Phi \rangle_{(\mathbb{U}^{\wedge}_{m})^{\mathbb{C}}} \\+ \langle \eta, \mathcal{G}_{3}\eta \rangle_{(\mathbb{U}^{\wedge}_{m})^{\mathbb{C}}},
	\end{split}
\end{equation}
where we omitted mentioning complexifications of the operators $C^{\wedge}_{m}$, $\mathcal{G}_{1}$, $\mathcal{G}_{2}$, and $\mathcal{G}_{3}$ for convenience. 

With each such $\mathcal{F}$ and $\nu_{0} \in \mathbb{R}$, we associate the following \textit{frequency inequality} on the line $-\nu_{0} + i \mathbb{R}$, avoiding the spectrum of $A^{[\wedge m]}$.
\begin{description}[before=\let\makelabel\descriptionlabel]
	\item[\textbf{(FI)}\refstepcounter{desccount}\label{DESC: FI}] For some $\delta>0$ and any $p$ with $\operatorname{Re}p = -\nu_{0}$, we have
	\begin{equation}
		\label{EQ: FreqIneqDelayCompoundGeneralForm}
		\mathcal{F}^{\mathbb{C}}(-(A^{[\wedge m]} - p I)^{-1}B^{\wedge}_{m}\eta , \eta ) \leq -\delta \left|\eta\right|^{2}_{(\mathbb{U}^{\wedge}_{m})^{\mathbb{C}}} \qquad \text{for any} \quad \eta \in \left(\mathbb{U}^{\wedge}_{m}\right)^{\mathbb{C}}.
	\end{equation}
\end{description}

Recall that $A$ generates an eventually compact semigroup $G$. Then $G^{\otimes m}$ (and, consequently, $G^{\wedge m}$) is also eventually compact. In particular, the spectrum of $A^{[\wedge m]}$ consists of eigenvalues with finite algebraic multiplicities. According to Proposition \ref{PROP: SpectralBoundAwedgeViaA}, the spectral bound $s(A^{[\wedge m]})$ of $A^{[\wedge m]}$ can be described as the sum of the first $m$ eigenvalues of $A$ if it has at least $m$ eigenvalues, or $s(A^{[\wedge m]}) = -\infty$ otherwise.

We have the following theorem.
\begin{theorem}\cite[Theorem 6.2]{Anikushin2023Comp}
	\label{TH: QuadraticFunctionalDelayCompoundTheorem}
	Let $\mathcal{F}$, as in \eqref{EQ: QuadraticFormRealCompoundDelayGeneral}, define a quadratic constraint with respect to \eqref{EQ: InfinitesimaldDescriptionXim}. Suppose that there exists $\nu_{0} \in \mathbb{R}$ such that $-\nu_{0} > s(A^{[\wedge m]})$ and
	\nameref{DESC: FI} is satisfied. Then there exists a bounded self-adjoint operator $P \in \mathcal{L}(\mathcal{L}^{\wedge}_{m})$ such that its quadratic form $V(\Phi)\coloneq \langle \Phi,P\Phi \rangle_{\mathcal{L}^{\wedge}_{m}}$ is positive-definite, and for some $\delta_{V}>0$ the compound cocycle $\Xi_{m}$ in $\mathcal{L}^{\wedge}_{m}$ generated by \eqref{EQ: InfinitesimaldDescriptionXim} satisfies
	\begin{equation}
		\label{EQ: QuadraticIneqDelayCompound}
		e^{2\nu_{0} t}V( \Xi^{t}_{m}(\wp, \Phi ) ) - V(\Phi) \leq -\delta_{V} \int_{0}^{t} e^{2\nu_{0} s} \left| \Xi^{s}_{m}(\wp,\Phi) \right|^{2}_{\mathcal{L}^{\wedge}_{m}}ds
	\end{equation}
	for any $t \geq 0$, $\wp \in \mathcal{P}$, and $\Phi \in \mathcal{L}^{\wedge}_{m}$.
\end{theorem}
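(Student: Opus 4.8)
The plan is to read \eqref{EQ: QuadraticIneqDelayCompound} as the outcome of the Frequency Theorem for infinite-dimensional systems with delay from \cite{Anikushin2020FreqDelay} (in the form adapted to compound cocycles in \cite{Anikushin2023Comp}), applied to the \emph{linear} control system
\[
\dot{\Phi}(t) = A^{[\wedge m]}\Phi(t) + B^{\wedge}_{m}\eta(t), \qquad \Phi(t)\in\mathcal{L}^{\wedge}_{m},\ \eta(t)\in\mathbb{U}^{\wedge}_{m},
\]
with measurement $C^{\wedge}_{m}\Phi(t)$, the shift $\nu_{0}$, and the quadratic cost $-\mathcal{F}^{\mathbb{C}}$ given by \eqref{EQ: QuadraticFormComplexificationCompoundDelayGeneral}; afterwards one substitutes the particular feedback $\eta(t)=F'^{\wedge}_{m}(\pi^{t}(\wp))C^{\wedge}_{m}\Phi(t)$ that turns this system into \eqref{EQ: InfinitesimaldDescriptionXim}. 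Since $-\nu_{0}>s(A^{[\wedge m]})$, the line $-\nu_{0}+i\mathbb{R}$ lies in the resolvent set of $A^{[\wedge m]}$, so the operator expression in \eqref{EQ: FreqIneqDelayCompoundGeneralForm} is well defined; written with $p=-\nu_{0}+i\omega$, condition \eqref{EQ: FreqIneqDelayCompoundGeneralForm} is precisely the strict frequency-domain inequality the Frequency Theorem requires for the form $-\mathcal{F}^{\mathbb{C}}$.

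Next I would check the structural hypotheses. Because $G^{\wedge m}$ is eventually compact (Subsection \ref{SUBSEC: MultiplicativeAddCompoundsBrief}), its exponential growth bound equals $s(A^{[\wedge m]})$, hence the shifted semigroup $e^{\nu_{0}t}G^{\wedge m}(t)$ is uniformly exponentially stable; in particular the pair $(A^{[\wedge m]},B^{\wedge}_{m})$ is $\nu_{0}$-stabilizable (trivially, by the zero feedback) and the infinite-horizon regulator problem over $L_{2}$-controls is well posed. The admissibility of $B^{\wedge}_{m}$ as a boundary control operator and the continuity of the output $C^{\wedge}_{m}$ along trajectories driven by $L_{2}$-controls are supplied by the chain of continuous embeddings $\mathcal{D}(A^{[\wedge m]})\subset\mathbb{E}^{\wedge}_{m}\subset\mathcal{L}^{\wedge}_{m}$ together with the explicit form of $A^{[\wedge m]}$ from Theorem \ref{TH: AdditiveCompoundDelayDescription} and the bounded extension of the measurement operators from Proposition \ref{TH: OperatorCExntesionOntoEk}; this is exactly the ``structural Cauchy formula'' in the space $\mathcal{L}^{\wedge}_{m}$ that places the situation within the scope of \cite{Anikushin2020FreqDelay}. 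Establishing this in the present decomposed setting — where $B^{\wedge}_{m}$ and $C^{\wedge}_{m}$ act simultaneously across all boundary subspaces over the faces $\mathcal{B}^{(m)}_{j_{1}\ldots j_{k}}$ — is the step I expect to be the main obstacle, and it is the part that in the paper is taken over from \cite{Anikushin2023Comp} rather than re-proved.

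With the hypotheses verified, the Frequency Theorem produces a self-adjoint $P\in\mathcal{L}(\mathcal{L}^{\wedge}_{m})$ and $\delta_{V}>0$ such that $V(\Phi)=(\Phi,P\Phi)_{\mathcal{L}^{\wedge}_{m}}$ is positive-definite — positivity being extracted from the strictness of \eqref{EQ: FreqIneqDelayCompoundGeneralForm} together with $\nu_{0}$-stabilizability, although, as noted in \cite{Anikushin2023Comp}, $V$ need not be coercive — and the Yakubovich-type relation holds: for every classical solution $(\Phi(\cdot),\eta(\cdot))$ of the linear control system on $[0,t]$,
\[
e^{2\nu_{0}t}V(\Phi(t))-V(\Phi(0)) \le -\int_{0}^{t} e^{2\nu_{0}s}\,\mathcal{F}^{\mathbb{C}}(\Phi(s),\eta(s))\,ds \;-\; \delta_{V}\int_{0}^{t} e^{2\nu_{0}s}\,\big|\Phi(s)\big|^{2}_{\mathcal{L}^{\wedge}_{m}}\,ds .
\]

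Finally I would specialize to $\eta(s)=F'^{\wedge}_{m}(\pi^{s}(\wp))C^{\wedge}_{m}\Phi(s)$, so that $\Phi(\cdot)=\Xi^{\cdot}_{m}(\wp,\Phi(0))$ on the dense set of data $\Phi(0)=\xi_{1}\wedge\ldots\wedge\xi_{m}$ with $\xi_{1},\ldots,\xi_{m}\in\mathcal{D}(A)$, where classical solutions of \eqref{EQ: InfinitesimaldDescriptionXim} exist; along such trajectories $C^{\wedge}_{m}\Phi(s)$ is well defined and continuous in $s$ (via $\mathcal{D}(A^{[\wedge m]})\subset\mathbb{E}^{\wedge}_{m}$), so $\eta(\cdot)$ is a legitimate continuous control. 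Since $\mathcal{F}$ defines a quadratic constraint for \eqref{EQ: InfinitesimaldDescriptionXim}, we have $\mathcal{F}^{\mathbb{C}}(\Phi(s),\eta(s))\ge 0$ along these trajectories (and $\mathcal{F}(\Phi,0)\ge 0$), hence the first integral on the right-hand side is $\le 0$ and the displayed relation collapses to \eqref{EQ: QuadraticIneqDelayCompound}. The passage to arbitrary $\Phi\in\mathcal{L}^{\wedge}_{m}$ and arbitrary $\wp\in\mathcal{P}$ then follows by density, using that both sides of \eqref{EQ: QuadraticIneqDelayCompound} depend continuously on $(\wp,\Phi)$ — the left side because $P$ is bounded and $\Xi^{t}_{m}(\wp,\cdot)$ is continuous, the right side because $s\mapsto\Xi^{s}_{m}(\wp,\Phi)$ is continuous into $\mathcal{L}^{\wedge}_{m}$ uniformly for $s\in[0,t]$ by the uniform continuity of the cocycle $\Xi_{m}$.
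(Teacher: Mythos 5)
Your plan coincides with the paper's route: the paper gives no self-contained proof of Theorem \ref{TH: QuadraticFunctionalDelayCompoundTheorem} but imports it from \cite{Anikushin2023Comp}, where exactly this scheme is carried out --- casting \eqref{EQ: InfinitesimaldDescriptionXim} as a boundary-perturbed linear control system, invoking the Frequency Theorem of \cite{Anikushin2020FreqDelay} (with the structural Cauchy formulas supplying admissibility and well-posedness of the infinite-horizon regulator problem, the step you rightly flag as the main burden and defer to the citation), then closing the loop with $\eta(s)=F'^{\wedge}_{m}(\pi^{s}(\wp))C^{\wedge}_{m}\Phi(s)$, discarding the $\mathcal{F}$-integral via the quadratic constraint, and finishing by density. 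The only imprecision is attributing positive-definiteness of $V$ to strictness of \eqref{EQ: FreqIneqDelayCompoundGeneralForm} plus $\nu_{0}$-stabilizability: in this framework it is the constraint condition $\mathcal{F}(\Phi,0)\geq 0$ (which you mention only in passing), tested against the zero control in the regulator problem, that yields positivity of the value operator $P$.
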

\begin{proof}
	Let us give a sketch of the proof. For the existence of $P$, we apply \cite[Theorem 2.1]{Anikushin2020FreqDelay} to the pair $(A^{[\wedge m]}+\nu_{0} I, B^{\wedge}_{m})$ and the quadratic form $\mathcal{F}$. There are three key elements that constitute conditions of the theorem. Namely, 
	\begin{enumerate}
		\item[1).] Well-posedness of the integral quadratic functional associated with $\mathcal{F}$ and its computation via the Fourier transform, which is discussed around \cite[Lemma 6.2]{Anikushin2023Comp}. This is the heaviest part of the theory concerned with the structural Cauchy formula for linear inhomogeneous problems associated with $A^{[\wedge m]}+\nu_{0} I$ and its relation to pointwise measurement operators constituting the functional;
		\item[2).] Boundedness of the resolvent of $A^{[\wedge m]}+\nu_{0} I$ in $\mathcal{L}(\mathcal{L}^{\wedge}_{m};\mathbb{E}^{\wedge}_{m})$ uniformly on the line $-\nu_{0} + i\mathbb{R}$, which is guaranteed by \cite[Corollary 6.1]{Anikushin2023Comp}, see also Theorem \ref{TH: ResolventDelayCompoundBound} and Remark \ref{REM: UniformBoundResolventDelayCompound} of the present paper;
		\item[3).] Validity of the frequency inequality \eqref{EQ: FreqIneqDelayCompoundGeneralForm}.
	\end{enumerate}
	After \cite[Theorem 2.1]{Anikushin2020FreqDelay} is applied, the proof of \eqref{EQ: QuadraticIneqDelayCompound} is standard. 
\end{proof}

From \eqref{EQ: QuadraticIneqDelayCompound} we have that the cocycle $\Xi_{m}$ has a uniform exponential growth bounded from above by the exponent $\nu_{0}$. This is contained in the following corollary.
\begin{corollary}
	\label{COR: DelayCompoundUniformExponentialStability}
	Under \eqref{EQ: QuadraticIneqDelayCompound}, there exists a constant $M>0$ such that for all $\Phi \in \mathcal{L}^{\wedge}_{m}$ and $\wp \in \mathcal{P}$, we have
	\begin{equation}
		\label{EQ: CorExpStablCompCoc}
		| \Xi^{t}_{m}(\wp,\Phi) |_{\mathcal{L}^{\wedge}_{m}} \leq M e^{-\nu_{0} t} |\Phi|_{\mathcal{L}^{\wedge}_{m}} \qquad \text{for any} \quad t \geq 0.
	\end{equation}
    In particular, for $\nu_{0} > 0$, the cocycle $\Xi_{m}$ is uniformly exponentially stable.
\end{corollary}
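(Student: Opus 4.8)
The plan is to extract from the dissipation inequality \eqref{EQ: QuadraticIneqDelayCompound} a bound on the trajectory of $\Xi_{m}$ in a weighted $L_{2}$-in-time sense, and then to upgrade it to the pointwise bound \eqref{EQ: CorExpStablCompCoc} using the cocycle law together with the uniform-in-finite-time estimate built into the definition of a uniformly continuous cocycle. Recall that by Theorem \ref{TH: QuadraticFunctionalDelayCompoundTheorem} the form $V$ is positive-definite, hence in particular $V(\Psi)\geq 0$ for all $\Psi\in\mathcal{L}^{\wedge}_{m}$, while boundedness of $P$ gives $V(\Psi)\leq\|P\|\cdot|\Psi|^{2}_{\mathcal{L}^{\wedge}_{m}}$. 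I emphasize that coercivity of $V$ is \emph{not} available, so the naive step of bounding $|\Psi|^{2}$ from below by a multiple of $V(\Psi)$ cannot be used; this is the one place where some care is needed.

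First I would fix $\wp\in\mathcal{P}$ and $\Phi\in\mathcal{L}^{\wedge}_{m}$, abbreviate $y(s):=|\Xi^{s}_{m}(\wp,\Phi)|^{2}_{\mathcal{L}^{\wedge}_{m}}$, and, discarding the nonnegative term $e^{2\nu_{0}t}V(\Xi^{t}_{m}(\wp,\Phi))$ on the left-hand side of \eqref{EQ: QuadraticIneqDelayCompound}, conclude $\delta_{V}\int_{0}^{t}e^{2\nu_{0}s}y(s)\,ds\leq V(\Phi)\leq\|P\|\cdot|\Phi|^{2}_{\mathcal{L}^{\wedge}_{m}}$ for every $t\geq0$, whence
\begin{equation}
\int_{0}^{\infty}e^{2\nu_{0}s}y(s)\,ds\leq\delta_{V}^{-1}\|P\|\cdot|\Phi|^{2}_{\mathcal{L}^{\wedge}_{m}}.
\end{equation}
Next I would use that $\Xi_{m}$ is a uniformly continuous linear cocycle, so that $\varkappa:=\sup_{t\in[0,1]}\sup_{\wp\in\mathcal{P}}\|\Xi^{t}_{m}(\wp,\cdot)\|$ is finite by property (UC2); the cocycle identity (CO1) then gives, for $t\geq1$ and $s\in[t-1,t]$, that $\Xi^{t}_{m}(\wp,\Phi)=\Xi^{t-s}_{m}(\pi^{s}(\wp),\Xi^{s}_{m}(\wp,\Phi))$ with $t-s\in[0,1]$, hence $y(t)\leq\varkappa^{2}y(s)$. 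Multiplying by $e^{2\nu_{0}s}$, integrating over $s\in[t-1,t]$, and using $\int_{t-1}^{t}e^{2\nu_{0}s}\,ds\geq c_{\nu_{0}}e^{2\nu_{0}t}$ with $c_{\nu_{0}}:=\min\{1,e^{-2\nu_{0}}\}>0$, I obtain $y(t)\leq\varkappa^{2}(c_{\nu_{0}}\delta_{V})^{-1}\|P\|\,e^{-2\nu_{0}t}|\Phi|^{2}_{\mathcal{L}^{\wedge}_{m}}$ for $t\geq1$; on $[0,1]$ the estimate $y(t)\leq\varkappa^{2}|\Phi|^{2}_{\mathcal{L}^{\wedge}_{m}}\leq\varkappa^{2}\max\{1,e^{2\nu_{0}}\}\,e^{-2\nu_{0}t}|\Phi|^{2}_{\mathcal{L}^{\wedge}_{m}}$ is immediate. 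Collecting these constants (all independent of $\wp$ and $\Phi$) into a single $M>0$ yields \eqref{EQ: CorExpStablCompCoc}, and the last assertion for $\nu_{0}>0$ is then obvious.

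I do not expect a genuine obstacle here: the whole difficulty is concentrated in the passage from the weighted integral bound on the trajectory to the pointwise one, which, in the absence of coercivity of $V$, must go through the cocycle structure and the finiteness of $\varkappa$ rather than through a direct comparison of norms. Everything else is bookkeeping; one should only double-check that the resulting constant $M$ is uniform in $\wp\in\mathcal{P}$, which it manifestly is.
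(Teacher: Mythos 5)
Your proposal is correct and follows essentially the same route as the paper's proof: discard the nonnegative term $e^{2\nu_{0}t}V(\Xi^{t}_{m}(\wp,\Phi))$ to get the weighted integral bound $\int_{0}^{\infty}e^{2\nu_{0}s}|\Xi^{s}_{m}(\wp,\Phi)|^{2}\,ds\leq\delta_{V}^{-1}V(\Phi)$, then use the cocycle law with the uniform finite-time bound from \nameref{DESC: UC2} to turn it into pointwise decay (the paper does this by the Mean Value Theorem on $[t,t+1]$, you by directly integrating $y(t)\leq\varkappa^{2}y(s)$ over $[t-1,t]$ — an inessential variation). No gaps; the constants are uniform in $\wp$ and $\Phi$ exactly as you say.
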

\begin{proof}
	Since $\Xi_{m}$ is uniformly continuous, the value
	\begin{equation}
		M_{\Xi} \coloneq \sup_{\wp \in \mathcal{P}}\sup_{s \in [0,1]} \| \Xi^{s}_{m}(\wp,\cdot) \|_{\mathcal{L}(\mathcal{L}^{\wedge}_{m})}
	\end{equation}
    is finite. Then from the cocycle property for any $\Phi \in \mathcal{L}^{\wedge}_{m}$, $\wp \in \mathcal{P}$, $t \geq 0$, and $s_{0} \in [t,t+1]$, we have the following estimate:
    \begin{equation}
    	\label{EQ: CocUnifExpStabUnContAppl}
    	|\Xi^{t+1}_{m}(\wp, \Phi)|_{\mathcal{L}^{\wedge}_{m}} = |\Xi^{t+1-s_{0}}(\pi^{s_{0}}(\wp), \Xi^{s_{0}}_{m}(\wp,\Phi))|_{\mathcal{L}^{\wedge}_{m}} \leq M_{\Xi} \cdot |\Xi^{s_{0}}_{m}(\wp, \Phi)|_{\mathcal{L}^{\wedge}_{m}}.
    \end{equation}
	
	Using the positive-definiteness of $V$, \eqref{EQ: QuadraticIneqDelayCompound}, the mean value theorem, and \eqref{EQ: CocUnifExpStabUnContAppl}, for any $t \geq 0$ we get that
	\begin{equation}
		\begin{split}
			\delta^{-1}_{V}V(\Phi) \geq \int_{0}^{\infty} e^{2 \nu_{0} s}| \Xi^{s}_{m}(\wp,\Phi) |^{2}_{\mathcal{L}^{\wedge}_{m}}ds \geq \int_{t}^{t+1} e^{2 \nu_{0} s}| \Xi^{s}_{m}(\wp,\Phi) |^{2}_{\mathcal{L}^{\wedge}_{m}}ds =\\= e^{2 \nu_{0} s_{0}}| \Xi^{s_{0}}_{m}(\wp,\Phi) |^{2}_{\mathcal{L}^{\wedge}_{m}} \geq e^{2 \nu_{0} t} (e^{|\nu_{0}|}M_{\Xi})^{-2} | \Xi^{t+1}_{m}(\wp,\Phi) |^{2}_{\mathcal{L}^{\wedge}_{m}},
		\end{split}
	\end{equation}
    where $s_{0} \in [t,t+1]$. This gives that \eqref{EQ: CorExpStablCompCoc} is satisfied for a sufficiently large $M>0$.
\end{proof}

\begin{remark}
	\label{REM: RobustnessLargLyapExp}
	Note that \eqref{EQ: CorExpStablCompCoc} implies that the largest uniform Lyapunov exponent $\lambda_{1}(\Xi_{m})$ of $\Xi_{m}$ satisfies $\lambda_{1}(\Xi_{m}) \leq -\nu_{0}$ or, equivalently,
	\begin{equation}
		\label{EQ: LyapExpMtimesSumNegative}
		\lambda_{1}(\Xi) + \cdots + \lambda_{m}(\Xi) \leq -\nu_{0},
	\end{equation} 
    where $\lambda_{1}(\Xi), \lambda_{2}(\Xi), \ldots$ are the uniform Lyapunov exponents of $\Xi$ defined by induction from the relations $\lambda_{1}(\Xi) + \cdots + \lambda_{k}(\Xi) = \lambda_{1}(\Xi_{k})$ for $k=1, 2, \ldots$ (see \cite{Anikushin2023LyapExp, Temam1997}). In \cite{Anikushin2023LyapExp}, it is shown that the largest uniform exponent of $\Xi_{m}$ is upper semicontinuous with respect to $\Xi$ in an appropriate topology. In applications, where $\mathcal{P}$ is a positively invariant region for $\pi$ localizing an attractor, and $\Xi$ is the derivative cocycle for $\pi$ in this region, this gives the upper semicontinuity with respect to $C^{1}$-perturbations of $\pi$, which are uniformly small in a neighborhood of the attractor and preserve the invariance of $\mathcal{P}$. In particular, for $\nu_{0}>0$, the inequality \eqref{EQ: LyapExpMtimesSumNegative} gives negativity of the sum that is preserved under smallness of such perturbations. As we have discussed in the introduction, this is the condition that is verified in the works concerned with applications of the generalized Bendixson criterion.
\end{remark}

There is a natural choice of a quadratic constraint $\mathcal{F}$ for general $F'(\cdot)$ satisfying \eqref{EQ: LipschitzFprimeDelay}. Namely, for $\Phi \in \mathbb{E}^{\wedge}_{m}$ and $\eta \in \mathbb{U}^{\wedge}_{m}$, consider
\begin{equation}
	\label{EQ: DelayCompStandardChoiceQC}
	\mathcal{F}(\Phi,\eta) \coloneq \Lambda^{2} |C^{\wedge}_{m}\Phi|^{2}_{\mathbb{M}^{\wedge}_{m}} - |\eta|^{2}_{\mathbb{U}^{\wedge}_{m}}. 
\end{equation}
To see that it is indeed a quadratic constraint, note that $\eta = F^{\otimes}_{m}(\wp)C^{\otimes}_{m}\Phi$ with $\Phi \in \mathbb{E}^{\otimes}_{m}$ according to \eqref{EQ: DelayMeasumerentAntisymmetricSumSpace} and \eqref{EQ: FPrimeTensorOperatorDef} is equivalent to
\begin{equation}
	\label{EQ: ClosedFeedbackDelayCompoundRelation}
	\eta^{j}_{j_{1}\ldots j_{k}} = F'_{j}(\wp) C^{(k)}_{j,J(j)}R_{jj_{1}\ldots j_{k}}\Phi
\end{equation}
for any $k \in \{0,\ldots,m-1\}$, $1 \leq j_{1} < \cdots < j_{k} \leq m$, and $j \notin \{ j_{1},\ldots,j_{k} \}$. Here the norm of each operator $F'_{j}(\wp)$, due to its definition in \eqref{EQ: DelayCompoundOperatorFJdefinition}, coincides with the norm of $F'(\wp)$ and, therefore, is bounded from above by $\Lambda$. From this and the definitions of $\mathbb{U}^{\wedge}_{m}$ and $\mathbb{M}^{\wedge}_{m}$, see \eqref{EQ: AntisymmetricControlSpaceDefinition} and \eqref{EQ: AntisymmetricMeasurementSpaceDefinition}, it follows that $\mathcal{F}(\Phi,\eta) \geq 0$ for $\eta = F^{\wedge}_{m}(\wp)C^{\wedge}_{m}\Phi$. Since the inequality $\mathcal{F}(\Phi,0) \geq 0$ is obvious, $\mathcal{F}$ indeed defines a quadratic constraint with respect to \eqref{EQ: InfinitesimaldDescriptionXim}.

In terms of the transfer operator $W(p) \coloneq -C^{\wedge}_{m}(A^{[\wedge m]} - p I)^{-1}B^{\wedge}_{m}$, the frequency inequality \eqref{EQ: FreqIneqDelayCompoundGeneralForm} associated with $\mathcal{F}$ from \eqref{EQ: DelayCompStandardChoiceQC} takes the following form:
\begin{equation}
\label{EQ: DelayCompoundSmithIneqGeneral}
\sup_{\omega \in \mathbb{R}}\| W(-\nu_{0} + i \omega) \|_{\mathcal{L}((\mathbb{U}^{\wedge}_{m})^{\mathbb{C}};(\mathbb{M}^{\wedge}_{m})^{\mathbb{C}})} < \Lambda^{-1}.
\end{equation}
Note that the norms of $(A^{[\wedge m]}-pI)^{-1}$ in $\mathcal{L}(\mathcal{L}^{\wedge}_{m};\mathbb{E}^{\wedge}_{m})$ for $p=-\nu_{0} + i \omega$ are bounded uniformly in $\omega \in \mathbb{R}$, see Theorem \ref{TH: ResolventDelayCompoundBound} below. From this it is clear that \eqref{EQ: DelayCompoundSmithIneqGeneral} is satisfied for all sufficiently small $\Lambda > 0$. This reflects the general circumstance that the uniform exponential stability (of the semigroup $G^{\wedge m}$ in our case) is preserved under uniformly small perturbations (controlled by $\Lambda$ in our case).

In general, \eqref{EQ: FreqIneqDelayCompoundGeneralForm} (in particular, \eqref{EQ: DelayCompoundSmithIneqGeneral}) represents a nonlocal condition that may be useful to verify for particular problems. From \eqref{EQ: OperatorAmFormula} it is clear that computation of such conditions requires solving a first-order PDE with boundary conditions containing both partial derivatives and delays. This makes it hard to study the problem purely analytically. Moreover, solutions to the associated resolvent equations belong to the domain $\mathcal{D}(A^{[\wedge m]})$ and are not usual smooth functions. Therefore, the development of appropriate numerical methods for studying such problems is required. 

In the next section, we present a general approach to the problem and, by its means, state an approximation scheme for verification of some frequency inequalities in the case of scalar equations. For $m=2$, we also present a direct approach to examine the resolvent equations, which leads to explicit representations of transfer operators, and show its agreement with the approximation scheme, see Section \ref{SUBSEC: ExampleResolventEquations}.
\section{Computation of frequency inequalities}
\label{SEC: ComputationOfFreqIneqDelayComp}
\subsection{Quadratic constraints for self-adjoint derivatives}
\label{SEC: ConstraintsSelfAdjointCompoundDelay}
Before we begin developing approximation schemes, let us consider quadratic constraints that are somewhat more delicate than those in \eqref{EQ: DelayCompStandardChoiceQC}. Such constraints arise in the case when in terms of \eqref{EQ: LipschitzFprimeDelay} we have $\mathbb{M}=\mathbb{U}$ and $F'(\wp)$ is self-adjoint. For example, these conditions are satisfied in the study of equations with scalar nonlinearities and measurements.

First, let us formulate an auxiliary lemma.
\begin{lemma}
	\label{LEM: AuxiliaryLemmaSAQuadraticConst}
	Suppose that $S$ is a bounded self-adjoint operator in a real separable Hilbert space $\mathbb{F}$ with inner product $\langle \cdot, \cdot \rangle_{\mathbb{F}}$ such that for some constants $\Lambda_{1} \leq \Lambda_{2}$ we have
	\begin{equation}
		\label{EQ: SelfAdjointOperatorConstraintLemma}
		\Lambda_{1} \langle f,f \rangle_{\mathbb{F}} \leq \langle f,Sf \rangle_{\mathbb{F}} \leq \Lambda_{2} \langle f,f \rangle_{\mathbb{F}} \qquad \text{for any} \quad f \in \mathbb{F}.
	\end{equation} 
	Then the quadratic form $\mathcal{G}(f,\eta)$ of $f,\eta \in \mathbb{F}$ given by
	\begin{equation}
		\label{EQ: QuadraticFormSelfAdjoint}
		\mathcal{G}(f,\eta) \coloneq - \Lambda_{1} \Lambda_{2} \langle f,f \rangle_{\mathbb{F}} + (\Lambda_{1} + \Lambda_{2})\langle f,\eta \rangle_{\mathbb{F}}  - \langle \eta,\eta \rangle_{\mathbb{F}}
	\end{equation}
	satisfies $\mathcal{G}(f,\eta) \geq 0$ provided that $\eta = Sf$.
\end{lemma}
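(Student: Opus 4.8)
The plan is to substitute $\eta = Sf$ directly into \eqref{EQ: QuadraticFormSelfAdjoint} and recognize the resulting expression as (minus) the quadratic form of the operator $(S-\Lambda_{1}I)(S-\Lambda_{2}I)$. First I would use that $S$ is self-adjoint, so $(Sf,Sf)_{\mathbb{F}} = (f,S^{2}f)_{\mathbb{F}}$, and hence
\[
\mathcal{G}(f,Sf) = \bigl(f,\,[-\Lambda_{1}\Lambda_{2}I + (\Lambda_{1}+\Lambda_{2})S - S^{2}]f\bigr)_{\mathbb{F}} = -\bigl(f,\,(S-\Lambda_{1}I)(S-\Lambda_{2}I)f\bigr)_{\mathbb{F}},
\]
where I used the factorization $(S-\Lambda_{1}I)(S-\Lambda_{2}I) = S^{2} - (\Lambda_{1}+\Lambda_{2})S + \Lambda_{1}\Lambda_{2}I$ (the two factors commute, being polynomials in $S$). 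Thus the claim $\mathcal{G}(f,Sf)\geq 0$ reduces to the operator inequality $\bigl(f,(S-\Lambda_{1}I)(S-\Lambda_{2}I)f\bigr)_{\mathbb{F}} \leq 0$ for all $f \in \mathbb{F}$.

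The second step is to establish this inequality from \eqref{EQ: SelfAdjointOperatorConstraintLemma}. Set $P := S-\Lambda_{1}I$ and $Q := \Lambda_{2}I - S$; then \eqref{EQ: SelfAdjointOperatorConstraintLemma} says exactly $P \geq 0$ and $Q \geq 0$, while $P$ and $Q$ commute and satisfy $PQ = -(S-\Lambda_{1}I)(S-\Lambda_{2}I)$. So it suffices to show $PQ \geq 0$. Since $P \geq 0$ it has a bounded self-adjoint square root $P^{1/2}$, which is a strong limit of polynomials in $P$ and therefore commutes with $Q$; hence $PQ = P^{1/2}QP^{1/2}$ and $(f,PQf)_{\mathbb{F}} = (P^{1/2}f,\,QP^{1/2}f)_{\mathbb{F}} \geq 0$ because $Q \geq 0$. (Alternatively, one may invoke the spectral theorem: \eqref{EQ: SelfAdjointOperatorConstraintLemma} together with self-adjointness forces $\sigma(S)\subset[\Lambda_{1},\Lambda_{2}]$, so with $S = \int_{[\Lambda_{1},\Lambda_{2}]}\lambda\,dE_{\lambda}$ we get $\bigl(f,(S-\Lambda_{1}I)(S-\Lambda_{2}I)f\bigr)_{\mathbb{F}} = \int_{[\Lambda_{1},\Lambda_{2}]}(\lambda-\Lambda_{1})(\lambda-\Lambda_{2})\,d(E_{\lambda}f,f)_{\mathbb{F}} \leq 0$, since the integrand is nonpositive on $[\Lambda_{1},\Lambda_{2}]$ and $d(E_{\lambda}f,f)_{\mathbb{F}}$ is a nonnegative measure.)

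Combining the two steps yields $\mathcal{G}(f,Sf) = (f,PQf)_{\mathbb{F}} \geq 0$, which is the assertion. There is essentially no serious obstacle here; the only point requiring a little care is the justification that $P^{1/2}$ commutes with $Q$ (or, in the spectral route, the inclusion $\sigma(S)\subset[\Lambda_{1},\Lambda_{2}]$), both of which are standard facts about bounded self-adjoint operators.
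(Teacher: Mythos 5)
Your proof is correct. Your main route differs slightly from the paper's: after substituting $\eta = Sf$ you factor the resulting form as $-\bigl(f,(S-\Lambda_{1}I)(S-\Lambda_{2}I)f\bigr)_{\mathbb{F}}$ and conclude via the standard fact that the product of two commuting nonnegative self-adjoint operators is nonnegative (using the square root $P^{1/2}$ and its commutation with $Q$, which is correctly justified). The paper instead invokes the spectral theorem in its multiplication-operator form: it realizes $\mathbb{F}$ as $L_{2}(\mathcal{X};\mu;\mathbb{R})$ with $S$ acting as multiplication by a function $\varphi$ taking values in $[\Lambda_{1},\Lambda_{2}]$, and then observes that $\mathcal{G}(f,Sf)=\int_{\mathcal{X}}\bigl[-\varphi^{2}+(\Lambda_{1}+\Lambda_{2})\varphi-\Lambda_{1}\Lambda_{2}\bigr]|f|^{2}\,d\mu \geq 0$ because the bracket is pointwise nonnegative — which is precisely the parenthetical alternative you sketch with the spectral measure $dE_{\lambda}$. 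What your primary argument buys is a more elementary, purely operator-algebraic proof that needs only the existence of the positive square root rather than the full spectral theorem; what the paper's (and your alternative) route buys is a one-line reduction to the scalar inequality $(\lambda-\Lambda_{1})(\lambda-\Lambda_{2})\leq 0$ on $[\Lambda_{1},\Lambda_{2}]$, which makes the factorized structure of the form transparent. Both are complete; the only care points you flag (commutation of $P^{1/2}$ with $Q$, and $\sigma(S)\subset[\Lambda_{1},\Lambda_{2}]$ following from the form bounds) are indeed standard and handled correctly.
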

\begin{proof}
	Using the spectral theorem for bounded self-adjoint operators, we may assume that $\mathbb{F} = L_{2}(\mathcal{X};\mu;\mathbb{R})$, where $(\mathcal{X},\mu)$ is a measure space, $\mu$ is $\sigma$-finite, and $S$ is a multiplication on a $\mu$-essentially bounded function $\varphi$ on $\mathcal{X}$ with $\varphi(x) \in [\Lambda_{1},\Lambda_{2}]$ for $\mu$-almost all $x \in \mathcal{X}$. Substituting $\eta = Sf$ into \eqref{EQ: QuadraticFormSelfAdjoint}, we obtain
	\begin{equation}
		\mathcal{G}(f,Sf) = \int_{\mathcal{X}}\left[ -\varphi^{2}(x) + (\Lambda_{1}+\Lambda_{2})\varphi(x) - \Lambda_{1}\Lambda_{2} \right] |f(x)|^{2} d\mu(x) \geq 0,
	\end{equation}
	since the multiplier in the square brackets is nonnegative $\mu$-almost everywhere.
\end{proof}

Next, in terms of Section \ref{SEC: StabilityOfDelayCompounds}, we assume that $\mathbb{F} \coloneq \mathbb{M} = \mathbb{U}$ and the operator $S \coloneq F'(\wp)$ is self-adjoint for any $\wp \in \mathcal{P}$ and satisfies \eqref{EQ: SelfAdjointOperatorConstraintLemma} for some $\Lambda_{1} \leq 0 \leq \Lambda_{2}$ (independent of $\wp$). Then the same holds for $\mathbb{F} \coloneq \mathbb{U}^{\wedge}_{m} = \mathbb{M}^{\wedge}_{m}$ and the operator $S \coloneq F^{\wedge}_{m}(\wp)$ by similar arguments as used near \eqref{EQ: ClosedFeedbackDelayCompoundRelation}. Thus, for the quadratic form $\mathcal{G}(M,\eta)$ of $M \in \mathbb{M}^{\wedge}_{m}$ and $\eta \in \mathbb{U}^{\wedge}_{m}$,
\begin{equation}
	\mathcal{G}(M,\eta) \coloneq -\Lambda_{1} \Lambda_{2} \langle M, M\rangle_{\mathbb{M}^{\wedge}_{m}} + (\Lambda_{1} + \Lambda_{2}) \langle M, \eta\rangle_{\mathbb{U}^{\wedge}_{m}} - \langle \eta,\eta \rangle_{\mathbb{U}^{\wedge}_{m}},
\end{equation}
the associated quadratic form $\mathcal{F}(\Phi,\eta)\coloneq\mathcal{G}(C^{\wedge}_{m}\Phi,\eta)$ of $\Phi \in \mathbb{E}^{\wedge}_{m}$ and $\eta \in \mathbb{U}^{\wedge}_{m}$ defines a quadratic constraint with respect to \eqref{EQ: InfinitesimaldDescriptionXim} according to Lemma \ref{LEM: AuxiliaryLemmaSAQuadraticConst}.

For the corresponding frequency inequality \eqref{EQ: FreqIneqDelayCompoundGeneralForm}, we have to satisfy for some $\delta>0$ and all $p=-\nu_{0}+i\omega$, where $\omega \in \mathbb{R}$, and $\eta \in (\mathbb{U}^{\wedge}_{m})^{\mathbb{C}}$, the following inequality:
\begin{equation}
	\label{EQ: DelayCompoundSelfAdjointFreqIneqGeneral}
	\Lambda_{1}\Lambda_{2} \left|W(p)\eta \right|^{2}_{(\mathbb{M}^{\wedge}_{m})^{\mathbb{C}}} - (\Lambda_{1} + \Lambda_{2})\operatorname{Re}\langle W(p)\eta, \eta \rangle_{(\mathbb{U}^{\wedge}_{m})^{\mathbb{C}}} + |\eta|^{2}_{(\mathbb{U}^{\wedge}_{m})^{\mathbb{C}}} \geq \delta |\eta|^{2}_{(\mathbb{U}^{\wedge}_{m})^{\mathbb{C}}}.
\end{equation}

In the case $\Lambda_{1} = -\Lambda$ and $\Lambda_{2} = \Lambda$ for some $\Lambda>0$, one may see that \eqref{EQ: DelayCompoundSelfAdjointFreqIneqGeneral} is equivalent to \eqref{EQ: DelayCompoundSmithIneqGeneral}, i.e.,
\begin{equation}
	\label{EQ: DelayCompoundSelfAdjContraintIdenticalBoundsFreqC}
	\sup_{\omega \in \mathbb{R}}\| W(-\nu_{0} + i \omega) \|_{\mathcal{L}((\mathbb{U}^{\wedge}_{m})^{\mathbb{C}};(\mathbb{M}^{\wedge}_{m})^{\mathbb{C}})}  < \Lambda^{-1}.
\end{equation}

Now let us assume that $\Lambda_{1} = 0$ and $\Lambda_{2} = \Lambda>0$ (the case of monotone nonlinearities). Then \eqref{EQ: DelayCompoundSelfAdjointFreqIneqGeneral} is equivalent to the following condition:
\begin{equation}
	\label{EQ: SelfAdjointAdditiveSymmetriztionOptimizationFreqCond}
	\sup_{\omega \in \mathbb{R}}\sup_{\substack{\eta \in (\mathbb{U}^{\wedge}_{m})^{\mathbb{C}},\\ \eta\not=0}} \frac{\left\langle S_{W}(-\nu_{0} + i \omega)\eta, \eta \right\rangle_{(\mathbb{U}^{\wedge}_{m})^{\mathbb{C}}}}{|\eta^{2}|_{(\mathbb{U}^{\wedge}_{m})^{\mathbb{C}}}} < \Lambda^{-1},
\end{equation}
where $S_{W}(p) \coloneq \frac{1}{2}(W(p)+W^{*}(p))$ is the additive symmetrization of $W(p)=-C^{\wedge}_{m}(A^{[\wedge m]} - pI)^{-1}B^{\wedge}_{m}$. Since $S_{W}(p)$ is self-adjoint, the inner supremum in \eqref{EQ: SelfAdjointAdditiveSymmetriztionOptimizationFreqCond} is the upper spectral bound of $S_{W}(p)$.

In the forthcoming sections, we will develop an approximation scheme to verify \eqref{EQ: DelayCompoundSelfAdjContraintIdenticalBoundsFreqC} and \eqref{EQ: SelfAdjointAdditiveSymmetriztionOptimizationFreqCond} for $n = r_{1} = r_{2} =1$ (in terms of \eqref{EQ: DelayRnLinearized}) and report some experimental results (see Remark \ref{REM: FrequencyCompoundGeneralCase} for the general case). Note also that in our experiments we use only the condition \eqref{EQ: DelayCompoundSelfAdjContraintIdenticalBoundsFreqC}, which, as it turned out, provides better results. However, \eqref{EQ: SelfAdjointAdditiveSymmetriztionOptimizationFreqCond} may be useful in some other applications.

\subsection{Approximation scheme: preliminaries}
\label{SEC: ApproximationFreqIneqDelayCompound}
For the computation of \eqref{EQ: DelayCompoundSelfAdjContraintIdenticalBoundsFreqC} and \eqref{EQ: SelfAdjointAdditiveSymmetriztionOptimizationFreqCond}, we have the following standard lemma.
\begin{lemma}
	\label{LEM: RaleighQuotientSelfAdjointOptimizationApproximation}
	Suppose that $\mathbb{F}_{1}$ and $\mathbb{F}_{2}$ are separable complex Hilbert spaces with orthonormal bases $\{ e^{1}_{k} \}_{k \geq 1}$ and $\{ e^{2}_{k} \}_{k \geq 1}$, respectively. Let $S$ be a bounded linear operator from $\mathbb{F}_{1}$ to $\mathbb{F}_{2}$. Consider the orthogonal projectors $P^{1}_{N}$ and $P^{2}_{N}$ onto the linear spans of $e^{1}_{1},\ldots,e^{1}_{N}$ and $e^{2}_{1},\ldots,e^{2}_{N}$, respectively. Then as $N \to \infty$ we have
	\begin{equation}
		\label{EQ: RaleighQuotientOptimizationApproximationConvergence}
		\alpha_{N} \coloneq \|P^{2}_{N} S P^{1}_{N} \|_{\mathcal{L}(\mathbb{F}_{1}, \mathbb{F}_{2})}  \to \alpha \coloneq \| S \|_{ \mathcal{L}(\mathbb{F}_{1}, \mathbb{F}_{2}) },
	\end{equation}
    and $\alpha_{N} \leq \alpha_{N+1}$ for any $N = 1,2,\ldots\,$. Moreover, if $\mathbb{F}_{1}=\mathbb{F}_{2}$, $P^{1}_{N}=P^{2}_{N}=P_{N}$, and $S$ is self-adjoint, then the analogous convergence holds for the largest eigenvalues $\alpha_{N}$ of $P_{N} S P_{N}$ and the upper spectral bound $\alpha$ of $S$.
\end{lemma}
Below we will deal only with the case $\mathbb{F}_{1} = \mathbb{F}_{2} \eqcolon \mathbb{F}$ and $P^{1}_{N} = P^{2}_{N} \eqcolon P_{N}$.

We aim to apply Lemma \ref{LEM: RaleighQuotientSelfAdjointOptimizationApproximation} to the operators $S \coloneq W(p)$ (see \eqref{EQ: DelayCompoundSelfAdjContraintIdenticalBoundsFreqC}) or $S\coloneq S_{W}(p)$ (see \eqref{EQ: SelfAdjointAdditiveSymmetriztionOptimizationFreqCond}) for $p=-\nu_{0} + i\omega$ with a fixed $\nu_{0} \in \mathbb{R}$ and all $\omega \in \mathbb{R}$. Note that in this case we have $\alpha_{N} = \alpha_{N}(-\nu_{0} + i\omega)$ and $\alpha = \alpha(-\nu_{0} + i\omega)$ and, consequently, the convergence in \eqref{EQ: RaleighQuotientOptimizationApproximationConvergence} depends on $\omega$. Using Theorem \ref{TH: ResolventDelayCompoundBound} below and the first resolvent identity, one can show the following. 
\begin{lemma}\cite[Lemma 7.2]{Anikushin2023Comp}
	\label{LEM: ApproxSchemeAlphLipsch}
	In the above context, the functions $\mathbb{R} \ni \omega \mapsto \alpha_{N}(-\nu_{0} + i\omega)$ and $\mathbb{R} \ni \omega \mapsto \alpha(-\nu_{0} + i\omega)$ are globally Lipschitz with a uniform in $N$ Lipschitz constant.
\end{lemma}
This implies that $\alpha_{N}(-\nu_{0} + i\omega)$ converges to $\alpha(-\nu_{0} + i\omega)$ uniformly on compact subsets of $\omega$ as $N \to \infty$. However, to verify frequency inequalities, we have to investigate them for $\omega$ from the entire $\mathbb{R}$. Below we conjecture that it is sufficient to work on a finite segment due to an asymptotically almost periodic (or even constant) behavior of $\alpha_{N}(-\nu_{0}+i\omega)$ as $|\omega| \to \infty$, see Conjecture \ref{CONJ: DelayCompoundOscillationPattern}.

Thus, from the perspective given by Lemma \ref{LEM: RaleighQuotientSelfAdjointOptimizationApproximation}, for numerical verification of frequency inequalities, it is required to compute $-C^{\wedge}_{m}(A^{[\wedge m]} - p I)^{-1}B^{\wedge}_{m} \eta$ for a finite number of $\eta$ belonging to an orthonormal basis in $(\mathbb{U}^{\wedge}_{m})^{\mathbb{C}}$. 

In the general case, we leave open the problem of developing appropriate numerical schemes for direct computations of the resolvent via solving the corresponding first-order PDE with boundary conditions on the $m$-cube $(-\tau,0)^{m}$ according to the description of $A^{[\wedge m]}$ from Theorem \ref{TH: AdditiveCompoundDelayDescription}. We refer to Section \ref{SUBSEC: ExampleResolventEquations} for an illustration by means of the simplest example, where an explicit representation of solutions is possible.

Below, we develop an alternative approach that is concerned with the computation of trajectories of the semigroup $G$ only. It relies on the following proposition, which is the well-known representation of the resolvent via the Laplace transform of the semigroup. For convenience, hereinafter we use the same notations to denote the complexifications of operators defined in Section \ref{SEC: StabilityOfDelayCompounds}, but we emphasize complexifications of the spaces.
\begin{proposition}\cite[Theorem 1.10, Chapter II]{EngelNagel2000}
	\label{PROP: ResolventDelayCompoundDescription}
	Let $p \in \mathbb{C}$ be such that $\operatorname{Re}p > \omega(G^{\wedge m})$, where $\omega(G^{\wedge m})$ is the growth bound of $G^{\wedge m}$. Then for any $\Phi \in (\mathcal{L}^{\wedge}_{m})^{\mathbb{C}}$ we have
	\begin{equation}
	\label{EQ: ResolventEquationForDelayCompoundGeneral}
			-(A^{[\wedge m]}-pI)^{-1} \Phi = \int_{0}^{\infty} e^{-p t}G^{\wedge m}(t)\Phi dt.
	\end{equation}
    In particular, for $\psi_{1},\ldots,\psi_{m} \in \mathbb{H}^{\mathbb{C}}$, and $\Phi = \psi_{1} \wedge \cdots \wedge \psi_{m}$, we have
    \begin{equation}
    \label{EQ: ResolventEquationForDelayCompoundComputation}
    	-(A^{[\wedge m]}-pI)^{-1} (\psi_{1}\wedge \cdots \wedge \psi_{m}) = \int_{0}^{\infty} e^{-pt}G(t)\psi_{1} \wedge \cdots \wedge G(t)\psi_{m}dt.
    \end{equation}
\end{proposition}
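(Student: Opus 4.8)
The plan is to derive \eqref{EQ: ResolventEquationForDelayCompoundGeneral} directly from the standard Laplace-transform representation of the resolvent of the generator of a $C_{0}$-semigroup, and then to obtain \eqref{EQ: ResolventEquationForDelayCompoundComputation} as a specialization to decomposable wedges. The abstract statement I would invoke is Theorem~1.10 of Chapter~II in \cite{EngelNagel2000}: if $\mathcal{A}$ generates a $C_{0}$-semigroup $\mathcal{T}$ on a Banach space with growth bound $\omega(\mathcal{T})$, then for every $p$ with $\operatorname{Re}p > \omega(\mathcal{T})$ one has $p \in \rho(\mathcal{A})$ and $(pI-\mathcal{A})^{-1}x = \int_{0}^{\infty} e^{-pt}\mathcal{T}(t)x\,dt$ for all $x$, the integral being an improper Riemann (equivalently, Bochner) integral that converges absolutely. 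First I would recall, from Subsection~\ref{SUBSEC: MultiplicativeAddCompoundsBrief} and the discussion around \eqref{EQ: DelayEqsHilbertSpace}, that $G$ is an eventually compact $C_{0}$-semigroup on $\mathbb{H}$, hence $G^{\wedge m}$ is a $C_{0}$-semigroup on $\mathbb{H}^{\wedge m}$ (identified with $\mathcal{L}^{\wedge}_{m}$) with generator $A^{[\wedge m]}$. Applying the cited theorem to $\mathcal{A}=A^{[\wedge m]}$, $\mathcal{T}=G^{\wedge m}$ gives, for $\operatorname{Re}p > \omega(G^{\wedge m})$,
\begin{equation*}
(pI - A^{[\wedge m]})^{-1}\Phi = \int_{0}^{\infty} e^{-pt}G^{\wedge m}(t)\Phi\,dt \quad\text{for all }\Phi \in (\mathcal{L}^{\wedge}_{m})^{\mathbb{C}},
\end{equation*}
and multiplying by $-1$ yields \eqref{EQ: ResolventEquationForDelayCompoundGeneral}, since $-(A^{[\wedge m]}-pI)^{-1} = (pI - A^{[\wedge m]})^{-1}$.

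For the second formula, I would take $\Phi = \psi_{1}\wedge\ldots\wedge\psi_{m}$ with $\psi_{1},\ldots,\psi_{m}\in\mathbb{H}^{\mathbb{C}}$ and use the defining property of the multiplicative compound semigroup, namely $G^{\wedge m}(t)(\psi_{1}\wedge\ldots\wedge\psi_{m}) = G(t)\psi_{1}\wedge\ldots\wedge G(t)\psi_{m}$ for $t\geq 0$, which is exactly the relation $G^{\wedge m}(t) = (G(t))^{\wedge m}$ together with the identity $G^{\wedge m}(v_{1}\wedge\ldots\wedge v_{m}) = Gv_{1}\wedge\ldots\wedge Gv_{m}$ recorded in Subsection~\ref{SUBSEC: MultiplicativeAddCompoundsBrief}. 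Substituting this into the integrand of \eqref{EQ: ResolventEquationForDelayCompoundGeneral} gives \eqref{EQ: ResolventEquationForDelayCompoundComputation}. The only point requiring a word of justification is that the Bochner integral $\int_{0}^{\infty} e^{-pt}\,G(t)\psi_{1}\wedge\ldots\wedge G(t)\psi_{m}\,dt$ is well-defined: this follows because $t\mapsto G(t)\psi_{i}$ is continuous on $[0,\infty)$ (strong continuity of $G$), so $t\mapsto G(t)\psi_{1}\wedge\ldots\wedge G(t)\psi_{m}$ is continuous into $\mathcal{L}^{\wedge}_{m}$ by continuity of the multilinear wedge map, and the $e^{-pt}$ factor together with the exponential growth bound $\|G^{\wedge m}(t)\|\leq Me^{\omega' t}$ for any $\omega' > \omega(G^{\wedge m})$ ensures absolute convergence; but all of this is already subsumed in the abstract statement applied to $\Phi = \psi_{1}\wedge\ldots\wedge\psi_{m}$, so no separate estimate is needed.

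There is no real obstacle here: the proposition is a direct corollary of a textbook fact, and the whole content is the identification of $G^{\wedge m}$ with its action on decomposable wedges. The mildest care is in the hypothesis $\operatorname{Re}p > \omega(G^{\wedge m})$ rather than $\operatorname{Re}p > s(A^{[\wedge m]})$; since $G^{\wedge m}$ is eventually compact, it is eventually norm continuous, so by the standard theory its growth bound coincides with its spectral bound, $\omega(G^{\wedge m}) = s(A^{[\wedge m]})$, and this can be mentioned in passing to reconcile the statement with the threshold $-\nu_{0} > s(A^{[\wedge m]})$ used in \nameref{TH: QuadraticFunctionalDelayCompoundTheorem} and \eqref{EQ: FreqIneqDelayCompoundGeneralForm}. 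I would therefore present the proof in two short lines: cite the Laplace-transform formula for the resolvent, then specialize via $G^{\wedge m}(t) = (G(t))^{\wedge m}$.
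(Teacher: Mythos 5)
Your proposal is correct and follows exactly the route the paper takes: the paper itself justifies this proposition solely by citing the Laplace-transform representation of the resolvent (Theorem 1.10, Chapter II of \cite{EngelNagel2000}) applied to the $C_{0}$-semigroup $G^{\wedge m}$ with generator $A^{[\wedge m]}$, and the second formula is just the specialization $G^{\wedge m}(t)(\psi_{1}\wedge\ldots\wedge\psi_{m}) = G(t)\psi_{1}\wedge\ldots\wedge G(t)\psi_{m}$. Your side remark that $\omega(G^{\wedge m}) = s(A^{[\wedge m]})$ by eventual compactness is likewise consistent with what the paper notes in Remark \ref{REM: UniformBoundResolventDelayCompound}.
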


Now our aim is to provide appropriate uniform estimates for the tail of the integral from \eqref{EQ: ResolventEquationForDelayCompoundGeneral}. For this, the following fundamental property of additive compound operators arising from delay equations is essential. Recall here the intermediate space $\mathbb{E}^{\wedge}_{m}$ defined below \eqref{EQ: SpaceEmDefinitionDelayCompoundGeneral}.
\begin{theorem}\cite[Theorem 4.4]{Anikushin2023Comp}
	\label{TH: ResolventDelayCompoundBound}
	For regular (i.e., non-spectral) points $p \in \mathbb{C}$ of $A^{[\wedge m]}$ we have the following estimate:
	\begin{equation}
		\label{EQ: ResolventEstimateEmDelayCompound}
		\| (A^{[\wedge m]} - p I)^{-1} \|_{\mathcal{L}((\mathcal{L}^{\wedge}_{m})^{\mathbb{C}};(\mathbb{E}^{\wedge}_{m})^{\mathbb{C}}) } \leq C_{1}(p) \cdot \| (A^{[\wedge m]} - p I)^{-1} \|_{\mathcal{L}((\mathcal{L}^{\wedge}_{m})^{\mathbb{C}}) } + C_{2}(p),
	\end{equation}
	where the constants $C_{1}(p)$ and $C_{2}(p)$ in fact depend only on $\max\{1, e^{-\tau \operatorname{Re}p}\}$ in a monotonically increasing manner.
\end{theorem}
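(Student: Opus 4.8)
The plan is to reduce the operator-norm bound \eqref{EQ: ResolventEstimateEmDelayCompound} to the a priori estimate
\[
\|\Phi\|_{(\mathbb{E}^{\wedge}_{m})^{\mathbb{C}}} \le C_{1}(p)\,\|\Phi\|_{(\mathcal{L}^{\wedge}_{m})^{\mathbb{C}}} + C_{2}(p)\,\|(A^{[\wedge m]}-pI)\Phi\|_{(\mathcal{L}^{\wedge}_{m})^{\mathbb{C}}}, \qquad \Phi \in \mathcal{D}(A^{[\wedge m]}),
\]
since putting $\Phi = (A^{[\wedge m]}-pI)^{-1}\Psi$ and taking the supremum over $\|\Psi\|\le 1$ then gives exactly \eqref{EQ: ResolventEstimateEmDelayCompound}. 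Fix such a $\Phi$, set $\Psi := (A^{[\wedge m]}-pI)\Phi$, $K := \max\{1,e^{-\tau\operatorname{Re}p}\}$ and let $\mathbf{1}=(1,\ldots,1)$ be the diagonal vector. By Theorem \ref{TH: AdditiveCompoundDelayDescription}, for every $k$-face $S=\{j_{1}<\ldots<j_{k}\}$ the restriction $u_{S}:=R_{j_{1}\ldots j_{k}}\Phi$ lies in $\mathcal{W}^{2}_{D}((-\tau,0)^{k};(\mathbb{R}^{n})^{\otimes m})$ and solves on $(-\tau,0)^{k}$ the transport equation
\[
\sum_{l=1}^{k}\frac{\partial}{\partial\theta_{l}}u_{S}-p\,u_{S}=g_{S},\qquad g_{S}:=R_{j_{1}\ldots j_{k}}\Psi-\sum_{j\notin S}\widetilde{A}^{(k)}_{j,J(j)}R_{jj_{1}\ldots j_{k}}\Phi.
\]
I would estimate $E_{k}:=\max_{|S|=k}\|u_{S}\|_{\mathbb{E}_{k}((\mathbb{R}^{n})^{\otimes m})}$ by downward induction on $k$, from $k=m$ (where the sum over $j\notin S$ is empty, so $g_{S}=R_{1\ldots m}\Psi$) down to $k=0$ (where trivially $E_{0}=|R_{0}\Phi|\le\|\Phi\|_{\mathcal{L}^{\wedge}_{m}}$, because the single $0$-face carries an atom of $\mu^{\otimes m}$ of unit mass).

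The core of the inductive step is the method of characteristics for the transport equation on a given $k$-face. For $\mu^{\otimes m}$-a.e.\ $\theta\in(-\tau,0)^{k}$ let $s^{*}(\theta):=-\max_{l}\theta_{l}\in(0,\tau)$ and let $l^{*}$ realize the maximum, so that $\theta+s^{*}(\theta)\mathbf{1}$ belongs to the face $\{\theta_{l^{*}}=0\}$ and still lies in $[-\tau,0]^{k}$. Since $u_{S}\in\mathcal{W}^{2}_{D}$, its restriction to $\mu^{\otimes m}$-a.e.\ diagonal line is absolutely continuous and satisfies the equation there, which yields
\[
u_{S}(\theta)=e^{-p\,s^{*}(\theta)}\,u_{S}\!\left(\theta+s^{*}(\theta)\mathbf{1}\right)-\int_{0}^{s^{*}(\theta)}e^{-p\sigma}\,g_{S}(\theta+\sigma\mathbf{1})\,d\sigma .
\]
The boundary value $u_{S}(\theta+s^{*}\mathbf{1})$ is the trace of $u_{S}$ on $\{\theta_{l^{*}}=0\}$; by the structural description of $\mathcal{D}(A^{[\wedge m]})$ from \cite{Anikushin2023Comp} this trace coincides, up to an isometry of the value space $(\mathbb{R}^{n})^{\otimes m}$, with the lower restriction $R_{S\setminus\{l^{*}\}}\Phi$, whose $L_{2}$-norm does not exceed $\|\Phi\|_{\mathcal{L}^{\wedge}_{m}}$ by the orthogonal decomposition \eqref{EQ: TensorSpaceDelayCompoundDecompositionBoundarySubspaces}. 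Bounding $e^{-\operatorname{Re}p\,\sigma}\le K$ for $\sigma\in[0,\tau]$, applying Cauchy--Schwarz in $\sigma$, and using that both maps $\theta\mapsto\theta+s^{*}(\theta)\mathbf{1}$ and $(\theta,\sigma)\mapsto\theta+\sigma\mathbf{1}$ restricted to a fixed slice $\{\theta_{l}=\mathrm{const}\}$ are piecewise affine with unit Jacobian onto their images inside $(-\tau,0)^{k}$, one estimates the $L_{2}$-norm of every slice of $u_{S}$ and hence obtains a recursion of the form
\[
E_{k}^{2}\le c_{k}\,K^{2}\Big(\|\Phi\|_{\mathcal{L}^{\wedge}_{m}}^{2}+\tau\big(\|\Psi\|_{\mathcal{L}^{\wedge}_{m}}+(m-k)\,c_{A}\,E_{k+1}\big)^{2}\Big),
\]
where $c_{A}$ is the norm of $\widetilde{A}^{(k)}_{j,J}\colon\mathbb{E}_{k+1}((\mathbb{R}^{n})^{\otimes m})\to L_{2}((-\tau,0)^{k};\cdot)$ (finite by the analog of Proposition \ref{TH: OperatorCExntesionOntoEk} for $\widetilde{A}$) and $c_{k}$ a combinatorial constant. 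For $n=1$ only the levels $k=m$ and $k=m-1$ actually occur, by Corollary \ref{COR: AntisymmetricRelationsScalar}.

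Running this recursion downward from $k=m$ expresses each $E_{k}$, and therefore $\|\Phi\|_{\mathbb{E}^{\wedge}_{m}}$ (which is equivalent to $\sum_{k}\sum_{|S|=k}\|R_{S}\Phi\|_{\mathbb{E}_{k}}$), as a bound of the required form, with $C_{1}(p)$ and $C_{2}(p)$ polynomials in $K=\max\{1,e^{-\tau\operatorname{Re}p}\}$ with nonnegative coefficients — in particular nondecreasing in $K$. I expect the main obstacle to be the two structural inputs carried over from \cite{Anikushin2023Comp}: the legitimacy of integrating the transport equation along a.e.\ characteristic line for merely $\mathcal{W}^{2}_{D}$-regular restrictions, and the precise identification of the boundary traces of $u_{S}$ on the faces $\{\theta_{l}=0\}$ with the lower-dimensional restrictions $R_{S\setminus\{l\}}\Phi$. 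Granting these, the remaining work — the bookkeeping of the downward recursion and of the exponential factors — is routine.
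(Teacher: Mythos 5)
The paper contains no proof of this theorem to compare against: it is quoted from the companion work \cite{Anikushin2023Comp} (``the following fundamental property \dots is essential (see \cite{Anikushin2023Comp})''), so your argument can only be judged on its own merits. On those merits it is essentially correct and is the natural route. The reduction to the a priori estimate for $\Phi\in\mathcal{D}(A^{[\wedge m]})$ is exactly right; Theorem \ref{TH: AdditiveCompoundDelayDescription} does put each restriction $R_{j_1\ldots j_k}\Phi$ in $\mathcal{W}^{2}_{D}$ and turns $(A^{[\wedge m]}-pI)\Phi=\Psi$ into a diagonal transport equation on each face; integrating forward along characteristics (toward the faces $\{\theta_l=0\}$, which is the only direction where boundary data are controlled by $\|\Phi\|_{\mathcal{L}^{\wedge}_m}$) and running the recursion downward in $k$, with the $\widetilde{A}^{(k)}_{j,J}$-terms bounded through the $\mathbb{E}_{k+1}$-norms by the analogue of Proposition \ref{TH: OperatorCExntesionOntoEk}, indeed yields constants that are polynomials with nonnegative coefficients in $K=\max\{1,e^{-\tau\operatorname{Re}p}\}$, hence increasing in $K$ as the statement requires; the slice-wise unimodular changes of variables you invoke are correct, and since $\mathcal{W}^{2}_{D}\hookrightarrow\mathbb{E}_k$ the qualitative membership in $\mathbb{E}^{\wedge}_m$ is already known, so only your quantitative bounds are needed.

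The one step you cannot simply quote from within this paper is the one you flag yourself: the identification of the trace of $R_{S}\Phi$ on $\{\theta_{l}=0\}$ with the lower restriction $R_{S\setminus\{l\}}\Phi$. The Remark following \eqref{EQ: TensorSpaceDelayCompoundDecompositionBoundarySubspaces} explicitly warns that for general elements of $\mathcal{L}^{\otimes}_m$ the restrictions to different faces are unrelated, so this compatibility is a genuine property of $\mathcal{D}(A^{[\wedge m]})$ that has to be proved rather than assumed. It can be closed without circularity: decomposable tensors $\phi_1\otimes\cdots\otimes\phi_m$ with $\phi_i\in\mathcal{D}(A)$ form a core of $A^{[\otimes m]}$ (their span is dense and invariant under $G^{\otimes m}$); for such tensors the identity follows at once from $R^{(1)}_{0}\phi_i=(R^{(1)}_{1}\phi_i)(0)$; and both sides are continuous in the graph norm, because traces on hyperplanes transverse to the diagonal are controlled by the $\mathcal{W}^{2}_{D}$-norm, which Theorem \ref{TH: AdditiveCompoundDelayDescription} asserts is equivalent to the graph norm, while $R_{S\setminus\{l\}}$ is even $\mathcal{L}^{\otimes}_m$-continuous. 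The same density (or mollification along the diagonal) argument legitimizes the a.e.-characteristic integration for $\mathcal{W}^{2}_{D}$-regular restrictions, your second flagged input. With these two points filled in, your downward recursion constitutes a complete proof, and it is in the spirit of the ``structural Cauchy formulas'' that the companion paper develops for exactly this purpose.
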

\begin{remark}
	\label{REM: UniformBoundResolventDelayCompound}
	Note that the norms of the resolvent $(A^{[\wedge m]}-pI)^{-1}$ in $\mathcal{L}((\mathcal{L}^{\wedge}_{m})^{\mathbb{C}})$ are uniformly bounded on any vertical line $-\nu_{0} + i\mathbb{R}$ that avoids the spectrum of $A^{[\wedge m]}$. For the case of our interest $-\nu_{0} > \omega(G^{\wedge m}) = s(A^{[\wedge m]})$ (the semigroup $G^{\wedge m}$ is eventually compact), this follows directly from \eqref{EQ: ResolventEquationForDelayCompoundGeneral}. For the general case, this can be shown via spectral decompositions, see, for example, \cite[Theorem 4.2]{Anikushin2020FreqDelay}. Using this and \eqref{EQ: ResolventEstimateEmDelayCompound}, we immediately get a uniform bound in $\mathcal{L}((\mathcal{L}^{\wedge}_{m})^{\mathbb{C}}; (\mathbb{E}^{\wedge}_{m})^{\mathbb{C}})$. It is also uniform in $\nu_{0}$ from compact subsets. This, along with the first resolvent identity, ensures that frequency inequalities are satisfied on an open set of $\nu_{0}$.
\end{remark}

From Theorem \ref{TH: ResolventDelayCompoundBound}, we derive the following main result that justifies the forthcoming approximation scheme.
\begin{theorem}
	\label{TH: DelayCompoundComputationTailEstimate}
	Let $\nu_{0} \in \mathbb{R}$ be such that $-\nu_{0} > \omega(G^{\wedge m})$. Then for any $p = -\nu_{0} + i \omega$, where $\omega \in \mathbb{R}$, $\Phi \in (\mathcal{L}^{\wedge}_{m})^{\mathbb{C}}$, and $T \geq 0$, we have
	\begin{equation}
		-(A^{[\wedge m]}-pI)^{-1} \Phi = \int_{0}^{T}e^{-p t}G^{\wedge m}(t)\Phi dt + R_{T}(p;\Phi),
	\end{equation}
    where $R_{T}(p;\Phi) \in \mathcal{D}(A^{[\wedge m]}) \subset (\mathbb{E}^{\wedge}_{m})^{\mathbb{C}}$, and for any $\varkappa \in (0, -\nu_{0} - \omega(G^{\wedge m}))$ there exists $M_{\varkappa} > 0$ such that for any $\Phi \in (\mathcal{L}^{\wedge}_{m})^{\mathbb{C}}$ and $T \geq 0$, the estimate
    \begin{equation}
    	\label{EQ: DelayCompoundIntegralDecayEstimate}
    	\|R_{T}(p;\Phi)\|_{(\mathbb{E}^{\wedge}_{m})^{\mathbb{C}}} \leq M_{\varkappa} e^{-\varkappa T} \cdot |\Phi|_{(\mathcal{L}^{\wedge}_{m})^{\mathbb{C}}},
    \end{equation}
    holds uniformly in $p = -\nu_{0} + i \omega$ with $\omega \in \mathbb{R}$.
\end{theorem}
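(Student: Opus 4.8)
The plan is to start from the resolvent formula \eqref{EQ: ResolventEquationForDelayCompoundGeneral} of Proposition \ref{PROP: ResolventDelayCompoundDescription}, which applies since $\operatorname{Re}p = -\nu_{0} > \omega(G^{\wedge m})$, and simply split the integral at $T$:
\begin{equation}
	-(A^{[\wedge m]}-pI)^{-1}\Phi = \int_{0}^{T}e^{-pt}G^{\wedge m}(t)\Phi\,dt + \int_{T}^{\infty}e^{-pt}G^{\wedge m}(t)\Phi\,dt,
\end{equation}
so that $R_{T}(p;\Phi) := \int_{T}^{\infty}e^{-pt}G^{\wedge m}(t)\Phi\,dt$. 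The key observation is that the tail integral is itself a resolvent-type object: using the semigroup property $G^{\wedge m}(t) = G^{\wedge m}(t-T)G^{\wedge m}(T)$ and the change of variable $s = t-T$, one gets
\begin{equation}
	R_{T}(p;\Phi) = e^{-pT}\int_{0}^{\infty}e^{-ps}G^{\wedge m}(s)\big(G^{\wedge m}(T)\Phi\big)\,ds = -e^{-pT}(A^{[\wedge m]}-pI)^{-1}\big(G^{\wedge m}(T)\Phi\big).
\end{equation}
In particular this shows $R_{T}(p;\Phi) \in \mathcal{D}(A^{[\wedge m]}) \subset (\mathbb{E}^{\wedge}_{m})^{\mathbb{C}}$, giving the first assertion.

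Next I would estimate the $(\mathbb{E}^{\wedge}_{m})^{\mathbb{C}}$-norm of this expression by combining three ingredients. First, $\lvert e^{-pT}\rvert = e^{\nu_{0}T}$. Second, the uniform-in-$\omega$ bound from Remark \ref{REM: UniformBoundResolventDelayCompound} (via Theorem \ref{TH: ResolventDelayCompoundBound}): there is a constant $K = K(\nu_{0})$ with $\|(A^{[\wedge m]}-pI)^{-1}\|_{\mathcal{L}((\mathcal{L}^{\wedge}_{m})^{\mathbb{C}};(\mathbb{E}^{\wedge}_{m})^{\mathbb{C}})} \leq K$ for all $p = -\nu_{0}+i\omega$, $\omega \in \mathbb{R}$ (here I use that $C_{1}(p), C_{2}(p)$ depend only on $\max\{1, e^{\tau\nu_{0}}\}$, hence are constant along the line). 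Third, the exponential decay of the semigroup: fix $\varkappa \in (0, -\nu_{0}-\omega(G^{\wedge m}))$; since $\omega(G^{\wedge m}) < -\nu_{0}-\varkappa$, there is $M_{\varkappa}' > 0$ with $\|G^{\wedge m}(T)\|_{\mathcal{L}((\mathcal{L}^{\wedge}_{m})^{\mathbb{C}})} \leq M_{\varkappa}' e^{(-\nu_{0}-\varkappa)T}$ for all $T \geq 0$. Stringing these together,
\begin{equation}
	\|R_{T}(p;\Phi)\|_{(\mathbb{E}^{\wedge}_{m})^{\mathbb{C}}} \leq e^{\nu_{0}T}\cdot K \cdot M_{\varkappa}' e^{(-\nu_{0}-\varkappa)T}\cdot|\Phi|_{(\mathcal{L}^{\wedge}_{m})^{\mathbb{C}}} = K M_{\varkappa}' e^{-\varkappa T}|\Phi|_{(\mathcal{L}^{\wedge}_{m})^{\mathbb{C}}},
\end{equation}
which is \eqref{EQ: DelayCompoundIntegralDecayEstimate} with $M_{\varkappa} := K M_{\varkappa}'$, and the bound is manifestly uniform in $\omega$.

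The only mildly delicate points are bookkeeping rather than substance. One must justify that $G^{\wedge m}(T)\Phi$ again lies in $(\mathcal{L}^{\wedge}_{m})^{\mathbb{C}}$ so that the resolvent applies to it — immediate since $G^{\wedge m}$ is a $C_{0}$-semigroup on that space — and that the growth bound $\omega(G^{\wedge m})$ controls the semigroup norm with the stated exponent, which is the definition of the growth bound together with the elementary fact that for any $\varepsilon > 0$ there is a constant making $\|G^{\wedge m}(T)\| \leq M_{\varepsilon}e^{(\omega(G^{\wedge m})+\varepsilon)T}$; choosing $\varepsilon$ so that $\omega(G^{\wedge m})+\varepsilon = -\nu_{0}-\varkappa$ does the job. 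I would also note that the passage $R_{T} = -e^{-pT}(A^{[\wedge m]}-pI)^{-1}G^{\wedge m}(T)\Phi$ requires only the semigroup law and Fubini/absolute convergence of the tail integral, both of which are standard. The main conceptual step — and the one worth flagging in the write-up — is recognizing that the tail is a shifted resolvent so that Theorem \ref{TH: ResolventDelayCompoundBound} can be invoked to get the stronger $(\mathbb{E}^{\wedge}_{m})^{\mathbb{C}}$-norm control rather than a mere $(\mathcal{L}^{\wedge}_{m})^{\mathbb{C}}$-estimate; without this trick one would only bound the tail in the weaker norm.
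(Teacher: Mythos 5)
Your proposal is correct and follows essentially the same route as the paper: identify the tail $\int_{T}^{\infty}e^{-pt}G^{\wedge m}(t)\Phi\,dt$ as a shifted resolvent applied to $G^{\wedge m}(T)\Phi$, then combine $|e^{-pT}|=e^{\nu_{0}T}$, the exponential decay of $G^{\wedge m}$ with exponent $-(\nu_{0}+\varkappa)$, and the $\omega$-uniform resolvent bound in $\mathcal{L}((\mathcal{L}^{\wedge}_{m})^{\mathbb{C}};(\mathbb{E}^{\wedge}_{m})^{\mathbb{C}})$ from Theorem \ref{TH: ResolventDelayCompoundBound} and Remark \ref{REM: UniformBoundResolventDelayCompound}. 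The only (cosmetic) difference is that you carry the minus sign in $R_{T}=-e^{-pT}(A^{[\wedge m]}-pI)^{-1}G^{\wedge m}(T)\Phi$, which the paper's display omits; this does not affect the norm estimate.
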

\begin{proof}
	Using \eqref{EQ: ResolventEquationForDelayCompoundGeneral}, we obtain that $R_{T}=R_{T}(p;\Phi)$ satisfies the following equation:
	\begin{equation}
		\label{EQ: DelayCompoundTailDecayEstimate}
		\begin{split}
			R_{T} = \int_{T}^{\infty}e^{-p t} G^{\wedge m}(t)\Phi dt = e^{-p T} (A^{[\wedge m]} - pI)^{-1}G^{\wedge m}(T)\Phi.
		\end{split}
	\end{equation}
    Suppose that $\varkappa \in (0, -\nu_{0} - \omega(G^{\wedge m}))$. Then there exists $\widetilde{M}_{\varkappa}>0$ such that the estimate
    \begin{equation}
    	| G^{\wedge m}(T) \Phi |_{(\mathcal{L}^{\wedge}_{m})^{\mathbb{C}}} \leq \widetilde{M}_{\varkappa} e^{-(\nu_{0} + \varkappa)T} | \Phi |_{(\mathcal{L}^{\wedge}_{m})^{\mathbb{C}}}
    \end{equation}
    is satisfied for all $T \geq 0$ and $\Phi \in (\mathcal{L}^{\wedge}_{m})^{\mathbb{C}}$. Combining this with \eqref{EQ: DelayCompoundTailDecayEstimate}, \eqref{EQ: ResolventEstimateEmDelayCompound}, and Remark \ref{REM: UniformBoundResolventDelayCompound}, we get \eqref{EQ: DelayCompoundIntegralDecayEstimate}.
\end{proof}

Now we are going to exploit \eqref{EQ: ResolventEquationForDelayCompoundComputation}, sticking to the case $r_{1}=r_{2}=1$, i.e., $\mathbb{M}=\mathbb{U}=\mathbb{R}$, and $n = 1$. In fact, $r_{2}=1$ is considered only for simplicity, and $r_{2} = \dim\mathbb{M}$ can be arbitrary, which allows a possibility of several measurements involved in the operator $C$. Although the restrictions $r_{1}=\dim\mathbb{U}=1$ and $n=1$ are essential for what follows, theoretically there is no problem to develop the approach in the general case, but at the cost of much heavier computations, see Remark \ref{REM: FrequencyCompoundGeneralCase}.

We start with the following proposition describing the antisymmetric relations \eqref{EQ: ControlSpaceRelationsAntisymmetricCompoundDelay} in the control space $\mathbb{U}^{\wedge}_{m}$. Clearly, the same description holds for the measurement space $\mathbb{M}^{\wedge}_{m}$. Although we need to describe it only for $k=m-1$ (since all $k \leq m-2$ are improper), we give such a description for any $k$ to illustrate the exclusion of improper $k$ in the definitions of $\mathbb{U}^{\wedge}_{m}$ and $\mathbb{M}^{\wedge}_{m}$, see Remark \ref{REM: ZeroingControlComponents}.
\begin{proposition}
	\label{PROP: ControlSpaceCompoundDelayDescriptionScalarCase}
	Suppose that $n = 1$. Then the relations \eqref{EQ: ControlSpaceRelationsAntisymmetricCompoundDelay} take the following form:
	\begin{alignat}{2}
		\label{EQ: ControlAntisymmetricRelationsScalar}
			&\eta^{j}_{j_{1} \ldots j_{k}} = 0 \quad &&\text{for any} \quad k \in \{ 0, \ldots, m-3 \},\notag\\
			&\eta^{j}_{\hat{j}} \quad &&\text{is antisymmetric for any} \quad j \in \{1, \ldots, m\},\notag\\
			&\eta^{j}_{\hat{j}} = (-1)^{j-i} \eta^{i}_{\hat{i}} \quad &&\text{for all} \quad i,j \in \{ 1,\ldots, m \},\\
			&\eta^{j}_{\widehat{ij}} = - \eta^{i}_{\widehat{ij}} \quad &&\text{for all distinct} \quad i,j \in \{1,\ldots, m\},\notag\\
			&\eta^{j}_{\widehat{ij}} \quad &&\text{is antisymmetric for all distinct} \quad i,j \in \{1,\ldots,m\},\notag\\
			&\eta^{j}_{\widehat{ij}} = (-1)^{(k-i)+1}\eta^{j}_{\widehat{kj}} \quad &&\text{for all distinct} \quad i,j,k \in \{1,\ldots, m\}.\notag
	\end{alignat}
\end{proposition}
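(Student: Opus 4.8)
The plan is to specialize the general antisymmetric relations \eqref{EQ: ControlSpaceRelationsAntisymmetricCompoundDelay} to the scalar case $n=r_1=1$, in which every transposition operator $T_\sigma$ on $\mathbb U_j = \mathbb R$ is the identity, and then to extract a minimal, explicit list of generating relations. The structure of the argument will closely parallel the proof of Corollary \ref{COR: AntisymmetricRelationsScalar} (which the authors left as an exercise), so I would organize it as follows.

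First I would observe that, since $n=1$, each index set $\{j_1,\dots,j_k\}$ together with a distinguished $j\notin\{j_1,\dots,j_k\}$ is really a choice of which $k+1$ of the $m$ ``slots'' are active; the component $\eta^j_{j_1\dots j_k}$ is a scalar-valued function of $k$ variables. A single application of \eqref{EQ: ControlSpaceRelationsAntisymmetricCompoundDelay} with $\sigma$ ranging over $\mathbb S_m$ relates $\eta^j_{j_1\dots j_k}$ to $\eta^{\sigma(j)}_{\sigma(j_{\overline\sigma(1)})\dots\sigma(j_{\overline\sigma(k)})}$ up to a sign $(-1)^\sigma$ and a reordering of arguments. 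I would split the permutations $\sigma$ into three useful families: (i) those fixing the active set $\{j,j_1,\dots,j_k\}$ setwise but permuting $\{j_1,\dots,j_k\}$ while fixing $j$ — these force $\eta^j_{\widehat{\cdots}}$ to be antisymmetric in its arguments; (ii) those fixing the active set but swapping $j$ with one of the $j_\ell$ — these relate $\eta^j_{\cdots}$ to $\eta^{j_\ell}_{\cdots}$ with a computable sign; (iii) those that move the active set to a different active set of the same cardinality — these, combined with a sign bookkeeping of the number of transpositions needed, give the ``shift'' relations such as $\eta^j_{\widehat j}=(-1)^{j-i}\eta^i_{\widehat i}$ and $\eta^j_{\widehat{ij}}=(-1)^{(k-i)+1}\eta^j_{\widehat{kj}}$.

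Next I would handle the vanishing statement $\eta^j_{j_1\dots j_k}=0$ for $k\le m-3$, i.e. for active sets of size $\le m-2$. Here the key point is that, by Corollary \ref{COR: AntisymmetricRelationsScalar} applied to $\mathcal L^{\wedge}_m$, exactly the faces with $k=m-1$ and $k=m$ are proper; the definition \eqref{EQ: AntisymmetricControlSpaceDefinition} of $\mathbb U^{\wedge}_m$ explicitly imposes $\eta^j_{j_1\dots j_k}=0$ for improper $k$, so the cases $k\le m-3$ are by definition zero and nothing needs to be proven there. What must be shown is that the remaining relations among the surviving components (active sets of size $m-1$ and $m$, i.e. $\eta^j_{\widehat j}$ and $\eta^j_{\widehat{ij}}$) reduce exactly to the listed antisymmetry and sign-shift relations, and conversely that any collection satisfying the listed relations satisfies all instances of \eqref{EQ: ControlSpaceRelationsAntisymmetricCompoundDelay}. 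The converse direction is a routine check that a general $\sigma\in\mathbb S_m$ can be decomposed into transpositions each realizing one of the listed elementary relations, with the product of the elementary signs matching $(-1)^\sigma$.

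The main obstacle I anticipate is the sign bookkeeping: matching $(-1)^\sigma$ against the explicit signs $(-1)^{j-i}$ and $(-1)^{(k-i)+1}$ in \eqref{EQ: ControlAntisymmetricRelationsScalar} requires carefully counting, for a permutation carrying the active set $\{j\}\cup(\text{complement of }\{j\})$ onto another active set, how many transpositions are needed once one normalizes both sides so their arguments are in increasing order (this is the role of the auxiliary permutation $\overline\sigma\in\mathbb S_k$). A clean way to control this is to always reduce to adjacent transpositions and track a single parity invariant, and to verify the base relations directly on small permutations before composing. I would also double-check the edge cases — $m$ small enough that some of the listed families are vacuous, and the relation $\eta^j_{\widehat{ij}}=-\eta^i_{\widehat{ij}}$ which is the $k=m-2$ analogue of the $k=m-1$ sign relation — to be sure the list is both necessary and sufficient. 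With the sign arithmetic in hand, the rest of the proof is a direct transcription of the general relations, exactly as in the referenced proof of Proposition \ref{PROP: ControlSpaceCompoundDelayDescriptionScalarCase}'s companion Corollary \ref{COR: AntisymmetricRelationsScalar}.
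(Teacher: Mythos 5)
Your handling of the surviving components is essentially the paper's argument (permutations fixing $j$ give antisymmetry, the transposition $(i\,j)$ gives $\eta^{j}_{\widehat{j}}=(-1)^{j-i}\eta^{i}_{\widehat{i}}$ once the argument-reordering $\overline{\sigma}$ is converted into a sign via the antisymmetry already established, $(i\,k)$ gives the last relation, and general $\sigma$ is reduced to these generators). However, your disposal of the first line of \eqref{EQ: ControlAntisymmetricRelationsScalar} contains a genuine gap. The proposition asserts that the relations \eqref{EQ: ControlSpaceRelationsAntisymmetricCompoundDelay} \emph{by themselves} reduce, for $n=r_{1}=1$, to the listed form; hence the vanishing $\eta^{j}_{j_{1}\ldots j_{k}}=0$ for $k\leq m-3$ must be \emph{derived from} \eqref{EQ: ControlSpaceRelationsAntisymmetricCompoundDelay}, not imported from the clause ``$\eta^{j}_{j_{1}\ldots j_{k}}=0$ for improper $k$'' in the definition \eqref{EQ: AntisymmetricControlSpaceDefinition} of $\mathbb{U}^{\wedge}_{m}$ --- that clause is an extra requirement imposed on top of the relations precisely because the relations alone do not yield it in full. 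Moreover, your shortcut is internally inconsistent: for $n=1$ the improper values are all $k\leq m-2$, so invoking the definitional clause would also annihilate the components $\eta^{j}_{\widehat{ij}}$, rendering the last three lines of \eqref{EQ: ControlAntisymmetricRelationsScalar} vacuous, whereas those lines are exactly the nontrivial content of \eqref{EQ: ControlSpaceRelationsAntisymmetricCompoundDelay} at $k=m-2$.

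The missing idea is a parity argument internal to \eqref{EQ: ControlSpaceRelationsAntisymmetricCompoundDelay}: take the canonical component $\eta^{k+1}_{1\ldots k}$ and a permutation $\sigma$ fixing $1,\ldots,k+1$ pointwise and arbitrary on the remaining $m-k-1$ letters; the relation then reads $\eta^{k+1}_{1\ldots k}=(-1)^{\sigma}\eta^{k+1}_{1\ldots k}$, and since $k\leq m-3$ leaves at least two free letters, one may choose $\sigma$ odd, forcing $\eta^{k+1}_{1\ldots k}=0$; an arbitrary component with the same $k$ then vanishes because the relations transport it to the canonical one. This also explains the dividing line your list must respect: for $k=m-2$ only one letter is free, no odd permutation fixes the active set pointwise, and those components survive subject to the stated sign and antisymmetry relations. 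With this step restored, the rest of your plan coincides with the paper's proof.
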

\begin{proof}
	For $n = 1$ we have that $\mathbb{U}_{j} = \mathbb{U}$ (see the beginning of Subsection \ref{SUBSEC: InducedControlOperators}), and hence $T_{\sigma}$ in \eqref{EQ: ControlSpaceRelationsAntisymmetricCompoundDelay} is the identity operator. Let us start from $k \in \{0, \ldots, m-3\}$ (of course, if $m \geq 3$). Consider \eqref{EQ: ControlSpaceRelationsAntisymmetricCompoundDelay} with $j_{l} = l$ for $l \in \{1,\ldots, k\}$, $j=k+1$, and $\sigma \in \mathbb{S}_{m}$ such that
	\begin{equation}
		\sigma = \begin{pmatrix}
			1 & \ldots & k + 1 & \ldots\\
			1 & \ldots & k + 1 & \ldots
		\end{pmatrix},
	\end{equation}
	where the tail, i.e., $\sigma(l)$ for $l >k+1$, is arbitrary. This leads to
	\begin{equation}
		\eta^{k+1}_{1\ldots k} = (-1)^{\sigma} \eta^{k+1}_{1\ldots k}.
	\end{equation}
	Since there are at least $2$ undetermined positions in $\sigma$, by a proper choice of $\sigma$ we get $\eta^{k+1}_{1\ldots k} = - \eta^{k+1}_{1\ldots k} = 0$. This shows the first relation from \eqref{EQ: ControlAntisymmetricRelationsScalar}, since any $\eta^{j}_{j_{1}\ldots j_{k}}$ can be determined from $\eta^{k+1}_{1\ldots k}$ using \eqref{EQ: ControlSpaceRelationsAntisymmetricCompoundDelay}.
	
	Next, we consider the case $k=m-1$. Let $j \in \{1, \ldots, m\}$. Then \eqref{EQ: ControlSpaceRelationsAntisymmetricCompoundDelay} takes the form
	\begin{equation}
		\label{EQ: ControlFunctionsRelationsScalarMminusOne}
		\eta^{j}_{\hat{j}} = (-1)^{\sigma} \Theta^{(m-1)}_{\bar{\sigma}} \eta^{\sigma(j)}_{\widehat{\sigma(j)}}.
	\end{equation}
	Taking this condition over all $\sigma$ such that $\sigma(j) = j$ gives the antisymmetricity of $\eta^{j}_{\hat{j}}$, i.e., the second relation from \eqref{EQ: ControlAntisymmetricRelationsScalar}. Furthermore, taking the cycle $\sigma = (i j)$ for some $i,j \in \{ 1,\ldots, m \}$ and utilizing the antisymmetricity, we obtain the third identity from \eqref{EQ: ControlAntisymmetricRelationsScalar}. It is not hard to show that these two identities are sufficient to derive \eqref{EQ: ControlFunctionsRelationsScalarMminusOne} for general permutations.
	
	For $k=m-2$, we consider \eqref{EQ: ControlSpaceRelationsAntisymmetricCompoundDelay} with $\sigma = (ij)$ to show the fourth relation from \eqref{EQ: ControlAntisymmetricRelationsScalar}; any $\sigma$ such that $\sigma(i)=i$ and $\sigma(j)=j$ to obtain the fifth relation from \eqref{EQ: ControlAntisymmetricRelationsScalar}; and $\sigma = (i k)$ for the sixth relation from \eqref{EQ: ControlAntisymmetricRelationsScalar}. We leave details to the reader.
\end{proof}
\begin{remark}
	\label{REM: ZeroingControlComponents}
	By the fourth series of relations from \eqref{EQ: ControlAntisymmetricRelationsScalar}, we may illustrate the discussion given below \eqref{EQ: AntisymmetricControlSpaceDefinition} on forcing $\eta^{j}_{\widehat{ij}}$ to zero in the definition \eqref{EQ: AntisymmetricControlSpaceDefinition} of $\mathbb{U}^{\wedge}_{m}$. Although single components $\eta^{j}_{\widehat{ij}}$ satisfying the relations need not be zero, the corresponding inner sum in the definition \eqref{EQ: OperatorBDelayCompoundDefinition} of $B^{\otimes}_{m}$ vanishes since
	\begin{equation}
		 B^{\widehat{ij}}_{i} \eta^{i}_{\widehat{ij}} + B^{\widehat{ij}}_{j} \eta^{j}_{\widehat{ij}} = \widetilde{B} \left(\eta^{i}_{\widehat{ij}} + \eta^{j}_{\widehat{ij}}\right) = 0.
	\end{equation}
	Thus, these components do not cause any effect on the control system.
\end{remark}

In virtue of Proposition \ref{PROP: ControlSpaceCompoundDelayDescriptionScalarCase}, it is convenient to identify any element $\eta \in (\mathbb{U}^{\wedge}_{m})^{\mathbb{C}}$ with an $m$-tuple $\eta = (\eta^{j}_{\hat{j}})_{j=1}^{m}$, where each $\eta^{j}_{\hat{j}}$ is an antisymmetric function from $L_{2}((-\tau,0)^{m-1};\mathbb{C})$ and $\eta^{j}_{\hat{j}} = (-1)^{1-j} \eta^{1}_{\hat{1}}$ for any $j \in \{1,\ldots,m\}$. Clearly, this establishes an isometric isomorphism between the subspace of such tuples in the orthogonal sum $\bigoplus_{j=1}^{m}L_{2}((-\tau,0)^{m-1};\mathbb{C})$ and $(\mathbb{U}^{\wedge}_{m})^{\mathbb{C}}$. Since any $\eta^{j}_{\hat{j}}$ is uniquely determined from $\eta^{1}_{\hat{1}}$, it is sufficient to construct an orthonormal basis in the subspace of antisymmetric functions from $L_{2}((-\tau,0)^{m-1};\mathbb{C})$.

Consider a family of functions $\phi_{k}(\theta)$, where $k \in \mathbb{Z}$, forming an orthonormal basis in $L_{2}(-\tau,0;\mathbb{C})$. Then the functions $\phi_{k_{1}}(\theta_{1}) \cdots \phi_{k_{m-1}}(\theta_{m-1})$, taken over all $k_{1},\ldots,k_{m-1} \in \mathbb{Z}$, form an orthonormal basis in the space $L_{2}((-\tau,0)^{m-1};\mathbb{C})$, and the functions
\begin{equation}
	\begin{split}
		U_{k_{1}\ldots k_{m-1}}(\theta_{1},\ldots,\theta_{m-1}) \coloneq \\ \frac{1}{\sqrt{(m-1)!}} \sum_{\widetilde{\sigma} \in \mathbb{S}_{m-1}} (-1)^{\widetilde{\sigma}} \phi_{k_{1}}(\theta_{\widetilde{\sigma}(1)}) \cdots \phi_{k_{m-1}}(\theta_{\widetilde{\sigma}(m-1)}),
	\end{split}
\end{equation}
taken over all integers $k_{1} < k_{2} < \cdots < k_{m-1}$, form an orthonormal basis in the subspace of antisymmetric functions from $L_{2}((-\tau,0)^{m-1};\mathbb{C})$. Consequently, the $m$-tuples $U^{\wedge}_{k_{1}\ldots k_{m-1}} = (U^{j}_{k_{1}\ldots k_{m-1}})_{j=1}^{m}$, where
\begin{equation}
	\label{EQ: ExampleCompoundDelayCalcBasisControlSpace}
	U^{j}_{k_{1}\ldots k_{m-1}} \coloneq (-1)^{1-j} m^{-1/2} U_{k_{1}\ldots k_{m-1}},
\end{equation}
taken over all integers $k_{1} < k_{2} < \cdots < k_{m-1}$, form an orthonormal basis in the control space $(\mathbb{U}^{\wedge}_{m})^{\mathbb{C}}$.

For $k \in \mathbb{Z}$, let $\psi_{k} \in \mathbb{H}$ be such that $R^{(1)}_{1}\psi_{k} = \phi_{k}$ and $R^{(1)}_{0}\psi_{k} \in \mathbb{R}$ is arbitrary\footnote{In virtue of \eqref{EQ: PropResolventCompoundBActionCompBdescr}, the values $R^{(1)}_{0}\psi_{l}$ affect only the restrictions $R_{j_{1}\ldots j_{k}}$ with $k \leq m-2$, which all vanish due to Corollary \ref{COR: AntisymmetricRelationsScalar}.}. If $\phi_{k}$ is continuous, it is convenient to set $R^{(1)}_{0}\psi_{k} \coloneq \phi_{k}(0)$.
\begin{proposition}
	\label{PROP: ResolventCompoundBActionBasisDescription}
	In the context of \eqref{EQ: DelayRnLinearized}, suppose that $n=r_{1}=1$. Then for $U^{\wedge}_{k_{1}\ldots k_{m-1}} = (U^{j}_{k_{1}\ldots k_{m-1}})_{j=1}^{m}$ defined by \eqref{EQ: ExampleCompoundDelayCalcBasisControlSpace}, we have
	\begin{equation}
		\label{EQ: PropResolventCompoundBActionCompBdescr}
		B^{\wedge}_{m} U^{\wedge}_{k_{1}\ldots k_{m-1}} = \psi_{k_{1}} \wedge \cdots \wedge \psi_{k_{m-1}} \wedge \psi_{\infty},
	\end{equation}
	where $\psi_{\infty} \in \mathbb{H}$ is such that $R^{(1)}_{1}\psi_{\infty} = 0$ and 
	\begin{equation}
		\label{EQ: PropResolventCompoundBActionCompDeltaValue}
		R^{(1)}_{0}\psi_{\infty} = (-1)^{m + 1} \sqrt{m!} \cdot \widetilde{B}.
	\end{equation}
\end{proposition}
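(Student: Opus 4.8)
The plan is to verify the identity by computing the restrictions $R_{j_{1}\ldots j_{k}}$ of both sides and using that an element of $\mathcal{L}^{\otimes}_{m}$ is uniquely determined by all of its restrictions (see \eqref{EQ: TensorSpaceDelayCompoundDecompositionBoundarySubspaces}). Both $B^{\wedge}_{m}U^{\wedge}_{k_{1}\ldots k_{m-1}}$ and $\Phi := \psi_{k_{1}} \wedge \ldots \wedge \psi_{k_{m-1}} \wedge \psi_{\infty}$ lie in $\mathcal{L}^{\wedge}_{m}$ (the former because $B^{\wedge}_{m} \in \mathcal{L}(\mathbb{U}^{\wedge}_{m};\mathcal{L}^{\wedge}_{m})$, the latter being a wedge of elements of $\mathbb{H}$), and for $n=1$ Corollary \ref{COR: AntisymmetricRelationsScalar} says that an element of $\mathcal{L}^{\wedge}_{m}$ has $R_{j_{1}\ldots j_{k}} = 0$ for all $k \leq m-2$ and that every $(m-1)$-face restriction $R_{\widehat{j}}$ is determined by a single one via $R_{\widehat{j}} = (-1)^{m-j} R_{\widehat{m}}$. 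Hence it suffices to show that the two sides share the same $R_{1\ldots m}$ and the same $R_{\widehat{m}}$.

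For the left-hand side, by Proposition \ref{PROP: ControlSpaceCompoundDelayDescriptionScalarCase} the tuple $\eta := U^{\wedge}_{k_{1}\ldots k_{m-1}}$ has $\eta^{j}_{j_{1}\ldots j_{k}} = 0$ for every improper index $k \leq m-2$, so in \eqref{EQ: OperatorBDelayCompoundDefinition} the operator $B^{\wedge}_{m}\eta = B^{\otimes}_{m}\eta$ collapses to $\sum_{j=1}^{m} B^{\widehat{j}}_{j}\eta^{j}_{\widehat{j}}$. Each summand lies in the boundary subspace $\partial_{\widehat{j}}\mathcal{L}^{\otimes}_{m}$, and since $\operatorname{Id}_{\mathbb{R}_{1,j}} \otimes \widetilde{B} \otimes \operatorname{Id}_{\mathbb{R}_{2,j}}$ is multiplication by the scalar $\widetilde{B}$ when $n=1$ (see \eqref{EQ: BoundaryOpeartorCompoundDelay}), one reads off $R_{\widehat{m}}(B^{\otimes}_{m}\eta) = \widetilde{B}\,\eta^{m}_{\widehat{m}} = \widetilde{B}\,U^{m}_{k_{1}\ldots k_{m-1}} = (-1)^{1-m} m^{-1/2} \widetilde{B}\, U_{k_{1}\ldots k_{m-1}}$ and $R_{1\ldots m}(B^{\otimes}_{m}\eta) = 0$.

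For the right-hand side I would expand $\Phi$ through the explicit formula \eqref{EQ: DelayCompoundIsomorphismAntisymmetricDecomp} (every $T_{\sigma}$ being the identity since $n=1$), which presents $\Phi$ as the average over $\sigma \in \mathbb{S}_{m}$, with sign $(-1)^{\sigma}$, of the products $\prod_{i=1}^{m-1}\psi_{k_{i}}(\theta_{\sigma(i)}) \cdot \psi_{\infty}(\theta_{\sigma(m)})$. The key point is that, on a face, a factor evaluated at a zeroed coordinate contributes the atom value $R^{(1)}_{0}(\cdot)$ while a factor at a free coordinate contributes the Lebesgue part $R^{(1)}_{1}(\cdot)$; since $R^{(1)}_{1}\psi_{\infty} \equiv 0$, the restriction $R_{1\ldots m}\Phi$ (no coordinate zeroed) vanishes termwise, and in $R_{\widehat{m}}\Phi$ only the terms with $\sigma(m)=m$ survive, since otherwise $\psi_{\infty}$ hits a free coordinate and kills the term. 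On those surviving terms $\psi_{\infty}$ contributes $R^{(1)}_{0}\psi_{\infty} = (-1)^{m+1}\sqrt{m!}\,\widetilde{B}$, the restricted $\sigma$ runs over $\mathbb{S}_{m-1}$ with the same sign, and the remaining product is exactly the antisymmetrization defining $U_{k_{1}\ldots k_{m-1}}$ up to the factor $\sqrt{(m-1)!}$. Collecting the constants $\tfrac{1}{m!}\cdot\sqrt{m!}\cdot\sqrt{(m-1)!} = m^{-1/2}$ and using $(-1)^{m+1}=(-1)^{1-m}$ yields $R_{\widehat{m}}\Phi = (-1)^{1-m} m^{-1/2} \widetilde{B}\, U_{k_{1}\ldots k_{m-1}}$, which matches the left-hand side; together with $R_{1\ldots m}\Phi = 0$ this gives the claim.

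The main work is pure bookkeeping: correctly tracking the atom/Lebesgue dichotomy of elements of $\mathbb{H} = L_{2}([-\tau,0];\mu;\mathbb{R}^{n})$ when restricting the wedge product to the faces, and checking that the prescribed value $(-1)^{m+1}\sqrt{m!}\,\widetilde{B}$ of $R^{(1)}_{0}\psi_{\infty}$ is precisely the one making the sign and the normalization come out right once the factors from the definitions of $U_{k_{1}\ldots k_{m-1}}$ and $U^{m}_{k_{1}\ldots k_{m-1}}$ cancel. Testing against the last face $\mathcal{B}_{\widehat{m}}$, rather than a general $\mathcal{B}_{\widehat{j}}$, is what keeps this transparent: for a general $j$ the reindexing of $\sigma$ would contribute an extra $(m-j+1)$-cycle of sign $(-1)^{m-j}$, exactly the factor already anticipated by the relation $R_{\widehat{j}} = (-1)^{m-j} R_{\widehat{m}}$ used in the reduction step.
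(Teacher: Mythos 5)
Your proposal is correct and follows essentially the same route as the paper's proof: expand $\psi_{k_1}\wedge\ldots\wedge\psi_{k_{m-1}}\wedge\psi_\infty$ via \eqref{EQ: DelayCompoundIsomorphismAntisymmetricDecomp}, use that $R^{(1)}_{1}\psi_\infty \equiv 0$ so only the terms placing $\psi_\infty$ at the zeroed coordinate survive on an $(m-1)$-face, and match the normalization $\tfrac{1}{m!}\sqrt{m!}\sqrt{(m-1)!}=m^{-1/2}$ and the sign $(-1)^{m+1}$ against $U^{j}_{k_1\ldots k_{m-1}}$. The only (harmless) difference is bookkeeping: the paper computes $R_{\widehat{j}}$ for every $j$ directly, tracking the extra sign $(-1)^{m-j}$ through the reindexing bijection, whereas you check only $R_{\widehat{m}}$ and $R_{1\ldots m}$ and recover the remaining faces from Corollary \ref{COR: AntisymmetricRelationsScalar}, since both sides lie in $\mathcal{L}^{\wedge}_{m}$.
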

\begin{proof}
	Let $b_{j} \colon \{ 1,\ldots, m-1 \} \to \{ 1,\ldots, \hat{j}, \ldots, m \}$ be the bijection such that $b_{j}(l) = l$ for $l < j$ and $b_{j}(l) = l+1$ for $l \geq j$. Then from \eqref{EQ: DelayCompoundIsomorphismAntisymmetricDecomp} for $\mu^{\otimes m}$-almost all $(\theta_{1},\ldots,\theta_{m}) \in [-\tau,0]^{m}$, we have
	\begin{equation}
		\label{EQ: PropResolventCompoundBActionCompScalarIdentity}
		\begin{split}
			(\psi_{k_{1}} \wedge \cdots \wedge \psi_{k_{m-1}} \wedge \psi_{\infty})(\theta_{1},\ldots,\theta_{m}) =\\= \frac{1}{m!} \sum_{ \sigma \in \mathbb{S}_{m}} (-1)^{\sigma} \psi_{k_{1}}(\theta_{ \sigma(1) } ) \cdots \psi_{k_{m-1}}(\theta_{\sigma(m-1)}) \cdot \psi_{\infty}(\theta_{\sigma(m)}) =\\=
			\frac{1}{m}\sum_{j=1}^{m} \psi_{\infty}(\theta_{j}) \frac{1}{(m-1)!}\sum_{\widetilde{\sigma} \in \mathbb{S}_{m-1}} (-1)^{\hat{\sigma}_{j}} \psi_{k_{1}}(\theta_{ \widetilde{\sigma}_{j}(1) } ) \cdots \psi_{k_{m-1}}(\theta_{\widetilde{\sigma}_{j}(m-1)}),
		\end{split}
	\end{equation}
	where $\widetilde{\sigma}_{j} \coloneq b_{j} \circ \widetilde{\sigma}$, and $\hat{\sigma}_{j} \in \mathbb{S}_{m}$ is given by
	\begin{equation}
		\hat{\sigma}_{j} = \begin{pmatrix}
			1 & \ldots & m-1 & m\\
			\widetilde{\sigma}_{j}(1) & \dots & \widetilde{\sigma}_{j}(m-1) & j
		\end{pmatrix}.
	\end{equation}
	It is easy to see that any inversion $\hat{\sigma}_{j}(l_{1}) > \hat{\sigma}_{j}(l_{2})$ for some $1 \leq l_{1} < l_{2} \leq m-1$ is equivalent to $\widetilde{\sigma}(l_{1}) > \widetilde{\sigma}(l_{2})$, and there are exactly $m-j$ inversions in $\hat{\sigma}_{j}$ for $l_{2}=m$. Thus, $(-1)^{\hat{\sigma}_{j}} = (-1)^{\widetilde{\sigma}} \cdot (-1)^{m-j}$. Applying the restriction operator $R^{(m)}_{\hat{j}}$ to \eqref{EQ: PropResolventCompoundBActionCompScalarIdentity} (only the $j$th summand survives) and using \eqref{EQ: PropResolventCompoundBActionCompDeltaValue} and \eqref{EQ: ExampleCompoundDelayCalcBasisControlSpace}, we obtain
	\begin{equation}
		\begin{split}
			R^{(m)}_{\hat{j}}(\psi_{k_{1}} \wedge \cdots \wedge \psi_{k_{m-1}} \wedge \psi_{\infty}) = R^{(1)}_{0}\psi_{\infty} \cdot \frac{(-1)^{m+1}}{\sqrt{m!}} \cdot U^{j}_{k_{1} \ldots k_{m-1}} = \widetilde{B} U^{j}_{k_{1}\ldots k_{m-1}}.
		\end{split}
	\end{equation}
	This shows \eqref{EQ: PropResolventCompoundBActionCompBdescr} according to the definition of $B^{\wedge}_{m}$ as the restriction of $B^{\otimes}_{m}$ from \eqref{EQ: OperatorBDelayCompoundDefinition} to $(\mathbb{U}^{\wedge}_{m})^{\mathbb{C}}$.
\end{proof}
\begin{remark}
	\label{REM: FrequencyCompoundGeneralCase}
	For general $r_{1}$ and $n$, there may not exist such a simple expression as in \eqref{EQ: PropResolventCompoundBActionCompBdescr} for images under $B^{\wedge}_{m}$ of an orthonormal basis in $\left(\mathbb{U}^{\wedge}_{m}\right)^{\mathbb{C}}$. Anyway, one may take an orthonormal basis in $\mathcal{L}^{\wedge}_{m}$ consisting of decomposable antisymmetric functions and express the images in a Fourier series. Then \eqref{EQ: ResolventEquationForDelayCompoundComputation} can be applied elementwise, and for practical computations we need to truncate the series, which introduces another parameter to the approximation scheme.
\end{remark}

Combining Propositions \ref{PROP: ResolventDelayCompoundDescription} and \ref{PROP: ResolventCompoundBActionBasisDescription}, we obtain the following.
\begin{corollary}
	\label{COR: ScalarComputationFormulaWithTailEstimate}
	In the context of Proposition \ref{PROP: ResolventCompoundBActionBasisDescription}, suppose that $p \in \mathbb{C}$ is such that $\operatorname{Re}p > \omega(G^{\wedge m})$. Then for all integers $k_{1} < k_{2} < \cdots < k_{m-1}$, we have
	\begin{equation}
		\label{EQ: DelayCompoundScalarComputationIntegralWithBCorollary}
		\begin{split}
			-(A^{[\wedge m]}-pI)^{-1}B^{\wedge}_{m} U^{\wedge}_{k_{1}\ldots k_{m-1}} =\\= \int_{0}^{\infty} e^{-pt}G(t)\psi_{k_{1}} \wedge \cdots \wedge G(t)\psi_{k_{m-1}} \wedge G(t)\psi_{\infty}dt =\\= \int_{0}^{T} e^{-pt}G(t)\psi_{k_{1}} \wedge \cdots \wedge G(t)\psi_{k_{m-1}} \wedge G(t)\psi_{\infty}dt + R_{T},
		\end{split}
	\end{equation}
    where $R_{T}=R_{T}(p;B^{\wedge}_{m} U^{\wedge}_{k_{1}\ldots k_{m-1}})$ admits the exponential decay estimate as in \eqref{EQ: DelayCompoundIntegralDecayEstimate}. In particular, the decay is uniform in $k_{1},\ldots,k_{m-1} \in \mathbb{Z}$ and $p$ with a fixed real part.
\end{corollary}

Thus, \eqref{EQ: DelayCompoundScalarComputationIntegralWithBCorollary} expresses the boundary action (i.e., via $B^{\wedge}_{m}$) of the resolvent of $A^{[\wedge m]}$ on the basis vector $U^{\wedge}_{k_{1} \ldots k_{m-1}}$ through the integral over $[0,T]$ involving solutions $G(t) \psi_{k}$ and $G(t) \psi_{\infty}$ of the linear system corresponding to $A$ plus a term $R_{T}$ that admits uniform exponential decay as $T \to \infty$. Here $G(t)\psi_{\infty}$ is the so-called fundamental solution up to the multiplier $R^{(1)}_{0}\psi_{\infty}$.

For the computation of the integral from \eqref{EQ: DelayCompoundScalarComputationIntegralWithBCorollary}, we have the following.
\begin{proposition}
	\label{PROP: IntegralHeadComputation}
	In the context of \eqref{EQ: DelayCompoundScalarComputationIntegralWithBCorollary}, suppose that $\phi_{k} \in L_{2}(-\tau,0;\mathbb{C})$ is taken to be continuous for all $k \in \mathbb{Z}$ and set
	\begin{equation}
		\label{EQ: ApproxSchemeIntegralToCompute1}
		\mathcal{I} \coloneq \int_{0}^{T} e^{-pt}G(t)\psi_{k_{1}} \wedge \cdots \wedge G(t)\psi_{k_{m-1}} \wedge G(t)\psi_{\infty}dt \in \mathcal{D}(A^{[\wedge m]}).
	\end{equation}
	Then $R^{(m)}_{1\ldots m}\mathcal{I}$ belongs to $C([-\tau,0]^{m};\mathbb{C})$ and can be expressed as follows:
	\begin{equation}
		\label{EQ: ApproxSchemeIntegralToCompute2}
		(R^{(m)}_{1\ldots m}\mathcal{I})(\theta_{1},\ldots,\theta_{m}) = \int_{0}^{T} e^{-pt}\bigwedge_{k_{1}\ldots k_{m-1}}(t)(\theta_{1},\ldots,\theta_{m}) dt
	\end{equation}
	for all $(\theta_{1},\ldots,\theta_{m}) \in [-\tau,0]^{m}$, where for $t \in [0,T]$ we set\footnote{Recall that for all $\mathbb{C}$-valued functions $f_{1},\ldots,f_{m}$ on $[-\tau,0]$, we set
	\begin{equation}
		\label{EQ: ScalarFunctionsWedgeProductDef}
		(f_{1} \wedge \cdots \wedge f_{m})(\theta_{1},\ldots,\theta_{m})\coloneq \frac{1}{m!} \sum_{\sigma \in \mathbb{S}_{m}}(-1)^{\sigma} f_{1}(\theta_{\sigma(1)}) \cdots f_{m}(\theta_{\sigma(m)})
	\end{equation}
    for all $(\theta_{1},\ldots,\theta_{m}) \in [-\tau,0]^{m}$.
}
	\begin{equation}
		\label{EQ: ApproxSchemeIntegralToCompute3}
		\bigwedge_{k_{1}\ldots k_{m-1}}(t) \coloneq R^{(1)}_{1}G(t)\psi_{k_{1}} \wedge \cdots \wedge R^{(1)}_{1}G(t)\psi_{k_{m-1}} \wedge R^{(1)}_{1}G(t)\psi_{\infty}.
	\end{equation}
	Furthermore, $\mathcal{I}$ and $R^{(m)}_{1\ldots m}\mathcal{I}$ coincide $\mu^{\otimes m}$-almost everywhere on $[-\tau,0]^{m}$.
\end{proposition}
\begin{proof}
	Since $R^{(m)}_{1\ldots m}$ is a bounded operator in $\mathcal{L}^{\wedge}_{m}$, from \eqref{EQ: ApproxSchemeIntegralToCompute1} and \eqref{EQ: DelayCompoundIsomorphismAntisymmetricDecomp} we have
	\begin{equation}
		\label{EQ: FreqCompPropFiniteTimeIntegralComputation}
		R^{(m)}_{1\ldots m}\mathcal{I} = \int_{0}^{T} e^{-pt}\bigwedge_{k_{1}\ldots k_{m-1}}(t) dt.
	\end{equation}
	From this, the validity of \eqref{EQ: ApproxSchemeIntegralToCompute2} for almost all $(\theta_{1},\ldots,\theta_{m}) \in [-\tau,0]^{m}$ is a well-known measure-theoretic fact. Note that for any $k \in \mathbb{Z}$, the function
	\begin{equation}
		x_{k}(t) \coloneq \begin{cases}
			R^{(1)}_{0}G(t)\psi_{k} &\text{if} \quad t \geq 0,\\
			\phi_{k}(t) &\text{if} \quad  t \in [-\tau,0],
		\end{cases} 
	\end{equation} 
	is continuous in $t \geq -\tau$ since it is the boundary part of the solution with continuous initial data, and $(R^{(1)}_{1}G(t)\psi_{k})(\theta) = x_{k}(t+\theta)$ for all $\theta \in [-\tau,0]$ and $t \geq 0$. Moreover, $x_{\infty}(t)$ (defined by the above formula for $k=\infty$) is continuous on $[-\tau,0)$ and in $t \geq 0$. From this, \eqref{EQ: ApproxSchemeIntegralToCompute3}, and \eqref{EQ: ScalarFunctionsWedgeProductDef}, it is clear that the integral in \eqref{EQ: ApproxSchemeIntegralToCompute2} can be represented as a finite sum of integrals depending continuously on $(\theta_{1},\ldots,\theta_{m}) \in [-\tau,0]^{m}$. Thus, the entire integral (and, consequently, $R^{(m)}_{1\ldots m}\mathcal{I}$) belongs to $C([-\tau,0]^{m};\mathbb{C})$.
	
	To show that $\mathcal{I}$ and $R^{(m)}_{1\ldots m}\mathcal{I}$ coincide $\mu^{\otimes m}$-almost everywhere on $[-\tau,0]^{m}$, we use the fact that $\mathcal{I} \in \mathcal{D}(A^{[\wedge m]})$. By \cite[Theorem 4.2]{Anikushin2023Comp}, for all $k \in \{1,\ldots,m\}$ and $1 \leq j_{1} < \cdots < j_{k} \leq m$, the restriction $R^{(k)}_{j_{1}\ldots j_{k}}\mathcal{I}$ belongs to $\mathcal{W}^{2}_{D}((-\tau,0)^{k};\mathbb{C})$ and has traces on the $k$-faces $\mathcal{B}^{(k)}_{\hat{l}}$ for $l \in \{1,\ldots,k\}$ which agree in the $L_{2}$-sense with the restrictions of order $k-1$. By \cite[Theorem A.2]{Anikushin2023Comp}, taking the trace of a continuous function is equivalent to taking its usual restriction. Thus, the restriction $R^{(m)}_{\hat{j}}\mathcal{I}$ belongs to $C([-\tau,0]^{m-1};\mathbb{C})$ since it agrees with the usual restriction of $R^{(m)}_{1\ldots m}\mathcal{I}$ to $\mathcal{B}^{(m)}_{\hat{j}}$ for any $j \in \{1,\ldots,m\}$. One may repeat this argument starting from $R^{(m)}_{\hat{j}}\mathcal{I}$ and pass to the restrictions of order $m-2$ and so on. Note that they actually vanish in our case due to Corollary \ref{COR: AntisymmetricRelationsScalar}.
\end{proof}

\subsection{Approximation scheme: statement}
\label{SEC: ApproxSchemeStatement}
Now we are ready to describe an approximation scheme for verification of frequency inequalities from \eqref{EQ: DelayCompoundSelfAdjContraintIdenticalBoundsFreqC} and \eqref{EQ: SelfAdjointAdditiveSymmetriztionOptimizationFreqCond} in the case of scalar equations. For simplicity, we suppose\footnote{This can be relaxed to allow the possibility of several measurements, i.e., general $r_{2}$, but then we also need to construct an orthonormal basis for $\left(\mathbb{M}^{\wedge}_{m}\right)^{\mathbb{C}}$.} that the measurement operator $C$ for some $\tau_{0} \in [0,\tau]$ is given by $C\phi = \phi(-\tau_{0})$ for any $\phi \in C([-\tau,0];\mathbb{R})$. In the forthcoming applications, we encounter the cases $\tau_{0} = 0$ and $\tau_{0} = \tau$.

For convenience, we rewrite \eqref{EQ: DelayRnLinearized} in the case $n=r_{1}=r_{2} = 1$ and $C$ as above. Thus, we have
\begin{equation}
	\label{EQ: ScalarDelayEquationExample}
	\dot{x}(t) = \widetilde{A}x_{t} + \widetilde{B}F'(\pi^{t}(\wp))x(t-\tau_{0}),
\end{equation}
where the operators $\widetilde{B}$ and $F'(\wp)$ can be identified with real numbers. Let $\Lambda > 0$. Then the frequency condition \eqref{EQ: DelayCompoundSelfAdjContraintIdenticalBoundsFreqC} is associated with the case
\begin{equation}
	\label{EQ: ScalarDelaySymmetricDerivative}
	|F'(\wp)| \leq \Lambda \qquad \text{for any} \quad \wp \in \mathcal{P},
\end{equation}
and the frequency condition \eqref{EQ: SelfAdjointAdditiveSymmetriztionOptimizationFreqCond} is associated with the case
\begin{equation}
	\label{EQ: ScalarDelayMonotoneDerivative}
	0 \leq F'(\wp) \leq \Lambda \qquad \text{for any} \quad \wp \in \mathcal{P}.
\end{equation}

By the Riesz representation theorem, there exists a function of bounded variation $\alpha(\cdot)$ on $[-\tau,0]$ such that
\begin{equation}
	\widetilde{A}\phi = \int_{-\tau}^{0}\phi(\theta)d\alpha(\theta) \qquad \text{for any} \quad \phi \in C([-\tau,0];\mathbb{R}).
\end{equation}
It is well known, see \cite{Hale1977, Anikushin2020FreqDelay}, that the spectrum of $A$, which is associated with $\widetilde{A}$ via \eqref{EQ: OperatorAScalarDelayEquations}, is given by the roots $p \in \mathbb{C}$ of
\begin{equation}
	\label{EQ: SpectrumScalarDelayOperator}
	\int_{-\tau}^{0}e^{p\theta}d\alpha(\theta) - p = 0.
\end{equation}
For example, if $\widetilde{A}\phi = a \phi(0) + b\phi(-\tau)$ for some $a,b \in \mathbb{R}$, then \eqref{EQ: SpectrumScalarDelayOperator} takes the form $a + be^{-\tau p} - p = 0$. Let $\lambda_{1}(A), \lambda_{2}(A), \ldots$ be the eigenvalues of $A$ arranged by nonincreasing their real parts and according to their multiplicities. By Proposition \ref{PROP: SpectralBoundAwedgeViaA}, the spectral bound $s(A^{[\wedge m]})$ of $A^{[\wedge m]}$ is given by $\sum_{j=1}^{m}\lambda_{j}(A)$ or, if there are less than $m$ eigenvalues, by $-\infty$.

Below, as the orthonormal system we take $\phi_{k}(\theta) = \tau^{-1/2} \exp(i\tau^{-1} 2\pi k\theta)$, although it is only essential that each function $\phi_{k}$ is continuous. Since for $n=r_{1}=r_{2} = 1$ we have $\mathbb{M}^{\wedge}_{m} = \mathbb{U}^{\wedge}_{m}$, we may use the same orthonormal basis in both spaces. So, the approximation scheme is described as follows.
\begin{description}[before=\let\makelabel\descriptionlabel]
	\item[\textbf{(AS.1)}\refstepcounter{desccount}\label{DESC: AS1DelayCompound}] Fix parameters of the scheme: an integer $N > 0$ and reals $T > 0$, $\Omega > 0$, and $\nu_{0} > 0$ such that $-\nu_{0} > s(A^{[\wedge m]})$, see below \eqref{EQ: SpectrumScalarDelayOperator};
	\item[\textbf{(AS.2)}] For the linear delay equation $\dot{x}(t) = \widetilde{A}x_{t}$, compute\footnote{In terms of the semigroup, we have $R^{(1)}_{0}G(t)\psi_{\infty}=x_{\infty}(t)$, $R^{(1)}_{1}G(t)\psi_{\infty} = x_{\infty}(t+\cdot)$, $R^{(1)}_{0}G(t)\psi_{k} = x_{k}(t)$, and $R^{(1)}_{1}G(t)\psi_{k} = x_{k}(t+\cdot)$ for any $t \in [0,T]$.} the scaled fundamental solution $x_{\infty}(\cdot) \colon [-\tau,T] \to \mathbb{R}$ with initial data $x_{\infty}(\theta) = 0$ for $\theta \in [-\tau,0)$ and $x_{\infty}(0) = R^{(1)}_{0}\psi_{\infty}$ (see \eqref{EQ: PropResolventCompoundBActionCompDeltaValue}) and the classical solution $x_{k}(\cdot) \colon [-\tau,T] \to \mathbb{C}$ for each $k \in \{ -N, \ldots, N \}$ with initial data
	$x_{k}(\theta) = \phi_{k}(\theta)$ for $\theta \in [-\tau,0]$;
	\item[\textbf{(AS.3)}] For each $p=-\nu_{0} + i \omega$ with $\omega \in [-\Omega, \Omega]$, compute the following: 
	
	\item[\textbf{(AS.3.1)}] For all integers $-N \leq k_{1} < \cdots < k_{m-1} \leq N$, compute\footnote{This is justified by Proposition \ref{PROP: IntegralHeadComputation}.}
	\begin{equation}
		\begin{split}
			\label{EQ: DelayCompApproximationScheme31M1Comp}
			M^{1}_{k_{1} \ldots k_{m-1}}(\theta_{2},\ldots,\theta_{m}) \coloneq \int_{0}^{T} e^{-pt}\bigwedge_{k_{1}\ldots k_{m-1}}(t)\left(-\tau_{0},\theta_{2},\ldots,\theta_{m} \right)dt
		\end{split}
	\end{equation}
    for $(\theta_{2},\ldots,\theta_{m}) \in [-\tau,0]^{m-1}$, where $\bigwedge\limits_{k_{1}\ldots k_{m-1}}(t)$ is given by \eqref{EQ: ApproxSchemeIntegralToCompute3}. For example, for $m=2$, we have the following expression:
    \begin{equation}
    	\label{EQ: MeasurementComputationM2ApproxScheme}
    	M^{1}_{k}(\theta) = \int_{0}^{T}e^{-p t} \frac{1}{2}\left[ x_{k}(t-\tau_{0}) \cdot x_{\infty}(t + \theta) - x_{k}(t + \theta) \cdot x_{\infty}(t-\tau_{0})  \right]dt,
    \end{equation} 
    where $\theta \in [-\tau,0]$ and $k \in \{-N , \ldots, N\}$;
    \item[\textbf{(AS.3.2)}] For all $-N \leq k_{1} < \cdots < k_{m-1} \leq N$ and $-N \leq l_{1} < \cdots < l_{m-1} \leq N$, compute
    \begin{equation}
    	\label{EQ: DelayCompoundASCoefficientApproximateMatrices}
    	c^{l_{1}\ldots l_{m-1}}_{k_{1} \ldots k_{m-1}} \coloneq \int_{[-\tau,0]^{m-1}}M^{1}_{k_{1} \ldots k_{m-1}}(\bar{\theta}) ( U^{1}_{l_{1} \ldots l_{m-1}}(\bar{\theta}) )^{*}d\bar{\theta},
    \end{equation}
    where $\bar{\theta} = (\theta_{1},\ldots,\theta_{m-1})$ and $U^{1}_{l_{1} \ldots l_{m-1}}$ is given by \eqref{EQ: ExampleCompoundDelayCalcBasisControlSpace};
    \item[\textbf{(AS.3.3)}\refstepcounter{desccount}\label{DESC: DelayCompundAS33}] Let $\mathfrak{n}(\cdot)$ be a bijection from $\{ 1, \ldots, C^{m-1}_{2N+1} \}$, where $C^{m-1}_{2N+1}$ is the binomial coefficient ${2N+1\choose m-1}$, to the set of all multi-indices $k_{1} \ldots k_{m-1}$ with $-N \leq k_{1} < \cdots < k_{m-1} \leq N$. Compute the matrix (see \eqref{EQ: ApproxSchemeWTN})
    \begin{equation}
    	\label{EQ: DelayCompoundASWTNdef}
    	W_{T, N}(p) \coloneq m \cdot \left[ c^{ \mathfrak{n}(i) }_{ \mathfrak{n}(j)} \right]_{i,j = 1}^{C^{m-1}_{2N+1} };
    \end{equation}
    \item[\textbf{\textbf{(AS.3.4)}}\refstepcounter{desccount}\label{DESC: DelayCompundAS34}] In the case of \eqref{EQ: ScalarDelaySymmetricDerivative}, compute the largest singular value $\alpha_{T,N}(p)$ of $W_{T,N}(p)$, and in the case of \eqref{EQ: ScalarDelayMonotoneDerivative}, compute the largest eigenvalue $\alpha_{T,N}(p)$ of the Hermitian matrix
    \begin{equation}
    	S_{T,N}(p) \coloneq -\frac{1}{2} \left[ 	W_{T, N}(p) + 	W^{*}_{T, N}(p) \right],
    \end{equation}
    where $W^{*}_{T, N}(p)$ is the Hermitian transpose of $W_{T, N}(p)$;
    \item[\textbf{(AS.4)}\refstepcounter{desccount}\label{DESC: AS4DelayCompound}] For all $p = -\nu_{0} + i \omega$ and $\omega \in [-\Omega, \Omega]$, verify that $\alpha_{T,N}(p) < \Lambda^{-1}$.
\end{description}

\subsection{Approximation scheme: convergence}
\label{SEC: ApproxSchemeConverg}
Now let us discuss the choice of parameters $T$, $N$, and $\Omega$ in the approximation scheme \nameref{DESC: AS1DelayCompound}--\nameref{DESC: AS4DelayCompound}. Recall that the scheme is based on the approximation of the integral over $[0,T]$ from \eqref{EQ: DelayCompoundScalarComputationIntegralWithBCorollary} for all $-N \leq k_{1} < \cdots < k_{m-1} \leq N$ and $p = -\nu_{0} + i\omega$ with $\omega \in [-\Omega; \Omega]$. This gives approximations $W_{T,N}(p)$ to the finite-dimensional operator $-P_{N} W(p) P_{N}$, where $P_{N}$ is the orthogonal projector onto the span of all $U^{\wedge}_{k_{1}\ldots k_{m-1}}$, see \eqref{EQ: ExampleCompoundDelayCalcBasisControlSpace}. In its turn, $P_{N} W(p) P_{N}$ approximates the transfer operator $W(p)$ appearing in the frequency inequalities \eqref{EQ: DelayCompoundSelfAdjContraintIdenticalBoundsFreqC} and \eqref{EQ: SelfAdjointAdditiveSymmetriztionOptimizationFreqCond}.

For the choice of $T$, we have Corollary \ref{COR: ScalarComputationFormulaWithTailEstimate}, which gives an exponential decay of the integral tail uniformly in $\omega \in \mathbb{R}$ and arbitrary integers $k_{1} < \cdots < k_{m-1}$ as $T \to \infty$. Thus, the choice of $T$ is independent of the other parameters. In particular, we have the following uniform in $\omega \in \mathbb{R}$ and $N$ dynamically exponential (or numerically linear) convergence of matrices in the Euclidean (spectral) norm:
\begin{equation}
	\label{EQ: ApproxSchemeWTN}
	\lim_{T \to \infty} W_{T,N}(p) = -P_{N} W(p) P_{N},
\end{equation}
where $P_{N} W(p) P_{N}$ is identified with a $(C^{m-1}_{2N+1} \times C^{m-1}_{2N+1})$-matrix according to the enumeration $\mathfrak{n}$ from \nameref{DESC: DelayCompundAS33}. In practice, it is sufficient to compare results for several values of $T$. For example, in our experiments we chose $T \approx 15$ and $T \approx 25$ and did not observe any difference.

Regarding the approximations $\alpha_{T,N}(-\nu_{0} + i \omega)$ defined in \nameref{DESC: DelayCompundAS34}, from \eqref{EQ: ApproxSchemeWTN} we have the uniform in $N$ and $\omega \in \mathbb{R}$ dynamically exponential convergence:
\begin{equation}
	\lim_{T \to \infty} \alpha_{T,N}(-\nu_{0} + i\omega) = \alpha_{N}(-\nu_{0} + i\omega).
\end{equation}
Furthermore, Lemma \ref{LEM: RaleighQuotientSelfAdjointOptimizationApproximation} gives the monotone convergence:
\begin{equation}
	\label{EQ: ApproxSchemeAlphaNtoAlpha}
	\lim_{N \to \infty}\alpha_{N}(-\nu_{0} + i\omega) = \alpha(-\nu_{0} + i\omega),
\end{equation}
which is uniform in $\omega \in [-\Omega,\Omega]$ for any fixed $\Omega > 0$ due to Lemma \ref{LEM: ApproxSchemeAlphLipsch}. In practice, one should expect $\alpha_{T,N}(-\nu_{0} + i \omega)$ to stabilize in a given interval $\omega \in [-\Omega, \Omega]$ as $N$ increases. In our experiments reported below and conducted in the case $m=2$, we took $N \in \{2, 5, 10, 20, 30\}$ for $\Omega = 37.5$. Moreover, the experiments indicate that the choice of $T \approx 15$ and $N=10$ is sufficient, since for larger parameters the results become almost indistinguishable in the most interesting segment $\omega \in [-10,10]$.

For the choice of $\Omega$, we leave the following conjecture stated in the case of \eqref{EQ: DelayCompoundSelfAdjContraintIdenticalBoundsFreqC} (for \eqref{EQ: SelfAdjointAdditiveSymmetriztionOptimizationFreqCond} and in more general situations, the statements are analogous).
\begin{conjecture}
	\label{CONJ: DelayCompoundOscillationPattern}
	The norm of the transfer operator $W(-\nu_{0} + i \omega)$ as in \eqref{EQ: DelayCompoundSelfAdjContraintIdenticalBoundsFreqC} is asymptotically almost periodic (in the sense of Bohr) as $|\omega| \to \infty$.
\end{conjecture}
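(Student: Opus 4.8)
The plan is to establish the stronger and cleaner statement that
$\|W(-\nu_{0}+i\omega)\|_{(\mathbb{U}^{\wedge}_{m})^{\mathbb{C}} \to (\mathbb{M}^{\wedge}_{m})^{\mathbb{C}}} \to 0$ as $|\omega| \to \infty$; since every continuous function on a line tending to $0$ at infinity is asymptotically almost periodic in the sense of Bohr (with vanishing almost periodic part), this implies the conjecture. First I would reduce to a finite horizon by Theorem \ref{TH: DelayCompoundComputationTailEstimate}: composing the representation $-(A^{[\wedge m]}-pI)^{-1}B^{\wedge}_{m}\eta = \int_{0}^{T}e^{-pt}G^{\wedge m}(t)B^{\wedge}_{m}\eta\, dt + R_{T}(p;B^{\wedge}_{m}\eta)$ with the bounded operator $C^{\wedge}_{m} \in \mathcal{L}(\mathbb{E}^{\wedge}_{m};\mathbb{M}^{\wedge}_{m})$ (using Theorem \ref{TH: ResolventDelayCompoundBound} to see that the finite integral lands in $\mathbb{E}^{\wedge}_{m}$) yields $\|W(p) - W_{T}(p)\|_{\mathrm{op}} \leq C_{\varkappa}e^{-\varkappa T}$ uniformly in $p = -\nu_{0}+i\omega$, $\omega \in \mathbb{R}$, where $W_{T}(p)\eta := C^{\wedge}_{m}\int_{0}^{T}e^{-pt}G^{\wedge m}(t)B^{\wedge}_{m}\eta\, dt$. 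Thus it suffices to prove that $\|W_{T}(-\nu_{0}+i\omega)\|_{\mathrm{op}} \to 0$ as $|\omega|\to\infty$ for each fixed $T>0$ and then let $T \to \infty$.

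For a fixed $T$, I would split $\int_{0}^{T} = \int_{0}^{t_{0}} + \int_{t_{0}}^{T}$, choosing $t_{0}$ large enough that $t \mapsto G^{\wedge m}(t)$ is norm continuous into $\mathbb{E}^{\wedge}_{m}$ on $[t_{0},\infty)$ (by eventual compactness of $G^{\wedge m}$ and the smoothing of delay semigroups into $\mathcal{D}(A^{[\wedge m]}) \subset \mathbb{E}^{\wedge}_{m}$). On $[t_{0},T]$ the integrand $t \mapsto e^{-pt}\, C^{\wedge}_{m}G^{\wedge m}(t)B^{\wedge}_{m}$ is then a norm-continuous $\mathcal{L}((\mathbb{U}^{\wedge}_{m})^{\mathbb{C}},(\mathbb{M}^{\wedge}_{m})^{\mathbb{C}})$-valued function on a compact interval, so a Banach-valued Riemann--Lebesgue lemma (uniform approximation by operator-valued step functions together with $\int_{I}e^{-i\omega t}dt \to 0$) gives $\big\| \int_{t_{0}}^{T} e^{-pt}C^{\wedge}_{m}G^{\wedge m}(t)B^{\wedge}_{m}\, dt \big\|_{\mathrm{op}} \to 0$ as $|\omega|\to\infty$. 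The short-time piece $\int_{0}^{t_{0}}$ is the delicate one: there $G^{\wedge m}$ is only strongly continuous, so instead I would use the explicit structure, via Proposition \ref{PROP: ResolventCompoundBActionBasisDescription} together with the structural Cauchy formulas of \cite{Anikushin2023Comp, Anikushin2020FreqDelay}, to decompose $C^{\wedge}_{m}G^{\wedge m}(t)B^{\wedge}_{m}$ (acting on the basis $U^{\wedge}_{k_{1}\ldots k_{m-1}}$ and extended by linearity) into a norm-continuous part and the part carried by the jump of the scaled fundamental solution $\psi_{\infty}$ at $t=0$. Integrating by parts once in $t$ against $e^{-pt}$ converts that jump into a boundary term of size $O(|\omega|^{-1})$ times a delay factor $e^{-p\sigma}$ (with $\sigma$ an integer combination of $\tau$ and $\tau_{0}$), while the norm-continuous remainder again contributes $o(1)$ in operator norm; the crucial requirement is to carry out this decomposition and the ensuing bounds uniformly over the unit ball of $(\mathbb{U}^{\wedge}_{m})^{\mathbb{C}}$, which one arranges by exploiting that the image of $B^{\wedge}_{m}$ lies in the boundary subspaces and that $C^{\wedge}_{m}$ is bounded from $\mathbb{E}^{\wedge}_{m}$ (Proposition \ref{TH: OperatorCExntesionOntoEk} and Theorem \ref{TH: ResolventDelayCompoundBound}).

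Assembling the three estimates gives $\limsup_{|\omega|\to\infty}\|W(-\nu_{0}+i\omega)\|_{\mathrm{op}} \leq C_{\varkappa}e^{-\varkappa T}$ for every $T$, hence $\|W(-\nu_{0}+i\omega)\|_{\mathrm{op}} \to 0$, which proves the conjecture. I expect the main obstacle to be precisely the short-time analysis on $[0,t_{0}]$: in that regime the semigroup lacks norm continuity, so one cannot invoke the operator-valued Riemann--Lebesgue lemma directly, and one must extract from the variation-of-constants representation of $A^{[\wedge m]}$ (Theorem \ref{TH: AdditiveCompoundDelayDescription}) how the boundary datum $B^{\wedge}_{m}\eta$ is transported and then read off by $C^{\wedge}_{m}$ -- this is exactly where the delays $\sigma$, and the only possibly non-decaying contributions $e^{-p\sigma}$, arise. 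For retarded-type equations, as is the present case, I expect these delay factors to be damped by the characteristic denominator, so that the almost periodic part is genuinely $0$ (consistently with the scalar case $m=1$, where $W(p) = e^{-p\tau_{0}}\widetilde{B}/\Delta(p)$ with $|\Delta(-\nu_{0}+i\omega)| \sim |\omega|$); should a non-decaying operator-valued trigonometric polynomial survive the limit $T \to \infty$, the same scheme delivers $\|W(-\nu_{0}+i\omega)\| = h(\omega) + o(1)$ with $h$ Bohr almost periodic, which still gives the conjecture.
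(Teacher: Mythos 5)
You are attempting to prove a statement that the paper itself leaves open: Conjecture \ref{CONJ: DelayCompoundOscillationPattern} is stated without proof, and the surrounding discussion explicitly says that the decay $\| W(-\nu_{0}+i\omega)\| \to 0$ holds for $m=1$ but ``is not the case for $m \geq 2$'', with the numerical experiments (Figs.~\ref{FIG: CompoundSST=15}, \ref{FIG: CompoundMGT=15}) showing oscillation around a strictly positive value. So the stronger statement you build your argument on is, according to the paper's own evidence, false for the compound case $m\geq 2$, and your closing fallback sentence (``should a non-decaying operator-valued trigonometric polynomial survive \dots the same scheme delivers $h$ Bohr almost periodic'') is not an argument --- it simply restates the conjecture without producing the almost periodic limit function or proving its existence.

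The concrete step that fails is the operator-norm Riemann--Lebesgue argument. Your tail reduction via Theorem \ref{TH: DelayCompoundComputationTailEstimate} is fine, and on $[t_{0},T]$ (where $G^{\wedge m}$ is norm continuous by eventual compactness) the oscillatory integral does vanish in operator norm. But the short-time piece $\int_{0}^{t_{0}}$ does not become small, and the reason is not merely a jump of $\psi_{\infty}$ that integration by parts could tame: the operator norm is a supremum over the \emph{infinite-dimensional} unit ball of $(\mathbb{U}^{\wedge}_{m})^{\mathbb{C}}$, and the decay of each matrix element in $\omega$ is not uniform in the basis index. Taking $\eta = U^{\wedge}_{k_{1}\ldots k_{m-1}}$ with $2\pi k_{j}/\tau \approx \pm\omega$, the transported history in \eqref{EQ: MeasurementComputationM2ApproxScheme} contributes terms of the form $\int_{0}^{t_{0}} e^{-pt}\phi_{k}(t-\tau_{0})\,x_{\infty}(t+\theta)\,dt = c_{k}\int_{0}^{t_{0}} e^{(-p+i2\pi k/\tau)t} x_{\infty}(t+\theta)\,dt$, which at resonance loses all oscillation and stays of order one as $|\omega|\to\infty$. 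This resonance between $e^{-i\omega t}$ and high-frequency inputs carried on the boundary faces is exactly what produces the persistent oscillatory plateau seen numerically, and it is invisible to your scheme because for $m=1$ (where only the fundamental solution enters and the characteristic denominator damps everything) no such input-dependent oscillation exists. A genuine proof of the conjecture would have to identify the limiting almost periodic profile coming from these resonant contributions (e.g.\ via an explicit asymptotic expansion of $W(-\nu_{0}+i\omega)$ on the resonant subspaces), which your proposal does not do.
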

In the case $m=1$, the considered value vanishes as $|\omega| \to \infty$, see \cite{Anikushin2020FreqDelay}. This is not the case for $m \geq 2$, and, indeed, in the examples below, the value shows a repetitive pattern (small oscillations around a positive value) as $|\omega| \to \infty$. In fact, it is asymptotic to the positive value with the convergence of order $O(|\omega|^{-1})$, see Corollary \ref{COR: ExplicitComputationConjecture}, and this may also hold for the general case, so no almost periodicity is in fact required. This indicates that, as in the case $m=1$, frequency inequalities can be verified on a finite time interval $[-\Omega,\Omega]$. Thus, proving Conjecture \ref{CONJ: DelayCompoundOscillationPattern} for more general classes of delay operators should be of great practical interest.

\subsection{Explicit analysis of resolvent equations for $m=2$}
\label{SUBSEC: ExampleResolventEquations}
In the context of \eqref{EQ: ScalarDelayEquationExample}, i.e., assuming $n=r_{1}=r_{2}=1$, let us illustrate the resolvent equations for $m=2$ by means of the operator $A$ corresponding via \eqref{EQ: OperatorAScalarDelayEquations} to $\widetilde{A}$, where $\widetilde{A}\phi = a \phi(0) + b\phi(-\tau)$ for some $a,b \in \mathbb{R}$ and all $\phi \in C([-\tau,0];\mathbb{R})$. 

Consider $\Psi \in \mathcal{L}^{\wedge}_{2}$ such that $R_{12}\Psi = 0$, $R_{1}\Psi = \psi$, and $R_{2}\Psi=-\psi$ for some $\psi \in L_{2}(-\tau,0;\mathbb{C})$. Suppose that $p \in \mathbb{C}$ does not belong to the spectrum of $A^{[\wedge 2]}$. Then, according to Theorem \ref{TH: AdditiveCompoundDelayDescription} and Corollary \ref{COR: AntisymmetricRelationsScalar}, the equation $\Phi = (A^{[\wedge 2]}-pI)^{-1}\Psi$ is equivalent to
\begin{equation}
	\label{EQ: ResolventEquationExample}
	\begin{split}
		\left(\frac{\partial}{\partial \theta_{1}} + \frac{\partial}{\partial \theta_{2}}\right)\Phi(\theta_{1},\theta_{2}) - p \Phi(\theta_{1},\theta_{2}) &= 0,\\
		\frac{d}{d \theta} \Phi(\theta, 0) + a \Phi(\theta, 0) + b \Phi(\theta, -\tau) - p \Phi(\theta,0) &= \psi(\theta),\\
		\frac{d}{d \theta} \Phi(0, \theta) + a \Phi(0, \theta) + b \Phi(-\tau, \theta) - p \Phi(0, \theta) &= -\psi(\theta),
	\end{split}
\end{equation}
where $(\theta_{1},\theta_{2}) \in (-\tau,0)^{2}$ and $\theta \in (-\tau,0)$. For convenience, here we also set $\Phi(\theta,0) = (R_{1}\Phi)(\theta)$ and similarly for other substitutions.

For each $\theta \in [-\tau,0]$, it is convenient to introduce the functions $\Phi^{(1)}_{\theta}(s) \coloneq \Phi(\theta+s,s)$ and $\Phi^{(2)}_{\theta}(s) \coloneq \Phi(s, \theta+s)$ defined for $s \in [-\tau-\theta,0]$. Then we can resolve the first equation in \eqref{EQ: ResolventEquationExample} as $\Phi^{(1)}_{\theta}(s) = c_{1}(\theta) e^{ps}$ and $\Phi^{(2)}_{\theta}(s) = c_{2}(\theta) e^{ps}$, where in fact $c_{1}(\theta) = \Phi(\theta,0) = (R_{1}\Phi)(\theta)$ and $c_{2}(\theta) = \Phi(0,\theta)=(R_{2}\Phi)(\theta)$. In particular, $c_{1}(\theta) = - c_{2}(\theta)$. Note also that $\Phi(\theta,-\tau) = \Phi^{(2)}_{-\tau-\theta}(\theta) = c_{2}(-\tau-\theta)e^{p\theta}$. Substituting all this into the second equation in \eqref{EQ: ResolventEquationExample} gives
\begin{equation}
	\label{EQ: CompoundC1ExampleEquation}
	\frac{d}{d\theta}c_{1}(\theta) + (a-p)c_{1}(\theta) - be^{p\theta} c_{1}(-\tau-\theta) = \psi(\theta).	
\end{equation}
Due to the antisymmetricity, we additionally have $c_{1}(0) = 0$.

All the above transformations are justified by the description of $\mathcal{D}(A^{[\wedge 2]})$ discussed in Remark \ref{REM: DescriptionOfDomainAdditiveCompound}. By our assumptions, there exists a unique $c_{1}(\cdot) \in W^{1,2}(-\tau,0;\mathbb{C})$ satisfying the above conditions. From it the entire $\Phi$ can be obtained.

To determine $c_{1}$ from \eqref{EQ: CompoundC1ExampleEquation}, it is convenient to introduce $y_{1}(\theta) \coloneq c_{1}(\theta)$ and $y_{2}(\theta) \coloneq c_{1}(-\tau-\theta)$. Then \eqref{EQ: CompoundC1ExampleEquation} gives the following linear inhomogeneous system of ordinary differential equations on $[-\tau,0]$:
\begin{equation}
	\label{EQ: ExResolventEquationODE}
	\begin{split}
		&\dot{y}_{1} = (p-a)y_{1} + be^{p\theta}y_{2} + \psi(\theta),\\
		&\dot{y}_{2} = (a-p)y_{2} - be^{-p(\theta + \tau)}y_{1} - \psi(-\tau-\theta),
	\end{split}
\end{equation}
which should be complemented by the conditions 
\begin{equation}
	\label{EQ: ExResolventEquationBoundaryConds}
	y_{1}(0) = y_{2}(-\tau) = 0 \qquad \text{and} \qquad y_{1}(-\tau)=y_{2}(0).
\end{equation}

We immediately have the following lemma.
\begin{lemma}
	\label{LEM: ExResolventsEqsReductionToODE}
	In the above context, there exists a unique solution $(y_{1},y_{2})$ to \eqref{EQ: ExResolventEquationODE} satisfying \eqref{EQ: ExResolventEquationBoundaryConds}. Moreover, it also satisfies $y_{1}(\theta)=y_{2}(-\tau-\theta)$ for all $\theta \in [-\tau,0]$.
\end{lemma}
\begin{proof}
	The existence follows from the existence of $c_{1}$ resolving \eqref{EQ: CompoundC1ExampleEquation} with $c_{1}(0)=0$. For the uniqueness, note that the transformation
	\begin{equation}
		\label{EQ: ExampleCompoundEquationsSymmetry}
		(y_{1}(\theta), y_{2}(\theta)) \mapsto (y_{2}(-\tau-\theta), y_{1}(-\tau-\theta))
	\end{equation}
	is a symmetry for \eqref{EQ: ExResolventEquationODE}, i.e., it takes solutions into solutions. In particular, the difference $\Delta(\theta) \coloneq (y_{1}(\theta), y_{2}(\theta)) - (y_{2}(-\tau-\theta), y_{1}(-\tau-\theta))$ solves the linear homogeneous system, and it has zero initial data at $\theta=0$ under \eqref{EQ: ExResolventEquationBoundaryConds}. So, $\Delta(\theta) \equiv 0$. Consequently, any solution $(y_{1},y_{2})$ satisfying \eqref{EQ: ExResolventEquationBoundaryConds} is symmetric, and $c_{1}(\theta) \coloneq y_{1}(\theta)$ solves \eqref{EQ: CompoundC1ExampleEquation}. Since $c_{1}$ is unique, the same holds for $(y_{1},y_{2})$.
\end{proof}

Now we are going to resolve \eqref{EQ: ExResolventEquationODE}. Let us start with the following.
\begin{lemma}
	Let $D=D(\theta)$ be the matrix of the linear part of \eqref{EQ: ExResolventEquationODE}, i.e.,
	\begin{equation}
		D(\theta) = \begin{pmatrix}
			p-a & be^{p\theta}\\
			-be^{-p(\theta+\tau)} & a-p
		\end{pmatrix}.
	\end{equation}
	Then solutions to $\dot{y}(\theta) = D(\theta)y(\theta)$ are delivered by the formula
	\begin{equation}
		\label{EQ: ExResolventEqsLinearSolutionsFormula}
		y(\theta)=
		\begin{pmatrix}
			e^{p\theta} & 0\\
			0 & 1
		\end{pmatrix}
		e^{D_{0}(\theta+\tau)}
		\begin{pmatrix}
			e^{p\tau} & 0\\
			0 & 1
		\end{pmatrix} y(-\tau),
	\end{equation}
	where $D_{0}$ is given by
	\begin{equation}
		\label{EQ: ResolventComputationD0matrix}
		D_{0} = \begin{pmatrix}
			-a & b\\
			-be^{-p\tau} & a-p
		\end{pmatrix}.
	\end{equation}
\end{lemma}
\begin{proof}
	This can be achieved by applying the change of variables
	\begin{equation}
		(y_{1}(\theta),y_{2}(\theta)) \mapsto (e^{-p\theta}y_{1}(\theta), y_{2}(\theta))
	\end{equation} 
	in the equations, which results in a linear system with constant coefficients given by the matrix $D_{0}$.
\end{proof}

For convenience, we set $G_{D}(\theta)$ to be the product of matrices from \eqref{EQ: ExResolventEqsLinearSolutionsFormula}. Thus, solutions to the homogeneous linear system are given by $y(\theta) = G_{D}(\theta)y(-\tau)$. In virtue of the symmetry \eqref{EQ: ExampleCompoundEquationsSymmetry}, we have
\begin{equation}
	\label{EQ: ExampleResolventEqsSymmtryForG}
	G_{D}(\theta) = T G_{D}(-\tau-\theta) T G_{D}(0), \qquad \text{where} \quad T=\begin{pmatrix}
		0 & 1\\
		1 & 0
	\end{pmatrix}.
\end{equation}

By applying the variation of constants formula, we immediately obtain the following.
\begin{corollary}
	Solutions to \eqref{EQ: ExResolventEquationODE} are given by
	\begin{equation}
		\label{EQ: ExResolventEqsGeneralInhFormula}
		\begin{pmatrix}
			y_{1}(\theta)\\
			y_{2}(\theta)
		\end{pmatrix}
		= G_{D}(\theta) 		\begin{pmatrix}
			y_{1}(-\tau)\\
			y_{2}(-\tau)
		\end{pmatrix}
		+ \int_{-\tau}^{\theta} G_{D}(\theta) (G_{D}(s))^{-1} \begin{pmatrix}
			\psi(s)\\
			-\psi(-\tau-s)
		\end{pmatrix}
		ds,
	\end{equation}
	where
	\begin{equation}
		G_{D}(\theta) (G_{D}(s))^{-1} = \begin{pmatrix}
			e^{p\theta} & 0\\
			0 & 1
		\end{pmatrix}
		e^{D_{0}(\theta-s)}
		\begin{pmatrix}
			e^{-ps} & 0\\
			0 & 1
		\end{pmatrix}.
	\end{equation}
\end{corollary}

Substituting the boundary conditions \eqref{EQ: ExResolventEquationBoundaryConds} into \eqref{EQ: ExResolventEqsGeneralInhFormula} with $\theta=0$, we obtain
	\begin{equation}
	\label{EQ: ExResolventEqsGeneralInhFormulaSubstituted}
	\begin{pmatrix}
		0\\
		y_{1}(-\tau)
	\end{pmatrix}
	= G_{D}(0) 		\begin{pmatrix}
		y_{1}(-\tau)\\
		0
	\end{pmatrix}
	+ \int_{-\tau}^{0} G_{D}(0) (G_{D}(s))^{-1} \begin{pmatrix}
		\psi(s)\\
		-\psi(-\tau-s)
	\end{pmatrix}
	ds.
\end{equation}
By Lemma \ref{LEM: ExResolventsEqsReductionToODE}, there exists a unique solution $y_{1}(-\tau) \in \mathbb{C}$ to this equation. Suppose that 
\begin{equation}
	\label{EQ: ExampleResolventEqsGcoeffs}
	G_{D}(\theta) = \begin{pmatrix}
		g_{11}(\theta) & g_{12}(\theta)\\
		g_{21}(\theta) & g_{22}(\theta)
	\end{pmatrix}
	\quad \text{and} \quad
	G_{D}(\theta) (G_{D}(s))^{-1} = \begin{pmatrix}
		g'_{11}(\theta,s) & g'_{12}(\theta,s)\\
		g'_{21}(\theta,s) & g'_{22}(\theta,s)
	\end{pmatrix}.
\end{equation}
Then \eqref{EQ: ExResolventEqsGeneralInhFormulaSubstituted} and integration by parts give the following expressions\footnote{It is unclear to us how the equations $g_{11}(0)=0$ and $1-g_{21}(0) = 0$ with respect to $p \in \mathbb{C}$ are related to the spectrum of $A^{[\wedge 2]}$. However, from the uniqueness of $y_{1}(-\tau)$, at least one of the expressions in \eqref{EQ: DelayCompoundScalarExplicitYtauExpressions} must be well defined. Furthermore, from the asymptotic analysis given below, both expressions are well defined for all sufficiently large $\omega$. Then, again by the uniqueness of $y_{1}(-\tau)$, the corresponding kernels must coincide for such $\omega$. Since they are analytic functions of $\omega$, possible zeros of the denominator must cancel with the zeros of the enumerator, thereby delivering an analytic continuation. We do not know if this can actually happen (in our experiments both denominators are always nonzero).}:
\begin{equation}
	\label{EQ: DelayCompoundScalarExplicitYtauExpressions}
	\begin{split}
		y_{1}(-\tau) &= -\frac{1}{g_{11}(0)} \int_{-\tau}^{0}\left( g'_{11}(0,s) - g'_{12}(0,-\tau-s) \right) \psi(s)ds,\\ 
		y_{1}(-\tau) &= \frac{1}{1-g_{21}(0)} \int_{-\tau}^{0}\left( g'_{21}(0,s) - g'_{22}(0,-\tau-s) \right) \psi(s)ds.
	\end{split}
\end{equation}

By substituting this $y_{1}(-\tau)$ into \eqref{EQ: ExResolventEqsGeneralInhFormula} with $y_{2}(-\tau)=0$, one sees that $y_{1}$ and $y_{2}$ can be represented as follows:
\begin{equation}
	\label{EQ: ExampleCompoundIntegralOperatorsAbstract}
	\begin{split}
		y_{1}(\theta) &= \int_{-\tau}^{0}K_{1}(\theta,s)\psi(s)ds,\\
		y_{2}(\theta) &= \int_{-\tau}^{0}K_{2}(\theta,s)\psi(s)ds
	\end{split}
\end{equation}
with certain $L_{2}$-summable kernels $K_{1}$ and $K_{2}$ depending on $p$, see \eqref{EQ: ExampleResolventEquationsK2Formula} and \eqref{EQ: ExplicitComputationsK1Formula} for the explicit formulas.

Let us discuss how this is related to the frequency inequality \eqref{EQ: DelayCompoundSmithIneqGeneral}. For simplicity, we assume that $C\phi = \phi(-\tau_{0})$ with $\tau_{0}=0$ or $\tau_{0}=\tau$, see Remark \ref{REM: ExplicitComputationGeneralTau0} for the case of general $\tau_{0}$.

In terms of Sections \ref{SUBSEC: InducedControlOperators} and \ref{SUBSEC: InducedMeasurementOperators}, the action of the transfer operator $W(p)=-C^{\wedge}_{2}(A^{[\wedge 2]}-pI)^{-1}B^{\wedge}_{2}$ by components sends $\psi = -\widetilde{B} \eta^{2}_{1}$ into $M^{2}_{1}(\theta) = (C^{(2)}_{2,2}R_{12}\Phi)(\theta) = \Phi(\theta,-\tau_{0})$ and analogously for $\eta^{1}_{2} = -\eta^{2}_{1}$ and $M^{1}_{2} = -M^{2}_{1}$. We assume that $\widetilde{B} = \pm 1$, and since we deal with the norm of $W(p)$, we can drop the possible minus sign in the final result. Since $\Phi(\theta,-\tau) = -c_{1}(-\tau-\theta) e^{p\theta} = -y_{2}(\theta)e^{p\theta}$ and $\Phi(\theta,0) = c_{1}(\theta) = y_{1}(\theta)$, in terms of the present section, we are interested in the norm of the integral operator
\begin{equation}
	\label{EQ: IntegralOperatorExplicit}
	\psi \mapsto 
	\begin{cases}
		e^{p\theta}y_{2}(\cdot) \qquad &\text{if} \quad \tau_{0} = \tau,\\
		y_{1}(\cdot) \qquad &\text{if} \quad \tau_{0}=0.
	\end{cases}
\end{equation}

From the above considerations, we can estimate the norm of $W(p)$ from above via the $L_{2}$-norm of the kernel, i.e.,
\begin{equation}
	\label{EQ: ExplicitEstimateForWViaL2NormK}
	\| W(p) \|_{\mathcal{L}((\mathbb{U}^{\wedge}_{2})^{\mathbb{C}};(\mathbb{M}^{\wedge}_{2})^{\mathbb{C}})} \leq \begin{cases}
			\| e^{p\theta} K_{2} \|_{L_{2}((-\tau,0)^{2};\mathbb{C})} \qquad &\text{if} \quad \tau_{0} = \tau,\\
			\| K_{1} \|_{L_{2}((-\tau,0)^{2};\mathbb{C})} \qquad &\text{if} \quad \tau_{0}=0.
		\end{cases}
\end{equation}
However, the inequality can be strict, see Fig. \ref{FIG: CompoundMGTKernelApproximations}.

Let us discuss computations by means of the kernel $K_{2}$. It can be expressed using the coefficients from \eqref{EQ: ExampleResolventEqsGcoeffs} as follows:
\begin{equation}
	\label{EQ: ExampleResolventEquationsK2Formula}
	\begin{split}
		K_{2}(\theta,s) = \frac{g_{21}(\theta) (g'_{21}(0,s) - g'_{22}(0,-\tau-s))}{1-g_{21}(0)} + \rchi_{[-\tau,\theta]}(s)g'_{21}(\theta,s) -\\ \rchi_{[-\tau-\theta,0]}(s)g'_{22}(\theta,-\tau-s),
	\end{split}
\end{equation}
where $\rchi_{\mathcal{I}}$ denotes the characteristic function of the interval $\mathcal{I}$. In their turn, the coefficients can be expressed in terms of entries constituting the matrix exponential $e^{D_{0}t}$, which can be computed explicitly via the following well-known formula.
\begin{lemma}
	\label{LEM: ExplicitExponentialMatrix2t2}
	Let $D_{0}$ be a $2\times 2$-matrix with complex entries. Then for any $t \in \mathbb{R}$ we have
	\begin{equation}
		\label{EQ: ExplicitMatrixExponential}
		e^{D_{0} t} = e^{\alpha t} \left[ \left(\cosh(\delta t) - \alpha \frac{\sinh(\delta t)}{\delta}\right)I_{2} + \frac{\sinh(\delta t)}{\delta} D_{0} \right],
	\end{equation}
	where $\alpha \coloneq \operatorname{tr}D_{0}/2$, $\delta \coloneq \pm \sqrt{ - \det( D_{0} - \alpha I_{2}) }$, and $I_{2}$ is the identity $2\times 2$-matrix.
\end{lemma}

From the symmetry \eqref{EQ: ExampleResolventEqsSymmtryForG}, one can also express  $g'_{21}(\theta,s)$ and $g'_{22}(\theta,-\tau-s)$ via the sum of decomposable functions, i.e., products of functions depending only on $\theta$ or $s$. Thus, it is only the characteristic functions in \eqref{EQ: ExampleResolventEquationsK2Formula} that are indecomposable.

So, there is an explicit representation of the kernels and, consequently, of the transfer operator $W(p)$. However, we do not know whether the norm of $W(p)$ can be explicitly represented. In fact, using the explicit formulas, we may establish that the norm of $W(-\nu_{0} + i\omega)$ tends to a constant as $|\omega| \to \infty$, and even the explicit computation of the constant value is not known to us. Let us expound this in the case of $\tau_{0} = \tau$, omitting cumbersome transformations but emphasizing key relations. We refer to the experimental results at the end of this section that show the agreement with the theoretical investigations.

\begin{proposition}
	In terms of Lemma \ref{LEM: ExplicitExponentialMatrix2t2}, for the matrix $D_{0}$ from \eqref{EQ: ResolventComputationD0matrix} with $p=-\nu_{0} + i \omega$ we have $\alpha = -p/2$ and
	\begin{equation}
		\label{EQ: ExplicitComputationDeltaAsymptotic}
		\delta = \delta(-\nu_{0} + i\omega) = \left(a + \frac{1}{2}\nu_{0}\right) - i \frac{\omega}{2} + O\left(\frac{1}{|\omega|}\right) \qquad \text{as} \quad |\omega| \to \infty,
	\end{equation}
\end{proposition}
\begin{proof}
	This follows from an asymptotic analysis according to the definition of $\delta$. Since it requires cumbersome transformations, we leave it to the interested reader.
\end{proof}

From \eqref{EQ: ExplicitComputationDeltaAsymptotic} one can study the asymptotic behavior of the kernels as follows.
\begin{proposition}
	\label{PROP: ExplicitComputationAsymptoticKernel}
	For $p = -\nu_{0} + i \omega$ and $\delta_{0} = (a+\nu_{0}/2) - i \omega/2$, we have 
	\begin{equation}
		\label{EQ: ExplicitComputationsK2AsymptoticKernel}
			K_{2}(\theta,s) = \bar{K}_{2}(\theta,s) + O\left(\frac{1}{|\omega|}\right) \qquad \text{as} \quad |\omega| \to \infty
	\end{equation}
	uniformly in $(\theta,s) \in [-\tau,0]^{2}$, where 
	\begin{equation}
		\label{EQ: ExplicitComputationAsymptoticKernel}
		\bar{K}_{2}(\theta,s) \coloneq -e^{ -p\theta / 2} e^{\delta_{0} \theta} \rchi_{[-\tau-\theta,0]}(s) e^{-p(\tau+s)/2} e^{\delta_{0} (\tau + s)}.
	\end{equation}
\end{proposition}
\begin{proof}
	Suppose that
	\begin{equation}
		e^{D_{0}t} = \begin{pmatrix}
			g^{0}_{11}(t) & g^{0}_{12}(t)\\
			g^{0}_{21}(t) & g^{0}_{22}(t)
		\end{pmatrix} \qquad \text{for all} \quad t \geq 0.
	\end{equation}
	By \eqref{EQ: ExplicitComputationDeltaAsymptotic} and \eqref{EQ: ExplicitMatrixExponential}, it is clear that
	\begin{equation}
		\label{EQ: ExplicitComputationG0AsymptoticExpansion}
		\begin{pmatrix}
			g^{0}_{11}(t) & g^{0}_{12}(t)\\
			g^{0}_{21}(t) & g^{0}_{22}(t)
		\end{pmatrix} =
		\begin{pmatrix}
			e^{-pt / 2} e^{-\delta_{0} t} & 0\\
			0 & e^{-pt / 2} e^{\delta_{0} t}
		\end{pmatrix} + O\left(\frac{1}{|\omega|}\right).
	\end{equation}
	Moreover, for the matrices from \eqref{EQ: ExampleResolventEqsGcoeffs} we have
	\begin{equation}
		\label{EQ: ExplicitComputationGprimeGviaG0}
		\begin{split}
		\begin{pmatrix}
			g_{11}(\theta) & g_{12}(\theta)\\
			g_{21}(\theta) & g_{22}(\theta)
		\end{pmatrix} &=
		\begin{pmatrix}
			e^{p(\theta + \tau)}g^{0}_{11}(\tau+\theta) & e^{p\theta}g^{0}_{12}(\tau+\theta)\\
			e^{p\tau}g^{0}_{21}(\tau+\theta) & g^{0}_{22}(\tau+\theta)
		\end{pmatrix},\\
		\begin{pmatrix}
			g'_{11}(\theta,s) & g'_{12}(\theta,s)\\
			g'_{21}(\theta,s) & g'_{22}(\theta,s)
		\end{pmatrix} &=
		\begin{pmatrix}
			e^{p(\theta-s)}g^{0}_{11}(\theta-s) & e^{p\theta}g^{0}_{12}(\theta-s)\\
			e^{-ps}g^{0}_{21}(\theta-s) & g^{0}_{22}(\theta-s)
		\end{pmatrix}.
		\end{split}
	\end{equation}
	From this and \eqref{EQ: ExplicitComputationG0AsymptoticExpansion}, it is clear that all the entries are uniformly bounded, and, moreover, the subdiagonal entries vanish as $|\omega| \to \infty$ with the order of $O(|\omega|^{-1})$. Thus, in \eqref{EQ: ExampleResolventEquationsK2Formula}, all the terms vanish except the last one. Using its expression from \eqref{EQ: ExplicitComputationGprimeGviaG0} and \eqref{EQ: ExplicitComputationG0AsymptoticExpansion}, we obtain \eqref{EQ: ExplicitComputationsK2AsymptoticKernel}.
\end{proof}

Clearly, the norm of the integral operator with the kernel $\bar{K}_{2}$ from \eqref{EQ: ExplicitComputationAsymptoticKernel} equals the norm of the integral operator with the kernel $|\bar{K}_{2}|$, which is independent of $\omega$. 
\begin{remark}
	\label{REM: K1Kernel}
	For the kernel $K_{1}$, we have the following expression:
	\begin{equation}
		\label{EQ: ExplicitComputationsK1Formula}
		\begin{split}
			K_{1}(\theta,s) = \frac{g_{11}(\theta)(g'_{12}(0,-\tau-s) - g'_{11}(0,s))}{g_{11}(0)} + \rchi_{[-\tau,\theta]}(s) g'_{11}(\theta,s) -\\ \rchi_{[-\tau-\theta,0]}(s) g'_{12}(\theta,-\tau-s),
		\end{split}
	\end{equation}
	which coincides in the $L_{2}$-sense with $K_{2}(-\tau-\theta,s)$ according to \eqref{EQ: ExampleCompoundIntegralOperatorsAbstract} and the symmetry of solutions from Lemma \ref{LEM: ExResolventsEqsReductionToODE}. In particular, \eqref{EQ: ExplicitComputationsK2AsymptoticKernel} gives that
	\begin{equation}
		\bar{K}_{1}(\theta, s) \coloneq \bar{K}_{2}(-\tau-\theta,s) = - e^{p\theta/2} e^{\delta_{0} \theta} \rchi_{[\theta, 0]}(s) e^{-ps/2} e^{-\delta_{0} s}
	\end{equation}
	is the asymptotic kernel for $K_{1}(\theta,s)$, and the norms of the integral operators with the kernels $\bar{K}_{1}$ and $|\bar{K}_{1}|$ are the same and, in particular, do not depend on $\omega$.
\end{remark}

The above considerations give the following.
\begin{corollary}
	\label{COR: ExplicitComputationConjecture}
	In the above context, for $-\nu_{0} > s(A^{[\wedge 2]})$ and $\tau_{0} = \tau$ or $\tau_{0} = 0$, the norm of $W(-\nu_{0} + i \omega)$ tends to a constant $\bar{W}=\bar{W}(a, \tau, \nu_{0})$ with the order of $O(|\omega|^{-1})$ as $|\omega| \to \infty$. More precisely, $\bar{W}$ is the norm of the integral operator with the kernel
	\begin{equation}
		|e^{p\theta}\bar{K}_{2}|(\theta,s) = e^{a \theta} \rchi_{[-\tau-\theta,0]}(s) e^{(a+\nu_{0})(\tau+s)}
	\end{equation}
	in the case $\tau_{0} = \tau$ or with the kernel
	\begin{equation}
		|\bar{K}_{1}|(\theta,s) = e^{-(a + \nu_{0})\theta} \rchi_{[\theta,0]}(s) e^{(a + \nu_{0})s}
	\end{equation}
	in the case $\tau_{0} = 0$. In particular, Conjecture \ref{CONJ: DelayCompoundOscillationPattern} is valid in these cases.
\end{corollary}
\begin{remark}
	\label{REM: ExplicitComputationGeneralTau0}
	For general $\tau_{0} \in [0,\tau]$, one can also show an analog of Corollary \ref{COR: ExplicitComputationConjecture}. Here we have
	\begin{equation}
		\Phi(\theta,-\tau_{0}) = \begin{cases}
			y_{1}(\theta+\tau_{0}) e^{-\tau_{0} p} \qquad &\text{if} \quad \theta \in [-\tau, -\tau_{0}],\\
			-y_{1}(-\tau_{0} - \theta)e^{p\theta} \qquad &\text{if} \quad \theta \in [-\tau_{0},0],
		\end{cases}
	\end{equation}
	and the asymptotic kernel can be expressed as follows:
	\begin{equation}
		\begin{split}
			\bar{K}_{\tau_{0}}(\theta,s) = e^{p(\theta-\tau_{0})/2} [ \rchi_{[-\tau_{0},0]}(\theta) e^{-\delta_{0}(\theta + \tau_{0})} -\\  \rchi_{[-\tau,-\tau_{0}]}(\theta) e^{\delta_{0}(\theta+\tau_{0})} ] \rchi_{[\theta,0]}(s) e^{-ps/2} e^{-\delta_{0} s}.
		\end{split}
	\end{equation}
	Since the characteristic functions in the square brackets are complementary, the norms of the integral operators with the kernels $\bar{K}_{\tau_{0}}$ and $|\bar{K}_{\tau_{0}}|$ are the same and do not depend on $\omega$. Thus, Conjecture \ref{CONJ: DelayCompoundOscillationPattern} is also valid in this case.
\end{remark}

It seems that even the explicit computation of the asymptotic norm $\bar{W}$ is not possible\footnote{See the \href{https://mathoverflow.net/q/500667}{discussion} on MathOverflow: https://mathoverflow.net/q/500667.}. However, to justify the verification of frequency inequalities on a finite segment, it may be sufficient to use the $L_{2}$-norm of the asymptotic kernel, see Fig.~\ref{FIG: CompoundMGTKernelApproximations}.

We conducted numerical experiments by means of the Mackey--Glass equations, namely, \eqref{EQ: MackeyGlassNumericalSchemeExample} with $\gamma = 0.1$, $\beta = 0.2$, $\kappa = 10$, and $\Lambda$ given by \eqref{EQ: MackeyGlassStabilityLambdaDef}. To avoid confusion, let us denote $\tau$ from \eqref{EQ: MackeyGlassNumericalSchemeExample} by $\tau'$. Then in terms of the present section, we have $\tau'=4.5$ and
\begin{equation}
	\label{EQ: ExplicitResolventParamatersForTest}
	a = -\tau' \gamma, \quad b=(\tau'\beta - \Lambda), \quad \tau_{0}=\tau=1, \quad \text{and} \quad \Lambda=\frac{1}{2}\tau'\beta\left( \frac{(\kappa-1)^{2}}{\kappa} +1\right).
\end{equation}
Moreover, we considered the Suarez--Schopf model as in \eqref{EQ: Suarez-SchopfLinearizedRewriten} with $\alpha = 0.6$, $\tau=0.83$, and $\Lambda=\Lambda_{R}$ with $R=R_{0}(\alpha,\tau)$ given by Lemma \ref{LEM: SSmodelRadiusEstimateWithRestr}. In terms of the present section, this gives
\begin{equation}
	\label{EQ: ExplicitResolventParamatersForTestSuarezSchopf}
	a = 1-\Lambda, \quad b=-\alpha, \quad \text{and} \quad \tau_{0}=0.
\end{equation}

For these parameters, we computed\footnote{Integrals are approximated via the Simpson $1/3$-rule using the uniform grid of $1001$ points on $[-\tau,0]$.} the $L_{2}$-norms of the kernels $e^{p\theta}K_{2}$ and $K_{1}$ as in \eqref{EQ: ExplicitEstimateForWViaL2NormK}. Moreover, similarly to Section \ref{SEC: ApproximationFreqIneqDelayCompound}, we truncated the integral operator \eqref{EQ: IntegralOperatorExplicit} using its explicit representation and the basis of trigonometric monomials $\phi_{k}(\theta)=\tau^{-1/2}e^{i2\pi k \theta/\tau}$ with $-N \leq k \leq N$. In terms of \eqref{EQ: ApproxSchemeWTN}, such truncations correspond to the approximations $P_{N} W(p) P_{N}$. By \eqref{EQ: ApproxSchemeAlphaNtoAlpha}, their norms $\alpha_{N}(p)$ monotonically converge to the norm of $W(p)$. 

\begin{figure}[t]
	\begin{minipage}{.5\textwidth}
		\includegraphics[width=\textwidth,angle=0]{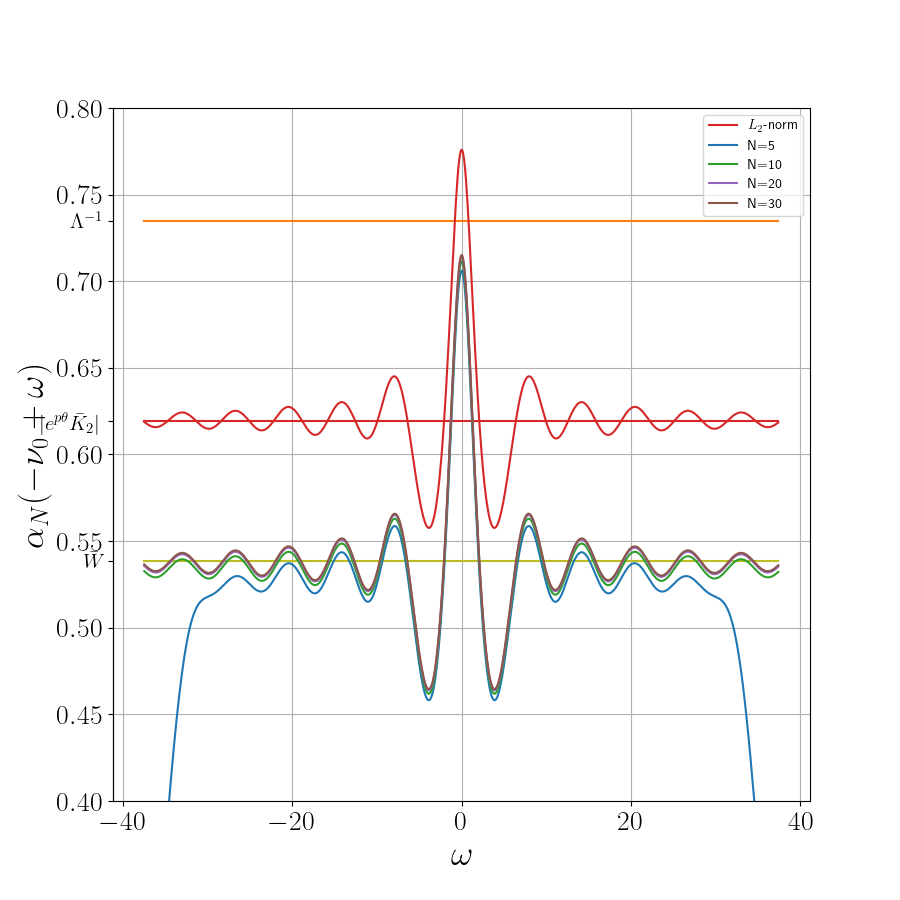}
	\end{minipage}%	
	\begin{minipage}{.5\textwidth}
		\includegraphics[width=\textwidth,angle=0]{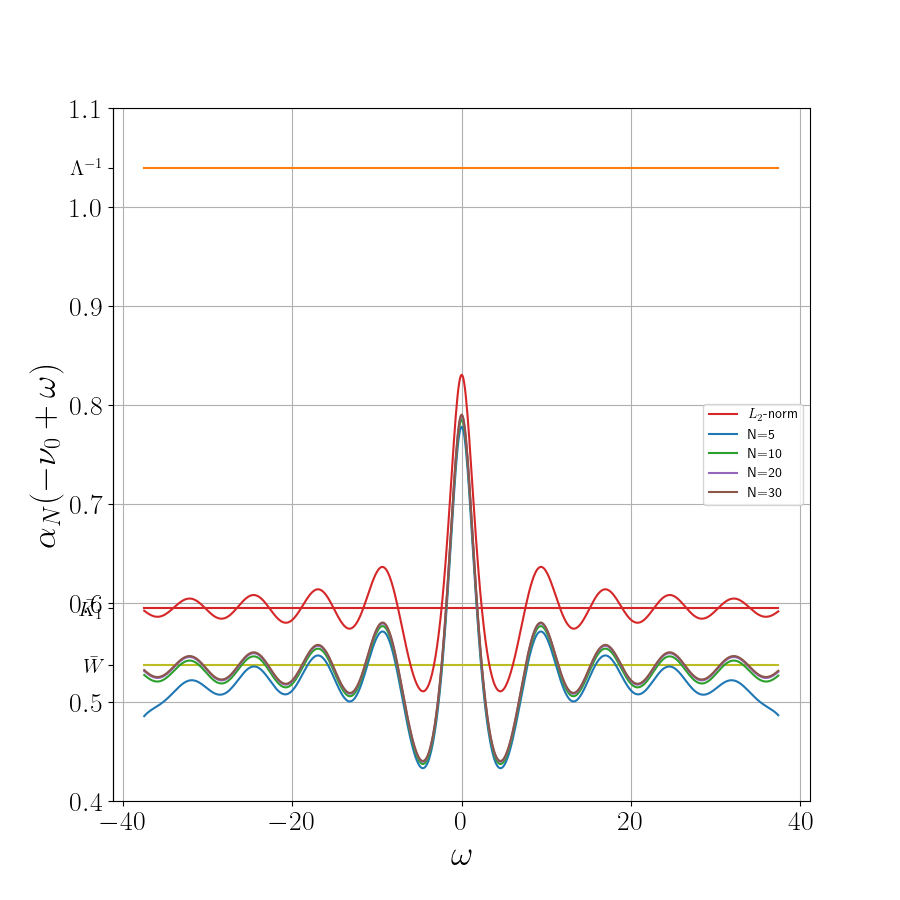}
	\end{minipage}%
	\caption{Graphs of the largest singular values $\alpha_{N}(-\nu_{0} + i \omega)$ for $\nu_{0}=0.01$ versus $\omega$ computed via the explicit representation of the transfer operator in the case of \eqref{EQ: ExplicitResolventParamatersForTest} (left) and \eqref{EQ: ExplicitResolventParamatersForTestSuarezSchopf} (right). Truncation parameters are taken as $N=5$ (blue), $N=10$ (green), $N=20$ (purple), and $N=30$ (brown). The red graph is the $L_{2}$-norm of the kernel from \eqref{EQ: ExplicitEstimateForWViaL2NormK}. The horizontal lines pass through the threshold value $\Lambda^{-1}$ (orange), the $L_{2}$-norms $|e^{p\theta}\bar{K}_{2}|$ (left) and $|\bar{K}_{1}|$ (right) of the asymptotic kernels (red), and the norm $\bar{W}$ of the asymptotic integral operator (olive) on the vertical axis. See the repository for implementation details.}
	\label{FIG: CompoundMGTKernelApproximations}
\end{figure}

Figure \ref{FIG: CompoundMGTKernelApproximations} shows some results for $N \in \{ 5,10,20,30\}$ in the case of \eqref{EQ: ExplicitResolventParamatersForTest} (left) and \eqref{EQ: ExplicitResolventParamatersForTestSuarezSchopf} (right). They indicate the presence of a gap between the norm of $W(p)$ and the $L_{2}$-norm of the kernel. Comparison with Fig.~\ref{FIG: CompoundSST=15} and Fig.~\ref{FIG: CompoundMGT=15} shows that the truncations agree with $W_{T,N}(p)$ delivered by the approximation scheme, which is expectable due to \eqref{EQ: ApproxSchemeWTN}. Moreover, the $L_{2}$-norm and the approximating norms agree with their constant asymptotic values\footnote{To approximate $\bar{W}$, we used analogous truncations with $N=1000$ and explicit formulas for the corresponding integrals.} as $|\omega| \to \infty$ according to Proposition \ref{PROP: ExplicitComputationAsymptoticKernel}, Remark \ref{REM: K1Kernel}, and Corollary \ref{COR: ExplicitComputationConjecture}.

For $m>2$, it is unclear to us whether the resolvent equations can be reduced to a problem amenable to direct computations via standard methods. We only note that the embedding of the diagonal Sobolev space $\mathcal{W}^{2}_{D}((-\tau,0)^{k};\mathbb{C})$ into $L_{2}((-\tau,0)^{k};\mathbb{C})$ is not compact for $k > 1$, see \cite[Remark 4.4]{Anikushin2023Comp}. Thus, for such problems the transfer operator is no longer compact and, in particular, cannot be expressed via integral operators with $L_{2}$-summable kernels. On the other hand, in the case of $m=2$, a similar approach for systems of equations can be developed. We plan to consider this case in future work.

As the truncations deliver only bounds from below for the norm of $W(p)$, it is interesting to obtain upper estimates, as these are more relevant for the verification of frequency inequalities. Recently, we found that the method of iterative nonlinear programming, which was suggested in our paper \cite{AnikushinRomanov2025RobustEstimates} to resolve minimax problems, works well for optimization of Schur test functions in order to obtain refined upper bounds for the norm of integral operators. In particular, this approach delivers much sharper upper bounds than \eqref{EQ: ExplicitEstimateForWViaL2NormK}, especially in a neighborhood of $\omega = 0$. Relevant discussions are given in our paper \cite{AnikushinRomanov2025Schur}.

Armed with explicit formulas for the test functions and kernels and effective bounds for the constant in $O(|\omega|^{-1})$ from \eqref{EQ: ExplicitComputationDeltaAsymptotic}, it shall be possible to make the upper estimates rigorous with the aid of interval arithmetic.
\section{Examples}
\subsection{Suarez--Schopf delayed oscillator}
\label{SEC: SuarezSchopfCompoundStab}
In this section, we aim to apply the developed machinery to study the global stability of the delayed oscillator proposed by Suarez and Schopf in \cite{Suarez1988} as a model for the El Ni\~{n}o--Southern Oscillation (ENSO). It is given by a scalar equation with a single delay:
\begin{equation}
	\label{EQ: SuarezSchopfCompoundExp}
	\dot{x}(t) = x(t) - \alpha x(t-\tau) - x^{3}(t),
\end{equation}
where $\tau > 0$ and $\alpha \in (0,1)$ are parameters. 

It can be shown, see \cite[Section 4]{Anikushin2022Semigroups}, that \eqref{EQ: SuarezSchopfCompoundExp} generates a dissipative semiflow $\varphi$ in the space $\mathbb{E} = C([-\tau,0];\mathbb{R})$ given by $\varphi^{t}(\phi_{0}) = x_{t}$, where $x \colon [-\tau,\infty) \to \mathbb{R}$ is a classical solution to \eqref{EQ: SuarezSchopfCompoundExp} such that $x_{0} = \phi_{0}$, and $x_{t}(\theta) = x(t+\theta)$ for $\theta \in [-\tau,0]$ denotes the $\tau$-history segment of $x(\cdot)$ at $t \geq 0$. Moreover, the global attractor $\mathcal{A}$ of $\varphi$ lies in the ball of radius\footnote{We endow $\mathbb{E}$ with the supremum norm.} $\sqrt{1+\alpha}$ centered at zero, and any ball with a radius not smaller than that is positively invariant.

Essential limitations on the dynamics of \eqref{EQ: SuarezSchopfCompoundExp} follow from the fact that it belongs to the class of systems with monotone negative feedback studied in \cite{MalletParetSell1996}. In particular, the dynamics of $\varphi$ satisfies the Poincar\'{e}--Bendixson trichotomy, i.e., the $\omega$-limit set of any point can be either a single equilibrium, or a single periodic orbit, or be a subset of equilibria together with complete orbits connecting them. Below, this trichotomy will be used to show that $\varphi$ is globally stable if certain frequency conditions are satisfied, see Proposition \ref{PROP: DelayCompoundGlobStabSuarezSchopf}.

It is clear that the set of equilibria for $\varphi$ is constituted by the zero equilibrium $\phi^{0}(\cdot) \equiv 0$ and the pair of symmetric ones $\phi^{\pm}(\cdot) \equiv \pm \sqrt{1-\alpha}$. For the considered parameters, standard local analysis shows that $\phi^{0}$ always has a one-dimensional unstable manifold. Moreover, for relatively small $\alpha$ and $\tau$, the symmetric equilibria $\phi^{\pm}$ are linearly stable. They lose their stability with a pair of complex-conjugate characteristic roots crossing the imaginary axis. These parameters correspond to the so-called neutral curve on the plane $(\tau,\alpha)$ (see Fig.~1 in \cite{AnikushinRom2023SS}), and the parameters below this curve correspond to the region of linear stability. 

Usually, the model \eqref{EQ: SuarezSchopfCompoundExp} is considered with parameters above the neutral curve, where it demonstrates stable periodic oscillations. However, in our work \cite{AnikushinRom2023SS}, we used analytical-numerical techniques to show that in the region of linear stability the presence of unstable periodic orbits, hidden periodic orbits, and homoclinic ``figure eights'' is possible if the parameters are taken sufficiently close to the neutral curve. Since systems with such rich multistability may be sensitive to external disturbances, and ENSO exhibits irregular behavior, these parameters seem to be more related to the modeled phenomenon. In this direction, we demonstrated that the additive effect of a small periodic forcing can cause chaotic behavior in the model.

Thus, the global stability of \eqref{EQ: SuarezSchopfCompoundExp} cannot be determined from the linear stability of equilibria. In fact, the theory of normal forms shows that on the neutral curve, symmetric equilibria $\phi^{\pm}$ undergo the Andronov--Hopf bifurcation, which is subcritical, and hence there exist unstable periodic orbits surrounding the equilibria for some parameters below the neutral curve. It is expected that the region of global stability is the region below what we called in \cite{AnikushinRom2023SS} the lower hidden curve. On this curve, the system is expected to undergo a saddle-node bifurcation of two (stable and unstable) large periodic orbits. Using normal forms, this scenario can be rigorously justified in a small neighborhood of the parameter $(\tau,\alpha)=(1,1)$. In \cite{AnikushinRom2023SS}, we numerically demonstrated the possibility of continuing the corresponding bifurcation curves using two-dimensional inertial manifolds.

Since the boundary of global stability in \eqref{EQ: SuarezSchopfCompoundExp} is determined by nonlocal bifurcations\footnote{In particular, it is hidden in the terminology of \cite{Kuzetal2020Lorenz}.}, it seems impossible to analytically compute it. In \cite{Anikushin2022Semigroups}, it was conjectured that \eqref{EQ: SuarezSchopfCompoundExp} is globally stable in the smaller region determined by the inequality $\lambda_{1} + \lambda_{2} < 0$, where $\lambda_{1} = \lambda_{1}(\alpha,\tau) > 0$ and $\lambda_{2} = \lambda_{2}(\alpha,\tau) < 0$ are the first two (as the real part decreases) characteristic roots, which are always real, at the zero equilibrium $\phi^{0}$. It can be shown that $\lambda_{1} + \lambda_{2} < 0$ is equivalent to the following explicit inequality:
\begin{equation}
	\label{EQ: SSmodelDimensionStabilityRegion}
	\tau < \frac{\log\left( \frac{1+\sqrt{1-\alpha^{2}}}{\alpha}\right)}{\sqrt{1-\alpha^{2}}}.
\end{equation}
The essence of this conjecture is revealed in its stronger form, which requires proving that $\phi^{0}$ is the most unstable point of $\mathcal{A}$ or, in rigorous terms, that the local Lyapunov dimension at $\phi^{0}$ equals to the Lyapunov dimension of $\mathcal{A}$. Such statements are known as the Eden conjecture, see \cite{Anikushin2023LyapExp, KuzReit2020}.

As to the developed apparatus, here \eqref{EQ: SSmodelDimensionStabilityRegion} determines the maximal region of possible applications, since this inequality is always satisfied under \eqref{EQ: LyapExpContrCond}.

A partial answer to the conjecture is given in \cite{Anikushin2022Semigroups} under the additional restriction $2\alpha \tau < 1$. Such a restriction is concerned with the construction of more delicate invariant regions to localize the global attractor $\mathcal{A}$, see Lemma \ref{LEM: SSmodelRadiusEstimateWithRestr}. In \cite{Anikushin2022Semigroups}, a comparison principle with stationary systems is also used. It is based on the monotonicity property of compound cocycles corresponding to monotone feedback systems explored by Mallet-Paret and Nussbaum in \cite{MalletParretNussbaum2013}, the already mentioned Poincar\'{e}--Bendixson trichotomy, and the ergodic variational principle for subadditive families, see \cite{Anikushin2023LyapExp}. However, not all the restricted region $2 \alpha \tau < 1$, which lies strictly within \eqref{EQ: SSmodelDimensionStabilityRegion} for $\alpha \geq 0.23$, is covered by such an approach. Although its part corresponding to $\alpha \in [0.75,1)$ seems to be identical, the part corresponding to $\alpha \in [0.5, 0.75]$ is significantly different, see \cite[Figure 1]{Anikushin2022Semigroups}.

In \cite{Anikushin2023LyapExp}, the Liouville trace formula applied in adapted metrics was used to estimate the Lyapunov dimension of $\mathcal{A}$ from above by $C(\alpha) \tau + 1$, where $C(\alpha) = \alpha^{2}e^{p^{*} + 1}$, and $p^{*}$ is the unique root $p>0$ of $\alpha^{2} p e^{p+1} = 3$. This ensures the global stability for $C(\alpha) \tau < 1$. For example, by taking $\alpha = 0.75$, we have $C(\alpha) \approx 3.555$, and the inequality approximately reduces to $\tau < 0.281$. For such parameters, the inequality $\alpha \tau < 0.211$ always holds. In particular, this method does not even cover the aforementioned result from \cite{Anikushin2022Semigroups}. Below, we intend to apply the developed machinery to improve these results.

Linearization of \eqref{EQ: SuarezSchopfCompoundExp} over a given solution $y_{0}(\cdot) \colon [-\tau,\infty) \to \mathbb{R}$ gives the equation
\begin{equation}
	\label{EQ: Suarez-SchopfLinearized1}
	\dot{x}(t) = (1 - 3 y^{2}_{0}(t))x(t) - \alpha x(t-\tau).
\end{equation}
Given $R > 0$, we set $\Lambda_{R} \coloneq 3R^{2}/2$ and rewrite \eqref{EQ: Suarez-SchopfLinearized1} as follows:
\begin{equation}
	\label{EQ: Suarez-SchopfLinearizedRewriten}
	\dot{x}(t) = (1-\Lambda_{R})x(t) - \alpha x(t-\tau) - (3y^{2}_{0}(t) - \Lambda_{R}) x(t).
\end{equation}
We consider \eqref{EQ: Suarez-SchopfLinearizedRewriten} in the context of \eqref{EQ: DelayRnLinearized} with $r_{1}=r_{2}=n=1$, $\widetilde{A}\phi \coloneq (1-\Lambda_{R})\phi(0) - \alpha \phi(-\tau)$, $C\phi \coloneq \phi(0)$ for $\phi \in C([-\tau,0];\mathbb{R})$, $\widetilde{B} \coloneq -1$, $F'(\wp) \coloneq 3|\wp(0)|^{2} - \Lambda_{R}$ for $\wp \in \mathcal{P} \coloneq \mathcal{A}$, and $\pi$ taken as the restriction of $\varphi$ to $\mathcal{A}$.

Eigenvalues of the operator $A$ corresponding via \eqref{EQ: OperatorAScalarDelayEquations} to the operator $\widetilde{A}$ defined below \eqref{EQ: Suarez-SchopfLinearizedRewriten} are given by the roots $p \in \mathbb{C}$ of
\begin{equation}
	1-\Lambda_{R} - \alpha e^{-\tau p} - p = 0.
\end{equation}
Let $\lambda_{1}(A), \lambda_{2}(A), \ldots$ be the eigenvalues arranged by nonincreasing their real parts and according to their multiplicities. Then the spectral bound of $A^{[\wedge m]}$ is given by $\sum_{j=1}^{m}\operatorname{Re}\lambda_{j}(A)$. 

The following proposition illustrates how global stability criteria can be derived for \eqref{EQ: SuarezSchopfCompoundExp} with the aid of developed machinery.
\begin{proposition}
	\label{PROP: DelayCompoundGlobStabSuarezSchopf}
	Let the global attractor $\mathcal{A}$ of the semiflow $\varphi$ generated by \eqref{EQ: SuarezSchopfCompoundExp} be contained in the ball of radius $R$ centered at $0$. Consider \eqref{EQ: Suarez-SchopfLinearizedRewriten} in the context of \eqref{EQ: DelayRnLinearized} as it is stated below the former. Suppose that there exists $\nu_{0} \geq 0$ such that $\operatorname{Re}\lambda_{1}(A) + \operatorname{Re}\lambda_{2}(A) < - \nu_{0}$, and the frequency inequality\footnote{See Section \ref{SUBSEC: ExampleResolventEquations} for an explicit interpretation of such inequalities.} \eqref{EQ: DelayCompoundSmithIneqGeneral} is satisfied with $m=2$ and $\Lambda = \Lambda_{R}$. Then $\varphi$ is globally stable, i.e., any trajectory converges to one of the three equilibria $\phi^{+}$, $\phi^{-}$, or $\phi^{0}$.
\end{proposition}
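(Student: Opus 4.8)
The plan is to use the frequency-domain machinery of Section~\ref{SEC: StabilityOfDelayCompounds} to obtain uniform exponential stability of the $2$-fold antisymmetric compound of the linearization cocycle over the attractor --- equivalently, uniform exponential contraction of $2$-dimensional volumes along orbits in $\mathcal{A}$ --- and then to combine this with the generalized Bendixson criterion and the Poincar\'{e}--Bendixson trichotomy for monotone feedback systems, which substitutes for the missing Closing Lemma. Concretely, I would first note that \eqref{EQ: Suarez-SchopfLinearizedRewriten}, read in the form \eqref{EQ: DelayRnLinearized} with $n=r_{1}=r_{2}=1$, $\mathcal{P}=\mathcal{A}$, $\pi=\varphi|_{\mathcal{A}}$, $\widetilde{B}=-1$, $C\phi=\phi(0)$ and $F'(\wp)=3|\wp(0)|^{2}-\Lambda_{R}$, satisfies \eqref{EQ: LipschitzFprimeDelay} with $\Lambda=\Lambda_{R}$: for $\wp\in\mathcal{A}$ one has $|\wp(0)|\le\|\wp\|_{C}\le R$, whence $F'(\wp)\in[-\Lambda_{R},\,3R^{2}-\Lambda_{R}]=[-\Lambda_{R},\Lambda_{R}]$ since $\Lambda_{R}=3R^{2}/2$. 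Then the standard quadratic constraint \eqref{EQ: DelayCompStandardChoiceQC} with $\Lambda=\Lambda_{R}$ defines a quadratic constraint for \eqref{EQ: InfinitesimaldDescriptionXim} with $m=2$, and its frequency inequality is exactly \eqref{EQ: DelayCompoundSmithIneqGeneral}, which is assumed; moreover $-\nu_{0}>\operatorname{Re}\lambda_{1}(A)+\operatorname{Re}\lambda_{2}(A)=s(A^{[\wedge 2]})$ by Proposition~\ref{PROP: SpectralBoundAwedgeViaA}. Hence Theorem~\ref{TH: QuadraticFunctionalDelayCompoundTheorem} and Corollary~\ref{COR: DelayCompoundUniformExponentialStability} (with $\nu_{0}>0$) give uniform exponential stability of $\Xi_{2}$, i.e. (Remark~\ref{REM: RobustnessLargLyapExp}) $\lambda_{1}(\Xi)+\lambda_{2}(\Xi)\le-\nu_{0}<0$, where $\Xi$ is the linearization cocycle of $\varphi$ along orbits in $\mathcal{A}$. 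By the generalized Bendixson criterion of M.Y.~Li and J.S.~Muldowney \cite{LiMuldowney1995LowBounds} for semiflows in Banach spaces, $\mathcal{A}$ then contains no closed invariant contour on which $\varphi^{t}$ is bijective; in particular no nonconstant periodic orbit, no homoclinic loop and no heteroclinic cycle among the equilibria (each of these is a closed invariant contour restricted to which $\varphi^{t}$ is a homeomorphism).

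Next I would extract the local structure of $\phi^{0},\phi^{\pm}$ from $\lambda_{1}(\Xi)+\lambda_{2}(\Xi)<0$. At each equilibrium $q$ the two rightmost characteristic roots satisfy $\operatorname{Re}\mu_{1}(q)+\operatorname{Re}\mu_{2}(q)\le\lambda_{1}(\Xi)+\lambda_{2}(\Xi)<0$, since the restriction of $\Xi_{2}$ to $\{q\}$ has largest Lyapunov exponent $\operatorname{Re}\mu_{1}(q)+\operatorname{Re}\mu_{2}(q)$. For $\phi^{0}$ the function $p\mapsto 1-\alpha e^{-\tau p}-p$ is positive at $p=0$ and tends to $-\infty$ as $p\to+\infty$, so it has a real root $\mu_{1}(\phi^{0})>0$; the inequality then forces every other root to have negative real part, so $\phi^{0}$ is hyperbolic with a one-dimensional unstable manifold $W^{u}(\phi^{0})$. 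The $\mathbb{Z}_{2}$-symmetry $x\mapsto-x$ fixes $\phi^{0}$, swaps $\phi^{\pm}$, and acts on $W^{u}(\phi^{0})$ interchanging its two branches; each branch is a trajectory on a one-dimensional unstable manifold of a dissipative flow having the trichotomy and no periodic orbits or homoclinics, hence converges to an equilibrium $\ne\phi^{0}$, so one branch connects $\phi^{0}$ to $\phi^{+}$ and the other to $\phi^{-}$. If some $\phi^{\pm}$ had a characteristic root with positive real part, the same sum inequality would make its unstable manifold one-dimensional, and its nonconstant branch would converge to $\phi^{0}$ or to $\phi^{\mp}$; using the connections just found and the symmetry, either case yields a heteroclinic cycle, which is excluded. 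Thus $\phi^{+}$ and $\phi^{-}$ have all characteristic roots with negative real part ($0$ is never a root, and a purely imaginary pair would force the two leading real parts to sum to $0$), i.e. they are asymptotically stable.

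To conclude, let $\phi_{0}\in\mathbb{E}$ be arbitrary. By the Poincar\'{e}--Bendixson trichotomy of J.~Mallet-Paret and G.R.~Sell \cite{MalletParetSell1996}, $\omega(\phi_{0})\subset\mathcal{A}$ is a single equilibrium (done), or a periodic orbit (excluded above), or a union of equilibria and complete connecting orbits. In the last case the only equilibrium with a nontrivial unstable manifold is $\phi^{0}$, so every connecting orbit in $\omega(\phi_{0})$ lies in $W^{u}(\phi^{0})$ and has $\omega$-limit $\phi^{+}$ or $\phi^{-}$; hence if $\omega(\phi_{0})$ contained a non-equilibrium point it would contain $\phi^{0}$ together with $\phi^{+}$ (or $\phi^{-}$). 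But then $\varphi^{t}(\phi_{0})$ returns to arbitrarily small neighborhoods of the hyperbolic equilibrium $\phi^{0}$, and upon leaving them it shadows $W^{u}(\phi^{0})$ (inclination/$\lambda$-lemma), so it enters a forward-invariant neighborhood of the asymptotically stable $\phi^{+}$ (or $\phi^{-}$) and converges there --- contradicting $\phi^{0}\in\omega(\phi_{0})$. Therefore $\omega(\phi_{0})$ is a connected subset of the finite set $\{\phi^{0},\phi^{+},\phi^{-}\}$, i.e. a single equilibrium, which is the claimed global stability.

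The routine part is the frequency-inequality bookkeeping and the counting of characteristic roots; the main obstacle is the rigorous handling of the third alternative of the trichotomy in the infinite-dimensional delay setting: proving that branches of the one-dimensional unstable manifold limit onto equilibria and running the inclination-lemma argument, which requires invoking the eventual compactness of the delay semiflow and the spectral gaps at the hyperbolic equilibrium $\phi^{0}$ and at the stable equilibria $\phi^{\pm}$. It is precisely here that the monotone feedback structure (through \cite{MalletParetSell1996}) is indispensable, since the generalized Bendixson criterion alone --- without a Closing Lemma --- does not rule out such heteroclinic $\omega$-limit sets directly.
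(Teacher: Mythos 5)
Your overall route is the same as the paper's: frequency inequality plus $-\nu_{0}>s(A^{[\wedge 2]})$ gives uniform exponential stability of $\Xi_{2}$ via Theorem \ref{TH: QuadraticFunctionalDelayCompoundTheorem} and Corollary \ref{COR: DelayCompoundUniformExponentialStability}, then the Li--Muldowney generalized Bendixson criterion excludes closed invariant contours in $\mathcal{A}$, and the Mallet-Paret--Sell trichotomy together with a hyperbolic-shadowing argument near $\phi^{0}$ yields convergence of every trajectory to an equilibrium. Two points, however, deserve attention. First, you pass directly from $\lambda_{1}(\Xi)+\lambda_{2}(\Xi)<0$ to the Bendixson criterion; the paper's proof inserts the steps this actually requires: a $C^{1}$-truncation of $x^{3}$ outside an invariant ball so that \eqref{EQ: DelayLinearCocAbsract} and Theorem \ref{TH: QuadraticFunctionalDelayCompoundTheorem} apply in $\mathbb{H}$, the dimension estimate (Chepyzhov--Ilyin / Constantin--Foias--Temam) giving $\dim\mathcal{A}<2$ from the exponential stability of $\Xi_{2}$, and the fact that $\varphi^{t}$ is a homeomorphism of $\mathcal{A}$ (analyticity of the right-hand side), which are the hypotheses of Corollary 2 in \cite{LiMuldowney1995LowBounds}. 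These are verifiable, but they are not optional bookkeeping.

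Second, and more substantively, your derivation of the asymptotic stability of $\phi^{\pm}$ is genuinely different from the paper's and has a gap as written. The paper obtains it from the model-specific fact that the admissible parameters (those with $\operatorname{Re}\lambda_{1}(A)+\operatorname{Re}\lambda_{2}(A)<0$, i.e. region \eqref{EQ: SSmodelDimensionStabilityRegion}) lie below the neutral curve, so all characteristic roots of $\phi^{\pm}$ have negative real parts; only then does it discuss the unstable manifold of $\phi^{0}$, using that once $\omega(\phi_{0})$ meets a stable equilibrium it equals it. You instead argue dynamically: branches of one-dimensional unstable manifolds ``converge to an equilibrium'', and a putative unstable direction at $\phi^{+}$ would create a heteroclinic cycle. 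But at the stage where you use this for $W^{u}(\phi^{0})$ (and again for $W^{u}(\phi^{+})$), the stability of $\phi^{\pm}$ is not yet available, so the third alternative of the trichotomy allows, a priori, an $\omega$-limit set such as $\{\phi^{+},\phi^{-}\}$ together with a one-way connecting orbit between them; excluding periodic orbits, homoclinics and polycycles does not by itself force convergence to a single equilibrium. To close this you need an extra ingredient --- e.g. internal chain transitivity (chain recurrence) of $\omega$-limit sets of precompact semiorbits, which rules out one-way connection configurations --- or you must follow the paper and establish the linear stability of $\phi^{\pm}$ first, which removes the circularity. With that repair your variant is attractive, since it avoids the separate parameter-region argument, but as stated the step ``each branch converges to an equilibrium $\ne\phi^{0}$'' is unjustified.
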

\begin{proof}
	According to Remark \ref{REM: UniformBoundResolventDelayCompound}, we can always assume that $\nu_{0} > 0$.
	
	Next, by appropriately choosing a $C^{1}$-truncation with bounded derivative of the nonlinearity $x^{3}$ outside a closed positively invariant ball containing $\mathcal{A}$, say, the closed ball $\mathcal{B}_{R_{0}}(0)$ of radius $R_{0}$ centered at $0$ with $R_{0} \geq \sqrt{1+\alpha}$, we may consider $\varphi$ generated by \eqref{EQ: SuarezSchopfCompoundExp} with the truncated nonlinearity as a semiflow in the Hilbert space $\mathbb{H}$ from \eqref{EQ: DelayEqsHilbertSpace} (with $n=1$). In view of \cite[Theorem 1]{Anikushin2022Semigroups}, this semiflow coincides with the initial semiflow in the ball, and $\mathcal{A}$ is also a compact invariant set for the new $\varphi$. Then by \cite[Theorems 2 and 3]{Anikushin2022Semigroups}, the cocycle $\Xi$ generated by \eqref{EQ: DelayLinearCocAbsract} in our context is the derivative cocycle for $\varphi$ over $\mathcal{A}$. Since the frequency condition is satisfied, we may apply Theorem \ref{TH: QuadraticFunctionalDelayCompoundTheorem} and Corollary \ref{COR: DelayCompoundUniformExponentialStability} to obtain that the twofold multiplicative compound $\Xi_{2}$ of $\Xi$ is uniformly exponentially stable. Then \cite[Theorem 2.1]{ChepyzhovIlyin2004} implies that the fractal dimension\footnote{Due to the smoothing property of $\varphi$ from $\mathbb{H}$ to $\mathbb{E}$, see \cite[Theorem 1]{Anikushin2022Semigroups}, the fractal dimension of $\mathcal{A}$ is the same in any of the metrics induced from $\mathbb{H}$ or $\mathbb{E}$.} of $\mathcal{A}$ is strictly less than $2$. For our purposes, it is also sufficient to use the same estimate for the Hausdorff dimension of $\mathcal{A}$, see \cite[Theorem 3.1, Chapter V]{Temam1997}.
	
	Note that the ball $\mathcal{B}_{R_{0}}(0)$ is convex and invariant with respect to the original $\varphi$, so $\varphi$ is a semiflow in the ball, and $\mathcal{A}$ attracts compact (in fact, even bounded) subsets of it. Moreover, by \cite[Property 4.3, Section 3.4]{Hale1977} or \cite[p.~450]{MalletParetSell1996}, the mapping $\varphi^{t}$ is a homeomorphism of $\mathcal{A}$ for any $t \geq 0$. Now \cite[Corollary 2]{LiMuldowney1995LowBounds} guarantees that $\mathcal{A}$ does not contain closed invariant contours\footnote{Here a ``closed invariant contour'' should be understood as a simple $\delta$-linked $1$-boundary in the terminology of \cite{LiMuldowney1995LowBounds}. It is important that periodic orbits, homoclinic trajectories, and polycycles belong to such a class.}.
	
	Now we utilize the Poincar\'{e}--Bendixson trichotomy, namely \cite[Theorem 2.1]{MalletParetSell1996}, to get the desired conclusion. First, we note that since $\Xi_{2}$ is exponentially stable, the parameters $\alpha$ and $\tau$ must necessarily belong to the region \eqref{EQ: SSmodelDimensionStabilityRegion} lying below the neutral curve. In particular, all characteristic roots at the symmetric equilibria $\phi^{\pm}$ have negative real parts in this case. Then it is sufficient to show that points from the one-dimensional unstable manifold of the zero equilibrium $\phi^{0}$ tend to one of $\phi^{\pm}$. Indeed, since periodic orbits are excluded, any point $\phi_{0} \in \mathbb{E}$ must contain at least one equilibrium in its $\omega$-limit set $\omega(\phi_{0})$ due to the trichotomy. Clearly, in the case of $\phi^{+}$ or $\phi^{-}$, the entire $\omega(\phi_{0})$ must coincide with the equilibrium. 
	
	If $\phi^{0}$ belongs to $\omega(\phi_{0})$ and $\phi_{0}$ does not lie on the stable manifold of $\phi^{0}$ (in which case $\omega(\phi_{0}) = \phi^{0}$), we consider some sequence $t=t_{k}$, where $k=1,2,\ldots,$ for which the point $\varphi^{t_{k}}(\phi_{0})$ tends to $\phi^{0}$ as $k \to \infty$. Due to the hyperbolic behavior\footnote{It is well known that the conjugating homeomorphism in the Hartman--Grobman theorem may fail to exist in infinite-dimensional problems, including delay equations. Here we mean a weaker version of the Hartman--Grobman theorem, which is usually not considered in the literature. It is concerned with the existence of a foliation in a neighborhood of the hyperbolic point. Here the unstable manifold can be considered as an inertial manifold, and the foliation can be constructed by the approach developed in \cite{Anikushin2020Geom}. See the next footnote for a precise statement.} in a small neighborhood of $\phi^{0}$, for sufficiently large $k$ the trajectory of $\varphi^{t_{k}}(\phi_{0})$ leaves the neighborhood sufficiently close to the trajectory of a point from the unstable manifold\footnote{More precisely, there exist a bounded open neighborhood $\mathcal{U}$ of $\phi^{0}$ in $\mathbb{E}$ and constants $M>0$ and $\nu > 0$ such that for any $\phi_{0} \in \mathcal{U}$ there exists a unique point $\phi^{*}_{0}$ from the unstable manifold $\mathcal{W}^{u}(\phi^{0})$ of $\phi^{0}$ in $\mathcal{U}$ such that the inequality
	\begin{equation}
		\| \varphi^{t}(\phi_{0}) - \varphi^{t}(\phi^{*}_{0}) \|_{\mathbb{E}} \leq M \operatorname{dist}(\phi_{0}, \mathcal{W}^{u}(\phi^{0})) e^{-\nu t}
	\end{equation}
	is satisfied as long as the trajectories of both points remain in $\mathcal{U}$. Obviously, the closer $\phi_{0}$ is to $\phi^{0}$, the more time the trajectories spend in $\mathcal{U}$ and, therefore, the closer they become to each other at the exit time $t$.
	}. If trajectories of any such points tend to one of $\phi^{\pm}$, the same can be said about $\phi_{0}$, and we get a contradiction.
	
	Now let $\phi_{0}$ be a point from the unstable manifold of $\phi^{0}$ different from $\phi^{0}$ itself. If $\omega(\phi_{0})$ does not contain any of $\phi^{\pm}$, it must contain a complete trajectory for which $\alpha$- and $\omega$-limit sets coincide with $\phi^{0}$. But such a trajectory, along with $\phi^{0}$, forms a closed invariant contour, the existence of which is forbidden.
\end{proof}

Suppose that $\mathcal{P}=\mathcal{A}$ lies in the ball $\mathcal{B}_{R}(0)$ of radius $R$ centered at $0$ in $C([-\tau,0];\mathbb{R})$. It is clear that $|F'(\wp)| \leq \Lambda_{R}$ for any $\wp \in \mathcal{P}$, where $F'(\wp)$ is defined below \eqref{EQ: Suarez-SchopfLinearizedRewriten}. From this view, we wish to localize $\mathcal{A}$ by a ball with the smallest possible radius $R$. For this, the following estimate is appropriate.
\begin{lemma}\cite[Lemma 4.2]{Anikushin2022Semigroups}
	\label{LEM: SSmodelRadiusEstimateWithRestr}
	Suppose $2 \alpha \tau < 1$, and let $R_{0}=R_{0}(\alpha,\tau)$ be the unique positive root $p > 0$ of $-p^{3} + (1-\alpha) p + C(\alpha,\tau) = 0$, where 
	\begin{equation}
		C(\alpha,\tau) = \frac{4}{3} \cdot\frac{\alpha \tau(1-\alpha)}{ 1 - \alpha \tau} \sqrt{\frac{1-\alpha}{3}}.
	\end{equation}
	Then the global attractor $\mathcal{A}$ of \eqref{EQ: SuarezSchopfCompoundExp} lies in the ball of radius $R_{0}$ centered at $0$.
\end{lemma}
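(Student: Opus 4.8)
\emph{Proof plan.} The plan is to reduce the claim to a pointwise estimate at the extremum of a complete bounded trajectory. Since \eqref{EQ: SuarezSchopfCompoundExp} is invariant under $x\mapsto -x$ and $\mathcal{A}$ is a compact invariant set, it suffices to take any complete orbit $x(\cdot)\colon\mathbb{R}\to\mathbb{R}$ lying in $\mathcal{A}$ and to bound $M:=\sup_{t\in\mathbb{R}}x(t)$; the bound on $-\inf_{t}x(t)$ is identical, and by compactness of $\mathcal{A}$ the value $M$ is attained at some $t_{0}\in\mathbb{R}$, where $\dot{x}(t_{0})=0$ because $t_{0}$ is an interior maximum of the $C^{1}$ function $x(\cdot)$. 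One first records the crude a priori bound $M\le\sqrt{1+\alpha}$ already noted in the text, so that only the regime $M>\sqrt{(1-\alpha)/3}$ needs attention; otherwise $M<R_{0}$ trivially.

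First I would rewrite the equation in centered form, $\dot{x}(t)=h(x(t))+\alpha\bigl(x(t)-x(t-\tau)\bigr)$ with $h(\xi):=(1-\alpha)\xi-\xi^{3}$, so that $\dot{x}(t_{0})=0$ reads $h(M)=-\alpha\bigl(x(t_{0})-x(t_{0}-\tau)\bigr)$. As $x(t_{0})=M\ge x(t_{0}-\tau)$, this gives $h(M)\le 0$ and $|h(M)|\le\alpha D$, where $D:=\sup_{s\in\mathbb{R}}|x(s)-x(s-\tau)|$ is the oscillation of the orbit over one delay span. Everything thus reduces to estimating $D$.

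Next I would bound $D$ by a bootstrap: from $x(s)-x(s-\tau)=\int_{s-\tau}^{s}\dot{x}(u)\,du$ and $|\dot{x}(u)|\le\max_{[-M,M]}|h|+\alpha D$ one gets $D\le\tau\bigl(\max_{[-M,M]}|h|+\alpha D\bigr)$, hence $D\le\tau\,\max_{[-M,M]}|h|/(1-\alpha\tau)$ since $\alpha\tau<1$. Here the hypothesis $2\alpha\tau<1$ is used to pin down $\max_{[-M,M]}|h|$: were $M$ large enough that this maximum equalled $|h(M)|=M^{3}-(1-\alpha)M>0$, the two inequalities above would force $|h(M)|\le\tfrac{\alpha\tau}{1-\alpha\tau}|h(M)|$, i.e. $2\alpha\tau\ge1$, a contradiction; therefore the maximum is attained at the interior critical point $\xi^{*}=\sqrt{(1-\alpha)/3}$ with $\max_{[-M,M]}|h|=h(\xi^{*})=\tfrac{2(1-\alpha)}{3}\sqrt{\tfrac{1-\alpha}{3}}$. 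Feeding this back yields $M^{3}-(1-\alpha)M=|h(M)|\le\tfrac{\alpha\tau}{1-\alpha\tau}\,h(\xi^{*})\le C(\alpha,\tau)$, and since $p\mapsto p^{3}-(1-\alpha)p$ is strictly increasing for $p\ge\xi^{*}$ and equals $C(\alpha,\tau)$ at $p=R_{0}$ by definition, we conclude $M\le R_{0}$.

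The main obstacle will be the bootstrap estimate for $D$ together with the self-consistent case analysis for $\max_{[-M,M]}|h|$: one must make sure the crude bound $M\le\sqrt{1+\alpha}$ is genuinely improved into the regime where the maximum of $|h|$ equals $h(\xi^{*})$, and one must track constants carefully (the estimate sketched above already suffices to land within $C(\alpha,\tau)$, being in fact somewhat generous, so a tighter bound on $|\dot{x}|$ recovers the stated constant). The remaining points are routine: justifying that the supremum over the complete orbit is attained, and that one may replace $x^{3}$ by a $C^{1}$-truncation with bounded derivative outside a large invariant ball without altering $\mathcal{A}$ — both follow from compactness of $\mathcal{A}$ and the well-posedness results already invoked in the paper.
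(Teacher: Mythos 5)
Your argument is essentially correct and self-contained. Note first that the paper does not prove this lemma at all: it is quoted from Lemma 4.2 of \cite{Anikushin2022Semigroups}, so the comparison can only be with that source; judged on its own, your extremum-plus-bootstrap argument closes. Writing $\dot x=h(x(t))+\alpha\bigl(x(t)-x(t-\tau)\bigr)$ with $h(\xi)=(1-\alpha)\xi-\xi^{3}$, evaluating at the interior maximum, bounding the delay increment by $\tau\sup|\dot x|$ and feeding the resulting $D\le \tau\max_{[-M,M]}|h|/(1-\alpha\tau)$ back into $|h(M)|\le\alpha D$, and then excluding the case $\max_{[-M,M]}|h|=|h(M)|$ via $2\alpha\tau<1$, gives $M^{3}-(1-\alpha)M\le\frac{\alpha\tau}{1-\alpha\tau}\,h(\xi^{*})$ with $h(\xi^{*})=\frac{2}{3}(1-\alpha)\sqrt{\frac{1-\alpha}{3}}$; since the lemma's constant is $C(\alpha,\tau)=\frac{\alpha\tau}{1-\alpha\tau}\cdot 2\,h(\xi^{*})$, your estimate is in fact sharper by a factor $2$ and a fortiori yields $M\le R_{0}$ by monotonicity of $p\mapsto p^{3}-(1-\alpha)p$ on $[\xi^{*},\infty)$. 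Two small points should be tightened in a final write-up. First, define $M:=\sup_{t\in\mathbb{R}}|x(t)|$ (using oddness of the equation to assume WLOG that it is realized as a positive maximum); as written you set $M=\sup_t x(t)$, and then $\max_{[-M,M]}|h|$ need not dominate $|h(x(u))|$ along the whole orbit if $-\inf_t x(t)>\sup_t x(t)$. Second, for an arbitrary complete orbit the supremum need not be attained, so run the argument on the complete orbit through a point of $\mathcal{A}$ at which the continuous functional $\phi\mapsto\|\phi\|_{\infty}$ attains its maximum over the compact set $\mathcal{A}$; there the maximum of $|x(\cdot)|$ over $\mathbb{R}$ is attained and $\dot x(t_{0})=0$ follows. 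The closing remark about truncating $x^{3}$ is unnecessary for this lemma.
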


For the radius $R_{0}$ from Lemma \ref{LEM: SSmodelRadiusEstimateWithRestr}, it can be shown that $R_{0} < \sqrt{1+\alpha}$ for $\alpha > 0.233$. Moreover, we clearly have $R_{0} \to \sqrt{1-\alpha}$ as $\alpha \to 1-$. Thus, under the additional restriction\footnote{This restriction can be relaxed if one uses a more accurate estimate for $R_{2}$ in \cite[Lemma 4.2]{Anikushin2022Semigroups} by considering an undetermined constant $\varkappa \geq 0$ instead of $\alpha$ in formula (4.21) therein. For small $\tau$, the value $\varkappa = \alpha$ is optimal, but it decreases to $0$ with increasing $\tau$, in which case the resulting bound reduces to $\sqrt{1+\alpha}$. We do not know an explicit formula for the optimal $\varkappa$.} $2 \alpha \tau < 1$, $R_{0}$ provides a better estimate for the radius of a ball enclosing $\mathcal{A}$.

Now, we illustrate our method by means of concrete parameters. Namely, we take $\alpha = 0.6$ and $\tau = 0.83$. Such parameters satisfy $2 \alpha \tau < 1$ and are not covered by the approach from \cite{Anikushin2022Semigroups}. Here the linear operator $A$ has the leading eigenvalues $\lambda_{1,2}(A) \approx -0.89 \pm i 0.63$. We consider the approximation scheme \nameref{DESC: AS1DelayCompound}--\nameref{DESC: AS4DelayCompound} for \eqref{EQ: Suarez-SchopfLinearizedRewriten} with the given $\alpha$, $\tau$, and $R=R_{0}(\alpha,\tau)$ from Lemma \ref{LEM: SSmodelRadiusEstimateWithRestr}. Parameters of the scheme are taken as $m=2$, $\Lambda \coloneq \Lambda_{R}$, $\nu_{0} = 0.01$, $\Omega = 37.5$, $T \in \{15.77, 25.73\}$, and $N \in \{2, 5, 10, 20, 30\}$. We conduct numerical experiments using a realization of the scheme on Python.
\begin{remark}
	\label{REM: DelayCompoundNumerical}
	For numerical integration of delay equations, we use the JiTCDDE package for Python, see \cite{AnsmannGJitCDDE2018}. Parameters of the integration procedure are taken as $\operatorname{first\_step}=\operatorname{max\_step} = 10^{-5}$, $\operatorname{atol} = 10^{-8}$, and $\operatorname{rtol} = 0$. Numerical solutions are obtained on the time interval $[0,T]$ at points from a uniform grid with the step taken about $h_{0}=10^{-3}$ (see the next footnote). Integrals from \eqref{EQ: DelayCompApproximationScheme31M1Comp} and \eqref{EQ: DelayCompoundASCoefficientApproximateMatrices} are approximated via the Simpson $1/3$-rule using uniform grids with the step about\footnote{Since the most efficient implementation of the Simpson rule demands an odd number of points, it is convenient to choose $T$ as an odd multiple of $\tau$. Then we can use uniform grid partitions of $[-\tau,0]$ and $[-\tau,T]$ by an odd number of points that agree on $[-\tau,0]$. We choose a step $h$ corresponding to such a partition by possibly tweaking (decreasing a bit) $h_{0}$.} $h_{0}=10^{-3}$. The step in $\omega$ is taken as $0.1$. See the repository for more details.
\end{remark}
\begin{figure}[t]
	\begin{minipage}{.5\textwidth}
		\includegraphics[width=\textwidth,angle=0]{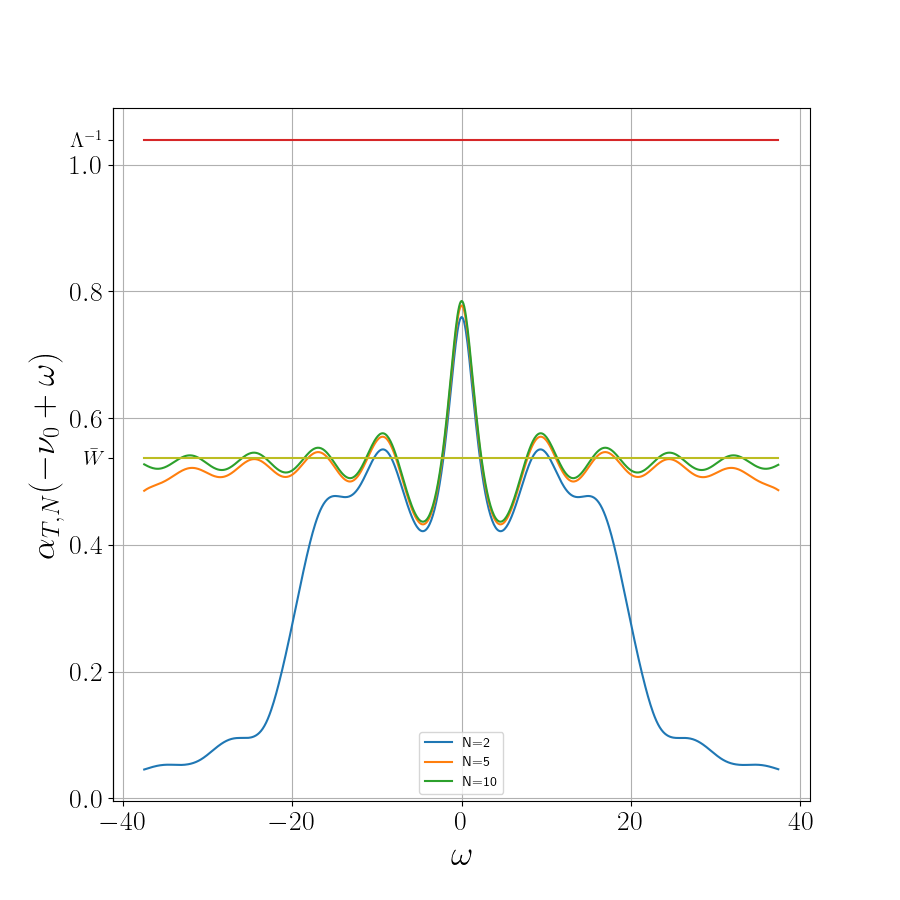}
	\end{minipage}%
	\begin{minipage}{.5\textwidth}
		\includegraphics[width=\textwidth,angle=0]{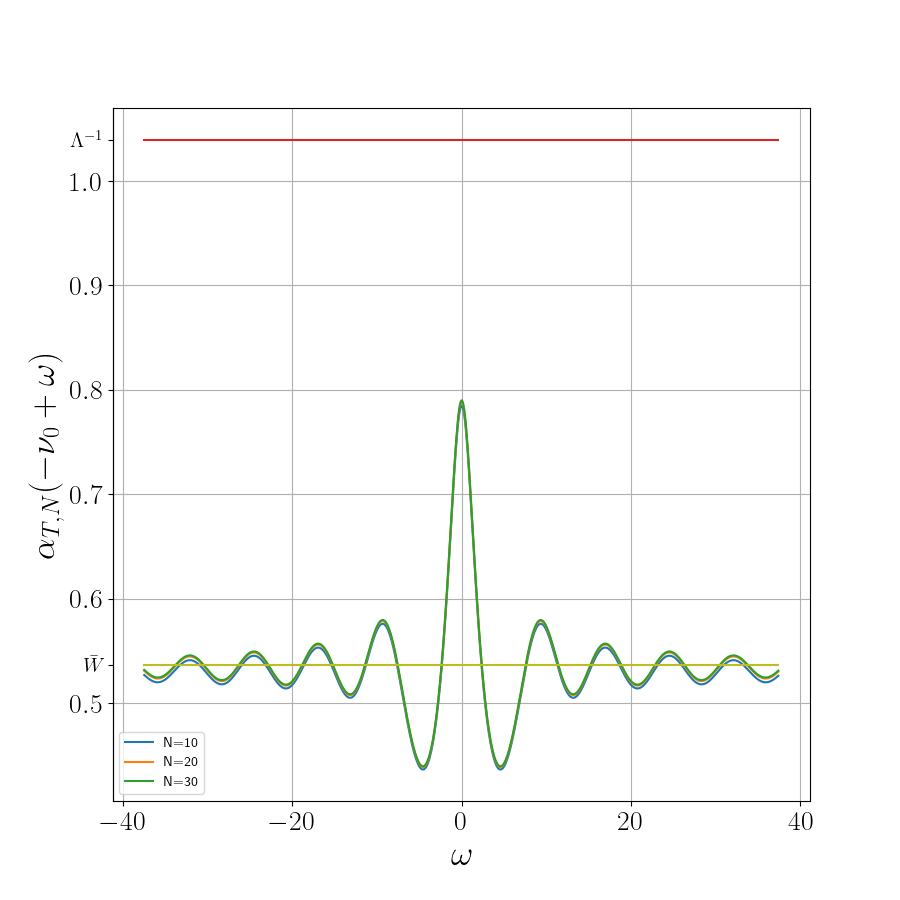}
	\end{minipage}%	
	\caption{Graphs of the largest singular values $\alpha_{T,N}(-\nu_{0} + i \omega)$ versus $\omega$ computed via the numerical implementation of the approximation scheme \nameref{DESC: AS1DelayCompound}--\nameref{DESC: AS4DelayCompound} applied to \eqref{EQ: Suarez-SchopfLinearizedRewriten} with $\alpha = 0.6$, $\tau = 0.83$, and $R=R_{0}(\alpha,\tau)$ given by Lemma \ref{LEM: SSmodelRadiusEstimateWithRestr}. Parameters of the scheme are taken as $m=2$, $\Lambda = \Lambda_{R}$, $\nu_{0} = 0.01$, $T=15.77$, $\Omega = 37.5$, and (Left): $N=2$ (blue), $N=5$ (orange), $N=10$ (green), and (Right): $N=10$ (blue), $N=20$ (orange), $N=30$ (green). The horizontal lines pass through the threshold value $\Lambda^{-1}$ (red) and the norm $\bar{W}$ of the asymptotic integral operator (olive) on the vertical axis. See Remark \ref{REM: DelayCompoundNumerical} and the repository for implementation details.}
	\label{FIG: CompoundSST=15}
\end{figure}

For $T = 15.77 = 19\tau$, Figure \ref{FIG: CompoundSST=15} shows graphs of the largest singular values $\alpha_{T,N}(-\nu_{0} + i \omega)$ versus $\omega$ for $N \in \{2,5,10\}$ (left) and $N \in \{10,20,30\}$ (right). For $T= 25.73 = 31\tau$, the conducted experiments give indistinguishable figures. This indicates convergence of the numerical scheme. For $|\omega| > 10$, the curves tend to exhibit an oscillating pattern decaying to a positive value $\bar{W}$ in accordance with Corollary \ref{COR: ExplicitComputationConjecture}. Note also that the results are consistent with Fig. \ref{FIG: CompoundMGTKernelApproximations} (right), where truncations are obtained via the explicit representation of the transfer operator.

Analogous experiments justify the validity of the frequency inequality in the region $2 \alpha \tau < 1$ with $\alpha \in [0.5, 1)$. Consequently, this region is expected to be contained in the region of global stability for \eqref{EQ: SuarezSchopfCompoundExp}. For $\alpha \in (0,0.5)$, the scheme indicates violation of the frequency inequality, but this is only a limitation of the method. We expect that one may improve the result by constructing more delicate subsets enclosing the global attractor.
\subsection{Mackey--Glass equations}
\label{SEC: MackeyGlassCompoundStab}
In this section, we study the following class of nonlinear scalar delay equations suggested by Mackey and Glass in \cite{MackeyGlass1977} as a model for certain physiological processes:
\begin{equation}
	\label{EQ: MackeyGlassExample}
	\dot{x}(t) =  - \gamma x(t) + \beta \frac{ x(t-\tau) }{ 1 + |x(t-\tau)|^{\kappa}  },
\end{equation}
where $\tau, \beta,\gamma > 0$, and $\kappa > 1$ are real parameters. From the physiological perspective, one is interested in the dynamics of \eqref{EQ: MackeyGlassExample} in the cone of positive functions. However, for global analysis, it is convenient to consider the system in the entire space.

Standard arguments show that \eqref{EQ: MackeyGlassExample} generates a dissipative semiflow $\varphi$ in the space $\mathbb{E} = C([-\tau,0];\mathbb{R})$ given by $\varphi^{t}(\phi_{0}) = x_{t}$ for all $t \geq 0$ and $\phi_{0} \in \mathbb{E}$, where $x \colon [-\tau,\infty) \to \mathbb{R}$ is the classical solution to \eqref{EQ: MackeyGlassExample} with $x_{0} = \phi_{0}$. Recall that $x_{t}(\theta) \coloneq x(t+\theta)$ for $\theta \in [-\tau,0]$ denotes the $\tau$-history segment at $t \geq 0$. Consequently, there exists a global attractor $\mathcal{A}$.

In \cite{Anikushin2023LyapExp}, it is shown that for $\beta \leq \gamma$, the global attractor $\mathcal{A}$ of $\varphi$ is given by the zero equilibrium $\phi^{0}(\cdot) \equiv 0$. For $\beta > \gamma$, the global attractor $\mathcal{A}$ lies in the ball of radius $\beta \gamma^{-1} \kappa^{-1} (\kappa-1)^{(\kappa-1)/\kappa}$ centered at $0$, and any ball with a radius not smaller than that is positively invariant with respect to $\varphi$.

It is not hard to see that for $\beta < \gamma$ there is a unique equilibrium $\phi^{0}(\cdot) \equiv 0$ with a negative leading real eigenvalue. For $\beta=\gamma$, the leading eigenvalue becomes zero, and the system undergoes a pitchfork bifurcation at $\phi^{0}$ with a birth of the pair of symmetric equilibria $\phi^{+}$ and $\phi^{-}$ given by $\phi^{\pm}(\cdot) \equiv \pm (\beta \gamma^{-1} - 1)^{1/\kappa}$ for $\beta > \gamma$.

Numerical experiments conducted in \cite{MackeyGlass1977} indicate that the model \eqref{EQ: MackeyGlassExample} may possess chaotic behavior and, consequently, the attractor $\mathcal{A}$ may have rich structure. In particular, chaos is observed for $\gamma = 0.1$, $\beta = 0.2$, $\kappa=10$, and $\tau \geq 10$.

In \cite{Anikushin2023LyapExp}, the Lyapunov dimension of $\mathcal{A}$ is estimated from above by $C(\gamma,\beta,\kappa) \tau + 1$ with some $C(\gamma,\beta,\kappa)>0$. In particular, the estimate implies that the attractor $\mathcal{A}$ does not contain closed invariant contours provided that $\tau < C(\gamma,\beta,\kappa)^{-1}$. For $\beta = 0.2$, $\gamma = 0.1$, and $\kappa=10$, we have $C(\gamma,\beta,\kappa) \approx 0.9957$, and the inequality is close to $\tau \leq 1$. However, it can be verified that for $\tau = \tau^{*}$, where
\begin{equation}
	\label{EQ: MackeyGlassTau0HopfBif}
	 \tau^{*} = \frac{8}{3}\arccos\left(-\frac{1}{4}\right) \approx 4.8626, 
\end{equation}
the leading pair of complex-conjugate characteristic roots at $\phi^{\pm}$ crosses the imaginary axis, and the system undergoes a supercritical Andronov--Hopf bifurcation (in contrast to the Suarez--Schopf model, where the direction is subcritical). We expect the system to be globally stable for $\tau < \tau^{*}$ and conjecture that the same holds for any parameters as follows.
\begin{conjecture}
	\label{CONJ: ConjectureMackeyGlassStability}
	For $\beta > \gamma > 0$, $\tau > 0$ and $\kappa > 1$, the semiflow $\varphi$ generated by \eqref{EQ: MackeyGlassExample} is globally stable provided that the equilibria $\phi^{\pm} = \pm (\beta \gamma^{-1} - 1)^{1/\kappa}$ are linearly stable, i.e., all their characteristic roots have negative real parts.
\end{conjecture}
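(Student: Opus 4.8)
The plan is to follow, step by step, the argument used for the Suarez--Schopf model in Proposition~\ref{PROP: DelayCompoundGlobStabSuarezSchopf}, to push it as far as the present machinery allows, and then to isolate the ingredient that is genuinely missing (which is why the statement is phrased as a conjecture). First I would fix $\beta > \gamma$, set $R := (\beta\gamma^{-1}-1)^{1/\kappa}$ (the sharp radius of a ball containing $\mathcal{A}$), and linearize \eqref{EQ: MackeyGlassExample} over an arbitrary complete bounded solution $y_{0}(\cdot)$, i.e. $\dot{x}(t) = -\gamma x(t) + \beta g'(y_{0}(t-\tau))x(t-\tau)$ with $g(x) = x/(1+|x|^{\kappa})$. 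Writing this in the form \eqref{EQ: DelayRnLinearized} with $n = r_{1} = r_{2} = 1$, $\widetilde{A}\phi = -\gamma\phi(0) + \beta\Lambda_{0}\phi(-\tau)$, $\widetilde{B} = \beta$, $C\phi = \phi(-\tau)$ and $F'(\wp) = g'(\wp(-\tau)) - \Lambda_{0}$, where $\Lambda_{0}$ is the midpoint of the interval $[\Lambda_{1},\Lambda_{2}]$ with $\Lambda_{2} = \max_{|\xi|\le R} g'(\xi) = g'(0) = 1$ and $\Lambda_{1} = \min_{|\xi|\le R} g'(\xi)$ (computable in closed form from $g'(\xi) = (1-(\kappa-1)|\xi|^{\kappa})(1+|\xi|^{\kappa})^{-2}$), one has $|F'(\wp)| \le \Lambda := (\Lambda_{2}-\Lambda_{1})/2$ for all $\wp \in \mathcal{P} = \mathcal{A}$. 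One then runs the approximation scheme of Section~\ref{SEC: ApproximationFreqIneqDelayCompound} with $m = 2$ and $\tau_{0} = \tau$ to verify \eqref{EQ: DelayCompoundSmithIneqGeneral} with this $\Lambda$ on a vertical line $-\nu_{0} + i\mathbb{R}$ satisfying $-\nu_{0} > \operatorname{Re}\lambda_{1}(A) + \operatorname{Re}\lambda_{2}(A)$ (the $\lambda_{j}(A)$ being the leading eigenvalues of the operator $A$ attached to the above $\widetilde{A}$), applies Theorem~\ref{TH: QuadraticFunctionalDelayCompoundTheorem} and Corollary~\ref{COR: DelayCompoundUniformExponentialStability} to conclude that the $2$-fold compound cocycle $\Xi_{2}$ of the linearization cocycle over $\mathcal{A}$ is uniformly exponentially stable, and invokes \cite{ChepyzhovIlyin2004} (or the Constantin-Foias-Temam estimate, see \cite{Temam1997}) to obtain $\dim_{f}\mathcal{A} < 2$.

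The next step is to exclude closed invariant contours in $\mathcal{A}$ via Corollary~2 of \cite{LiMuldowney1995LowBounds}, which requires each $\varphi^{t}$, $t \ge 0$, to be a homeomorphism of $\mathcal{A}$, i.e. backward uniqueness along complete bounded trajectories. Here a technical point already differs from the Suarez--Schopf case: for non-integer $\kappa$ the right-hand side of \eqref{EQ: MackeyGlassExample} is only $C^{1}$ on $\mathbb{R}$, so the analyticity argument used there is unavailable. Backward uniqueness can nevertheless be extracted from the structure $\dot{x}(t) = f(x(t), x(t-\tau))$ by solving for $x(t-\tau)$ in terms of $\dot{x}(t)$ and $x(t)$ and reconstructing the trajectory step by step to the left, provided $\partial_{2}f = \beta g'(x(t-\tau))$ stays bounded away from zero along $\mathcal{A}$; since $g'$ vanishes at $|\xi| = (\kappa-1)^{-1/\kappa}$ this needs a separate argument (a log-convexity estimate for the scalar variational equation, or a small perturbation keeping the trajectory off the degenerate set), or one may first restrict to the forward-invariant positive cone, where $g$ is analytic, and use the $x \mapsto -x$ symmetry of \eqref{EQ: MackeyGlassExample}.

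The main obstacle comes afterwards. In the Suarez--Schopf proof the passage from ``$\dim\mathcal{A} < 2$ and no closed invariant contours'' to global stability was supplied by the Poincar\'{e}-Bendixson trichotomy of Mallet-Paret and Sell \cite{MalletParetSell1996}, which applies because $\dot{x}(t) = x(t) - \alpha x(t-\tau) - x^{3}(t)$ is a monotone (negative) feedback system. The Mackey--Glass equation is \emph{not} a monotone feedback system on the region occupied by $\mathcal{A}$: the coefficient $\beta g'(x(t-\tau))$ of the delayed term is positive for small $|x(t-\tau)|$ (indeed $g'(0) = 1$) and negative for $|x(t-\tau)|$ near $R$ whenever $(\kappa-1)(\beta\gamma^{-1}-1) > 1$ --- exactly the range in which the Hopf bifurcation at $\tau_{0}$ from \eqref{EQ: MackeyGlassTau0HopfBif} occurs and the equilibria $\phi^{\pm}$ are of negative-feedback type --- so along the heteroclinic orbits in $\mathcal{A}$ between $\phi^{0}$ and $\phi^{\pm}$ this coefficient changes sign and no fixed feedback-monotonicity condition holds. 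Hence the trichotomy is unavailable, and the only route that remains general is an infinite-dimensional analog of the Closing Lemma (the Local Closing Lemma of \cite{LiMuldowney1996SIAMGlobStab} in finite dimensions), allowing one to pass from the absence of closed invariant contours to the statement that every nonwandering point is an equilibrium; no such lemma is presently known for delay equations, which is exactly why the statement is a conjecture rather than a proposition. (In the complementary range $(\kappa-1)(\beta\gamma^{-1}-1) \le 1$ the equation is a monotone positive-feedback system on $[-R,R]$, hence gradient-like by Smith's theory, and the assertion is classical there.) The concrete partial programme I would pursue is to strengthen the frequency-domain verification to the conditions of \cite{Anikushin2020FreqDelay} guaranteeing a two-dimensional inertial manifold for \eqref{EQ: MackeyGlassExample} near $\{\phi^{0},\phi^{\pm}\}$, on which the reduced flow is planar and the classical Poincar\'{e}-Bendixson theorem closes the argument --- this, however, would cover only a sub-region of the conjectured parameter set.
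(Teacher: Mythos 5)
You have not proved the statement, but neither does the paper: it is deliberately left as a conjecture, and the paper's only support for it is the partial result of Proposition~\ref{PROP: MackeyGlassGlobStability} (absence of closed invariant contours in $\mathcal{A}$ under the frequency inequality \eqref{EQ: DelayCompoundSmithIneqGeneral} for $m=2$) together with the numerical verification of that inequality up to $\tau\approx 4.5$. Your programme reproduces exactly this route: the rewriting of the linearization in the form \eqref{EQ: DelayRnLinearized}, the symmetric bound $|F'(\wp)|\le\Lambda$, Theorem~\ref{TH: QuadraticFunctionalDelayCompoundTheorem} and Corollary~\ref{COR: DelayCompoundUniformExponentialStability} to get uniform exponential stability of $\Xi_{2}$, the dimension bound $<2$ via \cite{ChepyzhovIlyin2004}, and Corollary~2 of \cite{LiMuldowney1995LowBounds} to exclude closed invariant contours. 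Your only deviations are cosmetic or mildly sharper: you do not normalize the delay to $1$ as in \eqref{EQ: MackeyGlassNumericalSchemeExample}, and you center the range of $g'$ restricted to the ball of radius $R=(\beta\gamma^{-1}-1)^{1/\kappa}$ rather than the global range used in \eqref{EQ: MackeyGlassStabilityLambdaDef} --- precisely the refinement the paper itself suggests in a footnote. You also correctly identify the two reasons the conjecture remains open, which coincide with the paper's own assessment: the Mackey--Glass nonlinearity is not of fixed monotone feedback type on $\mathcal{A}$ (since $g'$ changes sign there when $(\kappa-1)(\beta\gamma^{-1}-1)>1$), so the Poincar\'e--Bendixson trichotomy of \cite{MalletParetSell1996} used in Proposition~\ref{PROP: DelayCompoundGlobStabSuarezSchopf} is unavailable, and no infinite-dimensional Closing Lemma exists to pass from ``no closed invariant contours'' to global stability, as the paper stresses in the introduction. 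Your observation that backward uniqueness on $\mathcal{A}$ (needed for the homeomorphism hypothesis of \cite{LiMuldowney1995LowBounds}) is not automatic for non-integer $\kappa$, where the analyticity argument of the Suarez--Schopf proof fails, is a genuine point the paper glosses over.

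Two honest caveats you should make explicit. First, even granting a Closing Lemma, your argument would only cover the parameter region in which the frequency inequality can be verified, which numerically stops near $\tau\approx 4.5$--$4.6$ and does not reach the conjectured boundary $\tau_{0}\approx 4.86$ from \eqref{EQ: MackeyGlassTau0HopfBif}; so the proposal, like the paper, yields evidence for Conjecture~\ref{CONJ: ConjectureMackeyGlassStability} rather than a proof of it even modulo the missing lemma. Second, your side claim that in the complementary range $(\kappa-1)(\beta\gamma^{-1}-1)\le 1$ the positive-feedback monotone structure makes the assertion ``classical'' should be justified with care (monotonicity gives generic convergence, not immediately convergence of every trajectory); as stated it is a plausible remark, not a proof of that sub-case.
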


In other words, the conjecture states that the boundary of global stability in \eqref{EQ: MackeyGlassExample} is determined from the local stability of the symmetric equilibria $\phi^{\pm}$, i.e., it is trivial in the terminology of \cite{Kuzetal2020Lorenz}. This contrasts with the Suarez--Schopf oscillator \eqref{EQ: SuarezSchopfCompoundExp}, where the boundary is hidden. Now we are going to support the conjecture by means of the developed machinery.

First, it is convenient to normalize the delay in \eqref{EQ: MackeyGlassExample} by scaling the time variable $t \rightarrow \tau t$. Then \eqref{EQ: MackeyGlassExample} transforms into the following equation:
\begin{equation}
	\label{EQ: MackeyGlassExampleNormalized}
	\dot{x}(t) =  -\tau \gamma x(t) + \tau \beta \frac{ x(t-1) }{ 1 + |x(t-1)|^{\kappa}  }.
\end{equation}
Linearization of \eqref{EQ: MackeyGlassExampleNormalized} along a given solution $y_{0} \colon [-1,\infty) \to \mathbb{R}$ gives
\begin{equation}
	\label{EQ: MackeyGlassLinearization}
	\dot{x}(t) = -\tau \gamma x(t) + \tau \beta f'( y_{0}(t-1) )x(t-1),
\end{equation}
where $f(y) = y/(1+|y|^{\kappa})$ for $y \in \mathbb{R}$, and $f'(y)$ is the derivative of $f$ at $y$. Straightforward calculations show that $-\frac{(\kappa-1)^{2}}{4\kappa} \leq f'(y) \leq 1 \text{ for any } y \in \mathbb{R}$. From this, we rewrite \eqref{EQ: MackeyGlassLinearization} as follows: (here $y_{0,t}$ is the $1$-history segment of $y_{0}(\cdot)$ at $t$)
\begin{equation}
	\label{EQ: MackeyGlassNumericalSchemeExample}
	\dot{x}(t) = -\tau \gamma x(t) + (\tau \beta - \Lambda) x(t-1) + F'(y_{0,t})x(t-1),
\end{equation}
where $F'(\phi) = \tau \beta (f'(\phi(-1)) - 1) + \Lambda$ for $\phi \in C([-1,0];\mathbb{R})$, and $\Lambda$ is given by
\begin{equation}
	\label{EQ: MackeyGlassStabilityLambdaDef}
	\Lambda = \frac{1}{2} \tau \beta \left( \frac{(\kappa-1)^{2}}{4\kappa} + 1 \right).
\end{equation}
It is clear that $|F'(\phi)| \leq \Lambda$.

We consider \eqref{EQ: MackeyGlassNumericalSchemeExample} in the context of \eqref{EQ: DelayRnLinearized} with $r_{1}=r_{2}=n=1$, $\widetilde{A}\phi \coloneq -\tau \gamma\phi(0) + (\tau \beta - \Lambda) \phi(-1)$, $C\phi \coloneq \phi(-1)$ for $\phi \in C([-1,0];\mathbb{R})$, $\widetilde{B} \coloneq 1$, $F'(\wp)$ defined above for $\wp \in \mathcal{P} \coloneq C([-\tau,0];\mathbb{R})$, and $\pi \coloneq \varphi$.

Eigenvalues of the operator $A$ corresponding via \eqref{EQ: OperatorAScalarDelayEquations} to the operator $\widetilde{A}$ as above are given by the roots $p \in \mathbb{C}$ to
\begin{equation}
	-\tau\gamma + (\tau \beta - \Lambda) e^{-p} - p = 0.
\end{equation}
Let $\lambda_{1}(A), \lambda_{2}(A), \ldots$ be the eigenvalues arranged by nonincreasing their real parts and according to their multiplicities. Then the spectral bound of $A^{[\wedge m]}$ is given by $\sum_{j=1}^{m}\operatorname{Re}\lambda_{j}(A)$. 

We have the following analog of Proposition \ref{PROP: DelayCompoundGlobStabSuarezSchopf}, which gives a criterion for the absence of closed invariant contours on $\mathcal{A}$.
\begin{proposition}
	\label{PROP: MackeyGlassGlobStability}
	Let $\varphi$ be the semiflow generated by \eqref{EQ: MackeyGlassExample}. Consider \eqref{EQ: MackeyGlassNumericalSchemeExample} in the context of \eqref{EQ: DelayRnLinearized} as it is stated below the former. Suppose that there exists $\nu_{0} \geq 0$ such that $\operatorname{Re}\lambda_{1}(A) + \operatorname{Re}\lambda_{2}(A) < - \nu_{0}$, and the frequency inequality\footnote{See Section \ref{SUBSEC: ExampleResolventEquations} for an explicit interpretation of such inequalities.} \eqref{EQ: DelayCompoundSmithIneqGeneral} is satisfied with $m=2$ and $\Lambda$ given by \eqref{EQ: MackeyGlassStabilityLambdaDef}. Then the global attractor $\mathcal{A}$ of $\varphi$ does not contain closed invariant contours\footnote{Recall that a ``closed invariant contour'' should be understood as a simple $\delta$-linked $1$-boundary in the terminology of \cite{LiMuldowney1995LowBounds}.} on which $\varphi^{t}$ is bijective for some $t > 0$.
\end{proposition}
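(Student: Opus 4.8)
The plan is to follow the proof of Proposition \ref{PROP: DelayCompoundGlobStabSuarezSchopf}, carrying it only up to the stage at which closed invariant contours on $\mathcal{A}$ are excluded. First I would realize $\varphi$ as a dissipative $C^{1}$-semiflow in the Hilbert space $\mathbb{H}$ from \eqref{EQ: DelayEqsHilbertSpace} with $n=1$: in contrast with the cubic term of \eqref{EQ: SuarezSchopfCompoundExp}, no preliminary truncation is needed because $f(y)=y/(1+|y|^{\kappa})$ and $f'$ are globally bounded, and by the smoothing property of $\varphi$ from $\mathbb{H}$ to $\mathbb{E}$ (cf.\ Theorem 1 in \cite{Anikushin2022Semigroups}) the global attractor $\mathcal{A}$ is the same compact invariant set in either topology. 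The abstract linear equation \eqref{EQ: DelayLinearCocAbsract} assembled from the $\widetilde{A},\widetilde{B},C,F'$ specified below \eqref{EQ: MackeyGlassNumericalSchemeExample} then generates a uniformly continuous linear cocycle $\Xi$ over $(\mathcal{P},\pi)=(C([-\tau,0];\mathbb{R}),\varphi)$ whose restriction to $\mathcal{A}$ is the linearization cocycle of $\varphi$ over $\mathcal{A}$ (Theorems 2 and 3 in \cite{Anikushin2022Semigroups} and also \cite{Anikushin2023LyapExp}); since $|F'(\phi)|\le\Lambda$ with $\Lambda$ as in \eqref{EQ: MackeyGlassStabilityLambdaDef}, the form \eqref{EQ: DelayCompStandardChoiceQC} defines a quadratic constraint for the $2$-fold compound equation \eqref{EQ: InfinitesimaldDescriptionXim}.

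Next, using $-\nu_{0}>\operatorname{Re}\lambda_{1}(A)+\operatorname{Re}\lambda_{2}(A)=s(A^{[\wedge 2]})$ (Proposition \ref{PROP: SpectralBoundAwedgeViaA}) and the frequency inequality \eqref{EQ: DelayCompoundSmithIneqGeneral}, I would apply Theorem \ref{TH: QuadraticFunctionalDelayCompoundTheorem} and Corollary \ref{COR: DelayCompoundUniformExponentialStability} to conclude that the $2$-fold antisymmetric multiplicative compound $\Xi_{2}$ is uniformly exponentially stable over $\mathcal{P}$, hence over $\mathcal{A}\subset\mathcal{P}$. Equivalently (Remark \ref{REM: RobustnessLargLyapExp}), the first two uniform Lyapunov exponents of the linearization over $\mathcal{A}$ satisfy $\lambda_{1}(\Xi)+\lambda_{2}(\Xi)\le-\nu_{0}<0$, so by the main result of \cite{ChepyzhovIlyin2004} (or the Constantin-Foias-Temam estimate for the Hausdorff dimension, see \cite{Temam1997}) the attractor $\mathcal{A}$ has fractal, hence Hausdorff, dimension strictly less than $2$.

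To finish I would verify the remaining hypotheses of the generalized Bendixson criterion of M.Y.~Li and J.S.~Muldowney: any ball of radius exceeding $(\beta\gamma^{-1}-1)^{1/\kappa}$ is convex, positively invariant and attracts compact sets, and --- the right-hand side of \eqref{EQ: MackeyGlassExampleNormalized} being analytic (which covers the representative case of even integer $\kappa$, e.g.\ the classical $\kappa=10$; for the remaining exponents a separate backward-uniqueness argument on $\mathcal{A}$ is needed) --- each $\varphi^{t}$ is a homeomorphism of $\mathcal{A}$ by Theorem 4.1, Section 3.4 in \cite{Hale1977}. With $\dim\mathcal{A}<2$, Corollary 2 from \cite{LiMuldowney1995LowBounds} then gives that $\mathcal{A}$ carries no closed invariant contour (simple $\delta$-linked $1$-boundary), which is the assertion. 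I expect the only mildly delicate points to be bookkeeping --- that the cocycle $\Xi$ constructed over the whole phase space $\mathcal{P}$ restricts to the genuine linearization cocycle over $\mathcal{A}$, and that the frequency condition transfers to this subsystem, both immediate from $\mathcal{A}\subset\mathcal{P}$ and the cited results of \cite{Anikushin2022Semigroups}. Unlike in the Suarez-Schopf case, no Poincar\'{e}-Bendixson-trichotomy argument is available, since the Mackey-Glass equation is not in general a monotone feedback system ($\beta f'$ changes sign for $\kappa>1$); this is exactly why the conclusion is limited to the absence of closed invariant contours rather than the full global stability predicted by Conjecture \ref{CONJ: ConjectureMackeyGlassStability}.
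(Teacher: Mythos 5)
Your proposal is correct and coincides with the proof the paper intends (the paper gives no separate proof here, the argument being the first part of the proof of Proposition \ref{PROP: DelayCompoundGlobStabSuarezSchopf}): realize $\varphi$ in $\mathbb{H}$ (no truncation needed since $f'$ is globally bounded), apply Theorem \ref{TH: QuadraticFunctionalDelayCompoundTheorem} and Corollary \ref{COR: DelayCompoundUniformExponentialStability} to get uniform exponential stability of $\Xi_{2}$, deduce $\dim \mathcal{A}<2$ via \cite{ChepyzhovIlyin2004}, and conclude by Corollary 2 of \cite{LiMuldowney1995LowBounds}, stopping before the Poincar\'{e}--Bendixson step. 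Your remark on the injectivity of $\varphi^{t}$ on $\mathcal{A}$ for non-even $\kappa$ is a fair point of extra care, not a deviation from the paper's route.
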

\begin{proof}
	Similarly to the proof of Theorem \ref{PROP: DelayCompoundGlobStabSuarezSchopf}, we get that the Hausdorff dimension of $\mathcal{A}$ is strictly less than $2$.
	
	Now let $\mathcal{B}_{R_{0}}(0)$ be the ball of radius $R_{0}$ centered at zero. As discussed above, for any $R_{0} \geq \beta \gamma^{-1} \kappa^{-1} (\kappa-1)^{(\kappa-1)/\kappa}$, the attractor $\mathcal{A}$ lies in $\mathcal{B}_{R_{0}}(0)$, and the ball is positively invariant. Then the conclusion follows from \cite[Corollary 2]{LiMuldowney1995LowBounds} by modulo that the statement therein requires $\varphi^{t}$ to be bijective on $\mathcal{A}$, but in the proof it is used only that $\varphi^{t}$ is bijective on the closed invariant contour as in \cite[Corollary 1]{LiMuldowney1995LowBounds}.
\end{proof}
\begin{remark}
	\label{REM: MackeyGlassGlobStabViaRobust}
	Under the conditions of Proposition \ref{PROP: MackeyGlassGlobStability}, we in fact have the robust condition
	\begin{equation}
		\lambda_{1}(\Xi) + \lambda_{2}(\Xi) \leq -\nu_{0} < 0
	\end{equation}
    for the first and the second uniform Lyapunov exponents $\lambda_{1}(\Xi)$ and $\lambda_{2}(\Xi)$ of the derivative cocycle $\Xi$ over $(C([-\tau,0];\mathbb{R}),\varphi)$, see Remark \ref{REM: RobustnessLargLyapExp}. Thus, as in finite dimensions, such a condition is expected to guarantee global stability.
\end{remark}

Let us illustrate the method for the classical parameters $\gamma = 0.1$, $\beta = 0.2$, $\kappa = 10$, and $\tau = 4.5$. Here the leading pair of eigenvalues satisfies $\lambda_{1,2}(A) \approx -0.99 \pm i1.12$. We consider the approximation scheme \nameref{DESC: AS1DelayCompound}--\nameref{DESC: AS4DelayCompound} for \eqref{EQ: MackeyGlassNumericalSchemeExample} with the given $\gamma$, $\beta$, $\tau$, $\kappa$, and $\Lambda$ given by \eqref{EQ: MackeyGlassStabilityLambdaDef}. Parameters of the scheme are taken as $m=2$, $\Lambda$ as above, $\nu_{0} = 0.01$, $\Omega = 37.5$, $T \in \{15, 25\}$, and $N \in \{2, 5, 10, 20, 30\}$. We conduct numerical experiments using a realization of the scheme on Python, see Remark \ref{REM: DelayCompoundNumerical}.

\begin{figure}[t]
	\begin{minipage}{.5\textwidth}
		\includegraphics[width=\textwidth,angle=0]{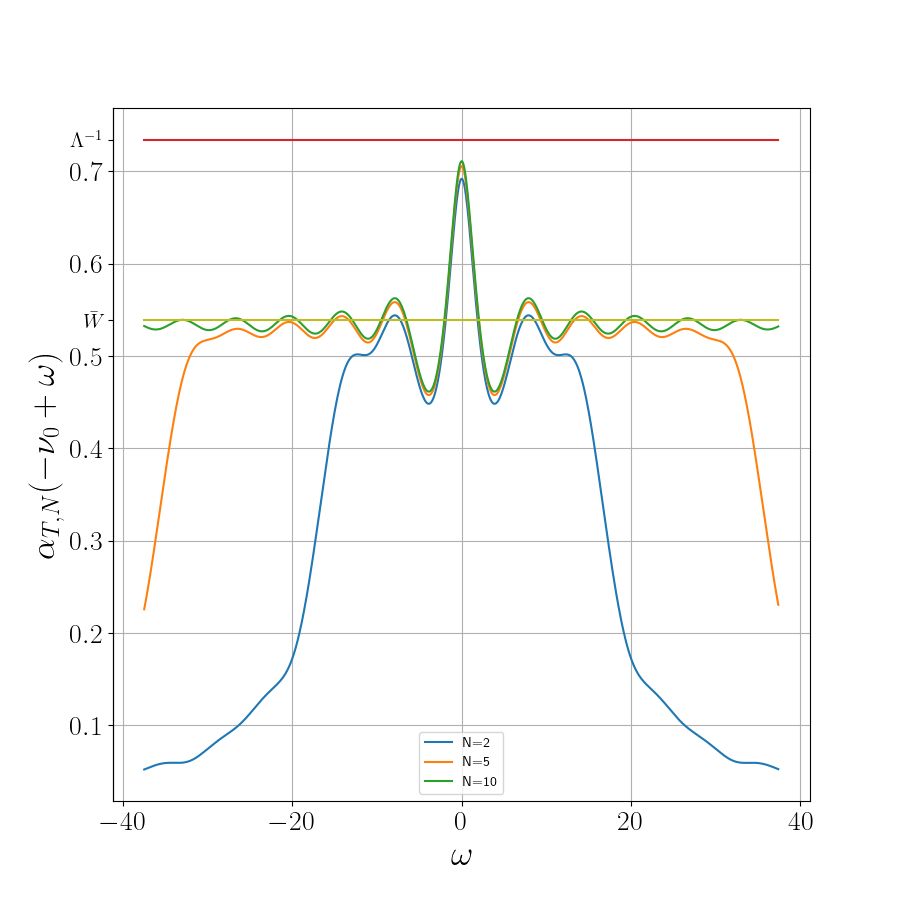}
	\end{minipage}%
	\begin{minipage}{.5\textwidth}
		\includegraphics[width=\textwidth,angle=0]{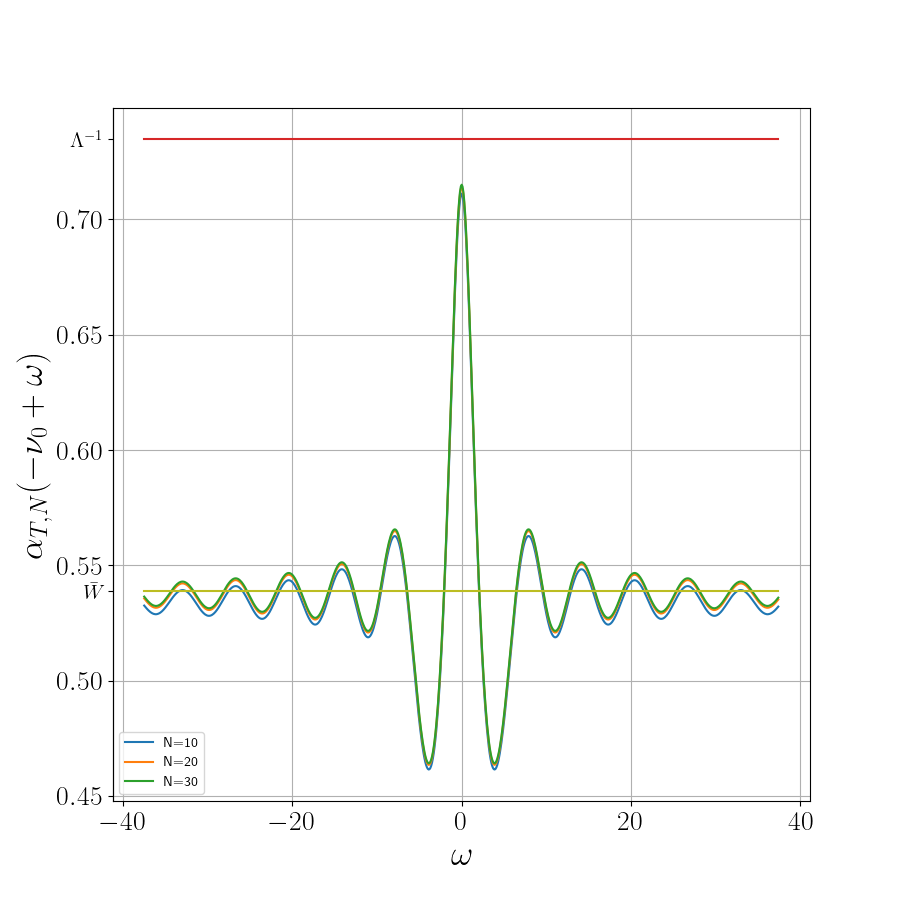}
	\end{minipage}%	
	\caption{Graphs of the largest singular values $\alpha_{T,N}(-\nu_{0} + i \omega)$ versus $\omega$ computed via the numerical implementation of the approximation scheme \nameref{DESC: AS1DelayCompound}--\nameref{DESC: AS4DelayCompound} applied to \eqref{EQ: MackeyGlassNumericalSchemeExample} with $\gamma = 0.1$, $\beta = 0.2$, $\kappa = 10$, $\tau = 4.5$, and $\Lambda$ given by \eqref{EQ: MackeyGlassStabilityLambdaDef}. Parameters of the scheme are taken as $m=2$, $\Lambda$ as above, $\nu_{0} = 0.01$, $T=15$, $\Omega = 37.5$, and (Left): $N=2$ (blue), $N=5$ (orange), $N=10$ (green), and (Right): $N=10$ (blue), $N=20$ (orange), $N=30$ (green). The horizontal lines pass through the threshold value $\Lambda^{-1}$ (red) and the norm $\bar{W}$ of the asymptotic integral operator (olive) on the vertical axis. See the repository for implementation details.}
	\label{FIG: CompoundMGT=15}
\end{figure}

For $T=15$, Figure \ref{FIG: CompoundMGT=15} shows graphs of the largest singular values $\alpha_{T,N}(-\nu_{0} + i \omega)$ versus $\omega$ for $N \in \{2,5,10\}$ (left) and $N \in \{10,20,30\}$ (right). For $T=25$, the conducted experiments give indistinguishable figures. This indicates convergence of the numerical scheme. For $|\omega| > 10$, the curves tend to exhibit an oscillating pattern decaying to a positive value $\bar{W}$ in accordance with Corollary \ref{COR: ExplicitComputationConjecture}. Note also that the results are consistent with Fig. \ref{FIG: CompoundMGTKernelApproximations} (left), where truncations are obtained via the explicit representation of the transfer operator.

In fact, the numerical scheme indicates that the frequency inequality is valid even for $\tau = 4.6$, but the graphs come too close to the threshold line in the experiments\footnote{Using the optimization of Schur test functions as in \cite{AnikushinRomanov2025Schur}, one can verify the frequency inequality up to $\tau = 4.55$.}. Analogous experiments justify the validity of the frequency inequality for $\tau \in (0, 4.5]$. This indicates the absence of closed invariant contours in the system for such parameters and, as is expected, the global stability, see Remark \ref{REM: MackeyGlassGlobStabViaRobust}. Moreover, we are quite surprised, since the method is in a sense rough, that the achieved result turned out to be very close to the desirable one determined by the bifurcation parameter $\tau^{*} \approx 4.8626$ from \eqref{EQ: MackeyGlassTau0HopfBif}. We consider this as another indicator that Conjecture \ref{CONJ: ConjectureMackeyGlassStability} should be valid.

Our method can be compared with the more delicate result of Liz, Tkachenko, and Trofimchuk \cite{Lizetal2003} generalizing the well-known Myshkis stability criterion to nonlinear scalar delay equations with a single equilibrium. It often provides boundaries of global stability that are close to the boundary of linear stability (the Nicholson blowflies model considered in \cite{Lizetal2003} is a nice illustration), and it is also based on a comparison with a linear system. It can be applied to \eqref{EQ: MackeyGlassExample} in the invariant cone of positive functions. Omitting (possibly nontrivial) justifications of the applicability\footnote{In fact, this is justified by \cite[Theorem 3.2]{LizetAl2005}, but the reader should be careful, since condition (iii) of the theorem contains an error related to an incorrect calculation of the derivative.} of \cite[Corollary 2.3]{Lizetal2003} in our situation (for $\gamma = 0.1$, $\beta = 0.2$, and $\kappa = 10$), we obtain that such a criterion would guarantee the global stability for $\tau < -10 \left[\ln 4 + \ln\ln(20/17)\right] \approx 4.3066$ that is smaller than our bound. Thus, the frequency criterion can also complement even such results, which significantly rely on some specificity of scalar equations. In fact, the criterion from \cite{Lizetal2003} is optimal in the class of time-dependent delays, and our method is more specific to constant delays.

For the Nicholson blowflies model, the frequency criterion cannot compete with the result of \cite{Lizetal2003}, but it complements previously known results based on more rough methods, see \cite{AnikushinRomanov2024EffEst} for a comparison.

%\appendix

%------
% Insert acknowledgments and information
% regarding funding at the end of the last
% section, i.e., right before the bibliography.
%------

%\begin{ack}
%We thank X.
%\end{ack}

\section*{Acknowledgments}
The authors are grateful to the Associate Editor and the anonymous reviewers for their valuable comments and suggestions, which led to significant improvements in the readability and exposition.

\begin{funding}
The reported study was funded by the Russian Science Foundation (Project 25-11-00147).
\end{funding}

\section*{Data availability}
The data that support the findings of this study can be generated using the scripts in the repository:\\
\centerline{\url{https://gitlab.com/romanov.andrey/freq-criterion-delay}}

\section*{Conflict of interest}
The authors declare that they have no conflict of interest.

%------
% Insert the bibliography.
%------

\end{document}